\newcommand{\overbar}[1]{\mkern 1.5mu\overline{\mkern-1.5mu#1\mkern-1.5mu}\mkern 1.5mu}
\newtheorem*{rep@theorem}{\rep@title}
\newcommand{\newreptheorem}[2]{%
\newenvironment{rep#1}[1]{%
 \def\rep@title{#2 \ref{##1}}%
 \begin{rep@theorem}}%
 {\end{rep@theorem}}}
\newtheorem*{claim}{Claim}
\theoremstyle{plain}
\newtheorem{theorem}{Theorem}[section]
\newtheorem{lemma}[theorem]{Lemma}
\newtheorem{corollary}[theorem]{Corollary}
\newtheorem{proposition}[theorem]{Proposition}
\newtheorem{thmx}{Theorem}
\newtheorem{prox}[thmx]{Proposition}
\theoremstyle{definition}
\newtheorem{definition}[theorem]{Definition}
\newtheorem*{question}{Question}
\newtheorem*{convention}{Convention}
\newtheorem{remark}[theorem]{Remark}
\newtheorem{Ex}[theorem]{Example}
\newtheorem*{acknowledgements}{Acknowledgements}
\newtheorem*{background}{Backgrounds and Summary of Results}
\newtheorem*{organization}{Organization of the Paper}
\newtheorem*{RIC}{Reduced Intersection Complex}
\newcommand{\Out}{\mathrm{Out}}
\begin{document}
\title{Quasi-isometry invariants of weakly special square complexes}
\author{Sangrok Oh}
\address{Department of Mathematics, Korea Advanced Institute of Science and Technology, 
		Daejeon, 34141, Korea}
\email{SangrokOh.math@gmail.com}
\begin{abstract}
We define the intersection complex for the universal cover of a compact weakly special square complex and show that it is a quasi-isometry invariant. By using this quasi-isometry invariant, we study the quasi-isometric classification of 2-dimensional right-angled Artin groups and planar graph 2-braid groups. 
Our results cover two well-known cases of 2-dimensional right-angled Artin groups: (1) those whose defining graphs are trees and (2) those whose outer automorphism groups are finite. Finally, we show that there are infinitely many graph 2-braid groups which are quasi-isometric to right-angled Artin groups and infinitely many which are not. 
\end{abstract}
\subjclass[2020]{20F65, 20F36, 20F67, 57M60}
\keywords{Right-angled Artin group, graph braid group, quasi-isometry, special cube complex, intersection complex}

\maketitle
\tableofcontents

%
%
%
\section{Introduction}\label{1}
\begin{background}
Groups acting geometrically on CAT(0) cube complexes (called \textit{cocompactly cubulated groups}) have a long list of nice properties. 
On the one hand, cocompactly cubulated groups are CAT(0) groups so that they are finitely presented and their word and conjugacy problems are solvable; see Chapter III.$\Gamma$ of \cite{BH} for more properties of CAT(0) groups.
On the other hand, combinatorial properties of CAT(0) cube complexes provide us a more understanding of cocompactly cubulated groups such as the Tits Alternative \cite{SW05}, (bi)automaticity \cite{NR98}, \cite{Swi06}, subgroup separability \cite{HW08}, \cite{HW12}, 3-manifold theory \cite{AGM13}, \cite{Wis09} or rank-rigidity \cite{CS11}.  
We refer to Sageev's notes \cite{Sag12} for a brief story about CAT(0) cube complexes and groups acting on them.

Among cocompactly cubulated groups, the most fundamental and ubiquitous ones are right-angled Artin groups; they are embedded into various groups \cite{CLM10}, \cite{Kob12}, \cite{Tay13}, \cite{KK13}, \cite{BKK14}, \cite{Sab07} and they contain not only well-known groups \cite{CW}, \cite{AGM13}, \cite{HW10} (see \cite{Wis09} for a canonical way to find subgroups in RAAGs and more references) but also complicated group-theoretic subgroups \cite{BB}. Given a simplicial graph $\Lambda$ with vertex set $V(\Lambda)$, the \textit{right-angled Artin group} (RAAG) $A(\Lambda)$ has the following presentation:
$$\langle V(\Lambda)\mid [v_i,v_j]=1~\mathrm{if}~ v_i,v_j\in V(\Lambda)\ \mathrm{are\ joined\ by\ an\ edge\ of\ }\Lambda \rangle.$$
The RAAG $A(\Lambda)$ acts geometrically on a CAT(0) cube complex $X(\Lambda)$ which is the universal cover of a locally CAT(0) cube complex $S(\Lambda)$, called the \textit{Salvetti complex}. The dimension of RAAG is the largest dimension of cubes in $X(\Lambda)$. 
We refer to Charney's article \cite{CH} for an introduction to RAAGs.

Another class of cocompactly cubulated groups is the class of (pure) graph braid groups. Given a graph $\Gamma$, the \textit{graph $n$-braid group} $B_n(\Gamma)$ and the \textit{pure graph $n$-braid group} $PB_n(\Gamma)$ are the $n$-braid group and the pure $n$-braid group over $\Gamma$, respectively. 
By the result of \cite{Abrams00}, if $\Gamma$ is suitably subdivided, then $B_n(\Gamma)$ and $PB_n(\Gamma)$ are the fundamental groups of locally CAT(0) cube complexes $UD_n(\Gamma)$ and $D_n(\Gamma)$, the unordered and ordered discrete configuration spaces of $n$ points on $\Gamma$, respectively (for instance, if $n=2$ and $\Gamma$ is simplicial, then $\Gamma$ is suitably subdivided in this sense). 
In particular, $B_n(\Gamma)$ and $PB_n(\Gamma)$ act geometrically on CAT(0) cube complexes $\overline{UD_n(\Gamma)}$ and $\overline{D_n(\Gamma)}$, the universal covers of $UD_n(\Gamma)$ and $D_n(\Gamma)$, respectively.
By the discrete Morse theory on $D_n(\Gamma)$ and $UD_n(\Gamma)$, presentations and cohomology rings of (pure) graph braid groups are established \cite{FS05}, \cite{FS08}, \cite{KP12}, \cite{KKP12}, \cite{KLP16}.

There were several approaches to clarify algebraic similarities between RAAGs and graph braid groups. 
Besides that both classes of groups act geometrically on CAT(0) cube complexes, they have the following embeddabilities: 
\begin{itemize}
\item
Any graph $n$-braid group $B_n(\Gamma)$ embeds into a RAAG $A(\Lambda)$ \cite{CW}. In this embedding, $\Lambda$ only depends on $\Gamma$.
\item
Any RAAG $A(\Lambda)$ embeds into a pure graph $n$-braid group $PB_n(\Gamma)$ \cite{Sab07}. In this embedding, $\Gamma$ and the index $n$ depend on $\Lambda$.
\end{itemize}
Sometimes, there are isomorphisms between RAAGs and graph braid groups. 
More precisely, if $n\geq4$, then there is a criterion for the graph $\Gamma$ when $B_n(\Gamma)$ is isomorphic to a RAAG \cite{KKP12}, \cite{KLP16}.
Even though criterions for $n=2,3$ are still not known, it is known that when $\Gamma$ is planar, $B_2(\Gamma)$ and $PB_2(\Gamma)$ admit presentations whose relators are commutators \cite{KP12} as RAAGs do.

Based on common properties of planar graph 2-braid groups and RAAGs, one would expect that for any planar graph 2-braid group, there exists a RAAG which is quasi-isometric to the planar graph 2-braid group and vice-versa.
However, it turns out that these classes of groups are not geometrically similar.  

\begin{thmx}\label{MainThm}
There are infinitely many planar graph 2-braid groups which are quasi-isometric to 2-dimensional RAAGs. There are also infinitely many planar graph 2-braid groups which are NOT quasi-isometric to any RAAGs.
\end{thmx}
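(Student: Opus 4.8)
The engine of the argument is the quasi-isometry invariance of the intersection complex established earlier in the paper: if two compact weakly special square complexes have quasi-isometric universal covers, then their intersection complexes agree. For a planar graph $\Gamma$ the space $\overline{UD_2(\Gamma)}$ is the universal cover of the compact special square complex $UD_2(\Gamma)$, and for a triangle-free graph $\Lambda$ the space $X(\Lambda)$ is the universal cover of the Salvetti complex $S(\Lambda)$; both are therefore inputs to which the invariant applies. Writing $\mathcal{I}(\cdot)$ for the intersection complex, the plan is to realize the whole dichotomy on the level of these complexes, reducing the geometric statement to a combinatorial comparison that can be read off from $\Gamma$ and from $\Lambda$.

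First I would record two preliminary reductions. Since $\overline{UD_2(\Gamma)}$ is two-dimensional, every quasiflat in $B_2(\Gamma)$ has dimension at most $2$, and when $\Gamma$ contains a cycle this bound is attained. The maximal dimension of a quasiflat is a quasi-isometry invariant, and for a RAAG $A(\Lambda)$ it equals the largest clique of $\Lambda$; hence any RAAG quasi-isometric to such a $B_2(\Gamma)$ must be two-dimensional, i.e. have a triangle-free defining graph. This restricts the negative direction to the (a priori much smaller) class of $2$-dimensional RAAGs. Second, I would extract from $\mathcal{I}(\cdot)$ a numerical quantity that grows along the families to be constructed — for instance the number of maximal product regions meeting a fixed one, or the number of distinct local types occurring in the complex — so that the infinitely many examples produced below are genuinely pairwise non-quasi-isometric rather than repetitions.

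For the positive direction I would exhibit an explicit infinite family of planar graphs $\{\Gamma_n\}$ together with an explicit family of trees $\{\Lambda_n\}$ for which $\mathcal{I}(\overline{UD_2(\Gamma_n)})$ and $\mathcal{I}(X(\Lambda_n))$ are isomorphic as labelled complexes; choosing the $\Lambda_n$ to be trees places them squarely in the tree case covered by the paper. I would then invoke the quasi-isometric classification supplied by the intersection-complex machinery to promote ``isomorphic intersection complexes'' to ``quasi-isometric,'' concluding $B_2(\Gamma_n)\simeq_{\mathrm{QI}} A(\Lambda_n)$; where convenient this promotion can be made concrete by building the quasi-isometry from the matching of product-region decompositions. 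The unbounded invariant of the previous paragraph, evaluated on $\Lambda_n$, then guarantees that infinitely many of the $B_2(\Gamma_n)$ are pairwise distinct up to quasi-isometry, so the family is honestly infinite.

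The negative direction carries the real content and is the step I expect to be the main obstacle. Its heart is a \emph{characterization}: I would determine precisely which labelled complexes occur as $\mathcal{I}(X(\Lambda))$ for $\Lambda$ triangle-free, isolating a local structural constraint that the intersection complex of every $2$-dimensional RAAG must satisfy — morally, that the pattern in which maximal product regions intersect is dictated by the links of $\Lambda$ and is therefore highly homogeneous. I would then construct a second infinite family of planar graphs $\{\Delta_m\}$, compute $\mathcal{I}(\overline{UD_2(\Delta_m)})$ directly from the combinatorics of $\Delta_m$, and arrange $\Delta_m$ so that this complex provably violates the RAAG constraint. Combined with the reduction to triangle-free graphs and the quasi-isometry invariance of $\mathcal{I}(\cdot)$, this shows no RAAG is quasi-isometric to $B_2(\Delta_m)$, and the unbounded invariant again makes the family infinite. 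The delicate points are proving that the constraint is genuinely \emph{necessary} for RAAGs — so that failing it is a true obstruction and not an artifact of a chosen normal form — and verifying, against the global structure of $\mathcal{I}(\overline{UD_2(\Delta_m)})$, that the braid-group examples really do fail it.
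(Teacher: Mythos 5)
Your overall skeleton matches the paper's (quasi-isometry invariance of the intersection complex, reduction to triangle-free $\Lambda$ via flat rigidity, a positive family matched to trees and a negative family violating a structural constraint), but the load-bearing step of your positive direction is precisely what the paper does not have: a converse to Theorem \ref{QIimpliesIso1}. The intersection complex is only shown to be a quasi-isometry \emph{invariant}; whether semi-isomorphic (let alone isomorphic) intersection complexes force the underlying spaces to be quasi-isometric is posed as an open Question at the end of the introduction, so ``promote isomorphic intersection complexes to quasi-isometric'' is not an available move. The paper's actual route is different: for the cacti $\mathcal{O}_k$ it shows that $RI(D_2(\mathcal{O}_k))$ is a developable graph of groups decomposing the group $SPB_2(\mathcal{O}_k)$ and satisfying the Behrstock--Neumann flipping conditions (Proposition \ref{StarN}), and the quasi-isometry $SPB_2(\mathcal{O}_k)\rightarrow A(T)$ comes from Theorem \ref{GraphofGroups} --- a quasi-isometry \emph{construction} theorem about graphs of groups, external to the intersection-complex formalism. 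Moreover, a component of $I(\overline{D_2(\Gamma)})$ only sees the proper subgroup $SPB_2(\Gamma)$: the Y-exchanges generate a complementary free factor, so $PB_2(\mathcal{O}_k)\cong SPB_2(\mathcal{O}_k)*\mathbb{F}_n$, and one needs Papasoglu--Whyte (Theorem \ref{PW02}, via Proposition \ref{QItoProd}) to conclude that $PB_2(\mathcal{O}_k)$ is quasi-isometric to $A(T)*\mathbb{Z}$ --- a RAAG on a \emph{disconnected} graph, which is not quasi-isometric to $A(T)$ itself (infinitely many ends versus one end). Your proposal confronts neither of these points, and without them the positive direction does not close.

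There is also an internal inconsistency in your setup: you demand that the positive family be pairwise non-quasi-isometric and propose a growing invariant to certify this, while simultaneously taking the target RAAGs to be trees. But all $A(T)$ with $T$ a tree of diameter $\geq 3$ lie in a single quasi-isometry class (this is exactly the Behrstock--Neumann phenomenon discussed after Theorem \ref{GraphofGroups}), hence so would all of your $B_2(\Gamma_n)$; no invariant can separate them. The theorem does not require this: ``infinitely many'' means infinitely many groups, and the paper's family $\{PB_2(\mathcal{O}_k)\}_{k\geq 3}$ indeed lies in one quasi-isometry class. Your negative direction is aligned in spirit with the paper's: the ``RAAG constraint'' there is that $I(X(\Lambda))$ is covered by copies $R_x$ of the \emph{finite} complex $RI(S(\Lambda))$, and for the cacti $\mathcal{O}'_{4,n}$ the paper exhibits an infinite alternating line of separating vertices of a special local type whose image would force distinct simplices of the finite simplicial complex $RI(S(\Lambda))$ to coincide (Proposition \ref{NotQI}, Corollary \ref{NotQItoRAAG}), together with a simple-connectivity constraint coming from developability (Corollary \ref{Development}). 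But as written that part of your argument is a program rather than a proof, and with the positive direction resting on the unavailable converse, the proposal as a whole has a genuine gap.
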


\textbf{Unless otherwise stated, complexes are assumed to be connected.}

\vspace{0.5mm}

In order to prove Theorem \ref{MainThm}, let us see the large scale geometry of RAAGs and graph 2-braid groups.
In the study about quasi-isometric rigidity of CAT(0) cube complexes or cocompactly cubulated groups, the first step is to see the behavior of top dimensional (quasi-)flats via quasi-isometries.   
In the case of RAAGs and graph 2-braid groups, the following list shows some successes of obtaining rigidity results by this approach.

\begin{remark}\label{123}
\begin{enumerate}
\item 
When two simplicial graphs $\Lambda_1$ and $\Lambda_2$ are atomic (no vertices of valency 1, no cycles of length $< 5$ and no separating closed stars), $A(\Lambda_1)$ and $A(\Lambda_2)$ are quasi-isometric if and only if they are isomorphic \cite{BKS(a)}.
\item 
For two simplicial graphs $\Lambda_1$ and $\Lambda_2$, if the outer automorphism groups of $A(\Lambda_1)$ and $A(\Lambda_2)$ are finite, then $A(\Lambda_1)$ and $A(\Lambda_2)$ are quasi-isometric if and only if they are isomorphic \cite{Hua(a)}.
\item
For two simplicial graphs $\Gamma_1$ and $\Gamma_2$, if $\overline{D_2(\Gamma_1)}$ and $\overline{D_2(\Gamma_2)}$ are quasi-isometric, then the underlying complexes of the intersection complexes of $\overline{D_2(\Gamma_1)}$ and $\overline{D_2(\Gamma_2)}$ are isometric \cite{Fer12}. The definition of an intersection complex is given in Section \ref{3.1}.
\end{enumerate}
\end{remark}

In \cite{Hua(b)}, Huang introduced \textit{weakly special cube complexes}, a slightly larger class than the class of special cube complexes (introduced by Haglund and Wise in \cite{HW08}), which contains both Salvetti complexes and discrete configuration spaces of $n$ points on (possibly non-simplicial) graphs. He showed that any quasi-isometry between the universal covers of two compact weakly special cube complexes preserves top-dimensional flats up to finite Hausdorff distance.
It means that for two simplicial graphs $\Gamma$ and $\Lambda$, if $\overline{D_2(\Gamma)}$ is quasi-isometric to $X(\Lambda)$, then $\Lambda$ has no triangles but at least one edge.
However, there are no more relations between $\Gamma$ and $\Lambda$ directly obtained from results in \cite{Hua(b)} due to the existence of a plenty of arbitrarily constructed flats in $\overline{D_2(\Gamma)}$ and $X(\Lambda)$. We thus define a quasi-isometrically rigid class of subcomplexes (larger than flats) in the universal cover of a compact weakly special square complex.

Given a compact weakly special square complex $Y$, let $\overline{Y}$ be the universal cover of $Y$ with the universal covering map $p_Y:\overline{Y}\rightarrow Y$. 
A \textit{standard product subcomplex} $K$ of $Y$ is defined as the image of a local isometry $\bold{P}_1\times\bold{P}_2\rightarrow Y$ for two non-simply connected graphs $\bold{P}_1$, $\bold{P}_2$ without vertices of valency 1. 
Among subcomplexes in $p^{-1}_{Y}(K)$ which are isometric to the product of two infinite trees, a maximal one $\overbar{K}$ (called a \textit{p-lift} of $K$) is defined as a \textit{standard product subcomplex} $\overbar{K}$ of $\overline{Y}$. Note that $\overbar{K}$ projects to $K$ under $p_Y$.
A \textit{maximal\ product\ subcomplex} of $Y$ or $\overline{Y}$ is the standard product subcomplex which is maximal under the set inclusion. For the details, see Section \ref{2.2}.
Then the pattern of maximal product subcomplexes of $\overline{Y}$ is preserved by a quasi-isometry up to finite Hausdorff distance. 

\begin{thmx}\label{MaxcontainingFlat}
Let $\overline{Y}$ and $\overline{Y'}$ be the universal covers of two compact weakly special square complexes. 
If there is a $(\lambda,\varepsilon)$-quasi-isometry $\phi:\overline{Y}\rightarrow\overline{Y'}$, then there exists a constant $C=C(\lambda,\varepsilon)>0$ such that the following hold: 
\begin{enumerate}
\item 
For any maximal product subcomplex $\overbar{M}\subset\overline{Y}$, there exists a unique maximal product subcomplex of $\overline{Y'}$ denoted by $\phi_H(\overbar{M})$ such that $d_H(\phi(\overbar{M}),\phi_H(\overbar{M}))<C$ where $d_H$ denotes the Hausdorff distance.
\item 
Let $\overbar{M}_i$ be a finite collection of maximal product subcomplexes of $\overline{Y}$.
If the intersection of $\overbar{M}_i$'s contains a standard product subcomplex of $\overline{Y}$, then the intersection of $\phi_H(\overbar{M}_i)$'s contains a standard product subcomplex of $\overline{Y'}$.
\end{enumerate}
\end{thmx}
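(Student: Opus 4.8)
The plan is to build the correspondence $\phi_H$ on top of the flat-preservation theorem of Huang \cite{Hua(b)}, which supplies a coarse bijection between the top-dimensional flats of $\overline{Y}$ and those of $\overline{Y'}$, distorting Hausdorff distance by a constant depending only on $(\lambda,\varepsilon)$ and the two fixed complexes. The guiding principle is that a standard product subcomplex $\overbar{M}\cong T_1\times T_2$ is the \emph{coarse union} of the flats $\ell_1\times\ell_2$ it contains (here I use that the factors $\bold{P}_i$ have no valency-$1$ vertices, so every point of $T_i$ lies on a bi-infinite geodesic), enriched by the two transverse families of flat-directions coming from the two tree factors. Since $\phi$ preserves flats and, as explained below, their coarse intersection pattern, it should transport this structure; the whole difficulty is to verify that the transported data reassembles into a single maximal product subcomplex and to control the error uniformly.

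For part (1), fix a maximal product subcomplex $\overbar{M}\cong T_1\times T_2$ and let $\{F_\alpha\}$ be the flats it contains, so that $\overbar{M}$ is at finite Hausdorff distance from $\bigcup_\alpha F_\alpha$ and $\phi(\overbar{M})$ from $\bigcup_\alpha \phi_H(F_\alpha)$. First I would record the elementary fact that any two flats of $\overbar{M}$ are joined by a finite chain of flats of $\overbar{M}$ in which consecutive members share a half-flat (an isometrically embedded $\mathbb{R}\times[0,\infty)$): changing one factor at a time reduces this to the observation that two bi-infinite geodesics of a tree are joined by a chain of geodesics with consecutive ones sharing a ray. Next I would show this relation is coarsely $\phi$-invariant: if $G,G'$ share a half-flat $H$, then $\phi(H)$ is a half-flat lying within bounded Hausdorff distance of both $\phi_H(G)$ and $\phi_H(G')$, so the coarse intersection of $\phi_H(G)$ and $\phi_H(G')$ contains a half-flat, and by the structural results of Section \ref{2.2} two flats whose coarse intersection contains a half-flat lie in a common standard product subcomplex of $\overline{Y'}$. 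Pushing the chain through $\phi_H$ places all the $\phi_H(F_\alpha)$ in a single half-flat-connected family, which I would contain in a maximal product subcomplex; this defines $\phi_H(\overbar{M})$. The estimate $d_H(\phi(\overbar{M}),\phi_H(\overbar{M}))<C$ then comes from promoting the coarse inclusion $\phi(\overbar{M})\subset \phi_H(\overbar{M})$ to a two-sided bound via cocompactness, applying the same construction to a quasi-inverse of $\phi$ for the reverse inclusion. Uniqueness is immediate once one notes that two distinct maximal product subcomplexes are at infinite Hausdorff distance, since each contains flats escaping any bounded neighbourhood of the other.

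For part (2) I would reduce to the non-maximal version of the same machinery: QIs coarsely preserve \emph{standard} product subcomplexes and their containment relations. Given maximal product subcomplexes $\overbar{M}_i$ whose intersection contains a standard product subcomplex $N=\bold{Q}_1\times\bold{Q}_2$, each flat $F\subset N$ lies in every $\overbar{M}_i$, so $\phi_H(F)$ lies within bounded Hausdorff distance of $\phi_H(\overbar{M}_i)$ for all $i$, hence (after a bounded correction using convexity of product subcomplexes) genuinely inside each $\phi_H(\overbar{M}_i)$. The flats of $N$ are again half-flat-connected, so by the argument of part (1) their images reassemble, within $\bigcap_i\phi_H(\overbar{M}_i)$, into a subcomplex at bounded Hausdorff distance from $\phi(N)$ that is a product of two infinite trees; by the results of Section \ref{2.2} this is (boundedly close to) a standard product subcomplex of $\overline{Y'}$, which therefore sits inside $\bigcap_i\phi_H(\overbar{M}_i)$ as required.

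The main obstacle I anticipate is separating distinct maximal product subcomplexes at the coarse level. Since two distinct maximal product subcomplexes can meet in a standard product subcomplex containing a half-flat, the naive relation ``share a half-flat'' may connect flats belonging to different maximal subcomplexes, so the half-flat-connected family produced above could \emph{a priori} spread across several maximal product subcomplexes. Resolving this is where the transverse-direction data and the combinatorial structure of Section \ref{2.2} must be used: a half-flat carries an intrinsic line-direction and ray-direction, and I would argue that a coherent family of flats carrying two fixed transverse direction-classes cannot straddle two maximal product subcomplexes, so the reconstructed family is confined to a single one. The second delicate point is purely quantitative, namely extracting the \emph{uniform} constant $C=C(\lambda,\varepsilon)$; this relies on the cocompactness of the actions of $\pi_1(Y)$ and $\pi_1(Y')$ to bound the local geometry and the number of orbit types, so that the bounded errors accumulated in the chain, in passing between a flat and its $\phi_H$-image, and in upgrading coarse containment to Hausdorff-closeness, can all be absorbed into a single constant.
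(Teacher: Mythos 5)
Your overall architecture is the same as the paper's (Huang's flat rigidity, coarse union of flats, half-flat chains, a quasi-inverse for the two-sided Hausdorff bound, and a rigidity lemma for uniqueness), but there is a genuine gap at the central step, and it is precisely the step you flag as ``the main obstacle.'' Your part (1) passes from ``the flats $\phi_H(F_\alpha)$ form a half-flat-connected family, with each \emph{consecutive} pair lying in a common standard product subcomplex'' to ``the whole family lies in, and is Hausdorff-close to, a single maximal product subcomplex.'' Chainwise common containment does not globalize: as you yourself note, the transitive closure of ``share a half-flat'' can sweep across several maximal product subcomplexes (in Example \ref{T_3Ex}, p-lifts of $\{a_1,a_3\}\times\{a_2\}$ and of $\{a_1\}\times\{a_2,a_3\}$ are distinct maximal product subcomplexes sharing entire flats). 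Your proposed repair --- that a ``coherent family of flats carrying two fixed transverse direction-classes cannot straddle two maximal product subcomplexes'' --- is the right intuition but is exactly the assertion requiring proof; the proposal never says what a direction-class is, why $\phi$ transports it, or why coherence forbids straddling. The property that actually does the work is not coherence of directions but grid-completeness of the image family (every ``horizontal'' direction spans a flat with every ``vertical'' one), together with a criterion that converts grid-completeness plus connectedness into an honest product structure.

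The paper's proof of Theorem \ref{SPStoSPS} (Case 3) supplies exactly this mechanism: Lemma \ref{SG} converts each subcomplex of the form tripod$\,\times\,\mathbb{R}$ in $\overbar{M}$ into a singular geodesic of $\overline{Y'}$, producing two collections $\mathbb{H}',\mathbb{V}'$; Theorem \ref{FlattoFlat} guarantees that every pair $(\alpha',\beta')\in\mathbb{H}'\times\mathbb{V}'$ induces a flat; the half-flat chain argument (your step) shows the union $\overbar{U}''$ of these flats is connected; and then Proposition 2.6 of \cite{CS11} --- the criterion of two families of pairwise-crossing hyperplanes --- is what forces $\overbar{U}''$ to be a product subcomplex, hence confined to a single maximal one. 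Lemma \ref{SubSPC} and Lemma \ref{NS} then replace $\overbar{U}''$ by a standard product subcomplex and upgrade coarse containment to genuine containment, after which your maximality, quasi-inverse and uniqueness steps (these coincide with Corollary \ref{MaxtoMax}) and your part (2) (which coincides with the paper's use of Lemma \ref{NS} and Lemma \ref{IntofStd} applied to $\bigcap_i\phi_H(\overbar{M}_i)$) go through. Without this assembly step, or an equivalent substitute, both parts of your argument remain incomplete, since part (2) explicitly invokes the reassembly of part (1).
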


Let $|I(\overline{Y})|$ be a (possibly disconnected) simplicial complex whose vertex set is the collection of maximal product subcomplexes of $\overline{Y}$ and $(k+1)$ vertices span a $k$-simplex $\triangle$ whenever the $(k+1)$ maximal product subcomplexes corresponding to these vertices share a standard product subcomplex. 
Lemma \ref{IntofStd} implies that each simplex $\triangle\subset I(\overline{Y})$ corresponds to a unique standard product subcomplex $\overbar{K}_\triangle\subset\overline{Y}$ which is at finite Hausdorff distance from the intersection of the $(k+1)$ maximal product subcomplexes.
Under the action $G:=\pi_1(Y)\curvearrowright\overline{Y}$, let $G_\triangle$ be the stabilizer of $\overbar{K}_\triangle$ be the \textit{assigned group} of $\triangle$.
The simplicial complex $|I(\overline{Y})|$ with these assigned groups is called the \textit{intersection complex} of $\overline{Y}$, denoted by $I(\overline{Y})$, and $|I(\overline{Y})|$ is called the \textit{underlying complex} of $I(\overline{Y})$.

Given two compact weakly special square complexes $Y$ and $Y'$ with their universal covers $\overline{Y}$ and $\overline{Y'}$, respectively, Theorem \ref{MaxcontainingFlat} implies that the quasi-isometry $\phi:\overline{Y}\rightarrow\overline{Y'}$ induces a \textit{semi-isomorphism} $\Phi:I(\overline{Y})\rightarrow I(\overline{Y'})$, an isometry preserving a certain relation among assigned groups.
It means that an intersection complex and its underlying complex are quasi-isometry invariants for the universal cover of a compact weakly special square complex but it turns out that the intersection complex is a more sensitive invariant. 

\begin{thmx}\label{QIimpliesIso1}
Let $\overline{Y}$ and $\overline{Y'}$ be the universal covers of two compact weakly special square complexes. 
If there is a quasi-isometry $\phi:\overline{Y}\rightarrow\overline{Y'}$, then $\phi$ induces a semi-isomorphism $\Phi:I(\overline{Y})\rightarrow I(\overline{Y'})$ via Theorem \ref{MaxcontainingFlat}. 
\end{thmx}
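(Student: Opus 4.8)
The plan is to build $\Phi$ directly from the correspondence $\phi_H$ supplied by Theorem \ref{MaxcontainingFlat}, check that it is a simplicial isomorphism by running the same construction on a quasi-inverse of $\phi$, and then verify that it respects the assigned groups. First I would define $\Phi$ on the vertex set. A vertex of $|I(\overline{Y})|$ is a maximal product subcomplex $\overbar{M}\subset\overline{Y}$, and Theorem \ref{MaxcontainingFlat}(1) assigns to it the unique maximal product subcomplex $\phi_H(\overbar{M})\subset\overline{Y'}$ at Hausdorff distance $<C$ from $\phi(\overbar{M})$; set $\Phi(\overbar{M}):=\phi_H(\overbar{M})$. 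To see that $\Phi$ extends to a simplicial map, suppose $\overbar{M}_0,\dots,\overbar{M}_k$ span a $k$-simplex of $|I(\overline{Y})|$, i.e.\ their intersection contains a standard product subcomplex. Theorem \ref{MaxcontainingFlat}(2) then guarantees that $\bigcap_i\phi_H(\overbar{M}_i)$ contains a standard product subcomplex, so the images $\phi_H(\overbar{M}_0),\dots,\phi_H(\overbar{M}_k)$ span a $k$-simplex of $|I(\overline{Y'})|$. Thus $\Phi$ is simplicial.

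Next I would show that $\Phi$ is an isometry of the underlying complexes, i.e.\ a simplicial isomorphism. Choosing a quasi-inverse $\phi^{-1}\colon\overline{Y'}\to\overline{Y}$, which is again a quasi-isometry, and applying the construction above yields a simplicial map $\Psi\colon I(\overline{Y'})\to I(\overline{Y})$ with vertex map $\psi_H$. For any maximal product subcomplex $\overbar{M}\subset\overline{Y}$ the composite $\psi_H(\phi_H(\overbar{M}))$ lies within bounded Hausdorff distance of $\phi^{-1}\!\circ\phi(\overbar{M})$, hence of $\overbar{M}$ itself; the uniqueness clause of Theorem \ref{MaxcontainingFlat}(1) then forces $\psi_H(\phi_H(\overbar{M}))=\overbar{M}$, and symmetrically $\phi_H\circ\psi_H=\mathrm{id}$. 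Therefore $\Phi$ is a bijection on vertices whose inverse $\Psi$ is also simplicial, so $\Phi$ is a simplicial isomorphism, and in particular an isometry of the underlying complexes.

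Finally, and this is where the real work lies, I would verify that $\Phi$ preserves the relation among assigned groups that upgrades it to a semi-isomorphism. For a simplex $\triangle$, Lemma \ref{IntofStd} pins down the standard product subcomplex $\overbar{K}_\triangle$ at finite Hausdorff distance from the intersection of the maximal product subcomplexes spanning $\triangle$; combining this with Theorem \ref{MaxcontainingFlat} shows that $\overbar{K}_{\Phi(\triangle)}$ is at finite Hausdorff distance from $\phi(\overbar{K}_\triangle)$. The quasi-isometry $\phi$ coarsely conjugates the geometric action $G=\pi_1(Y)\curvearrowright\overline{Y}$ to $G'=\pi_1(Y')\curvearrowright\overline{Y'}$ via $g\mapsto\phi\circ g\circ\phi^{-1}$, and under this coarse conjugation the stabilizer $G_\triangle$ of $\overbar{K}_\triangle$ is carried to a subgroup coarsely stabilizing $\overbar{K}_{\Phi(\triangle)}$; I would then compare this with the honest stabilizer $G_{\Phi(\triangle)}$ and transfer the defining relation (such as the containment or commensurability pattern between assigned groups of incident simplices) across $\Phi$.

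The main obstacle is precisely this last step. Since $\phi$ is only a quasi-isometry it does not conjugate $G$ to $G'$ on the nose, so one must control how stabilizers of standard product subcomplexes behave under the coarse conjugation and match the coarse stabilizer of $\overbar{K}_{\Phi(\triangle)}$ to $G_{\Phi(\triangle)}$. The tools for this are the uniform Hausdorff bounds of Theorem \ref{MaxcontainingFlat} together with the canonical description of $\overbar{K}_\triangle$ from Lemma \ref{IntofStd}, which keep the whole assignment $\triangle\mapsto\Phi(\triangle)$ equivariant up to finite error and thereby let the relation among assigned groups pass through $\Phi$ unchanged.
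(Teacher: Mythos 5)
Your construction of the isometry $|\Phi|$ — the vertex map via Theorem \ref{MaxcontainingFlat}(1), simpliciality via part (2), and bijectivity by running the same construction on a quasi-inverse — is exactly the paper's route (Corollary \ref{MaxtoMax}, Theorem \ref{MaxcontainingFlat2}, Theorem \ref{IsobetInt}). The gap is in your final step, and it is a wrong mechanism rather than a missing detail. A quasi-isometry induces no map on group elements: $\phi\circ g\circ\phi^{-1}$ is merely a quasi-isometry of $\overline{Y'}$, not an element of $\pi_1(Y')$, and there is no canonical way to assign it one, so the ``coarse conjugation'' carrying $G_\triangle$ to something comparable with $G_{\Phi(\triangle)}$ is not available. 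It also aims at the wrong target: being a semi-isomorphism asserts no relation between $G_\triangle$ and $G_{\Phi(\triangle)}$ as groups — they need not be isomorphic, only of the same quasi-isometric type. What must be verified, per the definition in Section \ref{3.2}, is a purely combinatorial pattern condition: along every chain $(\triangle^n,\dots,\triangle^0)$, after suitable choices of coordinates, the equalities occurring in the assigned group sequence (with respect to pairwise free factor inclusion) must occur in exactly the same positions as in the image sequence.

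The paper checks this pattern condition geometrically, not group-theoretically. By Lemma \ref{InclusionBetweenSPSes}, for a codimension-1 face $\triangle_1$ of $\triangle$ one writes $\overbar{K}_\triangle=\overbar{P}_1\times\overbar{P}_2$ and $\overbar{K}_{\triangle_1}=\overbar{P}_3\times\overbar{P}_4$ with $\overbar{P}_1\subseteq\overbar{P}_3$ and $\overbar{P}_2\subseteq\overbar{P}_4$; a coordinate equality of assigned groups is then the geometric equality $\overbar{P}_1=\overbar{P}_3$ (versus strict containment), and the $\mathbb{Z}$-versus-$\mathbb{F}$ type of a coordinate is whether $\overbar{P}_i$ is a line or a standard infinite tree. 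The observation following Theorem \ref{IsobetInt} shows — by choosing suitable flats and applying Lemma \ref{SG} and Theorem \ref{FlattoFlat} — that $\phi_H$ preserves both which coordinates are lines versus trees and which coordinate inclusions are equalities versus strict; this is precisely what makes $|\Phi|$ a semi-isomorphism. That geometric detection, with equality and strictness read off from configurations of flats and parallel sets that survive quasi-isometries up to bounded Hausdorff distance, is the ingredient your last step needs and that the stabilizer-transfer argument cannot supply.
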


\begin{prox}\label{1.5'}
There are two compact weakly special square complexes such that for their universal covers $\overline{Y}$ and $\overline{Y'}$, $|I(\overline{Y})|$ and $|I(\overline{Y'})|$ are isometric but $I(\overline{Y})$ and $I(\overline{Y'})$ are not semi-isomorphic, i.e. there is no semi-isomorphism between $I(\overline{Y})$ and $I(\overline{Y'})$. In particular, $\overline{Y}$ and $\overline{Y'}$ are not quasi-isometric.
\end{prox}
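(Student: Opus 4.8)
The plan is to exhibit the two complexes explicitly, compute both intersection complexes, and show that the underlying complexes coincide while a relational invariant of the assigned groups does not. A convenient pair is given by Salvetti complexes of two-dimensional RAAGs: take $Y=S(\Lambda)$ with $\Lambda$ the path $P_4$ on four vertices, and $Y'=S(\Lambda')$ with $\Lambda'$ the four-cycle $C_4$ together with one pendant edge. Both $\Lambda,\Lambda'$ are triangle-free, so $Y,Y'$ are compact weakly special square complexes, and each defining graph has exactly two maximal join subgraphs, meeting in a single smaller join.

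First I would identify the two intersection complexes via the natural amalgamated-product decompositions
\[
A(P_4)\cong (F_2\times\mathbb{Z}) *_{\mathbb{Z}^2} (\mathbb{Z}\times F_2),\qquad A(\Lambda')\cong (F_2\times F_2) *_{\mathbb{Z}\times F_2} (\mathbb{Z}\times F_3),
\]
in which the two vertex groups are the stabilizers of the two families of maximal product subcomplexes and the edge group is the stabilizer of the shared standard product subcomplex. By Lemma \ref{IntofStd} the maximal product subcomplexes of $\overline{Y}$ correspond to the vertex spaces (cosets of the vertex groups) and their pairwise intersections carrying standard product subcomplexes correspond to the edge spaces, so $|I(\overline{Y})|$ and $|I(\overline{Y'})|$ are precisely the Bass--Serre trees of these splittings; in both cases the maximal sharing number is $2$, so these complexes are $1$-dimensional. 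Since in each case both vertex groups contain the edge group with countably infinite index, both trees are the countably-infinite-regular tree, and hence $|I(\overline{Y})|$ and $|I(\overline{Y'})|$ are isometric.

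The heart of the argument is that no semi-isomorphism $I(\overline{Y})\to I(\overline{Y'})$ exists. A semi-isomorphism is an isometry of underlying complexes preserving the prescribed relation among assigned groups, so it must carry the assigned group of each edge of $|I(\overline{Y})|$ to that of the corresponding edge of $|I(\overline{Y'})|$ compatibly with this relation. For $Y=S(P_4)$ every edge of $|I(\overline{Y})|$ is assigned the stabilizer $\mathbb{Z}^2$ of a top-dimensional flat, whereas for $Y'$ every edge is assigned $\mathbb{Z}\times F_2$, the stabilizer of a standard product subcomplex that is a genuine product of a line and a branching tree. Equivalently, the shared standard product subcomplex indexed by a simplex is a flat in the first case and contains unboundedly many flats in the second, and correspondingly one edge group is virtually abelian while the other contains a nonabelian free subgroup. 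The two intersection complexes are therefore not semi-isomorphic, and by the contrapositive of Theorem \ref{QIimpliesIso1} the spaces $\overline{Y}$ and $\overline{Y'}$ are not quasi-isometric.

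The step I expect to be the main obstacle is this last one: verifying carefully from the definition of semi-isomorphism that the prescribed relation among assigned groups genuinely distinguishes the flat intersection pieces of $S(P_4)$ from the line-times-tree intersection pieces of $S(\Lambda')$. The subtlety is that free-group ranks are not themselves preserved---for instance $\mathbb{Z}\times F_2$ and $\mathbb{Z}\times F_3$ are quasi-isometric---so the argument cannot rest on comparing ranks of individual assigned groups. Instead I would isolate the coarser invariant ``the shared standard product subcomplex indexed by a given simplex is a flat,'' equivalently ``its assigned group is virtually abelian,'' and check that this is read off from the assigned-group relation and hence preserved by every semi-isomorphism. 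Granting this, the isometry of the two underlying regular trees and the computation of the assigned groups are routine bookkeeping.
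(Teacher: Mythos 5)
Your proposal is correct, but it reaches the statement by a genuinely different route than the paper does. The paper's own proof (Proposition \ref{1.5}) starts from an arbitrary graph $\Lambda$ with $Out(A(\Lambda))$ finite and takes $\Lambda'$ to be $\Lambda$ with a ``doubled'' vertex $v'$ satisfying $lk(v')=lk_\Lambda(v)$; the isometry $|I(X(\Lambda))|\cong|I(X(\Lambda'))|$ is obtained by observing that the inductive construction of the intersection complex from the reduced one (Remark \ref{ConstructionOfX}) is identical in the two cases, and the failure of semi-isomorphism is a \emph{local} phenomenon: the vertex of $RI(S(\Lambda'))$ labelled $\{v,v'\}\circ lk(v)$ has assigned group of type $\mathbb{F}\times\mathbb{F}$ yet all incident edge groups of type $\mathbb{Z}\times\mathbb{F}$, while in $RI(S(\Lambda))$ every $\mathbb{F}\times\mathbb{F}$ vertex meets a $\mathbb{Z}\times\mathbb{Z}$ edge. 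Your pair $S(P_4)$ and $S(C_4\cup\mathrm{pendant\ edge})$ instead makes both reduced intersection complexes a single edge, identifies both intersection complexes with Bass--Serre trees of the displayed amalgams via Lemma \ref{Decomposition} and Theorem \ref{GOG} (exactly as the paper itself does for $P_4$ at the start of Section \ref{5.1}; this, rather than Lemma \ref{IntofStd}, is the citation you want), gets the isometry of underlying complexes for free since both trees are countably-infinite-regular, and then distinguishes the complexes of groups \emph{globally}: every edge group is $\mathbb{Z}\times\mathbb{Z}$ in one and $\mathbb{Z}\times\mathbb{F}$ in the other (equivalently, one has $\mathbb{F}\times\mathbb{F}$ vertices and the other has none). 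Your example is more concrete and its verification more elementary; the paper's construction buys generality (a family of examples) and exhibits a subtler failure, in which the decorations differ only near one orbit of vertices.

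On the step you flag as the main obstacle: both your argument and the paper's rest on the same fact, namely that a semi-isomorphism preserves the quasi-isometric type (in particular, virtual abelianness) of assigned groups. This is precisely the note recorded after the definition of semi-isomorphism in Section \ref{3.2}, and the paper's proof of Proposition \ref{1.5} invokes it in the same way, so your proof is on equal footing with the paper's here. Your caution is well placed, though: the chain-equality clause in the definition of semi-morphism would not by itself separate your two trees, since in both of them every edge--vertex chain shows the same pattern (one coordinate equal, one properly included); it is the quasi-isometric-type preservation, not the equality pattern, that does the work, and combined with Theorem \ref{QIimpliesIso1} it completes the proof as you describe.
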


Theorem \ref{QIimpliesIso1} is our starting point to know whether there exist planar graph 2-braid groups which are quasi-isometric to 2-dimensional RAAGs. 
After investigating topological properties of $I(\overline{D_2(\Gamma)})$ for planar simplicial graphs $\Gamma$, we find an infinite collection $\mathcal{G}$ of planar simplicial graphs such that if $\Gamma\in\mathcal{G}$, then every component of $I(\overline{D_2(\Gamma)})$ is semi-isomorphic to $I(X(T))$ where $T$ is a tree of diameter $\geq 3$. 
By algebraic properties of planar graph 2-braid groups and the theory of complexes of groups, we show that this semi-isomorphism induces a quasi-isometry between $PB_2(\Gamma)$ (or $B_2(\Gamma)$) and $A(T)*\mathbb{Z}$ where $\mathbb{Z}$ is an infinite cyclic group.
We also find an infinite collection $\mathcal{G}'$ of planar simplicial graphs such that if $\Gamma'\in\mathcal{G}'$, then there is no $I(X(\Lambda))$ which is semi-isomorphic to any component of $I(\overline{D_2(\Gamma')})$. 
This is the sketch of the proof of Theorem \ref{MainThm}.
\vspace{1mm}

Theorem \ref{QIimpliesIso1} covers not only the result about graph 2-braid groups but also the results about 2-dimensional RAAGs in Remark \ref{123}. See Section \ref{5.1} for the case of 2-dimensional RAAGs. A result in \cite{BN} can also be covered using the concept of intersection complexes.

\begin{reptheorem}{Tree}[Theorem 5.3 in \cite{BN}]
Let $T$ be a tree of diameter $\geq 3$. If a RAAG $A(\Lambda)$ is quasi-isometric to $A(T)$, then $\Lambda$ is a tree of diameter $\geq 3$.
\end{reptheorem} 

This perspective can also give a new insight on quasi-isometric rigidity of 2-dimensional RAAGs. 
Suppose that a triangle-free simplicial graph $\Lambda$ is the union of subgraphs $\Lambda_i$ such that
\begin{enumerate}
\item for each $i$, the outer autormophism group of $A(\Lambda_i)$ is finite, and 
\item the intersection of any two of these subgraphs is either a vertex or an empty set, and the intersection of any three is empty.
\end{enumerate}
In this case, let us call such a subgraph $\Lambda_i$ a \textit{block}.
We show that the collection of isometry classes of blocks is a quasi-isometry invariant. 

\begin{thmx}\label{1.7'}
Let $\Lambda$ ($\Lambda'$, resp.) be a simplicial graph which is the union of blocks $\Lambda_i$ ($\Lambda'_j$, resp.). 
Suppose that $\Lambda$ and $\Lambda'$ have no finite sequence of blocks such that the first and last ones are the same and the intersection of two consecutive ones is a vertex.
Let $\mathcal{I}$ ($\mathcal{I}'$, resp.) be the collection of isometry classes of $\Lambda_i$'s ($\Lambda'_j$'s, resp.). If $\mathcal{I}$ and $\mathcal{I}'$ are different, then $A(\Lambda)$ and $A(\Lambda')$ are not quasi-isometric. 
\end{thmx}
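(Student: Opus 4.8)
The plan is to argue by contraposition: assuming $A(\Lambda)$ and $A(\Lambda')$ are quasi-isometric, I will recover $\mathcal{I}$ and $\mathcal{I}'$ from a single combinatorial object and conclude $\mathcal{I}=\mathcal{I}'$. Since $A(\Lambda)$ and $A(\Lambda')$ act geometrically on $X(\Lambda)$ and $X(\Lambda')$, the Milnor--\v Svarc lemma turns a quasi-isometry of the groups into a quasi-isometry $\phi\colon X(\Lambda)\to X(\Lambda')$, and Theorem \ref{QIimpliesIso1} then produces a semi-isomorphism $\Phi\colon I(X(\Lambda))\to I(X(\Lambda'))$. The entire argument is about extracting $\mathcal{I}$ from $I(X(\Lambda))$ in a way that depends only on the semi-isomorphism type.

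First I would analyze the product regions of a triangle-free $\Lambda$. Because $\Lambda$ is triangle-free, every standard product subcomplex arises from a complete bipartite subgraph (a join with both sides edgeless), and maximal product subcomplexes correspond to maximal such joins. The decisive local observation is that no join can cross a gluing vertex: if a complete bipartite subgraph contained a vertex of $\Lambda_i\setminus\Lambda_j$ and a vertex of $\Lambda_j\setminus\Lambda_i$, then completeness would force an edge between the two blocks, yet two blocks meet only in the single vertex $\Lambda_i\cap\Lambda_j$. Hence every join, and therefore every standard and every maximal product subcomplex, lies in (a lift of) a single block. Consequently two maximal product subcomplexes can share a standard product subcomplex only if they sit in the same block, so by Lemma \ref{IntofStd} every simplex of $I(X(\Lambda))$ stays inside one block.

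Next I would identify the connected components of $I(X(\Lambda))$ with the lifts of the blocks. The no-cycle hypothesis guarantees that $A(\Lambda)$ is the fundamental group of a \emph{tree} of groups whose vertex groups are the blocks $A(\Lambda_i)$ and whose edge groups are the infinite cyclic groups generated by the gluing vertices; hence $X(\Lambda)$ is a tree of spaces in which each lift of a block $\Lambda_i$ is stabilized by a conjugate of $A(\Lambda_i)$. Combined with the previous step, this shows that the maximal product subcomplexes inside a fixed lift of $\Lambda_i$ span a subcomplex of $I(X(\Lambda))$ that is semi-isomorphic to $I(X(\Lambda_i))$, with matching assigned groups, and that no simplex joins two distinct lifts. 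Using finiteness of $\mathrm{Out}(A(\Lambda_i))$ to guarantee that $I(X(\Lambda_i))$ is connected, I would conclude that the components of $I(X(\Lambda))$ are precisely these lifts, so that the set of semi-isomorphism types of components of $I(X(\Lambda))$ equals $\{\,I(X(\Lambda_i)) : [\Lambda_i]\in\mathcal{I}\,\}$, and likewise on the $\Lambda'$ side.

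Since $\Phi$ carries components to components preserving the semi-isomorphism type, these two sets of component types coincide, and it remains to show that the semi-isomorphism type of the component $I(X(\Lambda_i))$ determines the isometry class $[\Lambda_i]$. This is the step where the finite-$\mathrm{Out}$ hypothesis is essential and where I expect the main obstacle to lie: a semi-isomorphism is in general strictly weaker than a quasi-isometry (Proposition \ref{1.5'}), so matched intersection complexes need not a priori come from quasi-isometric spaces. I would overcome this by promoting a semi-isomorphism $I(X(\Lambda_i))\to I(X(\Lambda'_j))$ of blocks back to a quasi-isometry $X(\Lambda_i)\to X(\Lambda'_j)$: the assigned groups recover the product subgroups $F_m\times F_n$ and their intersections, and because $\Lambda_i$ has finite outer automorphism group these product regions are rigid enough that the complex of groups built from $I(X(\Lambda_i))$ reconstructs $A(\Lambda_i)$ up to quasi-isometry. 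Invoking Huang's rigidity for finite-$\mathrm{Out}$ right-angled Artin groups (Remark \ref{123}(2)) then yields $A(\Lambda_i)\cong A(\Lambda'_j)$, which for $2$-dimensional right-angled Artin groups is equivalent to $\Lambda_i$ being isometric to $\Lambda'_j$. Therefore $\mathcal{I}=\mathcal{I}'$, contradicting the assumption and completing the contrapositive.
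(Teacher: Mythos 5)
Your key combinatorial claim --- that no join of $\Lambda$ can cross a gluing vertex --- is false, and the component-by-component structure of your argument collapses with it. If $v=\Lambda_i\cap\Lambda_j$, the star $st_\Lambda(v)=\{v\}\circ lk_\Lambda(v)$ is itself a join subgraph whose second side $lk_\Lambda(v)$ contains vertices of $\Lambda_i\setminus\Lambda_j$ and of $\Lambda_j\setminus\Lambda_i$; completeness of a bipartite join only forces edges \emph{between} the two sides, so no edge between the blocks is required when the two cross-block vertices lie on the same side. Hence the maximal product subcomplex containing the standard geodesics labelled by $v$ genuinely straddles both blocks, and simplices of $I(X(\Lambda))$ do connect maximal product subcomplexes arising from different blocks. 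In fact $|I(X(\Lambda))|$ is connected for every connected $\Lambda$ (this is proved in Section \ref{4.1}), so your identification of the components of $I(X(\Lambda))$ with lifts of blocks cannot be correct. The paper's proof is built precisely around this point: it replaces $\Lambda_i$ by $L_i$, the union of the stars of the vertices of $\Lambda_i$, shows via Proposition \ref{ValencyOne} that $|I(X(L_i))|$ is a union of copies of $|I(X(\Lambda_i))|$ meeting in vertices, and then uses Lemma \ref{SeparatingVertex} to write the connected complex $I(X(\Lambda))$ as a union of copies of the $I(X(L_i))$ glued along \emph{separating} vertices --- the vertices corresponding to stars of gluing vertices. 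The induced semi-isomorphism $\Phi$ matches these pieces because it preserves separating vertices, not because it preserves components.

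The second gap is your final step. Promoting a semi-isomorphism $I(X(\Lambda_i))\to I(X(\Lambda'_j))$ to a quasi-isometry $X(\Lambda_i)\to X(\Lambda'_j)$ is exactly the kind of converse the paper never establishes (compare the open Question in the introduction and Proposition \ref{1.5'}); asserting that the product regions are ``rigid enough'' is not a proof. The paper does not need such a converse: by Theorem \ref{FiniteOut}, a semi-isomorphism of intersection complexes already determines the defining graphs, because it preserves the quasi-isometric types of assigned groups and hence restricts to an isometry between the subcomplexes spanned by the vertices of type $\mathbb{Z}\times\mathbb{F}$, which are exactly Huang's extension complexes; Theorem 1.1 of \cite{Hua(a)} (isometric extension complexes imply isometric graphs, under the finite $\mathrm{Out}$ hypothesis) then yields that $\Lambda_i$ is isometric to $\Lambda'_j$ directly, with no detour through a quasi-isometry of universal covers or through Huang's quasi-isometric rigidity theorem.
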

\end{background}

\begin{RIC}
As $|I(\overline{Y})|$ is obtained from $\overline{Y}$, a (possibly disconnected) complex of simplices $|RI(Y)|$ is similarly obtained from $Y$. First, the vertex set of $|RI(Y)|$ is the collection of maximal product subcomplexes of $Y$. 
Suppose that the intersection $W$ of $(k+1)$ maximal product subcomplexes of $Y$ contains a finite collection of standard product subcomplexes $K_i$ such that each $K_i$ is maximal under the set inclusion among standard product subcomplexes contained in $W$. 
Then the vertices in $|RI(Y)|$ corresponding to these $(k+1)$ maximal product subcomplexes span exactly $m$ $k$-simplices in $|RI(Y)|$ each of which corresponds to one of $K_i$'s.  
For the standard product subcomplex $K_{\triangle'}\subset Y$ corresponding to each simplex $\triangle'\subset|RI(Y)|$, there exists a simplex $\triangle\subset |I(\overline{Y})|$ which corresponds to a p-lift of $K_{\triangle'}$ in $\overline{Y}$.
Then the canonical action $G=\pi_1(Y)\curvearrowright\overline{Y}$ induces the action $G\curvearrowright|I(\overline{Y})|$ by isometries (Theorem \ref{TPBCM}).
To each simplex $\triangle'\subset |RI(Y)|$, we assign the conjugacy class of $G_\triangle$ and the complex $|RI(Y)|$ with these assigned groups is called a \textit{reduced intersection complex} of $Y$, denoted by $RI(Y)$.

Both $RI(Y)$ and $I(\overline{Y})$ are defined as complexes of groups, the assigned group of any simplex of which is isomorphic to the direct product of two finitely generated free groups (especially called a \textit{join group}). 
To emphasize this fact, we define a class of complexes of groups, called complexes of join groups. A \textit{complex of join groups} is a complex of groups whose cells are simplices and the assigned groups are join groups with specific properties. 
A \textit{semi-morphism} between complexes of join groups is a map such that (1) it becomes a combinatorial map if we ignore the assigned groups and (2) it preserves a certain inclusion structure of assigned groups. 
The precise definitions and properties related to complexes of join groups and semi-(iso)morphisms are established in Section \ref{3.2}.

\begin{thmx}\label{TPBCM3}
The action $\pi_1(Y)\curvearrowright I(\overline{Y})$ induced from the action $\pi_1(Y)\curvearrowright \overline{Y}$ is by semi-isomorphisms and the quotient map $\rho_Y:I(\overline{Y})\rightarrow RI(Y)$ is a semi-morphism.
\end{thmx}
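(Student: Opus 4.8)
The plan is to prove the two assertions separately, taking as input from Theorem~\ref{TPBCM} that the underlying combinatorial isometries $G\curvearrowright|I(\overline{Y})|$ already exist, and reducing every statement about assigned groups to the behaviour of stabilizers under the isometric action $G\curvearrowright\overline{Y}$. For the first assertion, fix $g\in G$. By Theorem~\ref{TPBCM}, $g$ induces a combinatorial isometry of $|I(\overline{Y})|$: it sends a vertex, i.e. a maximal product subcomplex $\overbar{M}\subset\overline{Y}$, to $g\overbar{M}$, which is again a maximal product subcomplex since $g$ is a cube-structure-preserving isometry of $\overline{Y}$. If $\overbar{M}_0,\dots,\overbar{M}_k$ span a simplex $\triangle$ because they share the standard product subcomplex $\overbar{K}_\triangle$, then $g\overbar{M}_0,\dots,g\overbar{M}_k$ share $g\overbar{K}_\triangle$, so $\triangle$ is carried to the simplex $g\triangle$ whose associated standard product subcomplex is $\overbar{K}_{g\triangle}=g\overbar{K}_\triangle$ (uniqueness by Lemma~\ref{IntofStd}). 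Since the assigned group is a stabilizer, $G_{g\triangle}=\mathrm{Stab}_G(g\overbar{K}_\triangle)=gG_\triangle g^{-1}$, so $g$ carries $G_\triangle$ onto $G_{g\triangle}$ by the conjugation isomorphism $h\mapsto ghg^{-1}$.

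I would then verify that this conjugation respects the extra structure demanded of a semi-isomorphism. Writing $\overbar{K}_\triangle\cong T_1\times T_2$ as a product of two infinite trees, the join group $G_\triangle$ is recorded together with the pair of free factors arising from the two factors of the product; because $g$ is a combinatorial isometry it maps the product decomposition of $\overbar{K}_\triangle$ to that of $g\overbar{K}_\triangle$, so conjugation by $g$ sends the pair of free factors of $G_\triangle$ to the pair of free factors of $G_{g\triangle}$. For a face relation $\tau\subset\triangle$ the complex-of-join-groups structure on $I(\overline{Y})$ comes with an inclusion homomorphism between $G_\triangle$ and $G_\tau$; applying $g$ conjugates both groups, and since $h\mapsto ghg^{-1}$ is functorial with respect to subgroup inclusions, the conjugated homomorphism is again the corresponding inclusion attached to $g\tau\subset g\triangle$. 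This shows that each $g$ acts as a semi-isomorphism.

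For the second assertion I would first identify $\rho_Y$ on underlying complexes with the quotient by $G$. Each maximal product subcomplex $\overbar{M}$ of $\overline{Y}$ projects under $p_Y$ to a maximal product subcomplex of $Y$, and two of them project to the same one exactly when they lie in a single $G$-orbit; this gives the map on vertices. A simplex $\triangle$ with standard product subcomplex $\overbar{K}_\triangle$ is sent to the simplex $\triangle'$ of $|RI(Y)|$ determined by $K=p_Y(\overbar{K}_\triangle)$, one of the standard product subcomplexes $K_i$ that are maximal in the intersection $W$ of the projected maximal product subcomplexes. This is well defined because $G$-translates of $\triangle$ carry $G$-translated $\overbar{K}_\triangle$'s and hence share the projection $K$; it is combinatorial; and the possibility that $W$ contains several maximal $K_i$, matched by several $G$-orbits of p-lift simplices, is precisely why $|RI(Y)|$ is a complex of simplices rather than a simplicial complex. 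Thus $\rho_Y$ realizes a bijection between the simplices of $RI(Y)$ and the $G$-orbits of simplices of $I(\overline{Y})$. For the assigned-group condition, the assigned group of $\triangle'$ is by definition the conjugacy class of $G_\triangle$ for any p-lift simplex $\triangle$ over $\triangle'$; this is well posed because distinct p-lifts of $K$ differ by elements of $G$ and so have $G$-conjugate stabilizers, which is exactly the computation $G_{g\triangle}=gG_\triangle g^{-1}$ from the first part. A face relation $\tau'\subset\triangle'$ downstairs lifts to a face relation $\tau\subset\triangle$ upstairs, where the inclusion homomorphism of join groups is preserved by the $G$-action and descends to the corresponding relation among conjugacy classes; hence $\rho_Y$ is a semi-morphism.

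The main obstacle I expect is the bookkeeping in the second assertion: making rigorous the correspondence between the maximal standard product subcomplexes $K_i$ inside a downstairs intersection $W$ and the $G$-orbits of p-lift simplices upstairs, and confirming that the projection $p_Y(\overbar{K}_\triangle)$ of a p-lift is itself maximal in $W$. This maximality is what guarantees that $\rho_Y$ lands on genuine simplices of $|RI(Y)|$ and that no simplex of $RI(Y)$ is omitted, and it is the step where the non-simplicial nature of $|RI(Y)|$ must be handled carefully. The first assertion, by contrast, is essentially the observation that stabilizers transform by conjugation, together with the compatibility of conjugation with the product (join) structure, which is automatic once $g$ is known to be a combinatorial isometry.
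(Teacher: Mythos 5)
Your proposal is correct and is essentially the paper's own argument: the paper likewise takes Theorem \ref{TPBCM} as the source of the underlying combinatorial maps and then observes that both the action and $\rho_Y$ preserve the labels of simplices, which is the same transport of structure you express via the conjugation $G_{g\triangle}=gG_{\triangle}g^{-1}$ of stabilizers (the assigned group of a simplex being the stabilizer of $\overbar{K}_\triangle$, whose projection $p_Y(\overbar{K}_\triangle)$ is the label). The bookkeeping you flag as the main obstacle --- that $p_Y(\overbar{K}_\triangle)$ determines a genuine simplex of $RI(Y)$ and that simplices of $RI(Y)$ biject with $\pi_1(Y)$-orbits of simplices of $I(\overline{Y})$ --- is already the content of Theorem \ref{TPBCM}, which the statement presupposes in speaking of ``the quotient map,'' so it can simply be cited rather than re-derived.
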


Based on Theorem \ref{TPBCM3} and algebraic properties of $\pi_1(Y)$, $I(\overline{Y})$ can be constructed from $RI(Y)$.
Let us see the construction case by case when $Y$ is either a 2-dimensional Salvetti complex $S(\Lambda)$ or $D_2(\Gamma)$ for a special kind of graph $\Gamma$.

\vspace{0.5mm}

In the case of 2-dimensional RAAGs $A(\Lambda)$, every standard product subcomplex of $S(\Lambda)$ is a standard subcomplex whose defining graph has a join decomposition. 
Since $S(\Lambda)$ has a unique vertex, the assigned group of each simplex $\triangle'\subset RI(S(\Lambda))$ is the canonical subgroup of $A(\Lambda)$ generated by the defining graph of the standard product subcomplex $K_{\triangle'}$ corresponding to $\triangle'$. This fact induces that $I(X(\Lambda))$ can be constructed from the copies of $RI(S(\Lambda))$ (Remark \ref{ConstructionOfX}).

By the construction, $I(X(\Lambda))$ can be considered as a realization of $A(\Lambda)$ endowed with the metric induced from the join length. Let $\mathcal{J}(\Lambda)$ be the collection of all the subgraphs of $\Lambda$ which have join decompositions. 
The \textit{join length} of $g\in A(\Lambda)$ is the minimum $l$ such that $g=g_1\cdots g_l$ where $g_i$ is in the subgroup $A(\Lambda')\leq A(\Lambda)$ for some $\Lambda'\in\mathcal{J}(\Lambda)$.
We then obtain a similar result in \cite{Ha11}, \cite{KK14}.

\begin{thmx}\label{Quasi-tree2}
Let $\Lambda$ be a triangle-free graph. Then $I(X(\Lambda))$ is a quasi-tree. In particular, $A(\Lambda)$ is weakly hyperbolic relative to $\{A(\Lambda')\ |\ \Lambda'\in \mathcal{J}(\Lambda)\}$.  
\end{thmx}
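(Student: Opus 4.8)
The plan is to reduce the statement to a bottleneck argument for the join-length metric and then read off weak relative hyperbolicity essentially for free. By the discussion preceding the theorem (Remark \ref{ConstructionOfX} and the ensuing description of $I(X(\Lambda))$ as a realization of $A(\Lambda)$ endowed with the join-length metric), the $1$-skeleton of $|I(X(\Lambda))|$ is quasi-isometric to the graph $\mathcal{G}$ on $A(\Lambda)$ in which $g$ and $h$ are adjacent whenever $g^{-1}h\in A(\Lambda')$ for some $\Lambda'\in\mathcal{J}(\Lambda)$; equivalently, to the Cayley graph of $A(\Lambda)$ coned off along the cosets of the join subgroups $\{A(\Lambda')\mid\Lambda'\in\mathcal{J}(\Lambda)\}$. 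So it suffices to show that $(A(\Lambda),d_{\mathcal{J}})$ is a quasi-tree, where $d_{\mathcal{J}}$ denotes the join-length metric. The tool I would use is Manning's bottleneck criterion: it is enough to produce a constant $\Delta$ such that, for every pair of points, a midpoint of some geodesic lies within $\Delta$ of every path joining the two points.

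Since $A(\Lambda)$ acts on $(A(\Lambda),d_{\mathcal{J}})$ by left multiplication, isometrically and vertex-transitively, I can normalize one endpoint to be the identity and consider the pair $1,g$. Writing a join-geodesic $g=s_1\cdots s_l$ with $s_i\in A(\Lambda_i')$, $\Lambda_i'\in\mathcal{J}(\Lambda)$ and $l=d_{\mathcal{J}}(1,g)$, the partial products $g_i=s_1\cdots s_i$ form a $d_{\mathcal{J}}$-geodesic, and I would take $m=g_{\lfloor l/2\rfloor}$ as the midpoint. The content of the bottleneck property is then to show that any join-length path $1=h_0,h_1,\dots,h_k=g$ has some $h_j$ with $d_{\mathcal{J}}(h_j,m)\le\Delta$ for a uniform $\Delta$.

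To control this I would pass to the standard (generator) normal form of elements of $A(\Lambda)$, viewing each element as a \emph{heap of pieces} over $\Lambda$; a join factorization refines a generator normal form, and the midpoint $m$ corresponds to a canonical horizontal cut of the heap of $g$. The essential input from triangle-freeness is that $A(\Lambda)$ is $2$-dimensional: every $\Lambda'\in\mathcal{J}(\Lambda)$ is complete bipartite, and two distinct maximal product subcomplexes that share a standard product subcomplex meet in a smaller standard product subcomplex, hence in a join subgroup (Lemma \ref{IntofStd}), which is collapsed to diameter $1$ in $d_{\mathcal{J}}$. Thus coning along $\mathcal{J}(\Lambda)$ collapses not only the maximal product regions but also all of their mutual overlaps to bounded diameter, and since there are no $3$-dimensional product regions to host fat flats, the maximal product regions can only overlap in this coned, tree-like pattern. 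Consequently a single join step $h_{j-1}\to h_j$ advances the heap across at most one such band, so to reach $g$ the path must pass through an element whose heap agrees with that of $m$ across the cut up to a bounded band, giving $d_{\mathcal{J}}(h_j,m)\le\Delta$. I expect this verification—making precise that no join-length path can jump over the cut without coming boundedly close to $m$—to be the main obstacle, since join factorizations are highly non-unique and one must show that all of them fellow-travel near the midpoint; triangle-freeness is exactly what rules out the fat product regions (the $\mathbb{Z}^3$ subgroups coming from triangles) that would otherwise break the bottleneck estimate.

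Finally, the "in particular" clause is immediate once $(A(\Lambda),d_{\mathcal{J}})$ is known to be a quasi-tree: quasi-trees are $\delta$-hyperbolic, and $(A(\Lambda),d_{\mathcal{J}})$ is quasi-isometric to the coned-off Cayley graph of $A(\Lambda)$ along the cosets of $\{A(\Lambda')\mid\Lambda'\in\mathcal{J}(\Lambda)\}$. Hyperbolicity of this coned-off graph is precisely the assertion that $A(\Lambda)$ is weakly hyperbolic relative to $\{A(\Lambda')\mid\Lambda'\in\mathcal{J}(\Lambda)\}$, in the same spirit as the results of \cite{Ha11} and \cite{KK14}.
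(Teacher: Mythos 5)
Your reduction is fine as far as it goes: identifying $|I(X(\Lambda))|$ up to quasi-isometry with $(A(\Lambda),d_J)$, i.e.\ with the Cayley graph coned off along the cosets of $\{A(\Lambda')\mid \Lambda'\in\mathcal{J}(\Lambda)\}$, is exactly Theorem \ref{Loop} of the paper, and the final observation that hyperbolicity of this coned-off graph is the definition of weak relative hyperbolicity is also correct. The genuine gap is the middle step: you never actually verify Manning's bottleneck criterion. The assertions that a join factorization induces a ``canonical horizontal cut of the heap,'' that ``a single join step advances the heap across at most one band,'' and hence that every join-length path from $1$ to $g$ must pass uniformly close to the midpoint, are precisely the statements that need proof, and you concede as much (``I expect this verification \dots to be the main obstacle''). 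Nothing in the sketch rules out a path of join moves circumventing the cut: adjacency in $d_J$ identifies entire cosets, which are unbounded in the word metric, and join factorizations are wildly non-unique, so the interaction of an arbitrary join move with your cut is not controlled by any estimate you give. What you are attempting is, in effect, a from-scratch proof of the Kim--Koberda theorem that the coned-off space (equivalently, the extension graph) is a quasi-tree, which is a substantive theorem and cannot be dispatched by the heap picture alone. The appeal to triangle-freeness (``no $\mathbb{Z}^3$ from triangles'') is also beside the point here, since triangle-freeness is a standing hypothesis making $S(\Lambda)$ a square complex, not an ingredient that produces a bottleneck constant.

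The paper closes this gap without any bottleneck argument, by a two-line metric comparison: every maximal join subgraph $\Lambda'=\Lambda_1\circ\Lambda_2$ is contained in $st(v)\cup st(w)$ for some $v\in\Lambda_1$, $w\in\Lambda_2$, and every star $st(v)$ is contained in some maximal join subgraph; hence $\|g\|_J\le\|g\|_*\le 2\|g\|_J$, so $(A(\Lambda),d_J)$ is bi-Lipschitz equivalent to $(A(\Lambda),d_*)$, the star-length metric. One then quotes the result of \cite{KK14} that $(A(\Lambda),d_*)$ is quasi-isometric to the extension graph, which is a quasi-tree, and that $A(\Lambda)$ is weakly hyperbolic relative to the stars. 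If you want to salvage your route, the correct move is either to import that theorem (as the paper does via the Lipschitz equivalence $d_J\simeq d_*$) or to carry out the bottleneck estimate in full, which amounts to reproving it.
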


In the case of pure 2-braid groups over simplicial graphs $\Gamma$, every standard product subcomplex of $D_2(\Gamma)$ can be represented by the product of disjoint two subgraphs of $\Gamma$.
A \textit{cactus} is a graph in which any two cycles have at most one vertex in common; if any two cycles do not intersect and each cycle $a$ intersects the closure of the complement of $a$, viewed as topological spaces, at a single vertex, then the cactus will be said to be \textit{simplest}.
If $\Gamma$ is a simplest cactus, then every standard product subcomplex of $D_2(\Gamma)$ can be represented by the product of two disjoint subcollections of the collection of all cycles in $\Gamma$. Let us see an example.

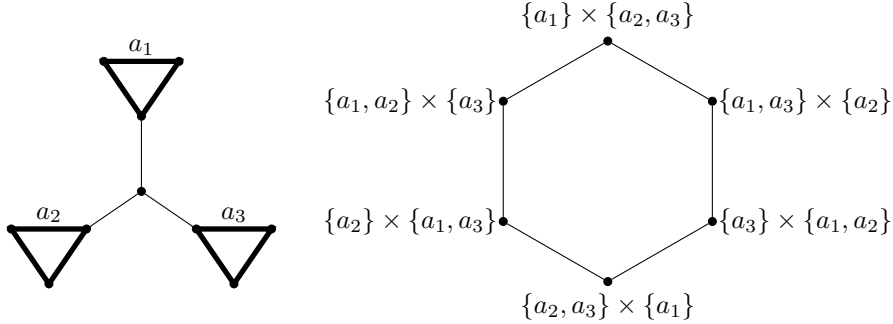
\begin{figure}[h]
\begin{tikzpicture}
\tikzstyle{every node}=[draw,circle,fill=black,minimum size=3pt,inner sep=0pt]
  \node[anchor=center] (x) at (0,0){};
  \node[anchor=center] (a1) at (0,1)[label={[shift={(0,0.65)}]:$a_1$}]{};
  \node[anchor=center] (a2) at (0.5,1.73){};  
  \node[anchor=center] (a3) at (-0.5,1.73){};  
  \node[anchor=center] (b1) at (0.73,-0.5){};
  \node[anchor=center] (b2) at (1.73,-0.5){};  
  \node[anchor=center] (b3) at (1.23,-1.23)[label={[shift={(0,0.65)}]:$a_3$}]{};  
  \node[anchor=center] (c1) at (-0.73,-0.5){};
  \node[anchor=center] (c2) at (-1.73,-0.5){};  
  \node[anchor=center] (c3) at (-1.23,-1.23)[label={[shift={(0,0.65)}]:$a_2$}]{};  
  \foreach \from/\to in {x/a1,a1/a2,a2/a3,a3/a1,x/b1,b1/b2,b2/b3,b3/b1,x/c1,c1/c2,c2/c3,c3/c1}
  \draw (\from) -- (\to);  
   \draw[line width=2pt] (a1.center) -- (a2.center) -- (a3.center) -- cycle;
   \draw[line width=2pt] (b1.center) -- (b2.center) -- (b3.center) -- cycle;
   \draw[line width=2pt] (c1.center) -- (c2.center) -- (c3.center) -- cycle;
  
  \draw (6.2,2) node (p1) [label={[label distance=-0.9cm]above:$\{a_1\}\times\{a_2,a_3\}$}]{};
  \draw (7.59,1.2) node (p2) [label={[label distance=0cm]right:$\{a_1,a_3\}\times\{a_2\}$}]{};
  \draw (7.59,-0.4) node (p3) [label={[label distance=0cm]right:$\{a_3\}\times\{a_1,a_2\}$}]{};
  \draw (6.2,-1.2) node (p4) [label={[label distance=-0.9cm]below:$\{a_2,a_3\}\times\{a_1\}$}]{};
  \draw (4.81,-0.4) node (p5) [label={[label distance=0cm]left:$\{a_2\}\times\{a_1,a_3\}$}]{};
  \draw (4.81,1.2) node (p6) [label={[label distance=0cm]left:$\{a_1,a_2\}\times\{a_3\}$}]{};
  \foreach \from/\to in {p1/p2,p2/p3,p3/p4,p4/p5,p5/p6,p6/p1}
  \draw (\from) -- (\to);
\end{tikzpicture}
\caption{$\mathcal{O}_{3}$ and $RI(D_2(\mathcal{O}_{3}))$.}
\label{Ex1}
\end{figure}

\begin{Ex}\label{T_3Ex}
The simplest cactus $\mathcal{O}_{3}$ in the left of Figure \ref{Ex1} has three cycles $a_1$, $a_2$, $a_3$. There are six maximal product subcomplexes of $D_2(\mathcal{O}_3)$ represented by  
\begin{center}
$\{a_2,a_3\}\times\{a_1\},\ \{a_2\}\times\{a_1,a_3\},\ \{a_1,a_2\}\times\{a_3\}$,\\
$\{a_1\}\times\{a_2,a_3\},\ \{a_1,a_3\}\times\{a_2\},\ \{a_3\}\times\{a_1,a_2\}$.
\end{center}
Let $\mathbf{u}_0$, $\mathbf{u}_1$ and $\mathbf{u}_2$ be the vertices in $RI(D_2(\mathcal{O}_{3}))$ which correspond to $\{a_2,a_3\}\times\{a_1\}$, $\{a_2\}\times\{a_1,a_3\}$ and $\{a_1,a_2\}\times\{a_3\}$, respectively, and let $\mathbf{u}_3$, $\mathbf{u}_4$ and $\mathbf{u}_5$ be the vertices which are obtained from $\mathbf{u}_0$, $\mathbf{u}_1$ and $\mathbf{u}_2$ by switching the coordinates, respectively. 
Then $RI(D_2(\mathcal{O}_{3}))$ is a hexagon with vertices $\mathbf{u}_0,\cdots,\mathbf{u}_5$ in this order. See the right in Figure \ref{Ex1}. 

Let $I_0$ be a component of $I(\overline{D_2(\mathcal{O}_3)})$, and let $\rho_0:I_0\rightarrow RI(D_2(\mathcal{O}_{3}))$ be the restriction of the quotient map $\rho_{D_2(\mathcal{O}_3)}:I(\overline{D_2(\mathcal{O}_3)})\rightarrow RI(D_2(\mathcal{O}_{3}))$ obtained in Theorem \ref{TPBCM3}.
Let $\mathbf{v}\in I_0$ be a vertex whose image under $\rho_0$ is $\mathbf{u}_4$.
Then $\mathbf{v}$ corresponds to a maximal product subcomplex $\overbar{M}_{\mathbf{v}}\subset\overline{D_2(\mathcal{O}_{3})}$ which is a p-lift of $\{a_1,a_3\}\times\{a_2\}$.
Along every standard flat in $\overbar{M}_{\mathbf{v}}$ which is a p-lift of $\{a_1\}\times \{a_2\}$, a maximal product subcomplex which is a p-lift of $\{a_1\}\times\{a_2,a_3\}$ is attached to $\overbar{M}_{\mathbf{v}}$. 
Along every standard flat in $\overbar{M}_{\mathbf{v}}$ which is a p-lift of $\{a_3\}\times \{a_2\}$, a maximal product subcomplex which is a p-lift of $\{a_3\}\times\{a_1,a_2\}$ is attached to $\overbar{M}_{\mathbf{v}}$. 
Thus, there are infinitely many edges of $I_0$ attached to $\mathbf{v}$ such that the other endpoint $\mathbf{v}'$ of each edge corresponds to a maximal product subcomplex which is a p-lift of either $\{a_1\}\times\{a_2,a_3\}$ or $\{a_3\}\times\{a_1,a_2\}$ as the left in Figure \ref{Ex1RI}, i.e. $\rho_0(\mathbf{v}')$ is either $\mathbf{u}_3$ or $\mathbf{u}_5$.

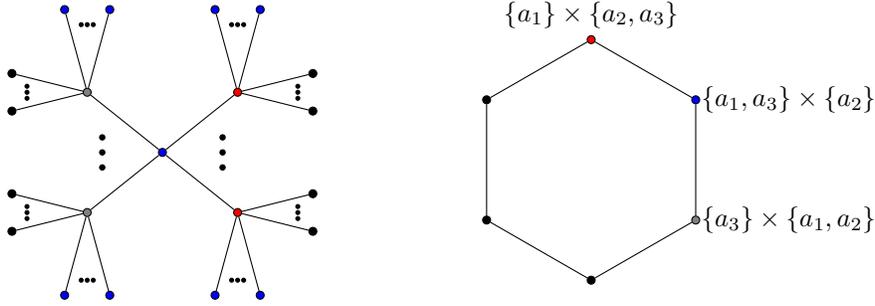
\begin{figure}[h]
\begin{tikzpicture}
\tikzstyle{every node}=[draw,circle,fill=black,minimum size=3pt,inner sep=0pt]
\tikzstyle{selected edge} = [draw,line width=5pt,-,red!50]
\tikzstyle{selected vertex} = [vertex, fill=red!24]
  \node[fill=blue] (x) at (0,0){};
  \node[fill=red] (a1) at (1,0.8){};
  \node[fill=red] (a2) at (1,-0.8){};  
  \node[minimum size=2pt] (a3) at (0.8,0.2){};
  \node[minimum size=2pt] (a4) at (0.8,0){};
  \node[minimum size=2pt] (a5) at (0.8,-0.2){};  
  \node[fill=gray] (b1) at (-1,0.8){};
  \node[fill=gray] (b2) at (-1,-0.8){};  
  \node[minimum size=2pt] (b3) at (-0.8,-0.2){};
  \node[minimum size=2pt] (b4) at (-0.8,0){};
  \node[minimum size=2pt] (b5) at (-0.8,0.2){};  
  \node[anchor=center] (c1) at (-2,-0.55){};
  \node[anchor=center] (c2) at (-2,-1.05){};
  \node[minimum size=1.5pt] (c8) at (-1.8,-0.72){};
  \node[minimum size=1.5pt] (c9) at (-1.8,-0.8){};  
  \node[minimum size=1.5pt] (c10) at (-1.8,-0.88){};
  \node[anchor=center] (c11) at (-2,0.55){};
  \node[anchor=center] (c12) at (-2,1.05){};
  \node[minimum size=1.5pt] (c13) at (-1.8,0.72){};
  \node[minimum size=1.5pt] (c14) at (-1.8,0.8){};  
  \node[minimum size=1.5pt] (c15) at (-1.8,0.88){};
  \node[fill=blue] (c3) at (-1.3,-1.9){};
  \node[fill=blue] (c4) at (-0.7,-1.9){};
  \node[minimum size=1.5pt] (c5) at (-1.08,-1.7){};
  \node[minimum size=1.5pt] (c6) at (-1,-1.7){};
  \node[minimum size=1.5pt] (c7) at (-0.92,-1.7){};
  \node[anchor=center] (d1) at (2,-0.55){};
  \node[anchor=center] (d2) at (2,-1.05){}; 
  \node[minimum size=1.5pt] (g1) at (1.8,-0.72){};
  \node[minimum size=1.5pt] (g2) at (1.8,-0.8){};  
  \node[minimum size=1.5pt] (g3) at (1.8,-0.88){};
  \node[fill=blue] (d3) at (1.3,-1.9){};
  \node[fill=blue] (d4) at (0.7,-1.9){};
  \node[minimum size=1.5pt] (d5) at (1.08,-1.7){};
  \node[minimum size=1.5pt] (d6) at (1,-1.7){};
  \node[minimum size=1.5pt] (d7) at (0.92,-1.7){};
  \node[anchor=center] (d8) at (2,0.55){};
  \node[anchor=center] (d9) at (2,1.05){};
  \node[minimum size=1.5pt] (d10) at (1.8,0.72){};
  \node[minimum size=1.5pt] (d11) at (1.8,0.8){};  
  \node[minimum size=1.5pt] (d12) at (1.8,0.88){};
  \node[fill=blue] (e1) at (1.3,1.9){};
  \node[fill=blue] (e2) at (0.7,1.9){};
  \node[minimum size=1.5pt] (e3) at (1.08,1.7){};
  \node[minimum size=1.5pt] (e4) at (1,1.7){};
  \node[minimum size=1.5pt] (e5) at (0.92,1.7){};
  \node[fill=blue] (f1) at (-1.3,1.9){};
  \node[fill=blue] (f2) at (-0.7,1.9){};
  \node[minimum size=1.5pt] (f3) at (-1.08,1.7){};
  \node[minimum size=1.5pt] (f4) at (-1,1.7){};
  \node[minimum size=1.5pt] (f5) at (-0.92,1.7){};
  \foreach \from/\to in {x/a1,x/a2,x/b1,x/b2,b2/c1,b2/c2,b2/c3,b2/c4,a2/d1,a2/d2,c11/b1,b1/c12,a1/d8,a1/d9,a1/e1,a1/e2,b1/f1,b1/f2,a2/d3,a2/d4}
  \draw (\from) -- (\to);
  
  \node[fill=red] (p1) at (5.7,1.5) [label={[label distance=-0.9cm]above:$\{a_1\}\times\{a_2,a_3\}$}]{};
  \node[fill=blue] (p2) at (7.09,0.7) [label={[label distance=0cm]right:$\{a_1,a_3\}\times\{a_2\}$}]{};
  \node[fill=gray] (p3) at (7.09,-0.9) [label={[label distance=0cm]right:$\{a_3\}\times\{a_1,a_2\}$}]{};
  \draw (5.7,-1.7) node (p4) {};
  \draw (4.31,-0.9) node (p5) {};
  \draw (4.31,0.7) node (p6) {};
  \foreach \from/\to in {p1/p2,p2/p3,p3/p4,p4/p5,p5/p6,p6/p1}
  \draw (\from) -- (\to);

\end{tikzpicture}
\caption{A component of $I(\overline{D_2(\mathcal{O}_3)})$ and $RI(D_2(\mathcal{O}_3))$}
\label{Ex1RI}
\end{figure}
\end{Ex}

In Example \ref{T_3Ex}, $RI(D_2(\mathcal{O}_3))$ can be considered as a graph of groups and $I_0$ is the Bass-Serre tree associated to $RI(D_2(\mathcal{O}_3))$.
In general, we can apply the theory of complexes of groups to $RI(D_2(\Gamma))$ in order to obtain the following fact.

\begin{thmx}\label{pi_1-injective2}
For a simplest cactus $\Gamma$, let $I_0(\overline{D_2(\Gamma)})$ be a component of $I(\overline{D_2(\Gamma)})$ and $SPB_2(\Gamma)$ the stabilizer of $I_0(\overline{D_2(\Gamma)})$ under the induced action $PB_2(\Gamma)\curvearrowright I(\overline{D_2(\Gamma)})$. Then $RI(D_2(\Gamma))$ can be considered as the developable complex of groups decomposition of $SPB_2(\Gamma)$ and $I_0(\overline{D_2(\Gamma)})$ is the development of $RI(D_2(\Gamma))$.   
\end{thmx}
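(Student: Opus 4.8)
The plan is to realize $RI(D_2(\Gamma))$ as the quotient complex of groups of an action of $SPB_2(\Gamma)$ on $I_0(\overline{D_2(\Gamma)})$, and then to invoke the developability theory for complexes of groups in Chapter III.$\mathcal{C}$ of \cite{BH}. Writing $G=PB_2(\Gamma)$ and $I_0=I_0(\overline{D_2(\Gamma)})$, I would start from the action $G\curvearrowright I(\overline{D_2(\Gamma)})$ by semi-isomorphisms and the quotient semi-morphism $\rho_{D_2(\Gamma)}\colon I(\overline{D_2(\Gamma)})\rightarrow RI(D_2(\Gamma))$ furnished by Theorem \ref{TPBCM3}. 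Restricting to the stabilizer $SPB_2(\Gamma)=\mathrm{Stab}_G(I_0)$ gives an action $SPB_2(\Gamma)\curvearrowright I_0$, and I would check that $G$ permutes the components of $I(\overline{D_2(\Gamma)})$ transitively (for $\Gamma$ connected this should be routine from the construction in Theorem \ref{TPBCM}), so that the restriction $\rho_0\colon I_0\rightarrow RI(D_2(\Gamma))$ is surjective and is precisely the quotient of $I_0$ by $SPB_2(\Gamma)$. I would then verify that the action is without inversions and that the $SPB_2(\Gamma)$-stabilizer of each simplex $\triangle\subset I_0$ is its assigned group $G_\triangle$; since $G_\triangle$ is by definition (via Lemma \ref{IntofStd}) the stabilizer of the standard product subcomplex $\overbar{K}_\triangle$, this is immediate from the way the $G$-action was produced. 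Together these identify the quotient complex of groups of $SPB_2(\Gamma)\curvearrowright I_0$ with $RI(D_2(\Gamma))$ as complexes of join groups.

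Granting this identification, developability is essentially automatic: a complex of groups arising as the quotient of a genuine group action is developable (III.$\mathcal{C}$ of \cite{BH}), and the canonical homomorphisms of the local (assigned) groups into $SPB_2(\Gamma)$ are injective because they are literally the inclusions of simplex stabilizers. What then remains is to show that $I_0$ is simply connected; once that is in hand, the uniqueness of the development forces the development of $RI(D_2(\Gamma))$ to be $I_0$ and its fundamental group to be $SPB_2(\Gamma)$, which is exactly the assertion.

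The hard part will therefore be the simple connectivity of $I_0$, and this is precisely where the hypothesis that $\Gamma$ is a special cactus enters. I would exploit the combinatorial model recorded just before Example \ref{T_3Ex}: for a special cactus every standard product subcomplex of $D_2(\Gamma)$ is represented by a product of two disjoint subcollections of the set $\mathcal{C}$ of cycles of $\Gamma$, and distinct cycles share no vertex. Using this I would describe the maximal product subcomplexes of $\overline{D_2(\Gamma)}$ and their overlaps purely in terms of bipartitions of subsets of $\mathcal{C}$, and argue that $I_0$ is assembled by attaching p-lifts of maximal product subcomplexes along standard flats in a tree-like pattern dictated by the incidence of cycles at cut vertices — exactly as in the $\mathcal{O}_3$ example, where $I_0$ is the Bass--Serre tree of the hexagon $RI(D_2(\mathcal{O}_3))$. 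The disjointness of cycles should rule out the configurations that would create nontrivial loops, so that $I_0$ retracts onto a tree and is simply connected. I expect verifying that this assembly produces no essential $1$-cycles, uniformly over all special cacti rather than just the motivating example, to be the main obstacle.
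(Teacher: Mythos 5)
Your framework is the right one and matches the paper's general machinery: the action of $PB_2(\Gamma)$ on $I(\overline{D_2(\Gamma)})$ by semi-isomorphisms with quotient semi-morphism $\rho_{D_2(\Gamma)}$ (Theorems \ref{TPBCM} and \ref{TPBCM2}), the identification of simplex stabilizers in $SPB_2(\Gamma)$ with the assigned groups, and the appeal to Chapter III.$\mathcal{C}$ of \cite{BH} together with uniqueness of the development are all sound (modulo a small wrinkle: for type-S cacti $RI(D_2(\Gamma))$ has two components, so $PB_2(\Gamma)$ is only transitive on the lifts of a fixed component, cf.\ Proposition \ref{Components}). The genuine gap is exactly the step you defer: simple connectivity of $|I_0(\overline{D_2(\Gamma)})|$ is never proved, and it is not a verification that can be postponed --- it is the entire content of the theorem. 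It cannot be "routine," because the analogous statement fails in the closely parallel RAAG setting: for $\Lambda$ an $n$-cycle, $n\geq 5$, $|RI(S(\Lambda))|$ is an $n$-cycle (Corollary \ref{Ncycle2}) and then $|I(X(\Lambda))|$ is not simply connected (Proposition \ref{ContractibleRI}), so any argument must use the cactus hypothesis in an essential way. Moreover, your heuristic is calibrated on $\mathcal{O}_3$, where $RI(D_2(\mathcal{O}_3))$ is one-dimensional and $I_0$ really is a Bass--Serre tree. For type-M cacti such as $\mathcal{O}'_4$ or $\mathcal{O}'_{4,n}$, maximal product subcomplexes of $\overline{D_2(\Gamma)}$ intersect along standard product subcomplexes of quasi-isometric types $(\mathbb{Z}\times\mathbb{F})$ and $(\mathbb{F}\times\mathbb{F})$, not just along flats; $|RI(D_2(\Gamma))|$ has simplices of dimension $\geq 2$ and is itself non-simply connected; and $|I_0|$ is a higher-dimensional complex, not a tree. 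A retraction of $|I_0|$ onto a tree would give contractibility, which is neither established by your sketch nor what the "tree-like assembly" picture supports, and "disjointness of cycles should rule out nontrivial loops" is a hope, not an argument.

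It is worth noting that the paper sidesteps this difficulty by running the logic in the opposite direction: simple connectivity of $|I_0|$ is an \emph{output} of developability, never an input. Concretely, Proposition \ref{Cactus1} works entirely downstairs with the compact quotient: collapsing each boundary cycle of $\Gamma$ to a vertex gives the spine $\mathcal{K}_\Gamma$, and a correspondence between maximal product subcomplexes of $D_2(\Gamma)$ and the `certain' product subcomplexes of $D_2(\mathcal{K}_\Gamma)$ (matched on intersections via Lemma \ref{IntofMax}) shows that $|RI(D_2(\Gamma))|$, $D_2(\mathcal{K}_\Gamma)$ and $\mathcal{R}^d(D_2(\Gamma))$ are homotopy equivalent --- a finite, checkable statement. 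Proposition \ref{CpxofGps} then applies: this homotopy equivalence identifies $\pi_1(RI(D_2(\Gamma)))$ with $\pi_1(\mathcal{R}(D_2(\Gamma)))=SPB_2(\Gamma)$, producing a morphism injective on local groups (Theorem \ref{developable}, i.e.\ Corollary 2.15 of \cite{BH}), hence developability; and the $\pi_1$-injectivity of $\mathcal{R}(D_2(\Gamma))\hookrightarrow D_2(\Gamma)$ (Remark \ref{SPB_2}) identifies $I_0(\overline{D_2(\Gamma)})$ with the development, whose simple connectivity is then Theorem \ref{development} (Theorem 3.13 and Corollary 3.15 of \cite{BH}). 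To complete your proposal you would have to prove simple connectivity of $|I_0|$ directly, which amounts to redoing the work of Proposition \ref{Cactus1} upstairs in a locally infinite complex; as written, the proposal sets up the correct reduction but leaves the theorem's core unproved.
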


The \textit{development} of a developable complex of groups is analogous to the Bass-Serre tree associated to a graph of groups. We refer to \cite{BH} or \cite{LM} for the theory of (developable) complexes of groups. How the theory is applied to a reduced intersection complex and how Theorem \ref{pi_1-injective2} is obtained are written in Section \ref{3.2} and Section \ref{4.2}, respectively.

\vspace{1mm}

Based on their properties, (reduced) intersection complexes can be used to study quasi-isometric rigidity of 2-dimensional RAAGs and planar graph 2-braid groups. In particular, we have the following question:

\begin{question}
For planar simplicial graphs $\Gamma_1$ and $\Gamma_2$, suppose that $|I(\overline{D_2(\Gamma_1)})|$ and $|I(\overline{D_2(\Gamma_2)})|$ are simply connected. 
Then, are $\overline{D_2(\Gamma_1)}$ and $\overline{D_2(\Gamma_2)}$ quasi-isometric when $I(\overline{D_2(\Gamma_1)})$ and $I(\overline{D_2(\Gamma_2)})$ are semi-isomorphic?
\end{question}
\end{RIC}

\begin{organization}
Section \ref{2} contains preliminary material, especially about weakly special square complexes.
Section \ref{3} consists of two subsections: the first one contains the definitions and topological properties of (reduced) intersection complexes, and the second one talks about complexes of join groups and semi-morphisms between them.
In Section \ref{4}, we see the construction of $RI(Y)$ from $Y$ and the construction of $I(\overline{Y})$ from $RI(Y)$ when $Y$ is either $S(\Lambda)$ for a triangle-free graph $\Lambda$ or $D_2(\Gamma)$ for a simplest cactus $\Gamma$.
Finally, in Section \ref{5}, we see quasi-isometric rigidity results about 2-dimensional RAAGs and planar graph 2-braid groups which are deduced from semi-morphisms between (reduced) intersection complexes.
\end{organization}

\begin{acknowledgements}
This paper which is part of the author's PhD thesis was started under the supervision of Kihyoung Ko and finished under the supervision of Hyungryul Baik. The author thanks Kihyoung Ko and Hyungryul Baik for helpful suggestions and stimulating discussions. The author thanks Sam Shephard for pointing out mistakes and discussing how to fix them. The author thanks Huang Jingyin, Hyowon Park, Philippe Tranchida for numerous useful conversations. The author also thanks the referee for carefully reading the paper and providing helpful comments.
\end{acknowledgements}

%
%
%
%
%
%

\section{Preliminaries}\label{2}

In this paper, we deal with two types of finite dimensional polyhedral complexes: (1) complexes of simplices and (2) complexes of cubes (or cube complexes). 
Unless otherwise stated, polyhedral complexes (or their subcomplexes) are assumed to be connected and by each cell, we mean its closure. 
Among 1-dimensional polyhedral complexes, `\textit{graphs}' denote finite ones and `\textit{trees}' denote (possibly infinite) simply connected ones; a \textit{star tree} is a tree of diameter 2. 
For a (closed) subset $S$ of a polyhedral complex $X$, the \textit{full subcomplex of $X$ spanned by $S$} is the union of cells of $X$ whose vertices are contained in $S$.
A \textit{combinatorial map} $f:X\rightarrow Y$ between two polyhedral complexes $X$, $Y$ of the same type is a (continuous) map which takes the interior of each cell of $X$ homeomorphically onto the interior of a cell of $Y$. If $f$ is injective (locally injective, resp.), then $f$ is said to be a \textit{combinatorial embedding (combinatorial immersion, resp.)}.
\textbf{Unless otherwise stated, any map between polyhedral complexes is assumed to be combinatorial.}

Let us introduce several graph-related terminologies which we will frequently use: `leaf', `standard (sub)graph', `standard infinite tree', `boundary cycle' and `(simplest) cactus'. A \textit{leaf} is a vertex of valency 1. 
A \textit{standard graph} denotes a non-simply connected graph without leaves and a \textit{cycle} is a standard graph whose fundamental group is isomorphic to an infinite cyclic group. If a standard graph has the fundamental group isomorphic to a non-abelian free group, then its universal cover is said to be a \textit{standard infinite tree} (we use this term to denote the universal cover of a standard subgraph which is not a cycle).
If a cycle in a graph is a full subgraph, then it is said to be an \textit{induced cycle}.
If a graph $\Gamma$ is planar, then $\Gamma$ can be considered as the image of an embedding $\Gamma\hookrightarrow \mathbb{R}^2$: For the closures $A_1, \cdots, A_r$ of connected components of $\mathbb{R}^2-\Gamma$, the boundary $\partial{A_i}$ of each $A_i$ is called a \textit{boundary cycle}. 
A \textit{cactus} is a planar graph in which the intersection of any two (boundary) cycles is an empty set or a vertex; if a cactus is obtained from a tree by attaching at most one cycle to each vertex of the tree, then the cactus will be said to be \textit{simplest}.

The definition of the link of a vertex in a complex of simplices is a little bit different from the definition of the link of a vertex in a cube complex. 
\begin{enumerate}
\item For a complex of simplices $X$, the link of a vertex $v\in X$, denoted by $lk_{X}(v)$, is the full subcomplex spanned by all the vertices adjacent to $v$, and the star of $v$, denoted by $st_{X}(v)$, is the full subcomplex spanned by $v$ itself and vertices in $lk_X(v)$ (we will often omit $X$ when the complex $X$ is clear). Note that $st_{X}(v)$ is always connected but $lk_{X}(v)$ is possibly disconnected.
\item For a cube complex $Y$, the link of a vertex $y\in Y$, denoted by $Lk(y,Y)$, is the $\epsilon$-sphere about $y$ which is considered as a complex of simplices; an $n$-simplex of $Lk(y,Y)$ corresponds to the corner of an $(n+1)$-cube of $Y$ containing $y$ and simplices of $Lk(y,Y)$ are attached along their faces according to the attachment of the corresponding corners of cubes.
\end{enumerate}

The terms `maximal' and `minimal' are used as usual: an object satisfying a property is said to be $\it{maximal}$ ($\it{minimal}$, resp.) if it is maximal (minimal, resp.) under the set inclusion among all the objects satisfying the given property.

Throughout this paper, for the universal cover $\overline{Y}$ of a cube complex $Y$, the universal covering map is denoted by $p_Y:\overline{Y}\rightarrow Y$.
A free group of rank $n$ is denoted by $\mathbb{F}_n$; especially, an infinite cyclic group ($n=1$) and a non-abelian finitely generated free group ($n>1$) will be denoted by $\mathbb{Z}$ and $\mathbb{F}$, respectively.

%
%
\subsection{CAT(0) Cube Complexes}\label{2.1}
Let $Y$ be a cube complex. The dimension of $Y$ is the largest dimension of cubes in $Y$; if the dimension is 2, then $Y$ is especially called a \textit{square complex}. 
A cube complex $Y$ is said to be non-positively curved (NPC) if, for each vertex $y\in Y$, its link $Lk(y,Y)$ is a flag complex. 
If $Y$ is simply connected, then $Y$ is said to be a CAT(0) \textit{cube complex}. We refer to \cite{HW08} and \cite{Wis12} for more about (CAT(0)) cube complexes.

If each $n$-cube is endowed with the standard metric of the unit cube $I^n$ in Euclidean $n$-space $\mathbb{E}^n$, then a cube complex $Y$ becomes a length space with the path metric $d$. 
Every CAT(0) cube complex endowed with this path metric is actually a CAT(0) space; Gromov showed that a finite dimensional cube complex is NPC if and only if it is locally CAT(0) with this path metric \cite{Grom} and Leary extended this result to the infinite dimensional case \cite{Lea}. 
As a consequence, an NPC cube complex with the path metric is a CAT(0) space if it is simply connected by the Cartan-Hadamard theorem.
We refer to \cite{BH} for more about CAT(0) geometry and the relation between the combinatorial and geometric definitions of (locally) CAT(0) cube complexes. 

Let $\phi:X\rightarrow Y$ be a combinatorial map between two cube complexes $X$ and $Y$.
Then $\phi$ is said to be a \textit{combinatorial local isometry} if, for each vertex $x\in X$, the combinatorial link map $Lk(x,X)\rightarrow Lk(\phi(x),Y)$ induced by $\phi$ is injective and its image is a full subcomplex of $Lk(\phi(x),Y)$. Additionally, if $\phi$ is a combinatorial embedding, then $\phi$ is said to be a \textit{combinatorial locally isometric embedding} and the image of $\phi$ is said to be \textit{locally convex} in $Y$. 
The following theorem tells that if $Y$ is NPC, in these combinatorial definitions, the term `combinatorial' can be dropped.

\begin{theorem}[\cite{CW}, \cite{HW08}]\label{CW}
Let $\phi:X\rightarrow Y$ be a combinatorial map between two finite dimensional cube complexes $X$ and $Y$. If $Y$ is NPC, then the map $\phi$ is a combinatorial local isometry if and only if $\phi$ is a (geodesical) local isometry. If $Y$ is NPC and $\phi$ is a combinatorial local isometry, then $X$ is also NPC.
\end{theorem}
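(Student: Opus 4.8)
The plan is to reduce both assertions to the combinatorial structure of links and then invoke Gromov's link condition. Throughout, each cube carries the standard Euclidean metric, so that each link $Lk(x,X)$ is an \emph{all-right spherical complex}: every simplex is a spherical simplex all of whose edges have length $\pi/2$. The first observation is that a combinatorial map is automatically an isometry on the interior of every cube, since it carries a cube homeomorphically onto a cube of the same dimension and the metric on each cube is standard. Hence $\phi$ can fail to be a local isometry only along lower-dimensional faces, and since a neighborhood of an interior point of a positive-dimensional face splits as a metric product of a Euclidean factor with a cone on a (lower-dimensional) link, the verification of the local isometry condition reduces to the vertices of $X$.

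Next I would recall the metric criterion for $\mathcal{M}_\kappa$-polyhedral complexes with finitely many shapes from \cite{BH}: $\phi$ is a (geodesic) local isometry if and only if, for every vertex $x\in X$, the induced link map $Lk(x,X)\rightarrow Lk(\phi(x),Y)$ is an isometric embedding whose image is a locally convex subcomplex. Here the hypothesis that $Y$ is NPC is used essentially: by Gromov's theorem each $Lk(\phi(x),Y)$ is a flag complex, hence a CAT(1) space, which is exactly what makes this link criterion applicable.

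The crux is then a purely combinatorial translation valid for all-right spherical complexes: a simplicial map between such complexes is an isometric embedding onto a locally convex subcomplex if and only if it is injective and its image is a \emph{full} subcomplex. The forward direction uses that in an all-right spherical complex the intrinsic angular metric is determined combinatorially, so that injectivity amounts to preserving these combinatorially computed distances, while local $\pi$-convexity of a subcomplex is equivalent to its being full. Combining this dictionary with the link criterion yields the equivalence in the first statement: $\phi$ is a combinatorial local isometry (injective link maps with full image) exactly when it is a geodesic local isometry. I expect this combinatorial-versus-metric dictionary, and in particular the equivalence ``locally convex $\Leftrightarrow$ full'' for subcomplexes of all-right spherical complexes, to be the main obstacle; I would handle it by citing the corresponding lemmas of \cite{BH} rather than reproving Gromov's link condition from scratch.

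Finally, for the second statement I would argue directly on links. If $\phi$ is a combinatorial local isometry, then for each vertex $x$ the link map $Lk(x,X)\rightarrow Lk(\phi(x),Y)$ is injective and its image $F$ is a full subcomplex; injectivity together with $F$ being the image forces the map to be a simplicial isomorphism onto $F$. Since $Y$ is NPC, $Lk(\phi(x),Y)$ is flag, and a full subcomplex of a flag complex is again flag: if finitely many vertices of $F$ are pairwise joined by edges of $F$, those edges lie in the ambient flag complex, so the vertices span a simplex there, which lies in $F$ by fullness. Hence $Lk(x,X)\cong F$ is flag for every vertex $x$, and therefore $X$ is NPC.
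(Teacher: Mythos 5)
The paper itself offers no proof of this theorem: it is imported verbatim from the cited references, so your proposal can only be measured against the standard argument in \cite{CW} and \cite{HW08}. Your overall strategy is exactly that standard argument (reduce to vertex links, invoke Gromov's link condition, and for the second statement use that a full subcomplex of a flag complex is flag), and your proof of the second statement is correct and complete as written.

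There is, however, one step in the first statement that fails as literally stated: the ``dictionary'' asserting that $\phi$ is a geodesic local isometry if and only if each link map $Lk(x,X)\rightarrow Lk(\phi(x),Y)$ is an \emph{isometric embedding} with locally convex image. Links can be disconnected or have diameter exceeding $\pi$, and a local isometry does not preserve link distances beyond $\pi$. Concretely, take $X$ to be a subdivided line segment mapping onto an isometrically embedded standard geodesic in a CAT(0) square complex $Y$: at an interior vertex $x$, the link $Lk(x,X)$ consists of two points at distance $\infty$ (in the path metric), while their images in $Lk(\phi(x),Y)$ may lie at distance exactly $\pi$. This $\phi$ is both a combinatorial and a geodesic local isometry, yet its link map is not distance-preserving, so the ``only if'' direction of your intermediate criterion is violated in the most basic examples. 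The correct criterion truncates the angular metric at $\pi$: the link map must be $\pi$-isometric (it preserves distances that are $<\pi$ and does not create distances $<\pi$ between points originally at distance $\geq\pi$), and the corresponding combinatorial translation is ``injective with full image $\Leftrightarrow$ $\pi$-isometric,'' using that in an all-right spherical complex non-adjacent vertices lie at distance $\geq\pi$ and that a subcomplex of a flag all-right complex is \emph{$\pi$-convex} (not merely locally convex --- two non-adjacent vertices of an ambient edge form a locally convex but non-full subcomplex) if and only if it is full. With this truncation your argument goes through and coincides with the proofs in the cited sources; without it, the chain of equivalences breaks at its middle link.
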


Two main properties of the path metric on an NPC cube complex are the following:
First, for two NPC cube complexes $X$, $Y$ with their universal covers $\overline{X}$, $\overline{Y}$, respectively, a local isometry $\phi:X\rightarrow Y$ induces an injective homomorphism $\phi_*:\pi_1(X,x)\rightarrow\pi_1(Y,\phi(x))$ and an \textit{elevation} $\overline{\phi}:\overline{X}\hookrightarrow \overline{Y}$, an isometric embedding which covers $\phi$ as in Figure \ref{Elevation} (see Chapter II.4 in \cite{BH}).
Second, the path metric on a CAT(0) cube complex is convex so that the nearest point projection onto a convex subcomplex is well-understood.

\vspace{-1mm}

\begin{figure}

\begin{tikzpicture}
  \matrix (m) [matrix of math nodes,row sep=3em,column sep=6em,minimum width=3em]
  { \overline{X} & \overline{Y} \\
    X & Y \\};
  \path[-stealth]
    (m-1-1) edge node [left] {$p_{X}$} (m-2-1)
            edge node [above] {$\overline{\phi}$} (m-1-2)
    (m-2-1.east|-m-2-2) edge node [above] {$\phi$} (m-2-2)
    (m-1-2) edge node [right] {$p_{Y}$} (m-2-2);
\end{tikzpicture}
	\caption{An elevation of a local isometry}
    	\label{Elevation}
\end{figure}

\begin{lemma}[\cite{BKS(a)}, \cite{Hua(b)}]\label{Projection}
Let $Y$ be an $n$-dimensional $\rm{CAT(0)}$ cube complex with the path metric $d$. Let $U_i$ be a convex subcomplex of $Y$ with the nearest point projection $\pi_{U_i}$ of $Y$ onto $U_i$ for $i=1,2$.
Suppose that $D=d(U_1,U_2)$, $V_1=\{y\in U_1\ |\ d(y,U_2)=D \}$ and $V_2=\{y\in U_2\ |\ d(y,U_1)=D \}$. Then,
\begin{enumerate}
\item $V_1$ and $V_2$ are non-empty convex subcomplexes.
\item 
$\pi_{U_1}$ maps $V_2$ isometrically onto $V_1$ and $\pi_{U_2}$ maps $V_1$ isometrically onto $V_2$. Moreover, the convex hull of $V_1\cup V_2$ is isometric to $V_1\times[0,D]$.
In other words, $V_1$ and $V_2$ are $\mathrm{parallel}$.
\item 
There exists a constant $A=A(D,n,\epsilon)\leq 1$ such that if $p_1\in U_1$, $p_2\in U_2$ and $d(p_i,V_i)\geq \epsilon>0$, then
$$d(p_1, U_2)\geq D + Ad(p_1,V_1),\ d(p_2, U_1)\geq D + Ad(p_2,V_2).$$
For $r\geq D$, let $r'=D+(2r-D)/A$. Then, $N_{r}(U_1)\cap N_r(U_2)\subset N_{r'}(V_i)$.
\end{enumerate}
\end{lemma}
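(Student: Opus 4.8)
The plan is to handle the three items in increasing order of difficulty, leaning throughout on the combinatorial structure of CAT(0) cube complexes: a subcomplex is convex (for the path metric) exactly when it is an intersection of combinatorial halfspaces bounded by hyperplanes, and the CAT(0) nearest-point projection $\pi_{U_i}$ onto a convex subcomplex coincides with the combinatorial gate projection, hence is a combinatorial map whose image is a subcomplex (I would cite \cite{HW08} and Chapter II of \cite{BH} for these facts, together with the convexity of $y\mapsto d(y,U_2)$ on a CAT(0) space). With this in hand item (1) is quick. Non-emptiness of $V_1,V_2$ follows because the gate projection attains the infimal distance $D$, producing a realizing geodesic whose endpoints lie in $V_1$ and $V_2$. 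Convexity of each $V_i$ follows because $y\mapsto d(y,U_2)$ is a convex function (distance to a convex set) on the convex subcomplex $U_1$, so the locus where it attains its minimum value $D$ is convex; that this locus is a subcomplex is immediate from the combinatorial nature of $\pi_{U_2}$.

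For item (2) I would run the standard Bridge (Sandwich) argument. First, $\pi_{U_1}$ sends $V_2$ into $V_1$: for $v_2\in V_2$ the point $\pi_{U_1}(v_2)$ realizes $d(v_2,U_1)=D$, so it lies in $V_1$, and symmetrically $\pi_{U_2}$ sends $V_1$ into $V_2$. Both restrictions are $1$-Lipschitz and, by symmetry, mutually inverse, so each is an isometry $V_1\cong V_2$. For the product structure I would note that for corresponding points $v_1\in V_1$ and $v_2=\pi_{U_2}(v_1)\in V_2$ the geodesic $[v_1,v_2]$ has length $D$ and meets both $U_1$ and $U_2$ orthogonally; convexity of the metric then forces the segments $[v_1,\pi_{U_2}(v_1)]$ over $v_1\in V_1$ to be mutually parallel, sweeping out a subset isometric to $V_1\times[0,D]$. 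Identifying this subset with the convex hull of $V_1\cup V_2$ (it is convex, contains both, and nothing smaller can) gives the parallelism assertion.

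The main obstacle is item (3). The subtlety is that soft CAT(0) reasoning is not enough: the restriction of $d(\cdot,U_2)$ to $U_1$ is convex with minimum value $D$ attained exactly on $V_1$, but its first-order behavior as one leaves $V_1$ can vanish, so no positive slope is available for free, which is precisely why the hypothesis $d(p_i,V_i)\ge\epsilon$ is imposed. My plan is to work in the bridge picture of item (2): writing $q=\pi_{V_1}(p_1)$, the geodesic $[q,p_1]$ lies in $U_1$ while the bridge segment issuing from $q$ is orthogonal to $U_1$. In the flat model where $U_1,U_2$ span a Euclidean corner one computes $d(p_1,U_2)=\sqrt{d(p_1,V_1)^2+D^2}$, which already yields a bound of the desired shape with $A=(\sqrt{\epsilon^2+D^2}-D)/\epsilon\le 1$ once $d(p_1,V_1)\ge\epsilon$; note also the combinatorial heuristic that in the $\ell^1$ metric the separating-hyperplane count gives a clean lower bound of the form $d_1(p_1,U_2)\ge d_1(U_1,U_2)+d_1(p_1,V_1)$. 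In a general cube complex the corner is only coarsely Euclidean, and I would recover a uniform lower bound by bounding the relevant comparison angles below in terms of the dimension $n$ (using that the hyperplanes crossing the bridge meet $U_1$ in controlled position), which is exactly what produces the dependence $A=A(D,n,\epsilon)$; I expect isolating and proving this rate estimate to be the technical heart of the lemma. Finally, the neighborhood containment $N_{r'}(V_i)\supseteq N_r(U_1)\cap N_r(U_2)$ is a formal consequence: decomposing the distance of a point $z$ in the intersection into a component along the bridge $V_1\times[0,D]$, of length at most $D$, and a transverse component controlled by the pointwise estimate yields $r'=D+(r-D)/A$ after a routine computation.
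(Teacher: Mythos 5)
You should first note that the paper contains no proof of Lemma \ref{Projection} at all: it is quoted verbatim from \cite{BKS(a)} and \cite{Hua(b)}, so there is no in-paper argument to compare yours against, and your proposal has to stand on its own. On that basis, your treatment of items (1) and (2) is the standard bridge/sandwich argument and is essentially correct, modulo one imprecision: the CAT(0) nearest-point projection onto a convex subcomplex does not literally ``coincide with the combinatorial gate projection''; what is true (and is itself a lemma in \cite{BKS(a)}) is that it maps cubes onto cubes and is affine on each cube, which is what you actually need for $V_i$ to be subcomplexes and for the distance $D$ to be attained.

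The genuine gap is in item (3), in two distinct places. First, the existence of $A=A(D,n,\epsilon)$ is the entire content of the item, and you explicitly defer it (``I expect isolating and proving this rate estimate to be the technical heart of the lemma''). The Euclidean-corner computation does not transfer, and ``bounding comparison angles below in terms of $n$'' is a plan, not an argument; the standard route is to use convexity of $t\mapsto d(\gamma(t),U_2)$ along geodesics in $U_1$ emanating from $V_1$, which reduces the problem to producing a uniform $c=c(D,n,\epsilon)>0$ with $d(y,U_2)\ge D+c$ whenever $y\in U_1$ and $d(y,V_1)=\epsilon$ --- and this uniform positivity is precisely where the cube structure must enter, since $Y$ need not be locally compact and no compactness argument is available. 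Second, your claim that the neighborhood containment is ``a formal consequence'' of the pointwise estimate is false. If $z\in N_r(U_1)\cap N_r(U_2)$ and $p_1=\pi_{U_1}(z)$, the triangle inequality only gives $d(p_1,U_2)\le 2r$, hence $d(z,V_1)\le r+\max\{\epsilon,(2r-D)/A\}$; this is of the shape $r+(2r-D)/A$, strictly weaker than the asserted $r'=D+(r-D)/A$ (compare at $D=0$, $A=1$: your bound is $3r$ against the claimed $r$). Shrinking $A$ does not repair this, since at $r=D$ the asserted containment reads $N_D(U_1)\cap N_D(U_2)\subset N_D(V_i)$ independently of $A$, which no triangle-inequality decomposition through $\pi_{U_1}(z)$ will produce. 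So either the estimate must be run for the point $z$ itself rather than for its projection, or the constant $r'$ must be degraded; either way this step needs an actual argument, not a ``routine computation.''
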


Let us come back to a cube complex $Y$ and define a combinatorial object, called a hyperplane. 
A $\it{midcube}$ $\sigma$ in a unit cube $I^n$ is an $(n-1)$-dimensional unit cube which contains the barycenter of $I^n$ and is parallel to one of $(n-1)$-dimensional faces of $I^n$. A midcube $\sigma$ is said to be \textit{dual} to an edge $e$ of $I^n$ if $e$ is not parallel to $\sigma$.
For a cube complex $Y$, each cube $c$ in $Y$ is the image of a combinatorial map $q:I^n\rightarrow Y$. 
For an edge $e$ of $I^n$ and the midcube $\sigma\subset I^n$ dual to $e$, $q(\sigma)$ is said to be a $\it{midcube}$ of $c$ which is \textit{dual} to $q(e)$.
Let $\square$ denote the equivalence relation on edges of $Y$ generated by $e\square f$ if and only if $e$ and $f$ are two opposite edges of some square in $Y$. A $\it{hyperplane}$ is the union of midcubes of $Y$ which are dual to edges in an equivalence class $[e]$ generated by $\square$.
The $\it{carrier}$ $\kappa(h)$ of a hyperplane $h$ is the union of all the cubes in $Y$ intersecting $h$. If $Y$ is a CAT(0) cube complex, then hyperplanes have the following properties.

\begin{theorem}[\cite{Sag95}]
Let $Y$ be a $\rm{CAT(0)}$ cube complex. Then,
\begin{enumerate}
\item every hyperplane is embedded,
\item every hyperplane separates $Y$ into precisely two components,
\item every hyperplane is itself a CAT(0) cube complex, and
\item every collection of pairwise intersecting hyperplanes intersects.
\end{enumerate}
\end{theorem}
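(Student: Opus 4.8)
The plan is to exploit the two defining features of a CAT(0) cube complex $Y$---that it is simply connected and that every vertex link is a flag complex---through the machinery of van Kampen (disk) diagrams. Throughout I trace a hyperplane $h$ by its system of dual edges, so the central object is the $\square$-class $[e]$ and the ``dual curve'' it determines. Each of the four conclusions becomes a statement about how this dual curve behaves, and in every case the flag condition at interior vertices together with minimality of area is what forbids the bad configurations.

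First I would establish conclusion (1), that $h$ is \emph{embedded}: no cube of $Y$ contains two distinct midcubes of $h$, and no square has two edges in $[e]$ beyond the generating opposite pair. Suppose instead that $h$ self-crosses or self-osculates. Then there is a combinatorial loop in $Y$ realizing this forbidden configuration, and since $Y$ is simply connected it bounds a reduced disk diagram $D\to Y$ of minimal area. Following the dual curve of $[e]$ through $D$, minimality rules out bigons and ``nogons'' while the flag (NPC) condition rules out self-intersections, contradicting the assumed configuration. This is the main obstacle: all the subtlety of hyperplanes is concentrated here, because embeddedness is exactly what makes the later parts clean, and it is the one place where simple connectivity and the flag condition must be deployed simultaneously.

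Granting embeddedness, (2) and (3) follow. For (2) the carrier is isometric to $h\times[-\tfrac12,\tfrac12]$, so the edges dual to $h$ are precisely the edges crossing $h$; I would declare two vertices of $Y\setminus h$ to lie on the same side when some edge path between them crosses $[e]$ an even number of times. Embeddedness makes this parity well defined on edges, and simple connectivity (every loop bounds a disk across which parity is preserved) makes it path-independent, so the complement has exactly the two nonempty connected parity classes. For (3) the midcubes of $h$ assemble into a cube complex glued exactly as the cubes of $Y$ dual to $[e]$ are glued; the link of a vertex of $h$ embeds as a full subcomplex of the corresponding link in $Y$, hence is flag, so $h$ is NPC, and simple connectivity of $h$ descends from that of $Y$ via the product structure $\kappa(h)\cong h\times[-\tfrac12,\tfrac12]$, which lets a loop in $h$ bound a disk in $Y$ and be projected back. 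Thus $h$ is a simply connected NPC cube complex, i.e. CAT(0).

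Finally, (4) is the Helly property, which I would reduce directly to the flag condition. Since cubes are finite-dimensional, only finitely many pairwise-crossing hyperplanes can occur at once, so it suffices to find a single cube dual to all of them. Two hyperplanes cross if and only if there is a square carrying one opposite pair of edges dual to each, and at a common vertex the corresponding edges determine pairwise-adjacent vertices in the link. Pairwise adjacency plus the flag condition produces a simplex in the link, which is precisely a cube of $Y$ meeting all the given hyperplanes, completing the argument.
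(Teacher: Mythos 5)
First, a point of comparison: the paper does not prove this theorem at all --- it is quoted as background from Sageev \cite{Sag95} --- so there is no internal proof to measure you against; your proposal has to be judged against the standard arguments (Sageev's original paper, and the disk-diagram treatment in \cite{Sag12} and in Wise's work). By that standard, your parts (1)--(3) are faithful sketches of the usual route: minimal-area disk diagrams to kill self-crossing dual curves, the mod-2 crossing-parity argument for separation, and the full-link/flag condition plus the carrier product structure for the hyperplane being CAT(0). One small hole in (2): parity only shows the two sides are distinct unions of components; you must still show each parity class is \emph{connected} in $Y\setminus h$. This is fixable with the carrier structure $\kappa(h)\cong h\times[-\tfrac{1}{2},\tfrac{1}{2}]$ that you invoke: a path crossing $h$ twice in consecutive edges can be rerouted along one side of the carrier (each side $h\times\{\pm\tfrac{1}{2}\}$ is connected because $h$ is), reducing the crossing number by two.

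The genuine gap is in (4). Your argument is purely local: \emph{if} the hyperplanes $h_1,\dots,h_n$ all had dual edges at one common vertex $v$, pairwise spanning squares, then flagness of $Lk(v,Y)$ would indeed yield the $n$-cube. But pairwise intersection only gives, for each pair $(i,j)$, \emph{some} square somewhere dual to both $h_i$ and $h_j$; these squares may be far apart, and nothing in your proposal produces a single vertex at which all $n$ hyperplanes meet pairwise. Producing that corner is the entire content of the Helly property. The standard repairs are: (a) Sageev's original route --- reduce to $n=3$, pick points in $h_1\cap h_2$, $h_2\cap h_3$, $h_1\cap h_3$, join them by paths, fill the resulting loop with a minimal-area disk diagram, and analyze the dual curves (minimality removes bigons) until the three pairwise crossings are pushed to a single square corner where flagness applies, then induct for $n>3$; or (b) use your part (3): regard the intersections $h_1\cap h_i$ ($i\geq 2$) as hyperplanes of the CAT(0) cube complex $h_1$, prove that they pairwise cross inside $h_1$ (again essentially the $n=3$ case), and induct on $n$. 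Either way, a genuinely global step --- a disk diagram or an induction through lower-dimensional hyperplanes --- is indispensable; the flag condition alone cannot supply it. A secondary quibble: your reduction ``only finitely many pairwise-crossing hyperplanes can occur at once'' requires finite dimensionality of $Y$, not merely of its individual cubes; this is harmless in the present paper, where everything is a square complex, but it is not a consequence of what you wrote.
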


\begin{remark}
The convexity of subcomplexes in a CAT(0) cube complex $Y$ can be stated with respect to hyperplanes. A subcomplex $P\subset Y$ is convex if and only if, for any two hyperplanes $h_1$, $h_2$ in $Y$ intersecting $P$ and intersecting each other, $h_1\cap h_2$ is contained in $P$.
\end{remark}

In the remaining part of this subsection, let $Y$ be a finite dimensional CAT(0) cube complex and let us investigate some kinds of convex subcomplexes of $Y$. Consider the real line $\mathbb{R}$ and the non-negative real line $\mathbb{R}_{\geq 0}$ as 1-dimensional cube complexes whose vertices correspond to integers.
A \textit{singular geodesic} (a \textit{singular ray}, resp.) is the image of an isometric embedding $\mathbb{R}\hookrightarrow Y$ ($\mathbb{R}_{\geq 0}\hookrightarrow Y$, resp.). 
The \textit{parallel set} $\mathbb{P}(\gamma)\subset Y$ of a singular geodesic $\gamma$ is the convex hull of the union of all the singular geodesics parallel to $\gamma$. 
By Lemma \ref{Projection} (or Theorem 2.14 in Chapter II.2 of \cite{BH}), $\mathbb{P}(\gamma)$ has a product structure; $\mathbb{P}(\gamma)$ is isometric to the product of $\mathbb{R}$ and a CAT(0) cube complex.

For a subcomplex $K\subset Y$, let $\mathcal{H}$ be the collection of hyperplanes in $Y$ intersecting $K$. Suppose that $\mathcal{H}$ is a disjoint union of non-empty subcollections $\mathcal{H}_1,\cdots,\mathcal{H}_n$ such that for $h\in\mathcal{H}_i$ and $h'\in\mathcal{H}_j$, if $i\neq j$, then $h\cap h'\cap K$ is non-empty, and otherwise, either $h=h'$ or $h\cap h'$ is empty. By Proposition 2.6 in \cite{CS11}, then, $K$ is convex and has a product structure, i.e. $K$ is the image of an isometric embedding $\bold{P}_1\times\cdots\times \bold{P}_n\hookrightarrow Y$ for CAT(0) cube complexes $\bold{P}_i$. 
If each $\bold{P}_i$ has dimension 1, then $K$ is said to be an $n$-dimensional \textit{product\ subcomplex}.
Let $P_i\subset Y$ be the image of $v_1\times\cdots\times v_{i-1}\times \bold{P}_i\times v_{i+1}\times\cdots\times v_n$ for some (indeed any) vertex $v_j\in \bold{P}_j$ under the isometric embedding. For convenience, $K$ is denoted by $P_1\times\cdots\times P_n$. Under this convention, the inclusion relation between product subcomplexes can be interpreted coordinate-wisely: If a product subcomplex $K=P_1\times\cdots\times P_n$ is contained in a product subcomplex $K'=P'_1\times\cdots\times P'_n$, then we can assume that $P_i \subset P'_i$ by permuting the coordinates as necessary.

\begin{remark}\label{EmbeddingImmersion}
If $Y$ is an $n$-dimensional NPC cube complex, then every immersion $\iota:\bold{P}_1\times\cdots\times \bold{P}_n\hookrightarrow Y$ for 1-dimensional cube complexes $\bold{P}_i$ is locally isometric. It can be seen by looking at the induced link map and its image.
For instance, when $n=2$, the link of any vertex in $\bold{P}_1\times\bold{P}_2$ is a bipartite complete graph and the link of any vertex in $Y$ has no triangles; the image of the induced link map must be a full subcomplex. If $Y$ is additionally simply-connected, then the immersion $\iota$ is an isometric embedding.
\end{remark}

\begin{lemma}\label{NS}
Suppose that $\overbar{K}_1$ and $\overbar{K}_2$ are $n$-dimensional product subcomplexes of an $n$-dimensional CAT(0) cube complex $Y$ each of which is the product of infinite trees without leaves. If $\overbar{K}_1\subset N_r(\overbar{K}_2)$, then $\overbar{K}_1\subset \overbar{K}_2$.
\end{lemma}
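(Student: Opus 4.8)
The plan is to combine the parallelism and projection estimates of Lemma \ref{Projection} with the leafless hypothesis and the equality $\dim\overbar{K}_1=\dim Y=n$. Set $U_1=\overbar{K}_2$, $U_2=\overbar{K}_1$ and $D=d(\overbar{K}_1,\overbar{K}_2)$, and feed these into Lemma \ref{Projection}. This produces nonempty convex subcomplexes $V_1\subseteq\overbar{K}_2$ and $V_2\subseteq\overbar{K}_1$, parallel to one another, with the convex hull of $V_1\cup V_2$ isometric to $V_2\times[0,D]$ and $\pi_{\overbar{K}_2}$ restricting to an isometry $V_2\to V_1$. First I would exploit part (3): fixing $\epsilon=1$ and using the hypothesis $d(p,\overbar{K}_2)\le r$ for every $p\in\overbar{K}_1$, the inequality $r\ge d(p,\overbar{K}_2)\ge D+A\,d(p,V_2)$ forces $d(p,V_2)\le\max\{1,(r-D)/A\}=:s$ for all $p\in\overbar{K}_1$. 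Thus $\overbar{K}_1\subseteq N_s(V_2)$, so the convex subcomplex $V_2$ is coarsely dense in $\overbar{K}_1$.

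The heart of the argument is then to upgrade coarse density to equality, i.e. to show $V_2=\overbar{K}_1$, and this is exactly where the leafless/infinite-tree hypothesis enters. Writing $\overbar{K}_1=R_1\times\cdots\times R_n$ with each $R_i$ an infinite tree without leaves, suppose $V_2\neq\overbar{K}_1$ and pick a vertex $u=(u_1,\dots,u_n)\in\overbar{K}_1\setminus V_2$. Since $V_2$ is convex, some hyperplane $h$ of $\overbar{K}_1$ separates $u$ from $V_2$; as every hyperplane of the product $\overbar{K}_1$ is a factor hyperplane, $h$ lives in a single factor $R_i$, splitting it as $R_i^-\sqcup R_i^+$ with the $R_i$-coordinates of $V_2$ confined to $\overline{R_i^-}$ and $u_i\in R_i^+$. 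Because $R_i$ has no leaves, $R_i^+$ carries a geodesic ray $r$ based at $u_i$ escaping to infinity away from $h$; transporting $r$ into the $i$-th coordinate of $u$ gives points $w_t\in\overbar{K}_1$ with $d(w_t,V_2)\ge d_{R_i}(r(t),\overline{R_i^-})\to\infty$, contradicting $\overbar{K}_1\subseteq N_s(V_2)$. Hence $V_2=\overbar{K}_1$. This is the step I expect to be the main obstacle, since the conclusion is false without "no leaves" (for instance two parallel lines in $\mathbb{R}^2$), and the whole point is that leafless factors supply the escaping rays that prevent a proper convex subcomplex from being coarsely dense.

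Finally I would rule out $D>0$ using dimension. Now $V_1\cong V_2=\overbar{K}_1$ is $n$-dimensional, so it contains an $n$-cube $\tau$, which is a top cube of $Y$ since $\dim Y=n$. If $D>0$, the convex hull $V_2\times[0,D]$ contains an isometric copy of $\tau\times[0,\delta]$ for small $\delta>0$ attached perpendicularly to $\overbar{K}_2$ along $\tau$; a point $y$ at perpendicular distance $t\in(0,\delta)$ from an interior point of $\tau$ satisfies $d(y,\overbar{K}_2)=t>0$, so $y\notin\overbar{K}_2$, and the cube of $Y$ containing $y$ must contain $\tau$ together with the new perpendicular direction, forcing dimension $\ge n+1$, a contradiction. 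Therefore $D=0$, whence $V_2=\{y\in\overbar{K}_1:d(y,\overbar{K}_2)=0\}=\overbar{K}_1\cap\overbar{K}_2$ because $\overbar{K}_2$ is closed. Combined with $V_2=\overbar{K}_1$ this yields $\overbar{K}_1=\overbar{K}_1\cap\overbar{K}_2\subseteq\overbar{K}_2$, as desired. (The case $n=1$, where $\overbar{K}_1$ and $\overbar{K}_2$ are leafless subtrees of a tree $Y$, is elementary and can be handled directly.)
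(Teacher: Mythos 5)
Your proof is correct, and it shares the same skeleton as the paper's argument---both hinge on Lemma \ref{Projection} and both finish with the identical dimension count ($\overbar{K}_1\times[0,D]$ cannot sit inside the $n$-dimensional complex $Y$ unless $D=0$)---but the crucial middle step is handled by a genuinely different mechanism. The paper disposes of it in two lines of CAT(0) analysis: the function $d(\cdot,\overbar{K}_2)$ restricted to $\overbar{K}_1$ is convex (by CAT(0) geometry) and bounded (by hypothesis), and since the leafless-tree hypothesis makes every geodesic segment in $\overbar{K}_1$ extendible, a bounded convex function must be constant; this is exactly the statement $V_2=\overbar{K}_1$ that you prove. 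You instead reach that statement combinatorially: part (3) of Lemma \ref{Projection} gives coarse density of $V_2$ in $\overbar{K}_1$, and then you rule out a proper convex subcomplex being coarsely dense by hyperplane separation plus an escaping ray in a leafless factor tree. The two routes use the leafless hypothesis for the same underlying reason (rays escaping to infinity versus extendible geodesics), but yours avoids invoking convexity of distance functions in CAT(0) spaces and stays at the level of halfspaces and the product structure, at the cost of being longer; the paper's is slicker but leans on the analytic machinery. Both treatments of the final dimension step are equally terse, so no complaint there.
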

\begin{proof}
By the CAT(0) geometry and the assumption, the distance function $d_{\overbar{K}_2}:=d(\cdot,\overbar{K}_2)$ is convex and bounded on $\overbar{K}_1$.
Since any geodesic segments in $\overbar{K}_1$ are extendible, $d_{\overbar{K}_2}$ is constant on $\overbar{K}_1$; let $D=d_{\overbar{K}_2}(\overbar{K}_1)$.
Let $\pi_{\overbar{K}_2}:Y\rightarrow \overbar{K}_2$ be the nearest point projection of $Y$ onto $\overbar{K}_2$. By Lemma \ref{Projection}, the convex hull of $\overbar{K}_1$ and $\pi_{\overbar{K}_2}(\overbar{K}_1)$ is isometric to $\overbar{K}_1\times[0,D]$. Since $Y$ and $\overbar{K}_1$ have the same dimension, $D$ must be $0$ and therefore, we have $\overbar{K}_1\subset \overbar{K}_2$.
\end{proof}

The parallel set of a singular geodesic is an example of product subcomplexes, and so is a top dimensional geodesical flat. More precisely, if the dimension of $Y$ is $n$, then a geodesical $n$-flat $F\subset Y$ (briefly, \textit{flat}) is a subcomplex since $F$ has the geodesic extension property. 
There is an isometric embedding $\mathbb{R}^n\hookrightarrow Y$ whose image is $F$ such that the isometric embedding induces the product structure of $F$.
In dimension 2, if the coarse intersection of flats is quasi-isometric to a geodesic, then these flats are actually contained in the parallel set of a singular geodesic. 

\begin{lemma}\label{parallel}
Let $Y$ be a $\rm{CAT(0)}$ square complex and $F_1,\cdots,F_n$ flats in $Y$ ($n\geq 2$). Suppose that there exists a constant $r>0$ and a $(\lambda,\varepsilon)$-quasi-geodesic $\gamma\subset Y$ such that $N_r(F_i)\cap N_r(F_j)$ is quasi-isometric to $\gamma$ for any distinct elements $i$, $j$ in $\{1,\cdots,n\}$. Then there exist a singular geodesic $\gamma'\subset Y$ and a constant $C_1=C_1(\lambda,\varepsilon,r)>0$ such that $F_i$ is contained in the parallel set $\mathbb{P}(\gamma')$ and $d_H(\gamma',\gamma)<C_1$.
\end{lemma}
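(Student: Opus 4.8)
The plan is to single out one flat, say $F_1$, turn each coarse intersection $N_r(F_1)\cap N_r(F_j)$ into an honest convex ``bridge'' lying inside $F_1$ by means of the projection machinery of Lemma \ref{Projection}, and then read off the common \emph{direction} of all these bridges; this direction produces $\gamma'$. Concretely, for each $j\ge 2$ I apply Lemma \ref{Projection} to the convex subcomplexes $U_1=F_1$ and $U_2=F_j$. Writing $D_j=d(F_1,F_j)$, I obtain parallel convex subcomplexes $V^{(j)}_1\subseteq F_1$ and $V^{(j)}_2\subseteq F_j$ whose convex hull is a flat strip isometric to $V^{(j)}_1\times[0,D_j]$, with $\pi_{F_1}$ carrying $V^{(j)}_2$ isometrically onto $V^{(j)}_1$. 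Since $N_r(F_1)\cap N_r(F_j)\ne\emptyset$ we have $D_j\le 2r$.

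Next I would show each bridge is Hausdorff-close to $\gamma$. Lemma \ref{Projection}(3), applied with radius $2r\ge D_j$, gives $N_r(F_1)\cap N_r(F_j)\subseteq N_{r''}(V^{(j)}_1)$ for a constant $r''=r''(r)$; conversely, sending $v\in V^{(j)}_1$ to the midpoint of the geodesic $[v,\pi_{F_j}(v)]$ (which lies at distance $\le D_j/2\le r$ from both $F_1$ and $F_j$) shows $V^{(j)}_1\subseteq N_r(N_r(F_1)\cap N_r(F_j))$. Hence $d_H(V^{(j)}_1,N_r(F_1)\cap N_r(F_j))$ is uniformly bounded, and combining with the hypothesis (which I read as the uniform bound $d_H(N_r(F_1)\cap N_r(F_j),\gamma)\le R_0(\lambda,\varepsilon,r)$, as is needed for the conclusion $d_H(\gamma',\gamma)<C_1$) yields $d_H(V^{(j)}_1,\gamma)\le C_0(\lambda,\varepsilon,r)$. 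In particular each $V^{(j)}_1$ is a two-ended convex subcomplex of the Euclidean plane $F_1\cong\mathbb{R}^2$, hence a flat strip (possibly degenerate to a line); its width is bounded in terms of $\lambda,\varepsilon,r$, since a $(\lambda,\varepsilon)$-quasi-geodesic cannot be coarsely dense in a wide Euclidean strip. As the centerlines of these strips are straight lines in $F_1$ all lying within a bounded neighborhood of $\gamma$, they are pairwise parallel, so all the $V^{(j)}_1$ share one direction. I then take $\gamma'$ to be one boundary line of $V^{(2)}_1$; because $F_1$ is convex in $Y$, $\gamma'$ is a singular geodesic, $F_1\subseteq\mathbb{P}(\gamma')$ automatically, and the width bound gives $d_H(\gamma',\gamma)<C_1(\lambda,\varepsilon,r)$.

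Finally I would transport $\gamma'$ into each $F_j$. Since $\gamma'$ is parallel in $F_1$ to the long direction of the strip $V^{(j)}_1$, there is a line $\ell_j\subseteq V^{(j)}_1$ parallel to $\gamma'$; pushing $\ell_j$ across the flat strip $V^{(j)}_1\times[0,D_j]$ produces the parallel line $\ell'_j\subseteq V^{(j)}_2\subseteq F_j$ sitting at height $D_j$ above $\ell_j$. By transitivity of parallelism in the CAT(0) space $Y$, $\gamma'\parallel\ell'_j$, so $\ell'_j\subseteq\mathbb{P}(\gamma')$; filling $F_j$ by the Euclidean translates of $\ell'_j$, each of which is parallel to $\gamma'$, gives $F_j\subseteq\mathbb{P}(\gamma')$. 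Together with $F_1\subseteq\mathbb{P}(\gamma')$ this establishes the conclusion for every $i$.

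The main obstacle, and the step deserving the most care, is the passage from the hypothesis to a genuine Hausdorff bound together with the alignment of directions: one must verify that ``quasi-isometric to $\gamma$'' really pins each convex bridge $V^{(j)}_1$ into a bounded neighborhood of the single quasi-line $\gamma$ (so that the strips have uniformly bounded width and their long directions coincide), and that one line $\gamma'$ drawn in $F_1$ simultaneously matches the strip direction of all $n-1$ bridges. The two-dimensionality is used crucially here: an isometrically embedded line in a Euclidean $2$-flat is an ordinary straight line, so finite Hausdorff distance forces literal parallelism, which is exactly what allows a single common direction to serve all of the $F_j$ at once.
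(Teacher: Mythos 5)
Your proof is correct and follows essentially the same route as the paper: apply Lemma \ref{Projection} to the pairs $(F_1,F_j)$, observe that the resulting bridges inside $F_1$ are all Hausdorff-close to $\gamma$ and hence mutually parallel, and conclude that every $F_j$ lies in the parallel set of that common direction. The paper's proof is a terser version of exactly this (it reads the hypothesis as Hausdorff-closeness in the same implicit way you flag), while you additionally make explicit the strip structure of the bridges, the construction of the singular geodesic $\gamma'$, and the bound $d_H(\gamma',\gamma)<C_1$, details the paper leaves to the reader.
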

\begin{proof}
By Lemma \ref{Projection}, there are subcomplexes $V_i\subset F_i$ such that $V_i=\{y\in F_i\ |\ d(y,F_j)=d(F_1,F_2)\}$ for $\{i,j\}=\{1,2\}$ such that $V_i$ is quasi-isometric to $N_r(F_1)\cap N_r(F_2)$. 
By the assumption, $V_1$ is quasi-isometric to $\gamma$ so that $\mathbb{P}(V_1)$ contains $F_1$ and $F_2$.

From $F_1$ and $F_3$, we also obtain two parallel singular geodesics $V'_1\subset F_1$ and $V'_3\subset F_3$ which are quasi-isometric to $\gamma$ as in the previous paragraph. Since both $V_1$ and $V'_1$ are quasi-isometric to $\gamma$ and contained in $F_1$, they are parallel.
Therefore, $F_3$ is also contained in $\mathbb{P}(V_1)$ and by induction, we prove the lemma.
\end{proof}

%
%
\subsection{Weakly Special Square Complexes}\label{2.2}
An NPC cube complex $Y$ is \textit{weakly\ special} if there is no hyperplane which \textit{self-osculates} or \textit{self-intersects}; the terms `self-osculate' and `self-intersect' were introduced by Huglund and Wise in \cite{HW08} and using these terms, the definition of `weakly special' was introduced by Huang in \cite{Hua(b)}. 
If $Y$ is compact, by Proposition 3.10 in \cite{HW08}, there exists a finite cover $Y'$ of $Y$ such that every hyperplane in $Y'$ is two-sided. \textbf{In this subsection, we assume that $Y$ is a compact weakly special square complex and every hyperplane in $Y$ is two-sided. The universal cover of $Y$ is denoted by $\overline{Y}$ and the universal covering map is denoted by $p_Y:\overline{Y}\rightarrow Y$.}

By definition, $Y$ has the following properties:
\begin{enumerate}
\item If two edges $e_1$ and $e_2$ are dual to the same hyperplane, then $e_1$ is embedded if and only if $e_2$ is embedded.
\item If two distinct edges $e_1$ and $e_2$ intersect at a vertex, then the hyperplanes dual to $e_1$ and $e_2$ are distinct.
\item 
Suppose that $I=(\overline{y}_1,\cdots,\overline{y}_n)$ and $I'=(\overline{y}'_1,\cdots,\overline{y}'_n)$ are sequences of consecutive vertices such that $p_Y(\overline{y}_1)=p_Y(\overline{y}'_1)$, and the images of hyperplanes dual to the edges $(\overline{y}_i,\overline{y}_{i+1})$ and $(\overline{y}'_i,\overline{y}'_{i+1})$ under $p_Y$ are the same. Then there is a unique deck transformation sending $\overline{y}_i$ to $\overline{y}'_i$ for all $i$.
\end{enumerate} 
Under these properties, we can define a subcomplex of $Y$ which has a kind of product structure by mimicking the way to define a product subcomplex of a CAT(0) square complex.

\begin{lemma}\label{ProjofPS}
Let $\overbar{K}\subset\overline{Y}$ be a 2-dimensional product subcomplex denoted by $\overbar{P}_1\times\overbar{P}_2$. Then $p_Y(\overbar{K})$ is equal to the image of a local isometry $\iota:\mathbf{P}_1\times\mathbf{P}_2\rightarrow Y$ where $\bold{P}_i$ is a graph isometric to $p_Y(\overbar{P}_i)$ for $i=1,2$. Moreover, if $\overbar{P}_i$ has no leaves, then $\bold{P}_i$ is a standard graph, a graph without leaves.
\end{lemma}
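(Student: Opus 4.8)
The plan is to exhibit $p_Y(\overbar{K})$ as the image of a single combinatorial local isometry out of the product $\mathbf{P}_1\times\mathbf{P}_2$, where $\mathbf{P}_i:=p_Y(\overbar{P}_i)$. Write $q_i:=p_Y|_{\overbar{P}_i}:\overbar{P}_i\to\mathbf{P}_i$ and $q:=q_1\times q_2:\overbar{K}\to\mathbf{P}_1\times\mathbf{P}_2$. Since $\overbar{K}=\overbar{P}_1\times\overbar{P}_2$ is a product subcomplex it is convex in $\overline{Y}$, so its inclusion into $\overline{Y}$ is a combinatorial local isometry; composing with the covering $p_Y$ (itself a local isometry) shows that $p_Y|_{\overbar{K}}:\overbar{K}\to Y$ is a combinatorial local isometry. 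By property (2), distinct edges of $\overbar{P}_i$ sharing a vertex are dual to distinct hyperplanes and hence have distinct $p_Y$-images, so each $q_i$ is a combinatorial immersion of graphs; as links in a graph are $0$-dimensional this makes $q_i$, and therefore the product map $q$, a combinatorial local isometry that is surjective on cells. The lemma then reduces to producing a combinatorial map $\iota:\mathbf{P}_1\times\mathbf{P}_2\to Y$ with $\iota\circ q=p_Y|_{\overbar{K}}$ and to proving that $\iota$ is a local isometry; granting this, $\mathrm{im}(\iota)=\iota(q(\overbar{K}))=p_Y(\overbar{K})$ follows from surjectivity of $q$, while $\mathbf{P}_i=p_Y(\overbar{P}_i)$ holds by definition.

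The existence of $\iota$ amounts to showing that $p_Y|_{\overbar{K}}$ is constant on the fibers of $q$, i.e. that $p_Y(x_1)=p_Y(x_1')$ and $p_Y(x_2)=p_Y(x_2')$ force $p_Y(x_1,x_2)=p_Y(x_1',x_2')$. Treating one coordinate at a time, it suffices to handle the case $x_2=x_2'$. Here I would fix a base vertex $v_2\in\overbar{P}_2$, take a geodesic edge-path $\mu$ in $\overbar{P}_2$ from $v_2$ to $x_2$, and compare its two parallel copies $\delta:=\{x_1\}\times\mu$ and $\delta':=\{x_1'\}\times\mu$ inside $\overbar{K}$: by the product structure an edge $\{x_1\}\times f$ and its partner $\{x_1'\}\times f$ are dual to the \emph{same} hyperplane of $\overline{Y}$, so $\delta$ and $\delta'$ cross identical sequences of hyperplanes and in particular have matching $p_Y$-hyperplane sequences. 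Since $p_Y(x_1,v_2)=p_Y(x_1)=p_Y(x_1')=p_Y(x_1',v_2)$, property (3) applies to the pair $(\delta,\delta')$ and produces a deck transformation of $\overline{Y}$ carrying the vertices of $\delta$ to those of $\delta'$; its effect on the endpoints is $(x_1,x_2)\mapsto(x_1',x_2)$, whence $p_Y(x_1,x_2)=p_Y(x_1',x_2)$. This is the step that genuinely uses weak specialness, and choosing the two \emph{parallel} copies $\delta,\delta'$ (rather than trying to build a suitable deck transformation by hand) is exactly what makes property (3) directly applicable.

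It remains to prove that $\iota$ is a local isometry, and this is the main obstacle. By Remark \ref{EmbeddingImmersion} every combinatorial immersion of a product of graphs into the NPC square complex $Y$ is automatically a local isometry, so it is enough to show that $\iota$ is locally injective. The delicate point is that $q$ need not be a covering—a single square of $\overbar{K}$ may, for instance, fold onto a torus in $\mathbf{P}_1\times\mathbf{P}_2$—so the edge-germs and square-corners at a vertex $\bar v=(\bar x_1,\bar x_2)$ of $\mathbf{P}_1\times\mathbf{P}_2$ are in general visible only through \emph{different} lifts of $\bar v$, and one cannot test injectivity of the link map $\iota_*$ inside the star of a single preimage. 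To handle this I would, given two distinct simplices of $\mathrm{lk}(\bar v)$ with the same $\iota$-image, use the product-respecting deck transformations of the previous paragraph to carry the two lifts witnessing these simplices to a common vertex $w$ of $\overline{Y}$; there they become cells in the star of $w$ with equal $p_Y$-image, so the local injectivity of the covering $p_Y$ together with property (2) forces them to coincide, and pushing the resulting identity back down forces the two original simplices of $\mathrm{lk}(\bar v)$ to agree. Running this argument on germs in each factor direction and on square-corners gives injectivity of $\iota_*$ at every vertex; hence $\iota$ is a combinatorial immersion and, by Remark \ref{EmbeddingImmersion}, the desired local isometry.
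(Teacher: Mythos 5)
Your construction of $\iota$ is correct and is, in substance, the paper's own argument in different packaging. The paper fixes two parallel slices $\alpha=\overbar{P}_1\times w$ and $\alpha'=\overbar{P}_1\times w'$ and uses the nearest-point projection $\pi_{\alpha'}$ together with properties (1)--(3) (property (3) being applied to parallel geodesics inside hyperplane carriers) to show that $p_Y(e)\mapsto p_Y(\pi_{\alpha'}(e))$ is a well-defined bijection $p_Y(\alpha)\to p_Y(\alpha')$; this is exactly your statement that $p_Y|_{\overbar{K}}$ is constant on the fibres of $q$, and your application of property (3) to the parallel paths $\{x_1\}\times\mu$ and $\{x_1'\}\times\mu$ is the same mechanism. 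Both arguments then finish by invoking Remark \ref{EmbeddingImmersion} to upgrade an immersion to a local isometry. (One small point: for $\iota$ to be well defined as a combinatorial map you need constancy on fibres at the level of edges and squares, not only vertices; this follows by running your property-(3) argument on paths that traverse the relevant edges, exactly as the paper does by working with edges throughout.)

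The gap is in your last paragraph, the local injectivity of $\iota$. After you carry the two lifted cells to a common vertex $(x_1',x_2)$ and conclude from local injectivity of $p_Y$ that $g(a_1\times\{x_2\})=a_2\times\{x_2\}$, you "push the resulting identity back down" to deduce that the two original simplices of $\mathrm{lk}(\bar v)$ agree. That deduction needs $q\circ g=q$ on these cells, i.e. that the deck transformation preserves the fibres of $q$ --- which is a form of the very injectivity you are proving, so as stated the step is circular; moreover $g$, produced by property (3), is only controlled along the two chosen paths and need not even map $\overbar{K}$ into itself, so calling it ``product-respecting'' is unjustified. The step can be repaired in two ways. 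First, by transport: since a corner of a square in a CAT(0) cube complex spans at most one square, induction along the strip $a_1\times\mu$ shows that $g$ carries it to the strip $a_2\times\mu$, so $g(a_1\times\{v_2\})=a_2\times\{v_2\}$, whence $q_1(a_1)=p_Y(a_1\times\{v_2\})=p_Y(a_2\times\{v_2\})=q_1(a_2)$. Second, and more cleanly, without deck transformations at all: two same-direction germs with the same $\iota$-image are dual to the same hyperplane of $Y$ (parallel edges of $\overbar{K}$ are dual to a single hyperplane of $\overline{Y}$), so property (2) forces them to coincide; and identifying germs from the two different factors would make the image hyperplane of $Y$ self-intersect, contradicting weak specialness. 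For what it is worth, the paper itself does not prove this step --- it only asserts that $\iota$ ``is obviously an immersion'' --- so you are attempting more than the paper writes down, but as written your argument does not close.
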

\begin{proof}
For an edge $e\subset\overline{Y}$, let $h_e$ be the hyperplane in $\overline{Y}$ dual to $e$ and $\kappa(h_e)$ the carrier of $h_e$.
Consider two parallel 1-dimensional subcomplexes $\alpha=\overbar{P}_1\times \overline{p}_2$ and $\alpha'=\overbar{P}_1\times \overline{p}_2'$ of $\overline{Y}$ for two distinct vertices $\overline{p}_2,\overline{p}_2'\in \overbar{P}_2$ and let $\pi_{\alpha'}:\alpha\rightarrow \alpha'$ be the restriction of the nearest point projection onto $\alpha'$.
First, we will show that $p_Y(\alpha)$ is isometric to $p_Y(\alpha')$ via the following two steps.

$\mathbf{Step\ 1.}$ For an edge $e\subset\alpha$, the edge $\pi_{\alpha'}(e)\subset\alpha'$ is dual to $h_e$. Since $p_Y(e)$ and $p_Y(\pi_{\alpha'}(e))$ are dual to the same hyperplane, by the property (1) of weakly special square complexes, $p_Y(e)$ is embedded if and only if $p_Y(\pi_{\alpha'}(e))$ is embedded.

$\mathbf{Step\ 2.}$ For an edge $e\subset\alpha$, let $e_1\subset\alpha$ be an edge such that $p_Y(e_1)=p_Y(e)$, i.e. there is a deck transformation $g\in\pi_1(Y)$ which sends $e$ to $e_1$. Let $\pi_{\alpha'}(e_1)\subset\alpha'$ be the edge dual to $h_{e_1}$. 
For a vertex $\overline{y}\in e$, the vertex $\pi_{\alpha'}(\overline{y})$ is in $\pi_{\alpha'}(e)$ and there exists a unique geodesic path $\gamma$ joining $\overline{y}$ to $\pi_{\alpha'}(\overline{y})$ such that $\gamma\subset\kappa(h_e)$.
Let $\overline{y}_1$ be the vertex in $e_1$ such that $p_Y(\overline{y})=p_Y(\overline{y}_1)$.
Consider a geodesic path $\gamma_1$ joining $\overline{y}_1$ to $\pi_{\alpha'}(\overline{y}_1)$.
Since $\gamma$ and $\gamma_1$ are parallel and $p_Y(\overline{y})=p_Y(\overline{y}_1)$, by the property (3) of weakly special cube complexes, $g$ is the exact deck transformation sending $\overline{y}$ and $\gamma$ to $\overline{y}_1$ and $\gamma_1$, respectively. In particular, $\gamma_1\subset\kappa(h_{e_1})$ so that $g.\pi_{\alpha'}(\overline{y})$ is in $\pi_{\alpha'}(e_1)$.
Since the images of $h_e$ and $h_{e_1}$ under $p_Y$ are the same, by the property (2) of weakly special cube complexes, we have $p_Y(\pi_{\alpha'}(e_1))=p_Y(e')$.

By the above two steps, we can construct a well-defined combinatorial map $\iota_1:p_Y(\alpha)\rightarrow p_Y(\alpha')$ which sends $p_Y(e)$ for each edge $e\subset\alpha$ to $p_Y(\pi_{\alpha'}(e))$. Indeed, $\iota_1$ is a bijection. 
Similarly, a bijection $\iota_2:p_Y(\beta)\rightarrow p_Y(\beta')$ is defined.
Let $\bold{P}_1$ and $\bold{P}_2$ be graphs isometric to $p_Y(\overbar{P}_1)$ and $p_Y(\overbar{P}_2)$, respectively. 
Then $\iota_1$ and $\iota_2$ induces a combinatorial map $\iota=\iota_1\times\iota_2:\bold{P}_1\times\bold{P}_2\rightarrow Y$ whose image is $p_Y(\overline{K})$ and the map $\iota$ is obviously an immersion.
By Remark \ref{EmbeddingImmersion}, the immersion $\iota:\bold{P}_1\times\bold{P}_2\rightarrow Y$ is the desired local isometry.
Since the action of $\pi_1(Y)$ on $\overline{Y}$ has no fixed points, if $\overbar{P}_i$ has no leaves, then $\bold{P}_i$ is a standard graph.
\end{proof}

The converse of the above lemma also holds.

\begin{lemma}\label{Converse}
Let $\iota:\bold{P}_1\times \bold{P}_2\rightarrow Y$ be a local isometry for graphs $\bold{P}_i$ ($i=1,2$), and let $K$ be the image of $\iota$. 
Then there exists a 2-dimensional product subcomplex $\overbar{K}$ of $\overline{Y}$ such that the image of $\overbar{K}$ under $p_Y$ is equal to $K$. 
\end{lemma}
\begin{proof}
For a vertex $\overline{y}\in p_Y^{-1}(K)$, let $p_Y(\overline{y})$ be the image of a vertex $\bold{p}_1\times \bold{p}_2\in\bf{P}_1\times\bf{P}_2$ under $\iota$, and let $P_1=\iota({\bf{P}_1 } \times \bold{p}_2)$ and $P_2=\iota(\bold{p}_1\times\bf{P}_2)$.
Let $\overline{\bold{P}_i}$ be the universal cover of $\bold{P}_i$ for $i=1,2$. Then there is an elevation $\overline{\iota}:\overline{\bold{P}_1}\times\overline{\bold{P}_2}\rightarrow \overline{Y}$ such that the image of a vertex $\overline{\bold{p}}_1\times\overline{\bold{p}}_2\in\overline{\bold{P}_1}\times\overline{\bold{P}_2}$ is $\overline{y}$.
Consider the component $\overbar{P}_i$ of the preimage of $P_i$ under $p_Y$ which contains $\overline{y}$ for $i=1,2$. 
Then $\overline{\iota}(\overline{\bold{P}_1}\times \overline{p}_2)$ is contained in $\overbar{P}_1$. 
Suppose that $e_1$ is an edge of $\overbar{P}_1$ which is not contained in, but meets $\overline{\iota}(\overline{\bold{P}_1}\times \overline{p}_2)$.
Since $p_Y(e_1)$ is contained in $P_1$, by the property (3) of weakly special cube complex, there is an edge $e_1'$ in $\overline{\iota}(\overline{\bold{P}_1}\times \overline{p}_2)$ and a deck transformation $\sigma$ such that $\sigma$ sends $e_1$ to $e'_1$. 
Let $e_2\subset\overline{\iota}(\overline{p}_1\times \bold{P}_2)$ be an edge one of whose endpoints contains $\overline{y}$.
The existence of $\sigma$ implies that $(\overline{\iota}(\overline{\bold{P}_1}\times\overline{p}_2)\cup e_1)\times e_2$ is a product subcomplex.
Inductively, we can deduce that $\overbar{P}_1\times\overbar{P}_2$ is a product subcomplex whose image under $p_Y$ is $K$.
Lastly, there is no larger product subcomplex in $p_Y^{-1}(K)$ which properly contains $\overbar{P}_1\times \overbar{P}_2$; otherwise, the image under $p_Y$ is larger than $K$.
\end{proof}

For a subcomplex $K\subset Y$, a product subcomplex $\overbar{K}$ of $\overline{Y}$ is said to be a \textit{p-lift} of $K$ if (1) $p_Y(\overbar{K})=K$ and (2) there is no other product subcomplex which properly contains $\overbar{K}$. Lemma \ref{Converse} implies that if $K$ is the image of the product of two graphs under a local isometry, then it has a p-lift.
If $\overbar{K}_1$ and $\overbar{K}_2$ are two p-lifts of $K$, then there must be a deck transformation sending $\overbar{K}_1$ to $\overbar{K}_2$.

\begin{definition}[Standard Product Subcomplex]\label{SPS}
A \textit{standard product subcomplex} $K$ of $Y$ is defined as the image of a local isometry from the product of two standard graphs into $Y$, and a \textit{standard product subcomplex} $\overbar{K}$ of $\overline{Y}$ is defined as a p-lift of $K$.
Among standard product subcomplexes of either $Y$ or $\overline{Y}$, a maximal one under the set inclusion is said to be a \textit{maximal product subcomplex}.
\end{definition}

\begin{remark}
Since the image of a standard product subcomplex of $\overline{Y}$ under $p_Y$ is a standard product subcomplex of $Y$, if the intersection of standard product subcomplexes $\overbar{K}_i\subset\overline{Y}$ contains a standard product subcomplex $\overbar{K}$, then the intersection of $p_Y(\overbar{K}_i)$'s contains the standard product subcomplex $p_Y(\overbar{K})$.
Conversely, if the intersection of standard product subcomplexes $K_i\subset Y$ contains a standard product subcomplex $K$, then there are p-lifts $\overbar{K}_i$ of $K_i$, which are standard product subcomplexes of $\overline{Y}$, such that the intersection of $\overbar{K}_i$'s contains a p-lift of $K$.  
\end{remark}

Let $K$ be a standard product subcomplex of $Y$. For a p-lift $\overbar{K}$ of $K$, the product $\bold{P}_1\times\bold{P}_2$ of two standard graphs $\bold{P}_1$ and $\bold{P}_2$ obtained as in Lemma \ref{ProjofPS} has the following two properties: 
\begin{itemize}
\item
Consider the collection $\{G_i\}$ of subgroups of  the stabilzer of $\overbar{K}$ under the canonical action $\pi_1(Y)\curvearrowright \overline{Y}$ where the quotient of $\overbar{K}$ by $G_i$ is the product of two graphs. Then the image of the induced homomorphism $\iota_*:\pi_1(\bold{P}_1\times\bold{P}_2)\rightarrow \pi_1(Y)$ is the largest one.
\item
Since p-lifts of $K$ is unique up to deck transformation, the product of two standard graphs obtained from another p-lift of $K$ is isometric to $\bold{P}_1\times\bold{P}_2$. 
\end{itemize}
We say that such $\bold{P}_1\times\bold{P}_2$ is the \textit{base} of $K$ (or $\overbar{K}$).

Let $K\subset Y$ be a standard product subcomplex with base $\bold{P}_1\times\bold{P}_2$ and local isometry $\bold{P}_1\times\bold{P}_2\rightarrow Y$. 
Let $P_1$ and $P_2$ be the images of $\bold{P}_1\times \bold{p}_{2}$ and $\bold{p}_1\times\bold{P}_2$ under the local isometry for some (indeed any) vertices $\bold{p}_1\in\bold{P}_1$ and $\bold{p}_2\in\bold{P}_2$, respectively.
For convenience, $K$ is then denoted by $P_1\times P_2$.
As we did for the inclusion relation between product subcomplexes of $\overline{Y}$, the inclusion relation between standard product subcomplexes of $Y$ can be interpreted coordinate-wisely.

\begin{lemma}\label{InclusionBetweenSPSes}
Suppose that $K$ and $K'$ are standard product subcomplexes of $Y$ such that $K\subset K'$. Then $K$ and $K'$ can be denoted by $P_1\times P_2$ and $P'_1\times P'_2$, respectively, such that $P_i\subset P'_i$ for $i=1,2$.
\end{lemma}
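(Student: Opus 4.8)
The plan is to lift the inclusion $K\subset K'$ to the universal cover, apply the already-established coordinate-wise interpretation of inclusion for product subcomplexes of the CAT(0) square complex $\overline{Y}$ (Section \ref{2.1}), and then push the resulting factor inclusions back down through $p_Y$. First I would fix a p-lift $\overbar{K}$ of $K$; by Definition \ref{SPS} it is a $2$-dimensional product subcomplex $\overbar{K}=\overbar{P}_1\times\overbar{P}_2$ of $\overline{Y}$ whose factors are infinite trees without leaves covering the base factors of $K$. Since $p_Y(\overbar{K})=K\subset K'$, we have $\overbar{K}\subset p^{-1}_Y(K')$.

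The crux is to show that $\overbar{K}$ is contained in a \emph{single} p-lift of $K'$. Recall that $p^{-1}_Y(K')$ is covered by the p-lifts $\overbar{K}'_a$ of $K'$, since every point of $p^{-1}_Y(K')$ lies in the image of some elevation of the defining local isometry $\mathbf{P}'_1\times\mathbf{P}'_2\rightarrow Y$. Pick a vertex $x\in\overbar{K}$ and a p-lift $\overbar{K}'_{a_0}$ with $x\in\overbar{K}'_{a_0}$. For any vertex $z\in\overbar{K}\cap\overbar{K}'_{a_0}$, the covering map $p_Y$ is injective on $Lk(z,\overline{Y})$, being a local homeomorphism; and because $\overbar{K}\subset p^{-1}_Y(K')$ forces $p_Y(Lk(z,\overbar{K}))\subseteq Lk(p_Y(z),K')=p_Y(Lk(z,\overbar{K}'_{a_0}))$, this injectivity yields $Lk(z,\overbar{K})\subseteq Lk(z,\overbar{K}'_{a_0})$. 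Hence every neighbor of $z$ in $\overbar{K}$ lies in $\overbar{K}'_{a_0}$; since the $1$-skeleton of $\overbar{K}$ is connected, every vertex of $\overbar{K}$ lies in $\overbar{K}'_{a_0}$, and since $\overbar{K}'_{a_0}$ is an isometrically embedded product subcomplex and thus a full subcomplex (Remark \ref{EmbeddingImmersion}), we conclude $\overbar{K}\subset\overbar{K}'_{a_0}=:\overbar{K}'$.

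With the inclusion $\overbar{K}\subset\overbar{K}'$ of two product subcomplexes of $\overline{Y}$ in hand, I would invoke the coordinate-wise interpretation of inclusion for product subcomplexes of a CAT(0) square complex from Section \ref{2.1}; in the two-factor case the permutation of coordinates amounts only to possibly swapping the two factors of $\overbar{K}$. This lets me write $\overbar{K}=\overbar{P}_1\times\overbar{P}_2$ and $\overbar{K}'=\overbar{P}'_1\times\overbar{P}'_2$ with $\overbar{P}_i\subset\overbar{P}'_i$ for $i=1,2$. Finally I would project down: by Lemma \ref{ProjofPS} the factorization of $\overbar{K}$ descends under $p_Y$ to the factorization $K=P_1\times P_2$ with $P_i=p_Y(\overbar{P}_i)$, and likewise $K'=P'_1\times P'_2$ with $P'_i=p_Y(\overbar{P}'_i)$; then $\overbar{P}_i\subset\overbar{P}'_i$ gives $P_i=p_Y(\overbar{P}_i)\subset p_Y(\overbar{P}'_i)=P'_i$, as required.

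I expect the main obstacle to be the middle step, namely ruling out that $\overbar{K}$ spreads across several distinct p-lifts of $K'$. The link-injectivity argument, together with the fullness of the convex product subcomplex $\overbar{K}'_{a_0}$, is precisely what confines $\overbar{K}$ to one p-lift, and it is the only place where the weakly special hypothesis is genuinely used (through the clean product/local-isometry structure of p-lifts). Everything else is either a direct lift from the established CAT(0) statement or a routine descent through the covering map.
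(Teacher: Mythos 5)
Your outer structure is the same as the paper's: fix compatible p-lifts with $\overbar{K}\subset\overbar{K}'$, apply the coordinate-wise reading of inclusions of product subcomplexes of the CAT(0) square complex $\overline{Y}$ from Section \ref{2.1}, and descend through $p_Y$ via Lemma \ref{ProjofPS}. Those two outer steps are exactly what the paper does, and they are fine. The divergence is the middle step, which you correctly identify as the crux. The paper does not argue it at all: it invokes the standing facts about p-lifts recorded after Definition \ref{SPS} (third bullet: if $K\subset K'$, then some p-lift of $K'$ contains a p-lift of $K$). You instead try to prove it, and your argument contains a genuine error.

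The false step is the equality $Lk(p_Y(z),K')=p_Y(Lk(z,\overbar{K}'_{a_0}))$. The restriction of $p_Y$ to a p-lift is surjective onto $K'$, but surjectivity does not pass to links at a fixed vertex, because the base local isometry $\mathbf{P}'_1\times\mathbf{P}'_2\rightarrow Y$ need not be injective: $K'$ can have vertices where two sheets of its image cross, and at such a vertex two \emph{distinct} p-lifts of $K'$ meet. Concretely, let $Y$ be the torus $C_4\times C_4$ ($C_4$ a $4$-cycle) with the two vertices $(0,0)$ and $(2,2)$ identified to a single vertex $w_0$. This $Y$ is a compact weakly special square complex (the link of $w_0$ is a disjoint union of two $4$-cycles, and the edges at the two identified corners are dual to disjoint sets of hyperplanes, so no self-intersection or self-osculation is created), the quotient map $C_4\times C_4\rightarrow Y$ is a non-injective local isometry, and $K':=Y$ is a standard product subcomplex. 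Its universal cover is a tree of flats, the p-lifts of $K'$ are exactly these flats (a $2$-dimensional product subcomplex has connected vertex links, so it cannot pass from one flat to another through a crossing vertex, where the ambient link is disconnected), and two flats meet at every lift of $w_0$. At such a shared vertex $z$, the links of the two flats are the two components of $Lk(z,\overline{Y})$, whereas $Lk(w_0,K')$ is their union; so your equality fails, and so does the derived containment $Lk(z,\overbar{K})\subseteq Lk(z,\overbar{K}'_{a_0})$. Since your proof allows an arbitrary choice of $x\in\overbar{K}$ and of a p-lift $\overbar{K}'_{a_0}$ through $x$, it must survive the choice where $x$ is a crossing vertex and $\overbar{K}'_{a_0}$ is the wrong sheet, and there the statement you are propagating is simply false. (One can elaborate this example, replacing $C_4$ by two triangles joined by an edge, to get instances with $K\subsetneq K'$ where the chosen $\overbar{K}'_{a_0}$ genuinely fails to contain the p-lift of $K$.) So the link argument does not establish that $\overbar{K}$ lies in a single p-lift of $K'$; repairing it requires either an argument that actually exploits the weakly special properties (1)--(3) of Section \ref{2.2}, or simply citing the property of p-lifts stated after Definition \ref{SPS}, which is what the paper's own proof does.
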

\begin{proof}
Let $\bold{P}'_1\times\bold{P}'_2$ be the base of $K'$ with the local isometry $\iota':\bold{P}'_1\times\bold{P}'_2\rightarrow Y$ and let $\overline{\iota'}:\overline{\bold{P}'_1}\times\overline{\bold{P}'_2}\hookrightarrow \overline{Y}$ be an elevation whose image is a standard product subcomplex $\overbar{K}'\subset\overline{Y}$.
Since $K\subset K'$, there exists a p-lift $\overbar{K}$ of $K$ contained in $\overbar{K}'$. It means that $\overbar{K}$ and $\overbar{K}'$ can be denoted by $\overbar{P}_1\times\overbar{P}_2$ and $\overbar{P}'_1\times\overbar{P}'_2$, respectively, where $\overbar{P}_i\subset\overbar{P}'_i$ for $i=1,2$. 
By Lemma \ref{ProjofPS} and the notation of standard product subcomplex of $Y$, $K$ and $K'$ are therefore denoted by $p_Y(\overbar{P}_1)\times p_Y(\overbar{P}_2)$ and $p_Y(\overbar{P}'_1)\times p_Y(\overbar{P}'_2)$, respectively, where $p_Y(\overbar{P}_i)\subset p_Y(\overbar{P}'_i)$. 
\end{proof}

\begin{convention} 
The following letters are usually used to denote objects related to product subcomplexes for a compact weakly special square complex $Y$ and the universal cover $\overline{Y}$ of $Y$:
\begin{itemize}
\item $P$ denotes a 1-dimensional subcomplex of $Y$ and $\overline{P}$ denotes a 1-dimensional subcomplex of $\overline{Y}$ which is isometric to the universal cover of $P$. 
\item $\overbar{K}$ and $\overbar{M}$ ($K$ and $M$, resp.) may denote standard and maximal product subcomplexes of $\overline{Y}$ ($Y$, resp.). A flat in $\overline{Y}$ may be denoted by $F$.
\item The base of a standard product subcomplex (or a subcomplex of the base) of $Y$ is denoted by a bold letter like $\bold{P}, \bold{K}, \mathrm{or}\ \bold{M}$.
\end{itemize}
In particular, the line over a letter denotes either the universal cover of the space which the letter refers or a subcomplex of the universal cover of something.
\end{convention}

For a standard product subcomplex $\overbar{K}\subset\overline{Y}$, the base of $p_Y(\overbar{K})\subset Y$ has the fundamental group quasi-isometric to exactly one of $\mathbb{Z}\times\mathbb{Z}$, $\mathbb{Z}\times\mathbb{F}$ or $\mathbb{F}\times\mathbb{F}$.
We say that $p_Y(\overbar{K})$ has one of the following \textit{quasi-isometric\ types}: $(\mathbb{Z}\times\mathbb{Z})$, $(\mathbb{Z}\times\mathbb{F})$ or $(\mathbb{F}\times\mathbb{F})$. 
The quasi-isometric type of $\overbar{K}$ is defined as the quasi-isometric type of $p_Y(\overbar{K})$. Note that if standard product subcomplexes of $\overline{Y}$ have different quasi-isometric types, then they are not quasi-isometric to each other. 

\vspace{0.5mm}

There are a series of facts about standard product subcomplexes of $\overline{Y}$ which we will use frequently in this paper.

\begin{lemma}\label{SubSPC}
Let $\overbar{K}\subset\overline{Y}$ be a product subcomplex $\overbar{P}_1\times\overbar{P}_2$ where $\overbar{P}_i$ is a subcomplex of $\overline{Y}$ which is isometric to an infinite tree for $i=1,2$.
Then there is a standard product subcomplex $\overbar{K}'\subset\overline{Y}$ such that $\overbar{K}'$ is contained in any standard product subcomplex whose neighborhood contains $\overbar{K}$.
\end{lemma}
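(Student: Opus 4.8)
The plan is to identify $\overbar{K}'$ with a canonical ``leafless core'' of $\overbar{K}$ and to show, using the projection and parallelism results of Lemma~\ref{Projection} together with Lemma~\ref{NS}, that this core is forced inside every standard product subcomplex that coarsely contains $\overbar{K}$. Write $\mathcal{S}$ for the set of standard product subcomplexes $\overbar{M}\subset\overline{Y}$ with $\overbar{K}\subset N_r(\overbar{M})$ for some $r$; if $\mathcal{S}=\emptyset$ there is nothing to prove, so assume $\mathcal{S}\neq\emptyset$ and fix a member $\overbar{M}=\overbar{Q}^M_1\times\overbar{Q}^M_2\in\mathcal{S}$.

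First I would upgrade the coarse closeness $\overbar{K}\subset N_r(\overbar{M})$ to a genuine overlap. Both $\overbar{K}$ and $\overbar{M}$ are convex, so Lemma~\ref{Projection} applies with $U_1=\overbar{K}$, $U_2=\overbar{M}$, $D=d(\overbar{K},\overbar{M})$ and $V_{\overbar{K}}=\{y\in\overbar{K}\mid d(y,\overbar{M})=D\}$. Part~(3) shows that a point $p\in\overbar{K}$ with $d(p,V_{\overbar{K}})\geq\varepsilon$ satisfies $d(p,\overbar{M})\geq D+A\,d(p,V_{\overbar{K}})$; since $d(p,\overbar{M})<r$ this bounds $d(p,V_{\overbar{K}})$ uniformly, so $V_{\overbar{K}}$ is coarsely dense in $\overbar{K}$, and by part~(2) it is carried isometrically by $\pi_{\overbar{M}}$ onto a convex subcomplex $V_{\overbar{M}}\subset\overbar{M}$. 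Being a convex, coarsely dense subcomplex of the two-dimensional product $\overbar{K}=\overbar{P}_1\times\overbar{P}_2$, the set $V_{\overbar{K}}$ is itself a product $\overbar{R}_1\times\overbar{R}_2$ with each $\overbar{R}_i\subset\overbar{P}_i$ a convex subtree coarsely dense in $\overbar{P}_i$.

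The key geometric input is then that $D=0$. Assuming each $\overbar{P}_i$ contains a bi-infinite geodesic (the main case), pick lines $\ell_i\subset\overbar{P}_i$; the flat $F=\ell_1\times\ell_2\subset\overbar{K}$ is a two-dimensional product of leafless trees, as is $\overbar{M}$, so from $F\subset N_r(\overbar{M})$ Lemma~\ref{NS} forces $F\subset\overbar{M}$. Hence $\overbar{K}\cap\overbar{M}\neq\emptyset$, $D=0$, and $V_{\overbar{K}}=V_{\overbar{M}}\subset\overbar{K}\cap\overbar{M}$. More generally every line of $\overbar{P}_i$ pairs with $\ell_{3-i}$ to give a flat in $\overbar{K}$, hence lies in $\overbar{M}$ and, after matching the two product structures along $F$, lies in the factor $\overbar{Q}^M_i$. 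I would now define the core $\overbar{Q}_i\subset\overbar{P}_i$ to be the union of all bi-infinite geodesics of $\overbar{P}_i$, a leafless convex subtree, and set $\mathbf{Q}_i=p_Y(\overbar{Q}_i)$. By Lemma~\ref{ProjofPS} the image $p_Y(\overbar{Q}_1\times\overbar{Q}_2)$ is the image of a local isometry of $\mathbf{Q}_1\times\mathbf{Q}_2$, and since each $\mathbf{Q}_i$ is a finite leafless graph it is non-simply connected, so $p_Y(\overbar{Q}_1\times\overbar{Q}_2)$ is a standard product subcomplex of $Y$ in the sense of Definition~\ref{SPS}. Let $\overbar{K}'$ be the p-lift whose factors are the components $\overbar{L}_i\supset\overbar{Q}_i$ of $p_Y^{-1}(\mathbf{Q}_i)$ containing $\overbar{Q}_i$; these are the universal covers of the $\mathbf{Q}_i$, so $\overbar{K}'$ is a standard product subcomplex of $\overline{Y}$. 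Because $\overbar{Q}_i\subset\overbar{Q}^M_i$ and $\overbar{Q}^M_i\to p_Y(\overbar{Q}^M_i)$ is a covering, unique lifting of edges keeps the connected lift $\overbar{L}_i$ inside the convex factor $\overbar{Q}^M_i$; therefore $\overbar{K}'=\overbar{L}_1\times\overbar{L}_2\subset\overbar{Q}^M_1\times\overbar{Q}^M_2=\overbar{M}$. As $\overbar{K}'$ depends only on $\overbar{K}$, it lies in every member of $\mathcal{S}$, which is the assertion.

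The main obstacle is the degenerate possibility that some factor $\overbar{P}_i$ contains no bi-infinite geodesic (for instance it is a ray): then $\overbar{K}$ carries no flat, Lemma~\ref{NS} no longer forces $D=0$, and the core $\overbar{Q}_i$ is empty, so the construction above breaks down. I expect to handle this either by arguing from the ambient structure of $\overline{Y}$ that a factor of a standard--product--containing $\overbar{K}$ must be two-ended or branching, or by replacing ``union of bi-infinite geodesics'' with the coarsely minimal convex subtree reaching every end of $\overbar{P}_i$ and running the projection argument of Lemma~\ref{Projection} directly on the parallel copies $V_{\overbar{M}}$ to extract the common standard product subcomplex. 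The secondary technical point to pin down is the unique-lifting claim that $\overbar{L}_i$ does not escape the factor $\overbar{Q}^M_i$, which rests on convexity of $\overbar{Q}^M_i$ in $\overline{Y}$ and the covering property of $p_Y$.
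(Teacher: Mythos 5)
Your main case (each $\overbar{P}_i$ contains a bi-infinite geodesic) is essentially sound and takes a genuinely different route from the paper: you manufacture the leafless core \emph{upstairs}, as the union of all lines in each factor, push flats into a competing $\overbar{M}$ via Lemma~\ref{NS}, and only then project down with Lemma~\ref{ProjofPS}, whereas the paper works \emph{downstairs} from the start. Both arguments finish with the same elevation/unique-path-lifting step, so up to the degenerate case your proposal is a legitimate alternative.

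The degenerate case, however, is a genuine gap rather than a removable technicality, and neither of your suggested repairs can close it. An ``infinite tree'' need not contain any bi-infinite geodesic: a factor can be one-ended (a ray, say), for instance $\overbar{K}$ a half-flat, or a ray winding around a cycle of $Y$ crossed with a standard infinite tree; such $\overbar{K}$ are coarsely contained in standard product subcomplexes, so the lemma has content exactly there. Your first fallback (that a factor of a standard-product-containing $\overbar{K}$ must be two-ended or branching) is false; nothing in the hypotheses or in the geometry of $\overline{Y}$ excludes one-ended factors. Your second fallback cannot succeed for a structural reason: a standard product subcomplex of $\overline{Y}$ is a product of \emph{leafless} infinite trees, and any leafless infinite subtree of $\overbar{P}_i$ would contain a line; hence if some $\overbar{P}_i$ has no line, then \emph{no} subcomplex of $\overbar{K}$ is a standard product subcomplex (inclusions of product subcomplexes are coordinate-wise), so the desired $\overbar{K}'$ must stick out of $\overbar{K}$ and cannot be found by any construction internal to $\overbar{K}$. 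It can only be obtained by unwrapping through the compact quotient, which is the paper's mechanism: by Lemma~\ref{ProjofPS}, $p_Y(\overbar{K})$ is the image of an immersion $\iota:\mathbf{P}_1\times\mathbf{P}_2\to Y$ of \emph{finite} graphs; since an infinite tree admits no locally injective map onto a finite tree, each $\mathbf{P}_i$ is non-simply connected, so its leafless core $\mathbf{P}_{i+2}$ is nonempty; then $K'=\iota(\mathbf{P}_3\times\mathbf{P}_4)$ is a standard product subcomplex of $Y$, and its p-lift containing the (connected, coarsely dense) preimage $\overbar{P}_3\times\overbar{P}_4$ is the required $\overbar{K}'$. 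Compactness of $Y$ plays the role that existence of lines plays in your argument, and this is precisely the idea your proposal is missing: as written, it proves the lemma only under the extra hypothesis that each $\overbar{P}_i$ contains a bi-infinite geodesic, which is strictly weaker than the statement.
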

\begin{proof}
By Lemma \ref{ProjofPS}, $p_Y(\overbar{K})\subset Y$ is the image of a local isometry $\iota:\bold{P}_1\times \bold{P}_2\rightarrow Y$ where $\bold{P}_1$ and $\bold{P}_2$ are graphs isometric to $p_Y(\overbar{P}_1)$ and $p_Y(\overbar{P}_2)$, respectively. 
Since $p_Y$ is a locally isometry and $\overbar{P}_i$ is infinite, $\bold{P}_i$ is non-simply connected for $i=1,2$.
Let $\bold{P}_3$ and $\bold{P}_4$ be the subgraphs of $\bold{P}_1$ and $\bold{P}_2$ obtained by removing leaves, respectively, i.e. $\bold{P}_3$ and $\bold{P}_4$ are the maximal standard subgraphs of $\bold{P}_1$ and $\bold{P}_2$, respectively.
Consider the preimage $\overbar{P}_{i+2}$ of $\iota(\bold{P}_{i+2})$ under the restriction of $p_Y$ to $\overbar{P}_i$ for $i=1,2$. Then $\overbar{P}_{i+2}$ is obviously connected and there exists a constant $C>0$ which depends on $Y$ such that $\overbar{P}_i$ lies in the $C$-neighborhood of $\overbar{P}_{i+2}$.
Then $\overbar{P}_3\times\overbar{P}_4\subset\overline{Y}$ is a product subcomplex which is contained in $\overbar{K}$ and whose neighborhood contains $\overbar{K}$.

The restriction $\iota|_{\bold{P}_3\times\bold{P}_4}$ of $\iota$ to $\bold{P}_3\times\bold{P}_4$ is a local isometry so that the image $K'$ of $\iota|_{\bold{P}_3\times\bold{P}_4}$ is a standard product subcomplex. Then there is a p-lift $\overbar{K}'$ of $K'$ in $\overline{Y}$ such that $\overbar{K}'$ contains $\overbar{P}_3\times\overbar{P}_4$.
Therefore, $\overbar{K}'$ is a standard product subcomplex of $\overline{Y}$ whose neighborhood contains $\overbar{K}$.
Suppose that $\overbar{K}''$ is a standard product subcomplex whose neighborhood contains $\overbar{K}$.
By Lemma \ref{NS}, $\overbar{K}''$ contains $\overbar{K}$ and thus $p_Y(\overbar{K}'')$ contains $K'$.
Since $\overbar{K}''$ is a p-lift of $p_Y(\overbar{K}'')$, it must contain $\overbar{K}'$.
\end{proof}

\begin{lemma}\label{IntofStd}
Let $\overbar{K}_1$ and $\overbar{K}_2$ be standard product subcomplexes of $\overline{Y}$. 
If $\overbar{K}_1\cap \overbar{K}_2$ contains a flat, then there exists a constant $C_1>0$ depending on $Y$ and a standard product subcomplex $\overbar{K}_3\subset \overbar{K}_1\cap \overbar{K}_2$ such that $\overbar{K}_1\cap \overbar{K}_2$ is contained in the $C_1$-neighborhood of $\overbar{K}_3$. In particular, $d_H(\overbar{K}_3,\overbar{K}_1\cap\overbar{K}_2)$ is finite.
This statement holds for finitely many standard product subcomplexes containing a flat in common. 
\end{lemma}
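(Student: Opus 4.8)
The plan is to reduce the problem to the structural result of Lemma \ref{SubSPC}, which already handles the case of a product subcomplex $\overbar{P}_1 \times \overbar{P}_2$ with each factor an infinite tree. First I would analyze the intersection $\overbar{K}_1 \cap \overbar{K}_2$ itself and show it carries a product structure compatible with those of $\overbar{K}_1$ and $\overbar{K}_2$. Writing $\overbar{K}_i = \overbar{P}^{(i)}_1 \times \overbar{P}^{(i)}_2$ and using the convexity of each $\overbar{K}_i$ (guaranteed by the product structure criterion of Proposition 2.6 in \cite{CS11}), the intersection of two convex subcomplexes is convex. Because $\overbar{K}_1 \cap \overbar{K}_2$ is assumed to contain a flat $F \cong \mathbb{R}^2$, the hyperplanes crossing the intersection split into two families that pairwise cross, so by the same product-structure criterion the intersection is itself a product subcomplex $\overbar{Q}_1 \times \overbar{Q}_2$ with each $\overbar{Q}_j$ a subcomplex of the corresponding factors (after permuting coordinates via the coordinate-wise inclusion convention recorded before Remark \ref{EmbeddingImmersion}). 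Since $F \subset \overbar{Q}_1 \times \overbar{Q}_2$ is a flat, each $\overbar{Q}_j$ contains a singular geodesic and hence is a genuinely infinite one-dimensional subcomplex, not a single vertex.

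The next step is to pass from these possibly leaf-laden factors $\overbar{Q}_j$ to a genuine standard product subcomplex. Each $\overbar{Q}_j$ is a subcomplex of the infinite tree $\overbar{P}^{(i)}_j$, but it need not itself be a product of \emph{infinite} trees in the sense required by Definition \ref{SPS}; it may, for instance, fail to be locally convex with the right link condition, or its projection under $p_Y$ may have leaves. To apply Lemma \ref{SubSPC} directly I would instead argue that $\overbar{Q}_1 \times \overbar{Q}_2$ contains a subcomplex $\overbar{R}_1 \times \overbar{R}_2$ with each $\overbar{R}_j$ an infinite tree and with $\overbar{Q}_j$ contained in a bounded neighborhood of $\overbar{R}_j$; here one passes to the preimage in $\overbar{Q}_j$ of the maximal standard subgraph of $p_Y(\overbar{Q}_j)$, exactly as in the opening paragraph of the proof of Lemma \ref{SubSPC}. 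This introduces a uniform constant depending only on $Y$, since the relevant trimming of leaves is controlled by the diameter of the compact quotient $Y$. Applying Lemma \ref{SubSPC} to $\overbar{R}_1 \times \overbar{R}_2$ then produces a standard product subcomplex $\overbar{K}_3 \subset \overline{Y}$ contained in every standard product subcomplex whose neighborhood contains $\overbar{R}_1 \times \overbar{R}_2$; in particular $\overbar{K}_3 \subset \overbar{K}_1$ and $\overbar{K}_3 \subset \overbar{K}_2$, so $\overbar{K}_3 \subset \overbar{K}_1 \cap \overbar{K}_2$.

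It remains to bound the Hausdorff distance. Lemma \ref{SubSPC} gives $\overbar{R}_1 \times \overbar{R}_2$ in a uniform neighborhood of $\overbar{K}_3$ and vice versa; combined with the bounded-neighborhood control of $\overbar{Q}_j$ over $\overbar{R}_j$ from the previous step, we get a constant $C_1 = C_1(Y)$ with $\overbar{K}_1 \cap \overbar{K}_2 = \overbar{Q}_1 \times \overbar{Q}_2 \subset N_{C_1}(\overbar{K}_3)$, and the reverse inclusion $\overbar{K}_3 \subset \overbar{K}_1 \cap \overbar{K}_2$ is exact, yielding $d_H(\overbar{K}_3, \overbar{K}_1 \cap \overbar{K}_2) < \infty$. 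The extension to a finite family $\overbar{K}_1, \dots, \overbar{K}_n$ sharing a common flat follows by the same argument applied to the intersection $\bigcap_i \overbar{K}_i$, which is again convex and, containing a flat, again a product subcomplex; the uniformity of $C_1$ over $Y$ ensures no dependence on $n$.

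The main obstacle I anticipate is the second step: controlling the passage from the raw intersection $\overbar{Q}_1 \times \overbar{Q}_2$, whose factors are arbitrary infinite subcomplexes of trees, to honest infinite-tree factors while keeping the neighborhood constant uniform in $Y$ rather than in the particular pair $\overbar{K}_1, \overbar{K}_2$. The delicate point is that the factor $\overbar{Q}_j$ could a priori have unboundedly deep leaf-branches if one is not careful, but compactness of $Y$ bounds the lengths of such dead-end segments (they project to paths in $Y$ whose leaf-terminating portions are controlled by the finitely many leaf-reducing moves visible in the quotient), which is precisely what makes $C_1$ depend only on $Y$.
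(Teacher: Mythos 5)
Your proposal is correct and follows essentially the same route as the paper's proof: establish that $\overbar{K}_1\cap\overbar{K}_2$ is convex and hence, by Proposition 2.6 of \cite{CS11}, a product subcomplex with infinite-tree factors, then invoke Lemma \ref{SubSPC} to produce a standard product subcomplex $\overbar{K}_3$ contained in both $\overbar{K}_1$ and $\overbar{K}_2$, with the neighborhood constant depending only on $Y$. Your intermediate trimming step producing $\overbar{R}_1\times\overbar{R}_2$ is redundant (though harmless), since Lemma \ref{SubSPC} only requires its input to be a product of infinite trees --- which your $\overbar{Q}_1\times\overbar{Q}_2$ already is, as you yourself note --- and that lemma performs precisely this leaf-trimming internally.
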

\begin{proof}
Let $\mathcal{H}$ be the collection of hyperplanes in $\overline{Y}$ intersecting $\overbar{K}_1\cap \overbar{K}_2$.
Since $\overbar{K}_1$ is a product subcomplex and $\overbar{K}_1\cap \overbar{K}_2\subset \overbar{K}_1$, $\mathcal{H}$ is the disjoint union of $\mathcal{H}_1$ and $\mathcal{H}_2$ such that every hyperplane $h_1\in\mathcal{H}_1$ intersects every hyperplane $h_2\in\mathcal{H}_2$. Since $\overbar{K}_1\cap \overbar{K}_2$ is convex, by Proposition 2.6 in \cite{CS11}, $\overbar{K}_1\cap \overbar{K}_2$ is a product subcomplex.
Let $\overbar{K}_3\subset\overline{Y}$ be the standard product subcomplex obtained from $\overbar{K}_1\cap \overbar{K}_2$ as in Lemma \ref{SubSPC}.
By construction, $\overbar{K}_3$ is then contained in both $\overbar{K}_1$ and $\overbar{K}_2$.
Therefore, the lemma holds.
\end{proof}

\begin{lemma}\label{AtMostOneMax}
Let $\gamma$ be a singular geodesic in $\overline{Y}$.
Then there is at most one maximal product subcomplex of $\overline{Y}$ whose neighborhood contains the parallel set $\mathbb{P}(\gamma)$.
\end{lemma}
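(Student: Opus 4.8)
The plan is to argue by contradiction: suppose $\overline{M}_1$ and $\overline{M}_2$ are two maximal product subcomplexes of $\overline{Y}$ with $\mathbb{P}(\gamma)\subset N_r(\overline{M}_1)\cap N_r(\overline{M}_2)$ for some $r>0$, and deduce $\overline{M}_1=\overline{M}_2$. First I would project $\gamma$ into each $\overline{M}_i$. Since $\overline{M}_i$ is convex, the function $t\mapsto d(\gamma(t),\overline{M}_i)$ is convex and bounded on $\mathbb{R}$, hence constant, say equal to $D_i$. Applying Lemma \ref{Projection} with $U_1=\gamma$ and $U_2=\overline{M}_i$ shows that the nearest point projection carries $\gamma$ isometrically onto a geodesic line $\gamma_i\subset\overline{M}_i$ parallel to $\gamma$ (their convex hull is a flat strip $\gamma\times[0,D_i]$).

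Next I would show that $\gamma_i$ is \emph{axial} in the product decomposition $\overline{M}_i=\overline{A}_i\times\overline{B}_i$. Because $\gamma_i$ is parallel to the singular geodesic $\gamma$, its direction is an edge direction inside the flat strip, so it cannot be a diagonal line of any Euclidean flat in $\overline{M}_i$; hence one of its two factor-projections is a point and $\gamma_i=\gamma_{A_i}\times\{\ast\}$ for a bi-infinite combinatorial geodesic $\gamma_{A_i}\subset\overline{A}_i$ (after possibly swapping the two factors). Translating $\gamma_i$ across the $\overline{B}_i$-factor produces singular geodesics parallel to $\gamma$, so the standard product subcomplex $\overline{K}_i:=\gamma_{A_i}\times\overline{B}_i$ (a product of the line $\gamma_{A_i}$ with the leafless infinite tree $\overline{B}_i$) lies inside $\mathbb{P}(\gamma)$. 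In particular $\overline{K}_i\subset\mathbb{P}(\gamma)\subset N_r(\overline{M}_j)$, and since $\overline{K}_i$ is a $2$-dimensional product of infinite trees without leaves, Lemma \ref{NS} upgrades this to $\overline{K}_i\subset\overline{M}_j$. Reading the inclusions $\overline{K}_1\subset\overline{M}_2$ and $\overline{K}_2\subset\overline{M}_1$ coordinate-wise (the hyperplanes dual to $\gamma$ lie in the $\overline{A}$-family of each $\overline{M}_j$, forcing the $\overline{B}$-family of $\overline{K}_i$ into the $\overline{B}$-family of $\overline{M}_j$) gives $\overline{B}_1\subset\overline{B}_2$ and $\overline{B}_2\subset\overline{B}_1$, whence $\overline{B}_1=\overline{B}_2=:\overline{B}$ as subcomplexes, with $\gamma_{A_1},\gamma_{A_2}$ both lying in $\overline{A}_1\cap\overline{A}_2$.

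It remains to prove that the two line-factors ``saturate'' to the same tree, i.e.\ $\overline{A}_1=\overline{A}_2$, and this is the step I expect to be the main obstacle. Here I would use maximality together with the NPC (flag) condition on links: at any vertex $v$ of the shared subcomplex $\gamma_{A_1}\times\overline{B}$, every edge direction coming from $\overline{A}_1$ and every edge direction coming from $\overline{A}_2$ is joined in $Lk(v,\overline{Y})$ to every edge direction of $\overline{B}$, so the union of these $\overline{A}$-directions spans a complete bipartite subgraph with the $\overline{B}$-directions. Using the hyperplane product criterion (Proposition 2.6 in \cite{CS11}) this local compatibility should globalize: the hyperplanes dual to $\overline{A}_1$ or to $\overline{A}_2$ all cross every hyperplane dual to $\overline{B}$, so the convex hull of $\overline{M}_1\cup\overline{M}_2$ is again a product subcomplex of the form $\widehat{A}\times\overline{B}$ containing both. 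Maximality of $\overline{M}_1$ forces $\overline{M}_1=\widehat{A}\times\overline{B}\supset\overline{M}_2$, and then maximality of $\overline{M}_2$ forces $\overline{M}_1=\overline{M}_2$, the desired contradiction.

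As a sanity check, the degenerate case where $\gamma$ is parallel to a \emph{line} factor is immediate: then $\overline{A}_i=\gamma_{A_i}$ and $\overline{M}_i=\overline{K}_i$, so the cross-inclusions $\overline{K}_1\subset\overline{M}_2$ and $\overline{K}_2\subset\overline{M}_1$ already give $\overline{M}_1=\overline{M}_2$ with no saturation argument. The genuine difficulty is confined to the case where the factor parallel to $\gamma$ is a branching tree, where one must rule out two distinct maximal product subcomplexes that agree on $\overline{B}$ and on a common geodesic $\gamma_A$ but branch differently in the $\overline{A}$-direction; the flag/maximality argument above is exactly what excludes this, since distinct branchings could always be merged into a strictly larger product subcomplex.
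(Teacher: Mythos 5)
Your proof is correct, and its skeleton matches the paper's: both arguments show that two maximal product subcomplexes whose neighborhoods contain $\mathbb{P}(\gamma)$ must share the tree factor transverse to $\gamma$, and then merge them into a strictly larger product subcomplex, contradicting maximality. The differences are in the middle machinery. Where you build the common cores via nearest-point projection (Lemma \ref{Projection}), the axiality of a singular geodesic inside a product of trees, and Lemma \ref{NS}, the paper instead applies Lemma \ref{SubSPC} to $\mathbb{P}(\gamma)=\gamma\times\overbar{T}$ to produce a single standard product subcomplex $\overbar{K}=\overbar{P}_1\times\overbar{P}_2$ contained in every standard product subcomplex whose neighborhood contains $\mathbb{P}(\gamma)$, and then uses Lemma \ref{NS} to identify $\overbar{P}_2$ with the second factor of each maximal one. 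For the merging step the paper is more elementary than you are: since the two maximal subcomplexes literally share the second factor, their union is $(\overbar{P}'_1\cup\overbar{P}''_1)\times\overbar{P}_2$, visibly a product, so neither your link/flag discussion nor the hyperplane criterion of Proposition 2.6 in \cite{CS11} is needed (and your flag paragraph is in fact redundant once you run the hyperplane argument, which alone yields that the union is convex and a product). One caveat in your final step: maximality here means maximality among \emph{standard} product subcomplexes, so you cannot immediately conclude $\overbar{M}_1=\widehat{A}\times\overbar{B}$ from maximality; you should first pass from the product $\widehat{A}\times\overbar{B}$ to a standard product subcomplex containing it (via Lemma \ref{SubSPC} together with Lemma \ref{NS}, or by projecting to $Y$ and using Lemma \ref{ProjofPS}) and then apply maximality to both $\overbar{M}_1$ and $\overbar{M}_2$. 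The paper glosses this same point when it asserts the union is a standard product subcomplex, so this is a patch rather than a gap, but it is worth making explicit.
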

\begin{proof}
Suppose that there is a maximal product subcomplex of $\overline{Y}$ which contains $\gamma$. Then the parallel set $\mathbb{P}(\gamma)$ is the product $\gamma\times\overbar{T}$ of $\gamma$ and an infinite tree $\overbar{T}$.
By Lemma \ref{SubSPC}, then, there is a standard product subcomplex $\overbar{K}\subset\overline{Y}$ such that $\overbar{K}=\overbar{P}_1\times\overbar{P}_2$ is contained in any standard product subcomplex whose neighborhood contains $\mathbb{P}(\gamma)$.

Suppose that there are two maximal product subcomplexes $\overbar{M}'$ and $\overbar{M}''$ whose neighborhoods contain $\mathbb{P}(\gamma)$. 
By Lemma \ref{NS} and the construction of $\overbar{K}$, then, $\overbar{M}'$ and $\overbar{M}''$ can be denoted by $\overbar{P}'_1\times\overbar{P}'_2$ and $\overbar{P}''_1\times\overbar{P}''_2$, respectively, such that $\overbar{P}_1$ is contained in $\overbar{P}'_1\cup \overbar{P}''_1$ and $\overbar{P}_2$ is equal to $\overbar{P}'_2$ and $\overbar{P}''_2$. 
However, it is a contradiction since $\overbar{P}'_1$ and $\overbar{P}''_1$ are not identical so that $(\overbar{P}'_1\cup\overbar{P}''_1)\times\overbar{P}_2$ is a standard product subcomplex which properly contains $\overbar{M}'$ and $\overbar{M}''$.
Therefore, the lemma holds.
\end{proof}

Before closing this subsection, we introduce a rigidity result about flats in $\overline{Y}$, the result in \cite{Hua(b)}, which is a key tool in this paper (especially in Section \ref{3.1}). 

\begin{theorem}[Theorem 1.3 in \cite{Hua(b)}]\label{FlattoFlat}
Let $Y_1$ and $Y_2$ be two compact weakly special square complexes with their universal covers $\overline{Y_1}$ and $\overline{Y_2}$, respectively.
If there is a $(\lambda,\varepsilon)$-quasi-isometry $\phi:\overline{Y_1}\rightarrow \overline{Y_2}$, then there exists a constant $C=C(\lambda,\varepsilon)>0$ such that for any flat $F\subset \overline{Y_1}$, there exists a unique flat $\phi_H(F)\subset \overline{Y_2}$ such that $d_H(\phi(F),\phi_H(F))<C$ where $d_H$ denotes the Hausdorff distance.
\end{theorem}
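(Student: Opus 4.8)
The plan is to treat this as a top-dimensional quasiflat rigidity statement. Since $\overline{Y_1}$ and $\overline{Y_2}$ are $2$-dimensional CAT(0) cube complexes, a flat $F\subset\overline{Y_1}$ is top-dimensional, i.e.\ the image of an isometric embedding $\mathbb{R}^2\hookrightarrow\overline{Y_1}$, and $\phi(F)$ is a $2$-dimensional quasiflat in $\overline{Y_2}$. The existence half of the theorem amounts to showing that this quasiflat is Hausdorff-close to a genuine flat, while the uniqueness half is a divergence-of-flats statement that I would extract directly from Lemma \ref{NS}. The key structural fact I will exploit is that a flat is itself a $2$-dimensional product subcomplex $\ell_1\times\ell_2$ of two singular geodesics, each of which is an infinite tree without leaves, so Lemma \ref{NS} applies to flats verbatim.

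First I would establish \emph{existence}. The engine is a top-dimensional quasiflats theorem for CAT(0) cube complexes: a $2$-dimensional quasiflat lies within Hausdorff distance $C'=C'(\lambda,\varepsilon)$ of a finite irredundant union of flats $F_1\cup\cdots\cup F_k\subset\overline{Y_2}$, each within $C'$ of $\phi(F)$. This is the deep input, and it is where the weakly special, $2$-dimensional structure enters, through control of the two transverse families of hyperplanes that any coarse plane must cross. Granting it, I would argue $k=1$. Distinct flats in a $2$-dimensional CAT(0) cube complex \emph{diverge}: by Lemma \ref{NS}, two flats at finite Hausdorff distance coincide, so distinct $F_i$ have pairwise coarse intersection at most a quasi-line (compare Lemma \ref{parallel}). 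If $k\geq 2$, passing to asymptotic cones makes $\phi$ a bi-Lipschitz homeomorphism, $\mathrm{Cone}(F)$ a bi-Lipschitz copy of $\mathbb{R}^2$, and the image equal to the union of the planes $\mathrm{Cone}(F_i)$; two distinct such planes sharing a line produce a branch locus (locally four half-planes meeting along a line), which is not a topological $2$-manifold and hence cannot be a bi-Lipschitz image of $\mathbb{R}^2$. Equivalently, at the cube-complex level, $F\cong\mathbb{R}^2$ cannot be coarsely cut into two unbounded regions, mapping close to distinct flats, separated only by an at-most-linear strip. Either way $k=1$, and I name the resulting single flat $\phi_H(F)$.

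Next, \emph{uniqueness} with a uniform constant. If $F'$ and $F''$ are flats with $d_H(\phi(F),F')<C'$ and $d_H(\phi(F),F'')<C'$, then $d_H(F',F'')<2C'$, so $F'\subset N_{2C'}(F'')$. Since $F'$ and $F''$ are $2$-dimensional product subcomplexes, each a product of two lines (infinite trees without leaves), Lemma \ref{NS} gives $F'\subset F''$, and by symmetry $F'=F''$. This yields uniqueness and, because the quasiflats theorem supplied $C'(\lambda,\varepsilon)$, produces the desired $C=C(\lambda,\varepsilon)$ depending only on the quasi-isometry constants.

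The hard part is the quasiflats theorem in the first step: showing that an abstract $2$-dimensional quasiflat is Hausdorff-close to a finite union of flats, with a constant depending only on $(\lambda,\varepsilon)$. This is precisely the combinatorial-geometric core, requiring one to pin a coarse plane to actual flats via coarse separation and the behavior of the two transverse hyperplane families in a weakly special square complex. By contrast, the reduction from a union of flats to a single flat and the uniqueness statement are comparatively soft, relying only on the divergence of distinct flats packaged in Lemma \ref{NS} together with the fact that $F$ is genuinely a plane rather than a more complicated quasiflat.
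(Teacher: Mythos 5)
First, a point of comparison: the paper does not prove Theorem \ref{FlattoFlat} at all — it is imported verbatim as Theorem 1.3 of \cite{Hua(b)} and used as a black box — so your proposal can only be measured against Huang's argument, whose overall shape (quasiflat rigidity plus a soft uniqueness step) you have correctly guessed. Your uniqueness step is indeed fine: two flats within finite Hausdorff distance of each other coincide by Lemma \ref{NS}, exactly as you say, and this gives the uniform constant once existence is settled.

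The genuine gap is in the existence step. The ``engine'' you grant yourself — that every $2$-dimensional quasiflat in a CAT(0) square complex lies within uniform Hausdorff distance of a finite union of \emph{flats} — is false, not merely unproven. Already in $\mathbb{R}^2$, the universal cover of the torus (a compact special square complex), the quadrant $[0,\infty)^2$ is a quasiflat (it is bi-Lipschitz to the plane by rescaling the angular coordinate), yet it lies at infinite Hausdorff distance from the unique flat $\mathbb{R}^2$, hence from every finite union of flats. The correct deep input, Huang's top-dimensional quasiflats theorem for CAT(0) cube complexes, produces a finite union of cubical \emph{orthants}, and it holds with no specialness or cocompactness hypothesis whatsoever. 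The actual content of the cited Theorem 1.3 — and the only place where compactness and weak specialness of $Y_2$ enter — is the upgrade from a union of orthants to a single genuine flat when the quasiflat in question is $\phi(F)$ for a flat $F$. Your proposal never addresses this step: the soft arguments you supply (asymptotic cones, branch loci, Lemma \ref{NS}, Lemma \ref{parallel}) only rule out configurations built from whole flats and cannot exclude staircase-like unions of orthants. Granting the engine in the form you state it, specialized to $\phi(F)$, is therefore circular: it assumes essentially everything that is to be proven. A smaller inaccuracy in the same passage: two distinct flats in a square complex can share a half-flat (take $\ell_1\times\mathbb{R}$ and $\ell_2\times\mathbb{R}$ in $T\times\mathbb{R}$ for lines $\ell_1,\ell_2$ sharing a ray in a tree $T$), so ``coarse intersection at most a quasi-line'' is also wrong, although your branch-locus argument could be repaired around that point.
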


%
%
\subsection{Right-angled Artin Groups}\label{2.3}
Let $\Lambda$ be a (possibly disconnected) simplicial graph. 
The Salvetti complex $S(\Lambda)$ associated to $\Lambda$ is a cube complex with one vertex such that each $n$-cube is an $n$-dimensional torus and corresponds to a complete subgraph $K_n$ of $\Lambda$. From the construction, it can be easily shown that $S(\Lambda)$ is a (weakly) special cube complex. Note that (1) if $\Lambda$ is the disjoint union of $\Lambda_1,\cdots,\Lambda_n$, then $S(\Lambda)$ is the wedge sum of $S(\Lambda_i)$'s, and (2) $S(\Lambda)$ is 2-dimensional if and only if $\Lambda$ is triangle-free and contains at least one edge.
The right-angled Artin group (RAAG) $A(\Lambda)$ with defining graph $\Lambda$ is the fundamental group of $S(\Lambda)$; the dimension of $A(\Lambda)$ is defined as the dimension of $S(\Lambda)$.
The universal cover of $S(\Lambda)$ and the universal covering map are denoted by $X(\Lambda)$ and $p_{S(\Lambda)}:X(\Lambda) \rightarrow S(\Lambda)$, respectively. Then vertices in $X(\Lambda)$ correspond to elements of $A(\Lambda)$ (after choosing a vertex in $X(\Lambda)$ corresponding to the identity of $A(\Lambda)$) due to the fact that $S(\Lambda)$ has a unique vertex. 
We refer to \cite{CH}, \cite{CHD}, \cite{HW08} for properties of Salvetti complexes and RAAGs. 

\begin{remark}[\cite{HW08}]\label{SC}
An alternative definition of a compact special cube complex is the following: a compact NPC cube complex $Y$ is special if there is a local isometry from $Y$ into a Salvetti complex.
\end{remark}

Each (possibly disconnected) full subgraph $\Lambda_1\leq \Lambda$ gives rise to a subgroup $A(\Lambda_1)$ of $A(\Lambda)$ and a locally isometric embedding $S(\Lambda_1)\hookrightarrow S(\Lambda)$; $S(\Lambda_1)$ is considered as a locally convex subcomplex of $S(\Lambda)$.
This kind of subcomplex $S(\Lambda_1)\subset S(\Lambda)$ is called a \textit{standard subcomplex of $S(\Lambda)$ with defining graph $\Lambda_1$}.
In particular, each edge of $S(\Lambda)$ is a standard subcomplex whose defining graph is a vertex; the collection of vertices in $\Lambda$ can be considered as the standard generating set of $A(\Lambda)$.
Each p-lift of $S(\Lambda_1)$ in $X(\Lambda)$ is a convex subcomplex (isometric to $X(\Lambda_1)$) which is called a \textit{standard subcomplex of $X(\Lambda)$ with defining graph $\Lambda_1$}. 
Especially, a standard subcomplex of $X(\Lambda)$ whose defining graph is a vertex $v\in \Lambda$ is called a \textit{standard geodesic labelled by} $v$ (which is isometric to $\mathbb{R}$). We remark that each edge of $S(\Lambda)$ or $X(\Lambda)$ is labelled by a vertex in $\Lambda$.

Suppose that $\Lambda$ is connected and triangle-free. Then a standard product subcomplex of $S(\Lambda)$ or $X(\Lambda)$ is a standard subcomplex whose defining graph $\Lambda_1\leq\Lambda$ admits a join decomposition $\Lambda_2 \circ \Lambda_3$ where $\Lambda_2$ and $\Lambda_3$ are non-empty. 
Since $\Lambda_2$ and $\Lambda_3$ have no edges, $A(\Lambda_1)$ is isomorphic to $\mathbb{F}_n\times\mathbb{F}_m$ where $n$ and $m$ are the number of vertices of $\Lambda_2$ and $\Lambda_3$, respectively.  
Since the groups isomorphic to $\mathbb{F}_n \times \mathbb{F}_m$ ($n,m\geq 1$) are frequently used in this paper, for convenience, they will be called \textit{join groups}.
The defining graph of a maximal product subcomplex of $S(\Lambda)$ or $X(\Lambda)$ is called a \textit{maximal join subgraph} of $\Lambda$. 

\vspace{0.5mm}

\textbf{Whenever we talk about RAAGs or Salvetti complexes, their defining graphs are always assumed to be simplicial even if we do not say.}

%
%

\subsection{Graph Braid Groups}\label{2.4}
We start from a (possibly non-simplicial) connected graph $\Gamma$. The ordered and unordered configuration spaces of $n$ points on $\Gamma$ are respectively
$$C_n(\Gamma) = \{(x_1 ,x_2, \cdots, x_n)\in \Gamma^n\ |\ x_i \neq x_j\ \mathrm{if}\ i \neq j \}\ \ \mathrm{and}$$  
$$UC_n(\Gamma) = \{\{x_1 ,x_2, \cdots, x_n\}\subset \Gamma\ |\ x_i \neq x_j\ \mathrm{if}\ i \neq j \}.$$
Under the action of the symmetric group $S_n$ on $\Gamma^n$ by permuting $n$ coordinates, $UC_n(\Gamma)$ is the quotient space of $C_n(\Gamma)$ by $S_n$.
The graph $n$-braid group $B_n(\Gamma)$ and the pure graph $n$-braid group $PB_n(\Gamma)$ are the fundamental groups $\pi_1(UC_n(\Gamma))$ and $\pi_1(C_n(\Gamma))$ of $UC_n(\Gamma)$ and $C_n(\Gamma)$, respectively. We note that $B_n(\Gamma)$ (and thus $PB_n(\Gamma)$) is always non-trivial except when $\Gamma$ is homeomorphic to a line segment.

Unlike configuration spaces on higher dimensional spaces, there is a discrete version of configuration space on $\Gamma$.
The ordered and unordered discrete configuration spaces of $n$ points on $\Gamma$ are 
$$D_n(\Gamma) = \{ (\sigma_1, \cdots, \sigma_n)\subset \Gamma^n\ |\ {\sigma_i} \cap {\sigma_j} = \emptyset\ \mathrm{if}\ i \neq j\}\ \ \mathrm{and}$$ 
$$UD_n(\Gamma)=\{ \{\sigma_1, \cdots, \sigma_n\} \subset \Gamma\ |\ \ \sigma_i \cap \sigma_j = \emptyset\ \mathrm{if}\ i \neq j \},$$ respectively, where $\sigma_i$ is either a 0-cell or a 1-cell. 
Abrams \cite{Abrams00} showed that $D_n(\Gamma)$ and $UD_n(\Gamma)$ are NPC cube complexes, and Crisp and Wiest \cite{CW} showed that there are local isometries from $D_n(\Gamma)$ and $UD_n(\Gamma)$ into Salvetti complexes; by Remark \ref{SC}, $D_n(\Gamma)$ and $UD_n(\Gamma)$ are special cube complexes.

\begin{lemma}\label{EmbeddingInGBG}
Let $\Gamma$ be a graph and $\Gamma'$ a subgraph of $\Gamma$. Then the embedding $\Gamma'\hookrightarrow \Gamma$ induces a locally isometric embedding $D_n(\Gamma')\hookrightarrow D_n(\Gamma)$.
\end{lemma}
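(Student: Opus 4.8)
The plan is to exhibit the inclusion-induced map and verify directly that it is a combinatorial locally isometric embedding; since $D_n(\Gamma)$ is NPC, Theorem \ref{CW} then upgrades it to a genuine (geodesic) local isometry, which is exactly the assertion.

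First I would define $\iota_n:D_n(\Gamma')\to D_n(\Gamma)$ by sending a configuration $(\sigma_1,\dots,\sigma_n)$ of pairwise disjoint cells of $\Gamma'$ to the same tuple, now regarded in $\Gamma$. This is well defined because each cell (vertex or edge) of $\Gamma'$ is a cell of $\Gamma$ and, $\Gamma'$ being a subcomplex, the closure of each $\sigma_i$ inside $\Gamma'$ coincides with its closure inside $\Gamma$; hence pairwise disjointness is inherited and $(\sigma_1,\dots,\sigma_n)$ really is a cube of $D_n(\Gamma)$. The map is combinatorial, taking the open cube indexed by $(\sigma_1,\dots,\sigma_n)$ homeomorphically onto the open cube with the same index, and it is injective on cubes because $\Gamma'\hookrightarrow\Gamma$ is injective on cells. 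Standard reasoning via lowest-dimensional carriers then shows $\iota_n$ is a combinatorial embedding.

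Next, recalling that $D_n(\Gamma)$ is NPC by \cite{Abrams00}, Theorem \ref{CW} reduces the claim to checking that $\iota_n$ is a combinatorial local isometry, i.e. that for each vertex $x$ of $D_n(\Gamma')$ the induced link map $Lk(x,D_n(\Gamma'))\to Lk(\iota_n(x),D_n(\Gamma))$ is injective with full image. Here a vertex $x=(v_1,\dots,v_n)$ is an ordered $n$-tuple of distinct vertices; a vertex of its link records the operation of replacing a single $v_i$ by an edge $e$ of $\Gamma$ incident to $v_i$ whose closure meets none of the $v_j$ with $j\neq i$, and a $k$-simplex records a compatible family of $k+1$ such replacements, i.e. a $(k+1)$-cube of $D_n(\Gamma)$ containing $x$. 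Injectivity of the link map again follows from injectivity of the cell inclusion.

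The crux is fullness, and this is where the hypothesis that $\Gamma'$ is a subcomplex does the work. Let $\tau$ be a simplex of $Lk(\iota_n(x),D_n(\Gamma))$ all of whose vertices lie in the image of the link map; then the cube of $D_n(\Gamma)$ represented by $\tau$ is obtained from $x$ by replacing certain coordinates with edges $e_{i_1},\dots,e_{i_k}$ that all belong to $\Gamma'$, the remaining coordinates being vertices of $\Gamma'$. The validity of this cube is precisely the pairwise disjointness of the cells $e_{i_1},\dots,e_{i_k}$ together with the unchanged $v_j$'s, and since all these cells and their closures lie in the subcomplex $\Gamma'$, the same disjointness holds inside $\Gamma'$. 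Hence the cube already lies in $D_n(\Gamma')$ and $\tau$ is in the image, giving fullness. I expect the only real care needed is the bookkeeping of vertex links together with the observation---essentially the content of the lemma---that cube-validity is a \emph{local} condition on the cells involved and is therefore unaffected by passing to a subcomplex. Assembling these facts shows that $\iota_n$ is a combinatorial locally isometric embedding, which by Theorem \ref{CW} is the desired locally isometric embedding.
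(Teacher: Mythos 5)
Your proof is correct and follows essentially the same route as the paper's: both exhibit the inclusion-induced embedding $D_n(\Gamma')\hookrightarrow D_n(\Gamma)$ and verify that the induced link map at each vertex is injective with full image, hence a (combinatorial) locally isometric embedding. You simply spell out the link bookkeeping and the appeal to Theorem \ref{CW} that the paper leaves implicit.
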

\begin{proof}
Obviously, the embedding $\Gamma'\hookrightarrow \Gamma$ induces the embedding $D_n(\Gamma')\hookrightarrow D_n(\Gamma)$. It can be easily seen that the induced link map is injective and its image is a full subcomplex so that and this embedding is a local isometry.
\end{proof}

However, $D_n(\Gamma)$ and $UD_n(\Gamma)$ are not topological invaraints of $\Gamma$. More precisely, if vertices are added to edges of $\Gamma$ (the number of vertices of valency 2 is increased), then the homotopy type of the (un)ordered discrete configuration space may be changed.
According to \cite{Abrams00}, \cite{KKP12} and \cite{PS12}, this kind of problem can be settled if there exists the assumption that $\Gamma$ is suitably subdivided.

\begin{theorem} [\cite{Abrams00}, \cite{KKP12}, \cite{PS12}]\label{Abrams}
For any $n>1$ and any graph $\Gamma$ with at least $n$ vertices, $D_n(\Gamma)$ is a deformation retract of $C_n(\Gamma)$ if and only if $\Gamma$ satisfies the following two conditions.
\begin{enumerate}
\item Each path between two vertices of valency not equal to 2 passes through at least $n-1$ edges.
\item Each homotopically non-trivial loop passes through at least $n+1$ edges.
\end{enumerate}
In particular, $C_2(\Gamma)$ and $D_2(\Gamma)$ are homotopy equivalent if $\Gamma$ is simplicial.
\end{theorem}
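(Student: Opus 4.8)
The plan is to decide the deformation-retract question by analyzing the inclusion $D_n(\Gamma)\hookrightarrow C_n(\Gamma)$ directly: a deformation retraction exists precisely when this inclusion is a homotopy equivalence realized by a collision-free flow, so I would prove sufficiency by constructing such a flow under (1) and (2), and prove necessity by exhibiting, whenever either condition fails, a discrepancy between the homotopy types of $C_n(\Gamma)$ and $D_n(\Gamma)$ that no homotopy equivalence can bridge.

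For sufficiency, assume (1) and (2). I view a point of $C_n(\Gamma)$ as an ordered $n$-tuple of distinct points and call two coordinates \emph{crowded} when they share a common closed cell; the subcomplex $D_n(\Gamma)$ is exactly the locus with no crowded pair. I would define a homotopy $H\colon C_n(\Gamma)\times[0,1]\to C_n(\Gamma)$ that pushes crowded coordinates apart by moving each point at controlled speed along a combinatorial geodesic toward an assigned distinct cell. Conditions (1) and (2) are what make the assignments possible: condition (1) guarantees at least $n-1$ edges between consecutive vertices of valency $\neq 2$, so as many as $n$ crowded points can be distributed into pairwise disjoint cells of the intervening segment, while condition (2) forces every essential cycle to have at least $n+1$ edges, so the spreading never wraps around a short loop to create an ambiguous or self-colliding assignment. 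I would require $H_0=\mathrm{id}$, $H_1(C_n(\Gamma))\subset D_n(\Gamma)$, and $H_t$ to fix $D_n(\Gamma)$ pointwise.

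The main obstacle is making the local assignments mutually consistent, continuous, and collision-free across the whole of $C_n(\Gamma)$, and doing so with the sharp constants $n-1$ and $n+1$ rather than weaker bounds. I would address this by stratifying $C_n(\Gamma)$ according to the occupancy pattern of closed cells, defining the flow first on the maximally crowded strata and then extending inductively to less crowded strata, checking at each stage that the extension agrees on common boundaries and that no two trajectories ever meet; the hypotheses supply exactly the room needed for this induction to go through.

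For necessity, suppose a deformation retraction exists, so the inclusion induces isomorphisms on all homotopy and homology groups. If (1) fails, there is a path of fewer than $n-1$ edges between two vertices of valency $\neq 2$; localizing configurations near this path, the continuous space admits families in which points exchange positions through a branch vertex with no discrete analogue, producing a class present in $C_n(\Gamma)$ but missing from $D_n(\Gamma)$, contradicting the isomorphism. If (2) fails, there is an essential cycle of length $\leq n$, around which $n$ points cannot be placed in pairwise disjoint cells so as to realize a full rotation, so the rotation class in $\pi_1$ is not carried by $D_n(\Gamma)$; again the homotopy types differ. Finally, the case $n=2$ with $\Gamma$ simplicial follows by inspection: simpliciality gives girth at least $3=n+1$, so (2) holds, and any path between two distinct vertices traverses at least $1=n-1$ edge, so (1) holds; hence $D_2(\Gamma)$ is a deformation retract of $C_2(\Gamma)$ and the two are homotopy equivalent.
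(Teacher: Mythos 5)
The paper does not actually prove this theorem: it is imported as a known result from \cite{Abrams00}, \cite{KKP12} and \cite{PS12}, so there is no internal proof to compare against and your proposal must stand on its own. It does not, because in both directions the decisive step is asserted rather than performed. For sufficiency, the entire content of the theorem is the construction you defer: a global, continuous, collision-free retraction of $C_n(\Gamma)$ onto $D_n(\Gamma)$, verified under the stated hypotheses. Your stratify-and-extend plan has the right shape (it is essentially Abrams' strategy), but the sentence ``the hypotheses supply exactly the room needed for this induction to go through'' is a restatement of the theorem, not an argument. Note also that Abrams' original proof required stronger subdivision (on the order of $n+1$ edges on paths between essential vertices as well as on cycles); relaxing the path constant to the sharp value $n-1$ is precisely the delicate contribution of \cite{KKP12} and \cite{PS12}, and nothing in your sketch indicates where $n-1$, as opposed to $n$ or $n+1$, is actually what makes the flow collision-free.

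The necessity direction has a concrete gap: your obstruction is local, but the conclusion must be global. You claim that if condition (1) fails, configurations localized near the short path yield a class of $C_n(\Gamma)$ ``missing from $D_n(\Gamma)$.'' At the level you describe (exchange classes, i.e.\ $\pi_0$-type information) this is simply false: take $n=3$ and $\Gamma$ a tripod whose legs have $1$, $2$, $2$ edges. Condition (1) fails, since the central vertex and the nearby leaf are joined by a single edge and $1 < n-1 = 2$; yet every transposition of the three points is realizable inside $D_3(\Gamma)$ --- park the third point at the far end of a long leg and perform the Y-exchange at the central vertex --- so $D_3(\Gamma)$ is connected exactly as $C_3(\Gamma)$ is, and no exchange class is missing. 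In such examples the failure of the deformation retraction can only be detected by finer invariants (ranks of $H_1$, Euler characteristics, or non-surjectivity of the inclusion on higher homotopy/homology), which is how the cited papers argue; your sketch supplies no such computation, and the same local-to-global objection applies to your rotation argument when condition (2) fails, since a rotation class around a short cycle might a priori be represented by a discrete loop routed through the rest of $\Gamma$. Only your final paragraph, the $n=2$ simplicial case, is complete as written, since it is a direct verification of the two conditions.
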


By the above theorem, we can deduce that $PB_n(\Gamma)$ and $B_n(\Gamma)$ are cocompactly cubulated groups.

\begin{Ex}\label{Tripod}
Let $T_3$ be a star tree with central vertex $v$ and three leaves $a,b,c$ given on the left of Figure \ref{tripodFig}. Since $D_2(T_3)$ is homeomorphic to a circle and $T_3$ is simplicial, $PB_2(T_3)$ is isomorphic to $\mathbb{Z}$.
The movement of two points on $T_3$ corresponding to a generator of $PB_2(T_3)$ is called an \textit{Y-exchange}, denoted by $(a,b,c;v)$.
\end{Ex}

\begin{figure}[h]
\centering
\begin{tikzpicture}
\tikzstyle{every node}=[draw,circle,fill=black,minimum size=3pt,inner sep=0pt]
  \node[anchor=center] (n1) at (0.2,1.7) [label={[label distance=0cm]above:$a$}]{};
  \node[anchor=center] (n2) at (-1,-1) [label={left:$b$}]{};
  \node[anchor=center] (n3) at (1.4,-1)  [label={right:$c$}]{};
  \node[anchor=center] (n4) at (0.2,0) [label={[label distance=0.05cm]left:$v$}]{};
  \foreach \from/\to in {n1/n4,n4/n2,n4/n3}
  \draw (\from) -- (\to);
  
  \draw (4.6,2) node (n5) [label={[label distance=-0.3cm]4:$(b,c)$}]{};
  \draw (5.8,2) node (n6) [label={[label distance=-0.3cm]4:$(v,c)$}]{};
  \draw (7,2) node (n7) [label={[label distance=-0.3cm]4:$(a,c)$}]{};
  \node[anchor=center] (n8) at (8,1.3){};
  \node[anchor=center] (n9) at (8,0.4){};
  \node[anchor=center] (n10) at (8,-0.5){};
  \draw (7,-1.2) node (n11) [label={[label distance=-0.2cm]below:$(c,b)$}]{};
  \draw (5.8,-1.2) node (n12) [label={[label distance=-0.2cm]below:$(c,v)$}]{};
  \draw (4.6,-1.2) node (n13) [label={[label distance=-0.2cm]below:$(c,a)$}]{};
  \node[anchor=center] (n14) at (3.6,-0.5){};
  \node[anchor=center] (n15) at (3.6,0.4){};
  \node[anchor=center] (n16) at (3.6,1.3){};
  \foreach \from/\to in {n5/n6,n6/n7,n7/n8,n8/n9,n9/n10,n10/n11,n11/n12,n12/n13,n13/n14,n14/n15,n15/n16,n16/n5}
  \draw (\from) -- (\to);
\end{tikzpicture}
\caption{$T_3$ and $D_2(T_3)$}
	\label{tripodFig}
\end{figure}
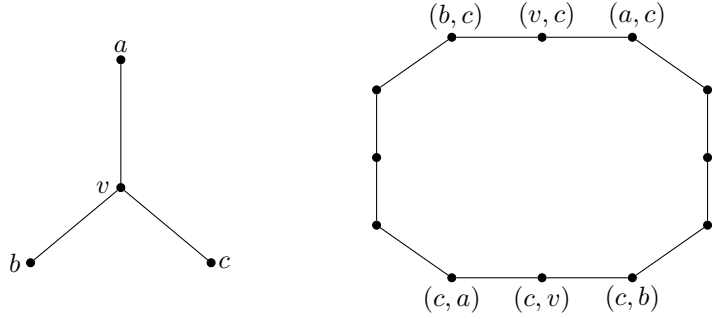

Let $\Gamma$ be a simplicial graph so that $C_2(\Gamma)$ is homotopy equivalent to $D_2(\Gamma)$.
Let $K\subset D_2(\Gamma)$ be a standard product subcomplex with base $\mathbf{K}=\mathbf{P}_1\times\mathbf{P}_2$ and local isometry $\iota:\mathbf{K}\rightarrow K\subset D_2(\Gamma)$.
Since $D_2(\Gamma)$ is a subcomplex of $\Gamma\times\Gamma$ and the image of each coordinate of $\mathbf{K}$ is injective, $\iota$ must be an embedding.
From the fact that there is a square $\sigma_1\times\sigma_2$ in $D_2(\Gamma)$ if and only if $\sigma_1$ and $\sigma_2$ are disjoint edges of $\Gamma$, we can easily derive the following fact.

\begin{lemma}\label{DisjointSubgraphs}
Let $\mathrm{Pr}_i :\Gamma\times\Gamma\rightarrow \Gamma$ be the $i$-th projection map of $\Gamma\times\Gamma$ for $i=1,2$.
Let $K\subset{D_2(\Gamma)}$ be a standard product subcomplex. Then the images of $K$ under $\mathrm{Pr}_1$ and $\mathrm{Pr}_2$ are disjoint standard subgraphs of $\Gamma$.
\end{lemma}

By the same idea, a product subcomplex of $D_2(\Gamma)$ can be defined canonically. 

\begin{definition}[Product subcomplex of $D_2(\Gamma)$]\label{PSofD2}
A subcomplex $P\subset D_2(\Gamma)$ is a \textit{product subcomplex} if $P$ is equal to the intersection of $\mathrm{Pr}^{-1}_1\circ\mathrm{Pr}_1(P)$ and $\mathrm{Pr}^{-1}_2\circ\mathrm{Pr}_2(P)$. Equivalently, a product subcomplex $P$ is the image of a locally isometric embedding $A\times B\rightarrow D_2(\Gamma)$ for two graphs $A$ and $B$ (each of which are possibly a single vertex).
\end{definition}

A (standard) product subcomplex of $D_2(\Gamma)$ is denoted by $\Gamma_i \times \Gamma'_i$ where $\Gamma_i$ and $\Gamma'_i$ are disjoint (standard) subgraphs of $\Gamma$.  
The intersection of two product subcomplexes $\Gamma_1 \times \Gamma'_1$ and $\Gamma_2 \times \Gamma'_2$ can be denoted by $(\Gamma_1\cap \Gamma_2)\times (\Gamma'_1\cap \Gamma'_2)$. Note that $(\Gamma_1\cap \Gamma_2)$ and $(\Gamma'_1\cap \Gamma'_2)$ are possibly disconnected but they must be disjoint in $\Gamma$.
Each p-lift of a standard product subcomplex $\Gamma_1 \times \Gamma_2\subset{D_2(\Gamma)}$ in $\overline{D_2(\Gamma)}$ is isometric to $\overline{\Gamma_1}\times\overline{\Gamma_2}$ where $\overline{\Gamma_i}$ is the universal cover of $\Gamma_i$ for $i=1,2$.

\vspace{1mm}

Let $\Gamma$ be a cactus without leaves. Suppose that there is a separaing vertex $v$ in $\Gamma$ and there is a maximal product subcomplex $\Gamma_i\times\Gamma'_i$ of $D_2(\Gamma)$. Then the fact that $\Gamma$ is a cactus induces that exactly one of $\Gamma_i$ or $\Gamma'_i$, say $\Gamma_i$, contains all the closures of components of the complement of $v$ in $\Gamma$ except one, which properly contains $\Gamma'_i$ (note that $\Gamma'_i$ must not contain $v$).
If $a$ is a boundary cycle contained in one of $\Gamma_i$ or $\Gamma'_i$, then the previous statement still holds when we replace $v$ by $a$. If $a$ is a boundary cycle which is not contained in both $\Gamma_i$ and $\Gamma'_i$, then each of $a\cap\Gamma_i$ and $a\cap\Gamma'_i$ is a path (possibly, a vertex), and the union of $\Gamma_i$, $\Gamma'_i$ and $a$ is the whole graph $\Gamma$. 
From this fact, we deduce the following fact.

\begin{lemma}\label{IntofMax}
For a cactus $\Gamma$ without leaves, suppose that there are two maximal product subcomplexes $M_1=\Gamma_1\times\Gamma'_1$ and $M_2=\Gamma_2\times\Gamma'_2$ of $D_2(\Gamma)$. If $M_1 \cap M_2$ is non-empty, then it is connected and contains a standard product subcomplex which contains any standard product subcomplex contained in $M_1\cap M_2$.
\end{lemma}
\begin{proof}
Since $\Gamma_1\cap\Gamma_2$ is non-empty by the assumption, it suffices to show that $\Gamma_1\cap\Gamma_2$ is a connected subgraph of $\Gamma$ containing a standard subgraph (by the same way, we can show that $\Gamma'_1\cap\Gamma'_2$ also contains a boundary cycle and thus the lemma holds). 

There are two cases depending on the existence of a boundary cycle not contained in both $\Gamma_1$ and $\Gamma'_1$:

$\mathbf{Case\ 1.}$
Suppose that there is no boundary cycle which is not contained in both $\Gamma_1$ and $\Gamma'_1$. Then there exist a unique edge $e$ such that $e$ is contained in neither $\Gamma_1$ nor $\Gamma'_1$, and $e\cap\Gamma_1$ is a separating vertex $v$ in $\Gamma$. Let $A_0,\cdots,A_n$ be the closures of components of the complement of $v$, where $A_0$ is the one containing $e$.
The paragraph above this lemma implies that $\Gamma_1$ is equal to $A_1\cup\cdots\cup A_n$.
If $\Gamma_2$ contains $v$, then $\Gamma_2$ either contains $A_1,\cdots,A_n$ so that $\Gamma_1\cap\Gamma_2=\Gamma_1$ or is contained in $A_i$ for $i>0$ so that $\Gamma_1\cap\Gamma_2=\Gamma_2$; since $\Gamma'_1$ is contained in $A_0$, if $\Gamma_2$ contains $A_0$, then $\Gamma'_1\cap\Gamma'_2$ is empty.
Otherwise, either $\Gamma_2\subset\Gamma_1$ and in particular, $\Gamma_1\cap\Gamma_2=\Gamma_2$, or $\Gamma_2\subseteq\Gamma'_1$ but the latter cannot happen since $\Gamma_1\cap\Gamma_2$ is not empty.

$\mathbf{Case\ 2.}$
Suppose that there exists a boundary cycle $a$ which is not contained in both $\Gamma_1$ and $\Gamma'_1$.
 
Suppose that $\Gamma_2$ contains $a$. If $\Gamma_1$ is not contained in $\Gamma_2$, by the paragraph above this lemma, $\Gamma'_2$ must be contained in $\Gamma_1$ but it is a contradiction since $\Gamma'_1\cap\Gamma'_2$ becomes empty. Thus, $\Gamma_1\subset\Gamma_2$.

Suppose that $\Gamma_2$ does not contain $a$ but $\Gamma'_2$ does. The paragraph above this lemma implies that $\Gamma_2$ is contained in the closure of one component of the complement of $a$. Then $\Gamma_2$ must be contained in $\Gamma_1$ since $\Gamma_1\cap\Gamma_2$ is non-empty.

Finally, suppose that both $\Gamma_2$ and $\Gamma'_2$ do not contain $a$. 
Let $v_i$ be the vertices in $a$ each of which is the intersection of $a$ and the closure of a component of the complement of $a$.
Then $\Gamma_1\cap\Gamma_2$ is non-empty if and only if $(\Gamma_1\cap a)\cap(\Gamma_2\cap a)$ is non-empty. In particular, $\Gamma_1\cap\Gamma_2$ is the union of the path $\Gamma_1\cap\Gamma_2\cap a$ and all the closures $A_i$ of components of the complement of $a$ satisfying that each $A_i$ contains a vertex in $\Gamma_1\cap\Gamma_2\cap a$. Since each $A_i$ contains a standard subgraph, $\Gamma_1\cap\Gamma_2$ is connected and contains a standard subgraph.
\end{proof}

Let $\mathcal{C}$ be the collection of all boundary cycles in $\Gamma$.
If $\Gamma$ is a simplest cactus, then every standard product subcomplex of $D_2(\Gamma)$ can be represented by the product $\mathcal{C}_1\times\mathcal{C}_2$ of two disjoint subcollections $\mathcal{C}_1$, $\mathcal{C}_2$ of $\mathcal{C}$ where $\mathcal{C}_i$ consists of boundary cycles in $\Gamma_i$ for $i=1,2$; if the standard product subcomplex is maximal, then $\mathcal{C}_1\cup\mathcal{C}_2=\mathcal{C}$, i.e. any boundary cycle in $\Gamma$ is contained in one of the coordinates of any maximal product subcomplex.

Using the discrete Morse theory, most of the results about graph braid groups are obtained and the following fact shows an algebraic similarity between RAAGs and graph 2-braid groups.

\begin{theorem}[\cite{KP12}]\label{scrg}
Let $\Gamma$ be a planar graph. Then $B_2(\Gamma)$ ($PB_2(\Gamma)$, resp.) have group presentations whose relators are commutators corresponding to (ordered, resp.) pairs of disjoint boundary cycles.
In particular, if $\Gamma$ has no pair of disjoint boundary cycles, then $PB_2(\Gamma)$ and $B_2(\Gamma)$ are isomorphic to $\mathbb{F}_n$ ($n\geq 1$).
\end{theorem}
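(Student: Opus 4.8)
The plan is to compute both braid groups by running discrete Morse theory on the cube complexes $UD_2(\Gamma)$ and $D_2(\Gamma)$, in the style of Farley--Sabalka, and then to read off the planar structure. First I would invoke Theorem \ref{Abrams}: after subdividing $\Gamma$ so that it is suitably subdivided for $n=2$, the spaces $UD_2(\Gamma)$ and $D_2(\Gamma)$ are homotopy equivalent to $UC_2(\Gamma)$ and $C_2(\Gamma)$, so they compute $B_2(\Gamma)$ and $PB_2(\Gamma)$. Since each braid group is determined by $\Gamma$ as a topological space and subdivision only inserts vertices of valency $2$, subdividing changes neither the braid groups nor the set of boundary cycles of the planar embedding nor their disjointness pattern. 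Thus it suffices to extract a presentation from the (at most $2$-dimensional) cube complexes.

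Next I would fix a spanning tree $T$ of $\Gamma$ together with a root and an ordering of the edges compatible with the planar embedding, and build the associated discrete gradient vector field on $UD_2(\Gamma)$ (respectively $D_2(\Gamma)$). The critical cells are then analyzed by dimension: there is a single critical $0$-cell, serving as the basepoint; the critical $1$-cells furnish the generators; and the critical $2$-cells furnish the relators. Here the key structural observation is that every $2$-cell of $D_2(\Gamma)$ is a square $e_1\times e_2$ spanned by two disjoint edges (recall $D_2(\Gamma)\subset\Gamma\times\Gamma$), and likewise for $UD_2(\Gamma)$, so each such cell is a torus. Its attaching map therefore reads as a commutator of the two loop-movements that carry one point around $e_1$ and the other around $e_2$. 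Rewriting this boundary word in terms of the critical $1$-cells via the gradient flow produces a relator which is again a commutator.

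The planar heart of the argument is to match critical $2$-cells with pairs of disjoint boundary cycles. I would use that, for a planar graph, the cycle space is spanned by the boundary cycles of the faces, so the braid obtained by sending one point around an arbitrary cycle (with the other point parked) is expressible through boundary-cycle braids $x_c$. A critical square $e_1\times e_2$ records two points independently traversing cycles through $e_1$ and through $e_2$, and the disjointness of $e_1$ and $e_2$ is exactly what is needed to realize these by vertex-disjoint boundary cycles. Carrying this through the Morse complex should yield a bijection between critical $2$-cells and unordered (for $B_2$) or ordered (for $PB_2$) pairs of disjoint boundary cycles, with associated relator the commutator $[x_{c_1},x_{c_2}]$, giving the two claimed presentations. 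The ``in particular'' statement is then immediate: if $\Gamma$ has no pair of disjoint boundary cycles there are no critical $2$-cells, hence no relators, so $PB_2(\Gamma)$ and $B_2(\Gamma)$ are free, of rank equal to the number of critical $1$-cells.

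The \emph{main obstacle} is precisely this last identification. One must show that each critical square reduces, after the gradient flow, to a clean commutator of two boundary-cycle generators rather than to a longer word, which requires careful bookkeeping of the flow against the fixed planar ordering; one must also verify that distinct disjoint pairs yield independent relators and that no spurious critical $2$-cells arise. The ordered-versus-unordered count distinguishing $PB_2$ from $B_2$ must additionally be tracked through the $S_2$-quotient relating $D_2(\Gamma)$ and $UD_2(\Gamma)$. These steps are combinatorial but delicate, and constitute the technical core carried out in \cite{KP12}.
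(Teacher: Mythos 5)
Your proposal takes essentially the same route as the paper: both reduce the statement to the discrete-Morse-theoretic construction of \cite{KP12} and defer its technical core --- that the relators arising from critical squares rewrite, using planarity, to commutators indexed by (ordered) pairs of disjoint boundary cycles --- to that reference. The paper merely packages this as the existence of one-vertex cube complexes $UM_2(\Gamma)$ and $M_2(\Gamma)$, homotopy equivalent to $UD_2(\Gamma)$ and $D_2(\Gamma)$, whose $2$-cubes are the images of the standard tori (so the commutator presentations can be read off directly), while your gradient-flow outline is an unpacking of the same construction; just note that, as stated, a single square $e_1\times e_2$ is not itself a torus and its Morse-rewritten boundary word being a commutator is exactly the planar input you correctly flag as the main obstacle.
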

\begin{proof}[Sketch of Proof]
In \cite{KP12}, a cube complex $M_2(\Gamma)$ ($UM_2(\Gamma)$, resp.) is obtained from $D_2(\Gamma)$ ($UD_2(\Gamma)$, resp.) such that (1) $M_2(\Gamma)$ ($UM_2(\Gamma)$, resp.) has a unique vertex, (2) there is a homotopy equivalence $f_D:D_2(\Gamma)\rightarrow M_2(\Gamma)$ ($f_U:UD_2(\Gamma)\rightarrow UM_2(\Gamma)$, resp.), and (3) the union of 2-cubes in $M_2(\Gamma)$ ($UM_2(\Gamma)$, resp.) is the image of the union of all standard tori in $D_2(\Gamma)$ ($UD_2(\Gamma)$, resp.) under $f_D$ ($f_U$, resp.).
The existences of $UM_2(\Gamma)$ and $M_2(\Gamma)$ shows that the theorem holds.
\end{proof}

\textbf{From now on, the defining graphs of graph 2-braid groups and discrete configulation spaces of 2 points (and their universal covers) are assumed to be connected and simplicial even if we do not say.}

%
%
%
%

\section{(Reduced) Intersection Complexes}\label{3}
Throughout this section, let $Y$ and $Y'$ be compact weakly special square complexes with their universal covers $\overline{Y}$ and $\overline{Y'}$, and the universal covering maps $p_Y:\overline{Y}\rightarrow Y$ and $p_{Y'}:\overline{Y'}\rightarrow Y'$, respectively. Suppose that there is a $(\lambda, \varepsilon)$-quasi-isometry  $\phi:\overline{Y}\rightarrow \overline{Y'}$.
We first show that maximal product subcomplexes of $\overline{Y}$ and their intersections are preserved (up to finite Hausdorff distance) by $\phi$. From this fact, we define the \textit{intersection complex} $I(\overline{Y})$ whose vertex set is the collection of maximal product subcomplexes of $\overline{Y}$ and the \textit{semi-isomorphism} $\Phi:I(\overline{Y})\rightarrow I(\overline{Y'})$, an isometry between intersection complexes which is induced by $\phi$. 

%
%

\subsection{Definitions of (Reduced) Intersection Complexes}\label{3.1}
We first deduce that maximal product subcomplexes of $\overline{Y}$ are preserved by a quasi-isometry up to finite Hausdorff distance by using Theorem \ref{FlattoFlat}.

\begin{lemma}[\cite{BKS(a)}]\label{SG}
There is a constant $D_1=D_1(\lambda, \varepsilon)>0$ with the following property:
Suppose that $\overbar{K}=\overbar{P}_1\times\overbar{P}_2\subset Y$ is a product subcomplex where $\overbar{P}_1$ is isometric to an infinite tree which is the union of three singular rays intersecting at one vertex $v$ and $\overbar{P}_2$ is isometric to $\mathbb{R}$.
Then the singular geodesic $\gamma=v\times\overbar{P}_2\subset \overline{Y}$ is mapped by $\phi$ to within Hausdorff distance at most $D_1$ of a singular geodesic $\gamma'\subset \overline{Y'}$ and $\phi(\overbar{K})$ lies in the $D_1$-neighborhood of the parallel set $\mathbb{P}(\gamma')$.
\end{lemma}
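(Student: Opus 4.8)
The plan is to use the product structure of $\overbar{K}$ to express it as a union of three flats that pairwise intersect in half-planes and triply intersect in $\gamma$, to push these flats across $\phi$ by Theorem \ref{FlattoFlat}, and then to recover $\gamma'$ from the intersection pattern of the image flats. Write $\overbar{P}_1=r_1\cup r_2\cup r_3$, the union of three singular rays meeting at $v$, and set $H_i=r_i\times\overbar{P}_2$ for $i=1,2,3$. Each $H_i$ is a half-plane with boundary $\{v\}\times\overbar{P}_2=\gamma$, any two of them meet exactly in $\gamma$, and each union $F_{ij}=H_i\cup H_j=(r_i\cup r_j)\times\overbar{P}_2$ is a flat because $r_i\cup r_j$ is a bi-infinite singular geodesic in $\overbar{P}_1$. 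Thus $\overbar{K}=F_{12}\cup F_{13}\cup F_{23}$.

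First I would apply Theorem \ref{FlattoFlat} to each $F_{ij}$, producing flats $F'_{ij}\subset\overline{Y'}$ with $d_H(\phi(F_{ij}),F'_{ij})<C$ for some $C=C(\lambda,\varepsilon)$. Since $\phi$ is a $(\lambda,\varepsilon)$-quasi-isometry and flats are convex, $\phi$ preserves coarse intersections up to bounded error; as $F_{ij}\cap F_{ik}=H_i$ is a genuine half-plane, the coarse intersection $N_r(F'_{ij})\cap N_r(F'_{ik})$ lies within bounded Hausdorff distance of $\phi(H_i)$ and is therefore $2$-dimensional. In particular the pairwise coarse intersections are $2$-dimensional, so Lemma \ref{parallel} is not directly available and a finer analysis is needed. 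The crucial step is to promote these coarse intersections to genuine ones: by Lemma \ref{Projection}, the coarse intersection of $F'_{ij}$ and $F'_{ik}$ is controlled by their parallel cores $V,V'$, and the convex hull of $V\cup V'$ is isometric to $V\times[0,D]$ with $D=d(F'_{ij},F'_{ik})$ and $V$ lying within bounded distance of $\phi(H_i)$, hence $2$-dimensional. If $D>0$, then $V\times[0,D]$ would be a $3$-dimensional convex subcomplex of the $2$-dimensional complex $\overline{Y'}$, which is impossible (this is the dimension argument already used in Lemma \ref{NS}). Hence $D=0$ and $W_i:=F'_{ij}\cap F'_{ik}$ is a genuine $2$-dimensional convex subcomplex within bounded Hausdorff distance of $\phi(H_i)$.

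Now I would work inside the flat $F'_{12}\cong\mathbb{R}^2$, which coarsely equals $W_1\cup W_2$ (because $F_{12}=H_1\cup H_2$), with $W_1\cap W_2=F'_{12}\cap F'_{13}\cap F'_{23}$ lying within bounded distance of $\phi(\gamma)$. The point is that $W_1\cap W_2$ is a \emph{genuine} convex subset of $\mathbb{R}^2$ that is quasi-isometric to the bi-infinite quasi-line $\phi(\gamma)$, hence unbounded in exactly two distinct directions. Because $W_1\cap W_2$ has linear growth, its recession cone can be neither a single ray nor a genuine $2$-dimensional cone (the latter would force quadratic growth); it is therefore a line, and a closed convex subset of $\mathbb{R}^2$ with line recession cone lies within bounded Hausdorff distance of a geodesic line $\gamma'\subset F'_{12}$. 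This $\gamma'$ is a singular geodesic and $d_H(\phi(\gamma),\gamma')$ is bounded, giving the first conclusion. This is the heart of the argument, and the main obstacle: it is \emph{convexity} — made available by the dimension argument of the previous paragraph — that rules out a ``bent'' limit such as a sector boundary and forces $\phi(\gamma)$ to be close to a straight singular geodesic rather than merely to a wandering quasi-line.

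Finally, since the convex set $W_1\cap W_2=F'_{12}\cap F'_{13}\cap F'_{23}$ is common to all three flats, the singular geodesic $\gamma'$ lies within bounded distance of each $F'_{ij}$; projecting, each flat $F'_{ij}$ contains a line parallel to $\gamma'$ and is therefore foliated by singular geodesics parallel to $\gamma'$, so $F'_{ij}\subset\mathbb{P}(\gamma')$. Consequently $\phi(\overbar{K})\subset N_C(F'_{12}\cup F'_{13}\cup F'_{23})\subset N_C(\mathbb{P}(\gamma'))$. Taking $D_1=D_1(\lambda,\varepsilon)$ to be the maximum of the finitely many constants produced above — all of which depend only on $\lambda$ and $\varepsilon$, since each flat carries the standard Euclidean metric — yields both assertions of the lemma.
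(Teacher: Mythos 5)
Your architecture is the same as the paper's: decompose $\overbar{K}$ into the three flats $F_{ij}=(r_i\cup r_j)\times\overbar{P}_2$, push them across $\phi$ with Theorem \ref{FlattoFlat}, and extract $\gamma'$ from the intersection pattern of the image flats. Where you diverge is the extraction step. The paper disposes of it in one line by citing Lemma \ref{parallel}, and your objection to doing so is well taken: the hypothesis of Lemma \ref{parallel} asks the \emph{pairwise} coarse intersections to be quasi-geodesics, whereas here they are quasi-half-planes, so that lemma does not literally apply. Your substitute for the pairwise step is correct: by Lemma \ref{Projection}, a positive distance between $F'_{ij}$ and $F'_{ik}$ would produce a convex set isometric to $V\times[0,D]$ with $V$ coarsely a half-plane, hence containing squares, and such a $3$-dimensional product cannot embed in the $2$-dimensional complex $\overline{Y'}$; this is exactly the dimension trick of Lemma \ref{NS}, and it yields genuine $2$-dimensional intersections $W_i$ coarsely equal to $\phi(H_i)$.

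The gap is your next assertion, that $W_1\cap W_2=F'_{12}\cap F'_{13}\cap F'_{23}$ is a genuine (in particular nonempty) convex subset of $F'_{12}$ coarsely equal to $\phi(\gamma)$ --- the very set on which you then run the recession-cone argument that you yourself call the heart of the proof. Nothing you proved gives nonemptiness. The dimension trick is powerless here: the parallel cores of $W_1$ and $W_2$ are coarsely \emph{lines}, and a product (line)$\times[0,D]$ embeds in a square complex with no contradiction, so $D>0$ cannot be excluded this way. Indeed, everything established up to that point --- $W_1,W_2$ convex, each coarsely a half-plane, coarse intersection coarsely a line --- is consistent with the model $W_1=\{y\geq 1\}$, $W_2=\{y\leq -1\}$ inside $F'_{12}\cong\mathbb{R}^2$, where $W_1\cap W_2=\emptyset$. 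There are two ways to close the hole. One is to invoke a Helly-type property of pairwise-intersecting convex subcomplexes of a CAT(0) cube complex (a median-space fact stated nowhere in the paper) to get a common point, and then use Lemma \ref{Projection}(3) with $D=0$ to identify $W_1\cap W_2$ coarsely with $\phi(\gamma)$. The other, closer to your own method, avoids nonemptiness altogether: apply Lemma \ref{Projection} once more, to the pair $W_1,W_2$ inside $F'_{12}$, and run the recession-cone argument on the parallel core $V\subset W_1$, a genuine convex subcomplex at controlled Hausdorff distance from the coarse intersection and hence from $\phi(\gamma)$; since $V\subset F'_{12}\cap F'_{13}$ and $V$ stays at bounded distance from its parallel copy inside $W_2\subset F'_{23}$, the resulting line is still coarsely contained in all three image flats and your final paragraph goes through verbatim. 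Two smaller points to tighten in the same step: linear growth alone does not exclude a one-ray recession cone (a half-strip has linear growth), so it is the two-endedness of $\phi(\gamma)$ that you must use there; and the singularity of $\gamma'$ should be justified by noting that the core is a convex \emph{subcomplex} of the flat, hence an axis-parallel strip, rather than merely a convex subset.
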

\begin{proof}
Let $F_1$, $F_2$ and $F_3$ be three flats contained in $\overbar{K}$ such that $\cup_i F_i=\overbar{K}$ and $\cap_iF_i=\gamma$.
Then the intersection of $C$-neighborhoods of $\phi_H(F_i)$'s contains the quasi-geodesic $\phi(\gamma)$. By Lemma \ref{parallel}, there exist a singular geodesic $\gamma'\subset\overline{Y'}$ and a constant $C_1=C_1(\lambda,\varepsilon,C)>0$ such that the parallel set $\mathbb{P}(\gamma')$ contains $\phi_H(F_i)$'s and $d_H(\gamma',\phi(\gamma))<C_1$.
Let $D_1=\max\{C_1,C\}$ which depends on $\lambda$, $\varepsilon$. Then $\phi(\overbar{K})$ lies in the $D_1$-neighborhood of $\mathbb{P}(\gamma')$.
\end{proof}

\begin{theorem}\label{SPStoSPS}
There exists a constant $D_2=D_2(\lambda, \varepsilon)>0$ such that for any standard product subcomplex $\overbar{K}\subset \overline{Y}$, there exists a standard product subcomplex $\overbar{K}'\subset\overline{Y'}$ such that $\phi(\overbar{K})$ lies in the $D_2$-neighborhood of $\overbar{K}'$.
\end{theorem}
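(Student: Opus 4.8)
The plan is to reduce everything to the following common step: find inside $\overline{Y'}$ a product subcomplex $\overbar L$ that is a product of two infinite trees and satisfies $\phi(\overbar K)\subset N_D(\overbar L)$ for some $D=D(\lambda,\varepsilon)$. Once this is done, Lemma \ref{SubSPC} applied to $\overbar L$ produces a genuine standard product subcomplex $\overbar K'\subset\overline{Y'}$ with $\overbar L\subset N_{C'}(\overbar K')$, and hence $\phi(\overbar K)\subset N_{D_2}(\overbar K')$ with $D_2=D+C'$. Writing $\overbar K=\overbar P_1\times\overbar P_2$ with each $\overbar P_i$ an infinite tree without leaves, I would split into the three quasi-isometric types $(\mathbb Z\times\mathbb Z)$, $(\mathbb Z\times\mathbb F)$ and $(\mathbb F\times\mathbb F)$ according to whether each $\overbar P_i$ is a line or genuinely branches, and construct $\overbar L$ in each case.

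The \textbf{flat case} $(\mathbb Z\times\mathbb Z)$ is the easiest. Here $\overbar K$ is a flat, so Theorem \ref{FlattoFlat} supplies a flat $\phi_H(\overbar K)\subset\overline{Y'}$ with $d_H(\phi(\overbar K),\phi_H(\overbar K))<C(\lambda,\varepsilon)$. A flat is exactly a product of two lines, hence a product of two infinite trees, so I simply take $\overbar L=\phi_H(\overbar K)$ and appeal to the common step.

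For the \textbf{mixed case} $(\mathbb Z\times\mathbb F)$, write $\overbar K=\overbar P_1\times\gamma_2$ with $\overbar P_1$ branching and $\gamma_2\cong\mathbb R$. The engine is Lemma \ref{SG}: for each tripod $\tau\subset\overbar P_1$ centred at a branch vertex $v$, applying it to $\tau\times\gamma_2$ yields a singular geodesic $\gamma'_v$ with $d_H(\phi(v\times\gamma_2),\gamma'_v)<D_1$ and $\phi(\tau\times\gamma_2)\subset N_{D_1}(\mathbb P(\gamma'_v))$. The key observation is that for any two vertices $v,w\in\overbar P_1$ the geodesics $v\times\gamma_2$ and $w\times\gamma_2$ are parallel, since the region between them in $\overbar K$ is a flat strip; hence their images are parallel quasi-geodesics and the singular geodesics $\gamma'_v,\gamma'_w$ are parallel, so all the parallel sets $\mathbb P(\gamma'_v)$ coincide with a single $\mathbb P(\gamma')$. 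Covering $\overbar P_1$ by tripods (possible since $\overbar P_1$ branches and has no leaves) then gives $\phi(\overbar K)\subset N_D(\mathbb P(\gamma'))$. Because $\overline{Y'}$ is two-dimensional, $\mathbb P(\gamma')=\gamma'\times T'$ with $T'$ a tree; projecting $\phi(\overbar K)$ to $T'$ and letting $\overbar P_1^{*}$ be the convex hull of the image, I set $\overbar L=\gamma'\times\overbar P_1^{*}$, a product of two infinite trees, and conclude via the common step. (Lemma \ref{NS} is used to control the reverse containment when verifying that $\overbar L$ lies at finite Hausdorff distance from $\phi(\overbar K)$.)

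The \textbf{generic case} $(\mathbb F\times\mathbb F)$, where both factors branch, is where the real work lies and is the step I expect to be the main obstacle, precisely because Lemma \ref{SG} only applies when one factor is a line. I would bootstrap from the mixed case: for each bi-infinite geodesic $\beta\subset\overbar P_2$ the slice $\overbar P_1\times\beta$ is of type $(\mathbb F\times\mathbb Z)$, so the previous paragraph produces a parallel set $\mathbb P(\gamma'_\beta)=\gamma'_\beta\times T'_\beta$ with $\phi(\overbar P_1\times\beta)\subset N_D(\mathbb P(\gamma'_\beta))$, where $\gamma'_\beta$ tracks the $\beta$-direction and $T'_\beta$ tracks the $\overbar P_1$-direction. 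The difficulty is to assemble these as $\beta$ varies: when $\beta,\beta'$ share a ray the slices overlap in $\overbar P_1\times(\beta\cap\beta')$, and comparing the two containments by means of Lemma \ref{Projection} and Lemma \ref{NS} forces the tree factors $T'_\beta$ and $T'_{\beta'}$ to agree coarsely on the overlap; since $\overbar P_2$ is connected this produces a single tree $\overbar P'_1$ coarsely equal to every $T'_\beta$, while the geodesics $\gamma'_\beta$ fit together into a second tree $\overbar P'_2$. Setting $\overbar L=\overbar P'_1\times\overbar P'_2$, which I would certify to be a product subcomplex by checking that its two associated families of hyperplanes cross (using that the images of the flats $\alpha\times\beta\subset\overbar K$ are flats inside $\overbar L$ and invoking Proposition 2.6 of \cite{CS11}), and applying the common step once more completes the argument. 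Throughout, every constant depends only on $\lambda,\varepsilon$ together with the fixed complexes $Y,Y'$, as required.
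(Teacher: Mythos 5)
Your overall architecture is the paper's: reduce to finding a product subcomplex $\overbar{L}\subset\overline{Y'}$ that is a product of two infinite trees with $\phi(\overbar{K})\subset N_D(\overbar{L})$, then invoke Lemma \ref{SubSPC}. Your flat case is exactly the paper's Case 1 (Theorem \ref{FlattoFlat} plus Lemma \ref{SubSPC}), and your mixed case is the paper's Case 2; if anything you are more careful, since the paper quotes Lemma \ref{SG} for the whole tree-times-line while your covering by tripods, together with the observation that the resulting singular geodesics are mutually parallel and hence have a common parallel set, is the justification that argument actually needs.

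The genuine gap is in the $(\mathbb{F}\times\mathbb{F})$ case, exactly where you anticipate trouble. Lemma \ref{SubSPC} requires as input an honest product subcomplex of $\overline{Y'}$ — a convex subcomplex isometric to a product of two infinite trees — but your construction only produces coarse data: for each slice $\overbar{P}_1\times\beta$ a parallel set $\mathbb{P}(\gamma'_\beta)=\gamma'_\beta\times T'_\beta$, plus the unproven assertion that the tree factors of overlapping slices ``agree coarsely.'' Neither Lemma \ref{Projection} nor Lemma \ref{NS} applies as you suggest: Lemma \ref{NS} needs one of the two parallel sets to lie in a neighborhood of the other, which fails here since $\mathbb{P}(\gamma'_\beta)$ and $\mathbb{P}(\gamma'_{\beta'})$ extend in different directions, and only the image of the overlap (not either parallel set) is known to lie near both. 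Moreover, even granting coarse agreement, finite-Hausdorff-distance relations do not assemble into a subcomplex: ``the geodesics $\gamma'_\beta$ fit together into a second tree'' is not a construction, and your final appeal to Proposition 2.6 of \cite{CS11} presupposes that $\overbar{L}$ is already a well-defined subcomplex whose crossing hyperplanes one can enumerate. The paper resolves precisely this point by a different assembly: it defines $\overbar{U}''$ as the union of the genuine flats induced by pairs of image geodesics from the two families $\mathbb{H}'$, $\mathbb{V}'$ (so $\overbar{U}''$ is a subcomplex by construction), checks that the two hyperplane families cross inside it, and then proves the real content — connectivity of $\overbar{U}''$ — by a chain argument in which consecutive flats share half-flats, using that flat rigidity preserves the property of two flats coarsely sharing a half-flat. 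Your slice-overlap mechanism compares coarse projections but never shows that actual flats or parallel sets in $\overline{Y'}$ intersect, which is what both connectivity and the criterion of \cite{CS11} demand; to close the gap you would essentially have to reproduce the paper's chain-of-half-flats argument.
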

\begin{proof}
Let $\overbar{K}=\overbar{P}_1\times\overbar{P}_2$. Since $\overbar{P}_i$ is isometric to either $\mathbb{R}$ or a standard infinite tree, we divide the statement into three cases. (Recall that $\mathbb{R}$ is not a standard infinite tree; see the first paragraph of Section \ref{2}.)
\vspace{1mm}
 
$\mathbf{Case\ 1.}$ Suppose that both $\overbar{P}_1$ and $\overbar{P}_2$ are isometric to $\mathbb{R}$, i.e. $\overbar{K}$ is a flat in $\overline{Y}$. By Lemma \ref{SubSPC}, the flat $\phi_H(\overbar{K})\subset\overline{Y'}$ is contained in a standard product subcomplex $\overbar{K}'\subset\overline{Y'}$ so that the theorem holds.
\vspace{1mm}

$\mathbf{Case\ 2.}$ Suppose that $\overbar{P}_1$ is isometric to a standard infinite tree and $\overbar{P}_2$ is isometric to $\mathbb{R}$. Let $v\in \overbar{P}_1$ be a vertex whose valency is $\geq 3$.
By Lemma \ref{SG}, $\gamma=v\times\overbar{P}_2$ is mapped by $\phi$ to within Hausdorff distance at most $D_1$ of a singular geodesic $\gamma'$ in $\overline{Y'}$ and $\phi(\overbar{K})\subset\overline{Y'}$ is contained in the $D_1$-neighborhood of $\mathbb{P}(\gamma')$ which is a product subcomplex.
By Lemma \ref{SubSPC}, there is a standard product subcomplex $\overbar{K}'\subset\overline{Y}'$ such that $\mathbb{P}(\gamma')$ is contained in the neighborhood of $\overbar{K}'$. Thus, $\overbar{K}'$ is the desired standard product subcomplex whose neighborhood contains $\phi(\overbar{K})$.
\vspace{1mm}

$\mathbf{Case\ 3.}$ Suppose that both $\overbar{P}_1$ and $\overbar{P}_2$ are isometric to standard infinite trees. 
Let $\mathbb{H}$ ($\mathbb{V}$, resp.) be the collection of singular geodesics of form $\alpha\times w$ ($v\times\beta$, resp.) in $\overbar{K}$ where $w\in \overbar{P}_2$ ($v\in \overbar{P}_1$, resp.) is a vertex. 
For any singular geodesic $\gamma$ in $\mathbb{H}$ or $\mathbb{V}$, by Lemma \ref{SG}, there is a singular geodesic in $\overline{Y'}$ which is within Hausdorff distance at most $D_1$ of $\phi(\gamma)$. Let $\mathbb{H}'$ ($\mathbb{V}'$, resp.) be the collection of such singular geodesics in $\overline{Y'}$ obtained from $\phi$ and the singular geodesics in $\mathbb{H}$ ($\mathbb{V}$, resp.). 
Since any $\alpha\in\mathbb{H}$ and $\beta\in\mathbb{V}$ induce a (unique) flat (i.e. there are singular geodesics $\alpha_1$ and $\beta_1$ parallel to $\alpha$ and $\beta$, respectively, such that $\alpha_1\times\beta_1$ is a flat), by Theorem \ref{FlattoFlat}, any $\alpha'\in\mathbb{H}'$ and $\beta'\in\mathbb{V}'$ also induce a flat (which is contained in $\mathbb{P}(\alpha')\cap\mathbb{P}(\beta')$).

Let $\overbar{U}'$ be the union of all the parallel sets of singular geodesics in $\mathbb{H}'\cup\mathbb{V}'$, 
$$\overbar{U}'=\bigcup_{\alpha'\in\mathbb{H}'}\mathbb{P}(\alpha') \cup\bigcup_{\beta'\in\mathbb{V}'}\mathbb{P}(\beta').$$
By the previous paragraph, $\overbar{U}'$ is connected and by $\mathbf{Case\ 2}$, $\phi(\overbar{K})$ is contained in the neighborhood of $\overbar{U}'$. However, $\overbar{U}'$ may not be a product subcomplex. So, we will find a product subcomplex $\overbar{U}''\subset\overline{Y'}$ such that $d_H(\overbar{U}',\overbar{U}'')$ is finite.

Let $\mathcal{H}'_1$ and $\mathcal{H}'_2$ be the collections of hyperplanes in $\overline{Y'}$ dual to edges of $\alpha'\in\mathbb{H}'$ and $\beta'\in\mathbb{V}'$, respectively.
Then every hyperplane in $\mathcal{H}'_1$ intersects every hyperplane in $\mathcal{H}'_2$ and vice-versa.
Let $\overbar{U}''\subset \overbar{U}'$ be the union of all the flats induced by $\alpha'\in\mathbb{H}'$, $\beta'\in\mathbb{V}'$. For any $h'_1\in\mathcal{H}'_1$, $h'_2\in\mathcal{H}'_2$, there is a unique square in $\overbar{U}''\subset\overline{Y'}$ which both $h_1$ and $h_2$ intersect. Since $\mathcal{H}'_1\cup\mathcal{H}'_2$ is the collection of all the hyperplanes intersecting $\overbar{U}''$, we need to show that $\overbar{U}''$ is connected in order to conclude that $\overbar{U}''$ is a product subcomplex by Proposition 2.6 in \cite{CS11}.

Let $F_1\subset \overbar{K}$ be the flat induced by $\alpha_1\in\mathbb{H}$ and $\beta_1\in\mathbb{V}$, and $F_2\subset \overbar{K}$ the flat induced by $\alpha_2\in\mathbb{H}$ and $\beta_2\in\mathbb{V}$.
If $\alpha_1$ and $\alpha_2$ are not parallel, then there is a singular geodesic $\alpha_3\in\mathbb{H}$ which contains two singular rays $\alpha^+_3,\alpha^-_3$ such that 
\begin{enumerate}
\item
$\alpha^+_3$ is parallel to a singular ray contained in $\alpha_1$ and $\alpha^-_3$ is parallel to a singular ray contained in $\alpha_2$, and 
\item 
$\alpha_i$ and $\alpha_j$ are not parallel for $i,j\in\{1,2,3\}$, $i\neq j$.
\end{enumerate}
Otherwise, let $\alpha_3=\alpha_1$. Similarly, we choose $\beta_3\in\mathbb{V}$ from $\beta_1$, $\beta_2$. 
Let $G_1\subset \overbar{K}$ be the flat induced by $\alpha_3$ and $\beta_1$, $G_2\subset \overbar{K}$ the flat induced by $\alpha_3$ and $\beta_3$, and $G_3\subset \overbar{K}$ the flat induced by $\alpha_3$ and $\beta_2$.
Then $F_1\cap G_1$, $G_1\cap G_2$, $G_2\cap G_3$ and $G_3\cap F_2$ contain half-flats in $\overline{Y}$ so that $\phi_H(F_1)\cap \phi_H(G_1)$, $\phi_H(G_1)\cap \phi_H(G_2)$, $\phi_H(G_2)\cap \phi_H(G_3)$ and $\phi_H(G_3)\cap \phi_H(F_2)$ contain half-flats in $\overline{Y'}$.
Thus, $\overbar{U}''$ is connected.

By Lemma \ref{SubSPC}, there is a standard product subcomplex $\overbar{K}'\subset\overline{Y'}$ such that $\overbar{U}''$ is contained in the neighborhood of $\overbar{K}'$.
Since $\overbar{K}$ is the union of flats and any flats in $\overbar{K}$ are mapped by $\phi$ to within controlled Hausdorff distance of flats in $\overbar{U}''$, $\phi(\overbar{K})$ lies in the neighborhood of $\overbar{U}''$. 
Therefore, $\phi(\overbar{K})$ lies in the neighborhood of the standard product subcomplex $\overbar{K}'$.
\end{proof}

\begin{corollary}\label{MaxtoMax}
There exists a constant $D=D(\lambda, \varepsilon)>0$ such that for any maximal product subcomplex $\overbar{M}\subset\overline{Y}$, there exists a unique maximal product subcomplex $\phi_H(\overbar{M})\subset\overline{Y'}$ such that $d_H(\phi(\overbar{M}),\phi_H(\overbar{M}))<D$.
In particular, the quasi-isometric types of $\overbar{M}$ and $\phi_H(\overbar{M})$ are the same.
\end{corollary}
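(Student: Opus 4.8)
The plan is to upgrade Theorem \ref{SPStoSPS} from a one-sided neighborhood bound for standard product subcomplexes to a two-sided (Hausdorff) bound for maximal ones, using a quasi-inverse of $\phi$ together with the rigidity in Lemma \ref{NS}. Let $\psi:\overline{Y'}\rightarrow\overline{Y}$ be a quasi-inverse of $\phi$; it is a $(\lambda',\varepsilon')$-quasi-isometry, and $d(\psi\phi,\mathrm{id}_{\overline{Y}})$, $d(\phi\psi,\mathrm{id}_{\overline{Y'}})$ are bounded by a constant $c$, where $\lambda',\varepsilon',c$ depend only on $\lambda,\varepsilon$. Viewing the maximal product subcomplex $\overbar{M}$ as a standard product subcomplex, Theorem \ref{SPStoSPS} furnishes a standard product subcomplex $\overbar{K}'\subset\overline{Y'}$ with $\phi(\overbar{M})\subset N_{D_2}(\overbar{K}')$; I would then let $\overbar{M}'$ be any maximal product subcomplex containing $\overbar{K}'$, so that $\phi(\overbar{M})\subset N_{D_2}(\overbar{M}')$. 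This is the candidate for $\phi_H(M)$, and it remains to prove the reverse containment and to pin down $\overbar{M}'$.

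For the reverse containment, apply Theorem \ref{SPStoSPS} to $\psi$ and the standard product subcomplex $\overbar{M}'$ to obtain $\overbar{L}\subset\overline{Y}$ with $\psi(\overbar{M}')\subset N_{D_2'}(\overbar{L})$, and enlarge $\overbar{L}$ to a maximal product subcomplex $\overbar{N}$. Chaining the bounds gives $\overbar{M}\subset N_c(\psi\phi(\overbar{M}))\subset N_{r}(\psi(\overbar{M}'))\subset N_{r'}(\overbar{N})$ for a constant $r'=r'(\lambda,\varepsilon)$. Since $\overbar{M}$ and $\overbar{N}$ are both $2$-dimensional products of leafless infinite trees, Lemma \ref{NS} gives $\overbar{M}\subset\overbar{N}$, and maximality of $\overbar{M}$ forces $\overbar{M}=\overbar{N}$; in particular $\overbar{L}\subset\overbar{M}$, so $\psi(\overbar{M}')\subset N_{D_2'}(\overbar{M})$. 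Pushing this forward by $\phi$ and using $d(\phi\psi,\mathrm{id})\le c$ yields $\overbar{M}'\subset N_{R}(\phi(\overbar{M}))$ for some $R=R(\lambda,\varepsilon)$. Combining with the first paragraph, $d_H(\phi(\overbar{M}),\overbar{M}')<D$ with $D=\max\{D_2,R\}=D(\lambda,\varepsilon)$, and I set $\phi_H(M):=\overbar{M}'$.

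Uniqueness is then immediate from Lemma \ref{NS}: if two maximal product subcomplexes $\overbar{M}'_1$, $\overbar{M}'_2$ each lie within $D$ of $\phi(\overbar{M})$, then each lies in a bounded neighborhood of the other, whence $\overbar{M}'_1\subset\overbar{M}'_2$ and $\overbar{M}'_2\subset\overbar{M}'_1$, so they coincide. Finally, since $\phi$ is a quasi-isometry and $d_H(\phi(\overbar{M}),\phi_H(M))<\infty$, the subcomplexes $\overbar{M}$ and $\phi_H(M)$ are quasi-isometric; as standard product subcomplexes of different quasi-isometric type are never quasi-isometric, $\overbar{M}$ and $\phi_H(M)$ share the same type.

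I expect the main obstacle to be exactly the passage from the one-sided containment of Theorem \ref{SPStoSPS} to the two-sided Hausdorff bound: the enlargement $\overbar{K}'\subseteq\overbar{M}'$ is a priori uncontrolled, and ruling out that $\overbar{M}'$ is ``too large'' is precisely what the ping-pong with $\psi$ accomplishes — feeding $\psi(\overbar{M}')$ back through Theorem \ref{SPStoSPS} and then invoking the maximality of $\overbar{M}$ via Lemma \ref{NS} to collapse $\overbar{N}$ onto $\overbar{M}$.
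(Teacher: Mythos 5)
Your proof is correct and follows essentially the same route as the paper: apply Theorem \ref{SPStoSPS} to $\phi$ to get a standard product subcomplex, enlarge it to a maximal one, ping-pong through Theorem \ref{SPStoSPS} applied to a quasi-inverse, and collapse the resulting containments via Lemma \ref{NS} and maximality of $\overbar{M}$ to obtain the two-sided Hausdorff bound. The only difference is that you spell out the uniqueness argument and the preservation of quasi-isometric type, which the paper's proof leaves implicit.
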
	
\begin{proof}
By Theorem \ref{SPStoSPS}, $\overbar{M}$ is mapped by $\phi$ to within finite Hausdorff distance of a standard product subcomplex $\overbar{M}'\subset\overline{Y'}$.
Let us assume that $\overbar{M}'$ is a maximal product subcomplex. 
By the inverse $\phi^{-1}$, then, $\overbar{M}'$ is mapped into the neighborhood of a standard product subcomplex $\overbar{K}\subset\overline{Y}$. Then $\overbar{M}$ is contained in the neighborhood of $\overbar{K}$.
By Lemma \ref{NS} and the maximality, $\overbar{M}=\overbar{K}$. Therefore, $\overbar{M}'$ is the desired maximal product subcomplex.  
\end{proof}

Now let us see how the quasi-isometry $\phi:\overline{Y}\rightarrow\overline{Y'}$ preserves the intersection of maximal product subcomplexes. Suppose that $\overbar{M}_i\subset\overline{Y}$ is a maximal product subcomplex for $i=1,\cdots,n$ such that the intersection $\overbar{W}=\cap_i\overbar{M}_i$ contains a flat $F$. By Lemma \ref{IntofStd}, there is a standard product subcomplex $\overbar{K}\subset\overbar{W}$ such that $d_H(\overbar{K},\overbar{W})$ is finite, and by Lemma \ref{NS}, any flats in $\overbar{W}$ are contained in $\overbar{K}$.
Since $\phi_H(F)$ is contained in the neighborhood of $\phi_H(\overbar{M}_i)$, by Lemma \ref{NS}, $\phi_H(F)$ is contained in $\phi_H(\overbar{M}_i)$ and thus contained in $\overbar{W}'=\cap_i\phi_H(\overbar{M}_i)$. 
As $\overbar{K}$ is obtained from $\overbar{W}$, a standard product subcomplex $\overbar{K}'\subset\overbar{W}'$ can be obtained such that $d_H(\overbar{K}',\overbar{W}')$ is finite and $\overbar{K}'$ contains any flats in $\overbar{W}'$.
Since $\overbar{W}'$ does not depend on the choice of flat $F\subset\overbar{W}$, we deduce that any flats in $\overbar{W}$ are mapped by $\phi$ into the neighborhood of $\overbar{K}'$. 
From the fact that $\overbar{K}$ is the union of flats contained in $\overbar{W}$, we know that the neighborhood of $\overbar{K}'$ contains $\phi(\overbar{K})$ and from the fact that $\phi^{-1}$ is also a quasi-isometry, $\overbar{K}'$ is quasi-isometric to $\overbar{K}$. 
Therefore, we prove the following theorem which is a more explicit form of Theorem \ref{MaxcontainingFlat}.

\begin{theorem}\label{MaxcontainingFlat2}
For a finite collection of maximal product subcomplexes $\overbar{M}_i\subset\overline{Y}$, if the intersection $\overbar{W}=\cap_i\overbar{M}_i$ contains a flat, then the intersection $\overbar{W}'=\cap_i\phi_H(\overbar{M}_i)$ also contains a flat.
For the standard product subcomplex $\overbar{K}\subset\overline{Y}$ obtained from $\overbar{W}$ as in Lemma \ref{IntofStd}, there is a unique standard product subcomplex $\phi_H(\overbar{K})\subset\overline{Y'}$ such that $\phi_H(\overbar{K})$ is the standard product subcomplex obtained from $\overbar{W}'$ as in Lemma \ref{IntofStd} and $d_H(\phi(\overbar{K}),\phi_H(\overbar{K}))<D$ where $D$ is given in Corollary \ref{MaxtoMax}. 
\end{theorem}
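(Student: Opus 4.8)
The plan is to make the informal chain of inclusions quantitative, in three stages: transport of a flat, promotion to the standard product subcomplex via Lemma \ref{IntofStd}, and two–sided Hausdorff control together with uniqueness. First I would prove that $\overbar{W}'$ contains a flat. Fix any flat $F\subset\overbar{W}=\cap_i\overbar{M}_i$, which exists by hypothesis. For each $i$ we have $F\subset\overbar{M}_i$, so $\phi(F)\subset\phi(\overbar{M}_i)\subset N_D(\phi_H(\overbar{M}_i))$ by Corollary \ref{MaxtoMax}; meanwhile Theorem \ref{FlattoFlat} produces a flat $\phi_H(F)\subset\overline{Y'}$ with $d_H(\phi(F),\phi_H(F))<C$ for a uniform $C=C(\lambda,\varepsilon)$. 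Hence $\phi_H(F)$ lies in a bounded neighborhood of $\phi_H(\overbar{M}_i)$, and since both are $2$-dimensional products of infinite trees without leaves, Lemma \ref{NS} upgrades this to the genuine inclusion $\phi_H(F)\subset\phi_H(\overbar{M}_i)$ for every $i$. Therefore $\phi_H(F)\subset\overbar{W}'$, which proves the first assertion, and Lemma \ref{IntofStd} then yields the standard product subcomplex $\overbar{K}'\subset\overbar{W}'$ at finite Hausdorff distance from $\overbar{W}'$ and containing every flat of $\overbar{W}'$.

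Next I would identify $\overbar{K}'$ as $\phi_H(\overbar{K})$ by bounding $d_H(\phi(\overbar{K}),\overbar{K}')$. The structural fact I would exploit is that $\overbar{K}=\overbar{P}_1\times\overbar{P}_2$ is the union of the flats it contains: since each $\overbar{P}_j$ is an infinite tree without leaves, every point extends to a bi-infinite geodesic in that factor, so $\overbar{K}=\bigcup_{F\subset\overbar{K}}F$. Each such flat $F$ still lies in $\overbar{W}$, so by the previous step $\phi_H(F)\subset\overbar{W}'\subset\overbar{K}'$, and $d_H(\phi(F),\phi_H(F))<C$ with $C$ uniform over all flats by Theorem \ref{FlattoFlat}. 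Taking the union over all flats $F\subset\overbar{K}$ gives $\phi(\overbar{K})\subset N_C(\overbar{K}')$, with a constant independent of the particular collection $\{\overbar{M}_i\}$.

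For the reverse inclusion I would rerun the whole argument for the quasi-isometry $\phi^{-1}$. Applying Corollary \ref{MaxtoMax} to $\phi^{-1}$ and using Lemma \ref{NS} with maximality shows $(\phi^{-1})_H(\phi_H(\overbar{M}_i))=\overbar{M}_i$, so the standard product subcomplex associated with $\cap_i(\phi^{-1})_H(\phi_H(\overbar{M}_i))=\overbar{W}$ is exactly $\overbar{K}$; the forward estimate applied to $\phi^{-1}$ then gives $\phi^{-1}(\overbar{K}')\subset N_{C'}(\overbar{K})$, which after applying $\phi$ yields $\overbar{K}'\subset N_{D}(\phi(\overbar{K}))$ after absorbing the intermediate constants into the $D$ of Corollary \ref{MaxtoMax}. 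Combining the two inclusions gives $d_H(\phi(\overbar{K}),\overbar{K}')<D$, so we may set $\phi_H(\overbar{K}):=\overbar{K}'$. Uniqueness is then immediate: any two standard product subcomplexes within Hausdorff distance $D$ of $\phi(\overbar{K})$ are within distance $2D$ of each other, and being $2$-dimensional products of infinite trees without leaves, Lemma \ref{NS} forces each to contain the other, so they coincide.

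I expect the main obstacle to be the constant bookkeeping, namely arranging that the single constant $D$ of Corollary \ref{MaxtoMax} governs both inclusions rather than two a priori different constants. The substance there is that Theorem \ref{FlattoFlat} supplies a flat-tracking constant that is uniform over all flats $F$ and independent of the collection $\{\overbar{M}_i\}$, and that the union-of-flats description of $\overbar{K}$ lets one pass from the flat-level estimate to the whole subcomplex without loss; the symmetry under $\phi\leftrightarrow\phi^{-1}$ then closes the estimate in both directions. Everything else is a direct reuse of Theorem \ref{SPStoSPS}, Corollary \ref{MaxtoMax}, and Lemmas \ref{NS} and \ref{IntofStd}.
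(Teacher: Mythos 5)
Your proposal is correct and follows essentially the same route as the paper's own argument: transport a flat of $\overbar{W}$ into $\overbar{W}'$ using Theorem \ref{FlattoFlat}, Corollary \ref{MaxtoMax} and Lemma \ref{NS}, promote via Lemma \ref{IntofStd}, use the union-of-flats description of $\overbar{K}$ to get $\phi(\overbar{K})\subset N(\overbar{K}')$, and close the estimate by running the same argument for $\phi^{-1}$. One cosmetic slip: the chain $\phi_H(F)\subset\overbar{W}'\subset\overbar{K}'$ has its second inclusion backwards (in fact $\overbar{K}'\subset\overbar{W}'$); the fact you actually need, and which you stated correctly just before, is that $\overbar{K}'$ contains every flat of $\overbar{W}'$, so the conclusion $\phi_H(F)\subset\overbar{K}'$ still stands.
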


From Corollary \ref{MaxtoMax} and Theorem \ref{MaxcontainingFlat2}, the following complex is obtained from $\overline{Y}$.

\begin{definition}[Intersection Complex, $\mathit{cf}$.\cite{Fer12}]\label{IC}
Let $Y$ be a compact weakly special square complex and $\overline{Y}$ the universal cover of $Y$ with the universal covering map $p_Y:\overline{Y}\rightarrow Y$.
The \textit{intersection complex} $I(\overline{Y})$ is defined as follows:
\begin{enumerate}
\item 
Each vertex in $I(\overline{Y})$ corresponds to a maximal product subcomplex of $\overline{Y}$.
\item 
If the intersection of $(k+1)$ maximal product subcomplexes contains a flat, then the corresponding $(k+1)$ vertices in $I(\overline{Y})$ span a unique $k$-simplex. 
\item 
Each simplex $\triangle\subset I(\overline{Y})$ corresponds to the standard product subcomplex $\overbar{K}_\triangle\subset\overline{Y}$ obtained as in the paragraph above Theorem \ref{MaxcontainingFlat2} and is labelled by the base of $\overbar{K}_\triangle$.
\end{enumerate}
The underlying complex is denoted by $|I(\overline{Y})|$. 
\end{definition}

\begin{convention}
The standard product subcomplex $\overbar{K}_{\mathbf{v}}\subset\overline{Y}$ corresponding to a vertex $\mathbf{v}\in I(\overline{Y})$ is sometimes denoted by $\overbar{M}_{\mathbf{v}}$ when we emphasize that it is a maximal product subcomplex.
\end{convention}

Note that Lemma \ref{IntofStd} guarantees that $|I(\overline{Y})|$ is a simplicial complex.

\begin{proposition}\label{Diam2}
Let $\gamma\subset\overline{Y}$ be a singular geodesic contained in a maximal product subcomplex. Consider maximal product subcomplexes of $\overline{Y}$ containing singular geodesics parallel to $\gamma$ and let $\overbar{V}_\gamma$ be the set of vertices in $I(\overline{Y})$ corresponding to such maximal product subcomplexes.
Then the full subcomplex of $I(\overline{Y})$ spanned by $\overbar{V}_\gamma$ is connected and contained in the star of a vertex. 
\end{proposition}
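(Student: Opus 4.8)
The plan is to exhibit one vertex $\mathbf{v}_0\in\overbar{V}_\gamma$ that is joined by an edge to every other vertex of $\overbar{V}_\gamma$. Once this is done, both assertions are immediate: the full subcomplex of $I(\overline{Y})$ spanned by $\overbar{V}_\gamma$ is then a cone with apex $\mathbf{v}_0$, hence connected, and since every vertex of $\overbar{V}_\gamma$ is either $\mathbf{v}_0$ itself or adjacent to $\mathbf{v}_0$, this full subcomplex is contained in the star of $\mathbf{v}_0$.

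To build the apex, I would first record that, because $\gamma$ lies in a maximal product subcomplex, the argument of Lemma \ref{AtMostOneMax} gives a product decomposition $\mathbb{P}(\gamma)=\gamma\times\overbar{T}$ of the parallel set, where $\overbar{T}$ is an infinite tree (it contains a line because $\gamma$ is parallel to a factor of a flat inside that $2$-dimensional product subcomplex). Applying Lemma \ref{SubSPC} to the product subcomplex $\gamma\times\overbar{T}$ yields a standard product subcomplex $\overbar{K}'\subset\overline{Y}$ whose $C$-neighborhood contains $\mathbb{P}(\gamma)$ for some constant $C=C(Y)$, and a maximal product subcomplex $\overbar{M}_0\supset\overbar{K}'$; by Lemma \ref{AtMostOneMax} this $\overbar{M}_0$ is in fact the unique maximal product subcomplex whose neighborhood contains $\mathbb{P}(\gamma)$. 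Let $\mathbf{v}_0$ be the vertex of $I(\overline{Y})$ corresponding to $\overbar{M}_0$. Since $\mathbb{P}(\gamma)\subset N_C(\overbar{M}_0)$, the function $d(\cdot,\overbar{M}_0)$ is bounded and convex on $\gamma$, hence constant; Lemma \ref{Projection} then projects $\gamma$ isometrically onto a singular geodesic parallel to $\gamma$ and contained in $\overbar{M}_0$, so $\mathbf{v}_0\in\overbar{V}_\gamma$.

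Next I would take an arbitrary vertex $\mathbf{v}\in\overbar{V}_\gamma$, corresponding to a maximal product subcomplex $\overbar{M}=\overbar{P}_1\times\overbar{P}_2$, and produce the required edge. By definition of $\overbar{V}_\gamma$, the complex $\overbar{M}$ contains a singular geodesic $\gamma'$ parallel to $\gamma$; as $\overbar{M}$ is a product of trees, $\gamma'$ must be parallel to one of the factors, say $\gamma'=\ell\times\{w\}$ with $\ell\subset\overbar{P}_1$, so choosing any line $m\subset\overbar{P}_2$ (which exists since $\overbar{P}_2$ has no leaves) gives a flat $F=\ell\times m\subset\overbar{M}$ through $\gamma'$. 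Every line of $F$ in the $\overbar{P}_1$-direction is parallel to $\gamma'$, hence to $\gamma$, and these lines fill $F$, so $F\subset\mathbb{P}(\gamma)\subset N_C(\overbar{M}_0)$. Because $F$ is a $2$-dimensional product of infinite trees without leaves and $\overbar{M}_0$ is as well, Lemma \ref{NS} upgrades this to $F\subset\overbar{M}_0$; therefore $F\subset\overbar{M}\cap\overbar{M}_0$ is a flat, and $\mathbf{v}$ and $\mathbf{v}_0$ span an edge of $I(\overline{Y})$ (or coincide). Since $\mathbf{v}$ was arbitrary, this completes the proof.

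I expect the points requiring care to be the two places where the product structure interacts with the CAT(0) metric: verifying that a singular geodesic of the product $\overbar{M}$ is genuinely parallel to one of the factors, so that the flat $F$ through $\gamma'$ is well defined, and verifying the containment $F\subset\mathbb{P}(\gamma)$, which is exactly what lets Lemma \ref{NS} convert the coarse bound $F\subset N_C(\overbar{M}_0)$ into the exact containment $F\subset\overbar{M}_0$. The bookkeeping of the single constant $C$ from Lemma \ref{SubSPC} is routine, and the uniqueness of $\overbar{M}_0$ supplied by Lemma \ref{AtMostOneMax}, though not logically necessary for the statement, makes the apex canonical.
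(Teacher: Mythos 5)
Your proposal is correct and takes essentially the same route as the paper's proof: both identify, via Lemma \ref{SubSPC} and Lemma \ref{AtMostOneMax}, the unique maximal product subcomplex whose neighborhood contains the parallel set $\mathbb{P}(\gamma)$, and then show that any maximal product subcomplex containing a singular geodesic parallel to $\gamma$ shares a flat with it, by placing that flat inside $\mathbb{P}(\gamma)$ and upgrading the coarse containment to an exact one with Lemma \ref{NS}. Your additional verifications (that a singular geodesic in a product of trees is parallel to a factor, and the projection argument via Lemma \ref{Projection} showing the apex itself lies in $\overbar{V}_\gamma$) simply make explicit steps the paper leaves implicit.
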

\begin{proof}
By Lemma \ref{AtMostOneMax} and the assumption, there is a unique maximal product subcomplex $\overbar{M}_\gamma\subset\overline{Y}$ whose neighborhood contains the parallel set $\mathbb{P}(\gamma)$.  
If a maximal product subcomplex $\overbar{M}$ contains a singular geodesic parallel to $\gamma$, then there is a flat $F$ in $\overbar{M}\cap \mathbb{P}(\gamma)$. 
It means that $\overbar{M}$ and $\overbar{M}_\gamma$ have $F$ in common since $F$ is contained in $\overbar{M}_\gamma$. Therefore, the vertex in $I(\overline{Y})$ corresponding to $\overbar{M}_\gamma$ and the vertex in $I(\overline{Y})$ corresponding to $\overbar{M}$ are either identical or joined by an edge. 
\end{proof}

In \cite{Fer12}, the intersection complex of $\overline{D_2(\Gamma)}$ was defined as the underlying complex $|I(\overline{D_2(\Gamma)})|$ of $I(\overline{D_2(\Gamma)})$ in our definition and it was shown that a quasi-isometry $\phi:\overline{D_2(\Gamma)}\rightarrow \overline{D_2(\Gamma')}$ induces an isometry $|\Phi|:|I(\overline{D_2(\Gamma)})|\rightarrow|I(\overline{D_2(\Gamma')})|$. 
The same story holds for a quasi-isometry between the universal covers of compact weakly special square complexes.

\begin{theorem}[$\mathit{cf}$.\cite{Fer12}]\label{IsobetInt}
Let $\phi:\overline{Y}\rightarrow \overline{Y'}$ be a $(\lambda, \varepsilon)$-quasi-isometry. Then $\phi$ induces an isometry $|\Phi|:|I(\overline{Y})| \rightarrow |I(\overline{Y'})|$.
Moreover, $|\Phi|$ preserves the quasi-isometric type of the label of each simplex, i.e. the quasi-isometric type of the label of $\triangle\subset I(\overline{Y})$ is the same as the quasi-isometric type of the label of $|\Phi|(\triangle)$.
\end{theorem}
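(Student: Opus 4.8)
The plan is to define $|\Phi|$ on vertices using Corollary \ref{MaxtoMax}, verify it is a bijection by running the same construction for a quasi-inverse of $\phi$, promote it to a simplicial isomorphism via Theorem \ref{MaxcontainingFlat2}, and finally read off that it is an isometry and that it preserves quasi-isometric types of labels. First I would set $|\Phi|(\mathbf{v})$ to be the vertex of $I(\overline{Y'})$ corresponding to the unique maximal product subcomplex $\phi_H(\overbar{M}_{\mathbf{v}})\subset\overline{Y'}$ provided by Corollary \ref{MaxtoMax}, where $\overbar{M}_{\mathbf{v}}$ is the maximal product subcomplex corresponding to $\mathbf{v}$. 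To see that this is a bijection on vertex sets, I would apply the same recipe to a quasi-inverse $\phi^{-1}:\overline{Y'}\rightarrow\overline{Y}$, obtaining a vertex map $|\Psi|$ in the opposite direction. Since $\phi^{-1}\circ\phi$ lies at bounded distance from the identity, the subcomplex $(\phi^{-1})_H(\phi_H(\overbar{M}_{\mathbf{v}}))$ is within finite Hausdorff distance of $\overbar{M}_{\mathbf{v}}$; the uniqueness clause of Corollary \ref{MaxtoMax} then forces $|\Psi|\circ|\Phi|=\mathrm{id}$, and symmetrically $|\Phi|\circ|\Psi|=\mathrm{id}$.

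Next I would upgrade the vertex bijection to a simplicial isomorphism. By Definition \ref{IC}, vertices $\mathbf{v}_0,\dots,\mathbf{v}_k$ span a simplex in $I(\overline{Y})$ exactly when $\bigcap_i\overbar{M}_{\mathbf{v}_i}$ contains a flat. Theorem \ref{MaxcontainingFlat2} asserts that in this case $\bigcap_i\phi_H(\overbar{M}_{\mathbf{v}_i})$ also contains a flat, so $|\Phi|(\mathbf{v}_0),\dots,|\Phi|(\mathbf{v}_k)$ span a simplex in $I(\overline{Y'})$. Applying the same implication to $\phi^{-1}$ and invoking the bijectivity established above yields the converse, so $|\Phi|$ carries $k$-simplices to $k$-simplices in both directions. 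Since the metric on each underlying complex is determined by its simplicial structure (each simplex being a regular Euclidean simplex with unit edge lengths), any such simplicial isomorphism is automatically an isometry; this gives the first assertion.

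Finally, for the statement about labels I would invoke the compatibility recorded in Theorem \ref{MaxcontainingFlat2}: for each simplex $\triangle$ the associated standard product subcomplex $\overbar{K}_\triangle$ is carried by $\phi$ to within Hausdorff distance $D$ of the standard product subcomplex $\phi_H(\overbar{K}_\triangle)$ assigned to $|\Phi|(\triangle)$. A quasi-isometry between two spaces that sends one to within finite Hausdorff distance of the other restricts to a quasi-isometry between them, so $\overbar{K}_\triangle$ and $\phi_H(\overbar{K}_\triangle)$ are quasi-isometric. Because distinct quasi-isometric types among $(\mathbb{Z}\times\mathbb{Z})$, $(\mathbb{Z}\times\mathbb{F})$ and $(\mathbb{F}\times\mathbb{F})$ are never quasi-isometric to one another, the labels of $\triangle$ and $|\Phi|(\triangle)$ must share the same quasi-isometric type. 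The delicate point throughout is the bijectivity argument in the first paragraph: one has to be sure the two $H$-constructions are genuinely mutually inverse, and this rests on the \emph{uniqueness} in Corollary \ref{MaxtoMax} rather than on any canonical naturality of the assignment $\overbar{M}\mapsto\phi_H(\overbar{M})$, which is defined only up to the ambient Hausdorff-distance bound.
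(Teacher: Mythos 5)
Your proposal is correct and follows essentially the same route as the paper: vertices are matched via Corollary \ref{MaxtoMax}, simplices and labels via Theorem \ref{MaxcontainingFlat2}, and the combinatorial bijection is then read off as an isometry preserving quasi-isometric types. The paper's own proof is just a terser version of this; your extra details (the quasi-inverse argument for bijectivity, which ultimately rests on the uniqueness of a maximal product subcomplex at finite Hausdorff distance as in Lemma \ref{NS}, and the observation that distinct quasi-isometric types are never quasi-isometric) are exactly the steps the paper leaves implicit.
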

\begin{proof}
By Corollary \ref{MaxtoMax}, there exists a one-to-one correspondence between the vertex set of $I(\overline{Y})$ and the vertex set of $I(\overline{Y'})$. By Theorem \ref{MaxcontainingFlat2}, there exists a one-to-one correspondence between the set of $k$-simplices of $I(\overline{Y})$ and the set of $k$-simplices of $I(\overline{Y'})$ for any integer $k\geq 0$.
So we can construct a combinatorial isometry $|\Phi|:|I(\overline{Y})|\rightarrow|I(\overline{Y'})|$ from $\phi:\overline{Y}\rightarrow \overline{Y'}$. Since $|\Phi|$ is bijective, $|\Phi|$ is an isometry.
Moreover, by Theorem \ref{MaxcontainingFlat2}, the labels of $\triangle\subset I(\overline{Y_1})$ and $|\Phi|(\triangle)\subset I(\overline{Y_2})$ have the same quasi-isometric type. 
\end{proof}

Let $\triangle\subset I(\overline{Y})$ be a simplex and $\triangle_1$ a codimension-1 face of $\triangle$.
By Lemma \ref{InclusionBetweenSPSes}, $\overbar{K}_{\triangle}$ and $\overbar{K}_{\triangle_1}$ are denoted by $\overbar{P}_1\times \overbar{P}_2$ and $\overbar{P}_3\times \overbar{P}_4$, respectively, such that $\overbar{P}_1\subseteq \overbar{P}_3$ and $\overbar{P}_2\subseteq \overbar{P}_4$. 
Similarly, $\phi_H(\overbar{K}_{\triangle})$ and $\phi_H(\overbar{K}_{\triangle_1})$ are denoted by $\overbar{P}'_1\times \overbar{P}'_2$ and $\overbar{P}'_3\times \overbar{P}'_4$, respectively, such that $\overbar{P}'_1\subseteq \overbar{P}'_3$, $\overbar{P}'_2\subseteq \overbar{P}'_4$.
Note that $\overbar{P}_i$ and $\overbar{P}'_i$ are isometric to either $\mathbb{R}$ or standard infinite trees.
Let us see how $\phi$ preserves the inclusion relation between $\overbar{K}_{\triangle}$ and $\overbar{K}_{\triangle_1}$. First, the numbers of coordinates of $\overbar{K}_{\triangle}$ and $\phi_H(\overbar{K}_{\triangle})$ which are isometric to $\mathbb{R}$, respectively, are the same (for instance, if $\overbar{P}_1$ is isometric to $\mathbb{R}$ but $\overbar{P}_2$ is not, then only one of $\overbar{P}'_1$ or $\overbar{P}'_2$ is isometric to $\mathbb{R}$).
And the numbers of equalities are also the same. More precisely, by choosing suitable flats in $\overline{Y}$ and $\overline{Y'}$, and by using Lemma \ref{SG} and Theorem \ref{FlattoFlat}, we can show the following:
If $\overbar{P}_1=\overbar{P}_3$ and $\overbar{P}_2\subsetneq\overbar{P}_4$, then either $\overbar{P}'_1=\overbar{P}'_3$ and $\overbar{P}'_2\subsetneq\overbar{P}'_4$, or $\overbar{P}'_1\subsetneq\overbar{P}'_3$ and $\overbar{P}'_2=\overbar{P}'_4$.
If $\overbar{P}_1=\overbar{P}_3$ and $\overbar{P}_2=\overbar{P}_4$, then $\overbar{P}'_1=\overbar{P}'_3$ and $\overbar{P}'_2=\overbar{P}'_4$.
If $\overbar{P}_1\subsetneq\overbar{P}_3$ and $\overbar{P}_2\subsetneq\overbar{P}_4$, then $\overbar{P}'_1\subsetneq\overbar{P}'_3$ and $\overbar{P}'_2\subsetneq\overbar{P}'_4$.
In summary, after interchanging the coordinates of $\overbar{K}$ as necessary, 
\begin{enumerate}
\item
$\overbar{P}_i$ is a standard infinite tree if and only if $\overbar{P}'_i$ is a standard infinite tree for $i=1,2,3,4$, and 
\item
$\overbar{P}_{j+1}$ is strictly contained in $\overbar{P}_{j+3}$ if and only if $\overbar{P}'_{j+1}$ is strictly contained in $\overbar{P}'_{j+3}$ for $j=0,1$.
\end{enumerate}
In Section \ref{3.2}, we talk about the way to encode this kind of information in the combinatorial map $|\Phi|:|I(\overline{Y})| \rightarrow |I(\overline{Y'})|$.

\vspace{1mm}

In general, an intersection complex is locally infinite and has infinite diameter. 
To see the local and global structure of an intersection complex, we define a reduced intersection complex, a complex of simplices (possibly not a simplicial complex) which is derived from $Y$ as the simplicial complex $I(\overline{Y})$ is derived from $\overline{Y}$.

\begin{definition}[Reduced Intersection Complex]\label{RI}
Let $Y$ be a compact weakly special square complex. The \textit{reduced intersection complex} $RI(Y)$ is defined as follows:
\begin{enumerate}
\item Each vertex in $RI(Y)$ corresponds to a maximal product subcomplex of $Y$.
\item Suppose that the intersection $W$ of $(k+1)$ maximal product subcomplexes of $Y$ contains $m$ standard product subcomplexes $K_1,\cdots,K_m$ such that each $K_i$ is maximal under the set inclusion among standard product subcomplexes in $W$.
Then, the corresponding $(k+1)$ vertices in $RI(Y)$ span $m$ $k$-simplices.
\item By (2), each simplex $\triangle\subset RI(Y)$ corresponds to a standard product subcomplex $K_\triangle\subset Y$ and is labelled by the base $\bold{K}_\triangle$ of $K_\triangle$.
\end{enumerate}
The underlying complex is denoted by $|RI(Y)|$. 
\end{definition}

\begin{convention}
The standard product subcomplex $K_{\mathbf{u}}\subset{Y}$ corresponding to a vertex $\mathbf{u}\in RI(Y)$ is sometimes denoted by $M_{\mathbf{u}}$ with the base $\bold{M}_{\mathbf{u}}$ when we emphasize that it is a maximal product subcomplex.
\end{convention}

As $Y$ is obtained from $\overline{Y}$ via the action $\pi_1(Y)\curvearrowright\overline{Y}$, $|RI(Y)|$ can also be obtained from $|I(\overline{Y})|$ via the action of $\pi_1(Y)$ on $|I(\overline{Y})|$.

\begin{theorem}\label{TPBCM}
The action of $\pi_1(Y)$ on $\overline{Y}$ by deck transformations induces the action of $\pi_1(Y)$ on $|I(\overline{Y})|$ by isometries and $|RI(Y)|$ is the quotient $|I(\overline{Y})|/\pi_1(Y)$; the combinatorial map $|\rho_Y|:|I(\overline{Y})|\rightarrow |RI(Y)|$ obtained from the action is said to be the $\mathrm{canonical\ quotient\ map}$. 
Moreover, the action of $\pi_1(Y)$ on $|I(\overline{Y})|$ and thus $|\rho_Y|$ preserve the label of each simplex. 
\end{theorem}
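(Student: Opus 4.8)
The plan is to build the statement in four layers---first the induced simplicial action, then its preservation of labels, then the identification of the quotient with $|RI(Y)|$ via the covering map, and finally to isolate the one genuinely delicate matching that all of this rests on.

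\emph{The induced action.} Each $g\in\pi_1(Y)$ acts on $\overline{Y}$ as a cubical isometry, so it carries product subcomplexes to product subcomplexes isometrically and, since being maximal under inclusion is a property of the ambient metric, it permutes the maximal product subcomplexes of $\overline{Y}$. This gives an action on the vertex set of $I(\overline{Y})$. If $(k+1)$ maximal product subcomplexes $\overbar{M}_0,\dots,\overbar{M}_k$ intersect in a set containing a flat $F$, then $g.\overbar{M}_0,\dots,g.\overbar{M}_k$ intersect in $g.(\cap_i\overbar{M}_i)$, which contains the flat $g.F$; hence $g$ sends simplices to simplices. Thus each $g$ induces a simplicial automorphism of $|I(\overline{Y})|$, and a simplicial automorphism is an isometry for the path metric, which is the asserted action by isometries.

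\emph{Label preservation.} For a simplex $\triangle$ with vertices $\overbar{M}_0,\dots,\overbar{M}_k$, the associated $\overbar{K}_\triangle$ is the standard core of $\overbar{W}=\cap_i\overbar{M}_i$ produced by Lemma~\ref{IntofStd} (through Lemma~\ref{SubSPC}), which in dimension two is genuinely the maximal standard product subcomplex contained in $\overbar{W}$ by Lemma~\ref{NS}. Because $g$ is an isometry and the core construction depends only on the metric data of $\overbar{W}$, one has $\overbar{K}_{g.\triangle}=g.\overbar{K}_\triangle$. Since $p_Y\circ g=p_Y$, this yields $p_Y(\overbar{K}_{g.\triangle})=p_Y(\overbar{K}_\triangle)$, so $\triangle$ and $g.\triangle$ carry the same label; thus the action preserves labels, and the quotient map will too.

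\emph{Identifying the quotient.} I would then exhibit a label-preserving isomorphism $|I(\overline{Y})|/\pi_1(Y)\to|RI(Y)|$ induced by $p_Y$, which automatically makes $|\rho_Y|$ combinatorial. On vertices, $p_Y$ sends a maximal product subcomplex $\overbar{M}$ to a maximal product subcomplex $p_Y(\overbar{M})$ of $Y$, because maximality is detected coordinate-wise by the base (Lemma~\ref{InclusionBetweenSPSes}), which Definition~\ref{SPS} attaches identically to $\overbar{M}$ and to $p_Y(\overbar{M})$; and two maximal product subcomplexes of $\overline{Y}$ share a $p_Y$-image exactly when they lie in one $\pi_1(Y)$-orbit, since the p-lifts of a fixed $M\subset Y$ are precisely the elevations to $\overline{Y}$ of the local isometry defining the base of $M$, on which $\pi_1(Y)$ acts transitively (Chapter~II.4 of \cite{BH}). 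Hence vertex-orbits correspond bijectively to vertices of $RI(Y)$. On simplices, $\triangle$ is sent to the $k$-simplex of $RI(Y)$ spanned by the $p_Y(\overbar{M}_i)$ and labelled by the base of $K_\triangle:=p_Y(\overbar{K}_\triangle)$; the two p-lift facts recorded after Definition~\ref{SPS}, combined with the transitivity just used, show both that $K_\triangle$ is maximal under inclusion among standard product subcomplexes of $W=\cap_i p_Y(\overbar{M}_i)$ and that every such maximal one lifts back to a unique orbit of simplices. Matching of labels is then immediate from the definitions of $\overbar{K}_\triangle$ and $K_\triangle$.

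\emph{The main obstacle.} Everything above except the simplex correspondence is formal bookkeeping; the delicate point is reconciling the \emph{upstairs} notion of maximality (the standard core $\overbar{K}_\triangle$ of $\overbar{W}=\cap_i\overbar{M}_i$) with the \emph{downstairs} notion (a standard product subcomplex maximal under inclusion inside $W=\cap_i p_Y(\overbar{M}_i)$). The subtlety is that the converse p-lift fact produces p-lifts $g_i.\overbar{M}_i$ with possibly distinct $g_i$, so one cannot immediately pull a downstairs chain $K_\triangle\subsetneq K'\subseteq W$ back into the single intersection $\overbar{W}$. The resolution I expect to use is to elevate the coordinate-wise inclusion of bases directly---lifting $\mathbf{P}_i\hookrightarrow\mathbf{P}_i'$ to $\overbar{P}_i\hookrightarrow\overbar{P}_i'$ using $\pi_1$-injectivity of subgraph inclusions---and then to apply Lemma~\ref{NS} to upgrade the resulting coarse containment into an honest containment inside $\overbar{W}$, thereby contradicting maximality of $\overbar{K}_\triangle$. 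Carrying this out carefully, rather than the formal quotient statements, is where the real work lies.
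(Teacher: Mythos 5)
Your first three paragraphs reproduce, with more detail, exactly what the paper's own (very terse) proof does: deck transformations carry standard product subcomplexes to standard product subcomplexes with the same $p_Y$-image, giving the label-preserving isometric action; conversely any two standard product subcomplexes of $\overline{Y}$ with the same $p_Y$-image are related by a deck transformation; and orbits are then matched with simplices of $RI(Y)$ through their labels. You are also right about where the real content lies: the paper's proof silently assumes the point you isolate, namely that $p_Y(\overbar{K}_\triangle)$ is maximal among standard product subcomplexes of $W=\cap_i p_Y(\overbar{M}_i)$, which is needed for the orbit of $\triangle$ to land on a simplex of $RI(Y)$ as defined in Definition~\ref{RI}. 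So your diagnosis is correct and goes beyond what the paper writes down.

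The repair you propose, however, does not close that hole. Lemma~\ref{NS} takes as input a coarse containment $\overbar{K}'\subset N_r(\overbar{M}_i)$, and nothing in your setup produces one: elevating $K_\triangle\subset K'$ gives a p-lift $\overbar{K}'$ of $K'$ containing $\overbar{K}_\triangle$, but no relation, coarse or otherwise, between $\overbar{K}'$ and the chosen p-lifts $\overbar{M}_i$; elevating $K'\subset M_i$ instead gives a containment $\overbar{K}''\subset\overbar{M}''_i$ of some p-lifts, but after translating $\overbar{K}''$ onto $\overbar{K}'$ the complex $\overbar{M}''_i$ becomes some translate $g.\overbar{M}_i$, not necessarily $\overbar{M}_i$. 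Since $p_Y^{-1}(M_i)$ is the union of all translates $g.\overbar{M}_i$, which for a general weakly special complex may overlap, neither connectedness of $\overbar{K}'$ nor Lemma~\ref{NS} forces $\overbar{K}'$ into $\overbar{M}_i$ itself; what you need is that two p-lifts of the same $M_i$ both containing $\overbar{K}_\triangle$ coincide, and that is equivalent to the problem you started with. The statement is obtained by based lifting rather than coarse geometry: fix a vertex $v\in\overbar{K}_\triangle$ and elevate the whole chain $K_\triangle\subset K'\subset M_i$ through $v$, coordinate by coordinate; uniqueness of elevations with prescribed basepoint (this is where $\pi_1$-injectivity of the bases and the weakly special properties genuinely enter, as in the proofs of Lemma~\ref{ProjofPS} and Lemma~\ref{InclusionBetweenSPSes}) puts the elevation of $K'$ through $v$ inside each $\overbar{M}_i$, hence inside $\overbar{W}$, and then maximality of $\overbar{K}_\triangle$ in $\overbar{W}$ --- which is the correct use of Lemma~\ref{NS}, exactly as in your second paragraph --- gives $K'=p_Y(\overbar{K}_\triangle)$. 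The same based-lifting claim is also needed for a point neither you nor the paper addresses: that two upstairs simplices with the same downstairs vertices and the same label lie in a single $\pi_1(Y)$-orbit, without which the quotient could have more simplices than $|RI(Y)|$.
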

\begin{proof}
Each deck transformation sends a standard product subcomplex $\overbar{K}_1\subset\overline{Y}$ to a standard product subcomplex $\overbar{K}_2\subset\overline{Y}$ such that $p_{Y}(\overbar{K}_1)=p_{Y}(\overbar{K}_2)$. Thus, the action of $\pi_1(Y)$ on $\overline{Y}$ induces the action of $\pi_1(Y)$ on $|I(\overline{Y})|$ by isometries such that for any element $g\in\pi_1(Y)$ and any simplex $\triangle\subset I(\overline{Y})$, $g.\triangle$ and $\triangle$ have the same label.
Conversely, for any two standard product subcomplexes $\overbar{K}_1$, $\overbar{K}_2\subset\overline{Y}$ whose images under $p_Y$ are the same, there exists an element $g\in\pi_1(Y)$ such that $g.\overbar{K}_1=\overbar{K}_2$.
This implies that the simplices of $|I(\overline{Y})|$ which have the same label are in the same orbit under the induced action $\pi_1(Y)\curvearrowright |I(\overline{Y})|$. Moreover, for any simplex $\triangle\subset|I(\overline{Y})|$, there is a unique simplex of $|RI(Y)|$ whose label is the same with the label of $\triangle$.
Therefore, the theorem holds.
\end{proof}

Though the reasons are slightly different, both the isometry $|\Phi|$ in Theorem \ref{IsobetInt} and the canonical quotient map $|\rho_Y|$ in Theorem \ref{TPBCM} preserve the inclusion relation between labels. To emphasize this fact, these maps will be denoted by $\Phi:I(\overline{Y})\rightarrow I(\overline{Y'})$ and $\rho_{Y}:I(\overline{Y})\rightarrow RI(Y)$, respectively.
\vspace{1mm}

Let us define the set maps $f_{Y}:RI(Y)\rightarrow 2^Y$ and $\tilde{f}_{Y}:I(\overline{Y})\rightarrow 2^{\overline{Y}}$ as follows:
\begin{enumerate}
\item These maps send each (open) simplex to the corresponding standard product subcomplex.
\item For a (closed) subcomplex $A$ of $RI(Y)$ or $I(\overline{Y})$, the image of $A$ is the union of the images of (open) simplices in $A$ under $f_Y$ or $\tilde{f}_Y$, respectively.
\end{enumerate}  
Then, $f_{Y}\circ \rho_{Y} = p_{Y} \circ \tilde{f}_{Y}$ as in Figure \ref{CommDiagram}. For a component $RI_0$ of $RI(Y)$, a component $I_0$ of $\rho_{Y}^{-1}(RI_0)$ is said to be a $\it{lift}$ of $RI_0$. Then the underlying complexes of lifts of $RI_0$ are all isometric.

\begin{figure}[h]
\begin{tikzpicture}
  \matrix (m) [matrix of math nodes,row sep=4em,column sep=6em,minimum width=4em]
  { I(\overline{Y}) & 2^{\overline{Y}} \\
    RI(Y) & 2^Y \\};
  \path[-stealth]
    (m-1-1) edge node [left] {$\rho_{Y}$} (m-2-1)
            edge node [above] {$\tilde{f}_{Y}$} (m-1-2)
    (m-2-1.east|-m-2-2) edge node [above] {$f_{Y}$} (m-2-2)
    (m-1-2) edge node [right] {$p_{Y}$} (m-2-2);
\end{tikzpicture}
	\caption{The commutative diagram of intersection complexes}
    	\label{CommDiagram}
\end{figure}

\begin{proposition}\label{Components}
Let $I_1$ and $I_2$ be two lifts of a component $RI_0$ of $RI(Y)$ in $I(\overline{Y})$. Then there is an isometry $|I_1|\rightarrow |I_2|$ which preserves the label of each simplex.
\end{proposition}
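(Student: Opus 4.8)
The plan is to produce the isometry as the restriction of a single deck transformation, thereby reducing the whole statement to transitivity of the $\pi_1(Y)$-action on the lifts of $RI_0$. Recall from Theorem~\ref{TPBCM} that $G:=\pi_1(Y)$ acts on $|I(\overline{Y})|$ by label-preserving isometries with quotient map $\rho_Y:I(\overline{Y})\to RI(Y)$, and that two simplices of $I(\overline{Y})$ carry the same label if and only if they lie in the same $G$-orbit. Since $\rho_Y\circ g=\rho_Y$ for every $g\in G$, each $g$ maps $\rho_Y^{-1}(RI_0)$ onto itself and hence permutes its connected components, i.e. the lifts of $RI_0$. Thus, once I produce $g\in G$ with $g\,I_1=I_2$, the restriction $g|_{I_1}:|I_1|\to|I_2|$ is the desired label-preserving isometry. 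So it suffices to show that $G$ acts transitively on the lifts of $RI_0$.

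The first main step is a local-lifting statement: each lift $I_j$ surjects onto $RI_0$ under $\rho_Y$. The image $\rho_Y(I_j)$ is a connected subcomplex of $RI_0$, so it is enough to prove that $\rho_Y(I_j)$ contains, together with each of its vertices, the full star of that vertex in $RI_0$; a connected subcomplex with this property must be an entire component. Concretely, take a vertex $v\in I_j$ with $\rho_Y(v)=u$, so that $\overbar{M}_v$ is a p-lift of the maximal product subcomplex $M_u\subset Y$. Given a simplex $\triangle'$ of $RI_0$ at $u$, its associated standard product subcomplex $K_{\triangle'}$ is contained in $M_u$ by Definition~\ref{RI}; by Lemma~\ref{InclusionBetweenSPSes} and the p-lift correspondence following Definition~\ref{SPS}, there is a p-lift $\overbar{K}$ of $K_{\triangle'}$ with $\overbar{K}\subset\overbar{M}_v$. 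The maximal product subcomplexes of $\overline{Y}$ containing $\overbar{K}$ then span a simplex $\triangle$ at $v$ whose associated label is the base of $K_{\triangle'}$, namely the label of $\triangle'$; by the label-uniqueness in Theorem~\ref{TPBCM} this forces $\rho_Y(\triangle)=\triangle'$, giving the required lift of $\triangle'$ at $v$.

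With surjectivity in hand, transitivity is short. Let $I_1,I_2$ be two lifts, choose a vertex $v_2\in I_2$, and set $u:=\rho_Y(v_2)\in RI_0$. By the previous step there is a vertex $v_1\in I_1$ with $\rho_Y(v_1)=u$, so $v_1$ and $v_2$ carry the same label; by Theorem~\ref{TPBCM} there is $g\in G$ with $g\,v_1=v_2$. Then $g\,I_1$ is the lift containing $g\,v_1=v_2$, and since $v_2\in I_2$ and distinct lifts are disjoint, $g\,I_1=I_2$. As noted above, $g|_{I_1}$ is then a label-preserving isometry $|I_1|\to|I_2|$, which proves the proposition.

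The main obstacle is the local-lifting step. The delicate point is verifying that the simplex $\triangle$ spanned at $v$ by the maximal product subcomplexes through $\overbar{K}$ genuinely projects onto $\triangle'$, rather than onto a proper face or a different simplex: this rests on matching the maximality of $K_{\triangle'}$ among standard product subcomplexes in the relevant intersection in $Y$ with the maximality governing simplices of $I(\overline{Y})$ (the construction preceding Definition~\ref{IC}). It is precisely here that Theorem~\ref{TPBCM}'s identification of equally labelled simplices with a single simplex of $RI(Y)$ does the bookkeeping; once the labels are matched, the remaining verifications are formal.
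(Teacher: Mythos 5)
Your proof is correct and follows essentially the same route as the paper's: both produce a deck transformation $g\in\pi_1(Y)$ carrying a vertex of $I_1$ to an equally-labelled vertex of $I_2$ (using the fact from Theorem \ref{TPBCM} that equally-labelled simplices lie in one $\pi_1(Y)$-orbit) and then conclude $g\,I_1=I_2$ from disjointness of components, so that $g$ restricts to the desired label-preserving isometry $|I_1|\rightarrow|I_2|$. The only real difference is that the paper simply picks vertices $\mathbf{v}_1\in I_1$, $\mathbf{v}_2\in I_2$ with the same image under $\rho_Y$ without justifying that such a pair exists, whereas you prove this via your local-lifting/surjectivity step, filling in a point the paper leaves implicit.
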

\begin{proof}
If $|I_1| \cap |I_2|$ is not empty, then $I_1=I_2$ from the definition of an intersection complex so that $|I_1|\cap |I_2|=\emptyset$. Let $\mathbf{v}_1\in I_1$ and $\mathbf{v}_2\in I_2$ be vertices whose images under the canonical quotient map $\rho_{Y}$ are the same. 
Then $\tilde{f}_{Y}(\mathbf{v}_1)$ and $\tilde{f}_{Y}(\mathbf{v}_2)$ are maximal product subcomplexes of $\overline{Y}$ whose images under $p_{Y}$ are the same and there exists a non-trivial element $g\in \pi_1(Y)$ such that $g.\tilde{f}_{Y}(\mathbf{v}_1)=\tilde{f}_{Y}(\mathbf{v}_2)$. 
It means that $g.|I_1|\cap |I_2|$ is non-empty so that $g.|I_1|=|I_2|$. Since the action of $\pi_1(Y)$ on $|I(\overline{Y})|$ preserves the label of each simplex, the proposition holds. 
\end{proof}

If $RI(Y)$ is connected, then a lift of $RI(Y)$ in $I(\overline{Y})$ is denoted by $I_0(\overline{Y})$ and the restriction of $\rho_Y$ to $I_0(\overline{Y})$ is also called the canonical quotient map.

\begin{convention}
In this paper, the term `lift' is used to denote a component of the preimage of something.
\end{convention}

The following fact about $RI(Y)$ is analogous to Proposition \ref{Diam2} about $I(\overline{Y})$.

\begin{proposition}\label{Diam2'}
Let $\gamma$ be a locally geodesic loop which is in the 1-skeleton $Y^{(1)}$ of $Y$. 
Suppose that there is at least one maximal product subcomplex of $Y$ containing a loop which is freely homotopic to $\gamma$. Let $V_\gamma$ be the set of vertices in $RI(Y)$ corresponding to such maximal product subcomplexes. Then the full subcomplex of $RI(Y)$ spanned by $V_\gamma$ is connected and contained in the star of a vertex. 
\end{proposition}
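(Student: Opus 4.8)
The plan is to lift everything to $\overline{Y}$ and deduce the statement from Proposition \ref{Diam2} through the canonical quotient map $\rho_Y\colon I(\overline{Y})\to RI(Y)$ of Theorem \ref{TPBCM}. Fix a lift $\overline{\gamma}\subset\overline{Y}$ of $\gamma$; since $\gamma$ is a locally geodesic loop in $Y^{(1)}$, $\overline{\gamma}$ is a singular geodesic, namely an axis of the element $g=[\gamma]\in\pi_1(Y)$. The hypothesis gives a maximal product subcomplex $M$ of $Y$ and a loop $\delta\subset M$ freely homotopic to $\gamma$; lifting $\delta$ into a p-lift of $M$ produces a singular geodesic that is an axis of a conjugate of $g$, hence a deck-translate $h\overline{\gamma}$ of $\overline{\gamma}$, so after translating by $h^{-1}$ we find a maximal product subcomplex of $\overline{Y}$ containing $\overline{\gamma}$. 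Thus $\overline{\gamma}$ satisfies the hypothesis of Proposition \ref{Diam2}; let $\overline{M}_{\overline{\gamma}}$ be the unique maximal product subcomplex whose neighborhood contains $\mathbb{P}(\overline{\gamma})$ (Lemma \ref{AtMostOneMax}), let $\overline{\mathbf{v}}$ be its vertex in $I(\overline{Y})$, and set $\mathbf{u}_0=\rho_Y(\overline{\mathbf{v}})$. Because $g$ preserves $\mathbb{P}(\overline{\gamma})$ it preserves $\overline{M}_{\overline{\gamma}}$, so $p_Y(\overline{M}_{\overline{\gamma}})$ contains the loop $p_Y(\overline{\gamma})$, which is freely homotopic to $\gamma$; hence $\mathbf{u}_0\in V_\gamma$.

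Next I would set up the vertex-level dictionary. Let $\overline{W}_{\overline{\gamma}}$ denote the set of maximal product subcomplexes of $\overline{Y}$ that contain $\overline{\gamma}$, so that $\overline{W}_{\overline{\gamma}}\subseteq\overline{V}_{\overline{\gamma}}$, the set appearing in Proposition \ref{Diam2}, and $\overline{\mathbf{v}}\in\overline{W}_{\overline{\gamma}}$. Using the correspondence between free homotopy classes of loops, conjugacy classes in $\pi_1(Y)$, and deck-orbits of axes, I claim $\rho_Y(\overline{W}_{\overline{\gamma}})=V_\gamma$: if $\overline{M}\ni\overline{\gamma}$ then $p_Y(\overline{M})$ contains the loop $p_Y(\overline{\gamma})$ freely homotopic to $\gamma$, giving $\rho_Y([\overline{M}])\in V_\gamma$; conversely a loop in $M_{\mathbf{u}}$ freely homotopic to $\gamma$ lifts, exactly as in the first paragraph, to a deck-translate of $\overline{\gamma}$ lying in a p-lift of $M_{\mathbf{u}}$, which after translating is a member of $\overline{W}_{\overline{\gamma}}$ mapping to $\mathbf{u}$.

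I would then prove the analogue of Proposition \ref{Diam2} for $\overline{W}_{\overline{\gamma}}$ and transport it downstairs. Every $\overline{M}\in\overline{W}_{\overline{\gamma}}$ contains $\overline{\gamma}$, hence a flat $\overline{\gamma}\times\ell\subseteq\mathbb{P}(\overline{\gamma})\subseteq\overline{M}_{\overline{\gamma}}$, so $\overline{M}$ and $\overline{M}_{\overline{\gamma}}$ share a flat and are joined by an edge; more generally, for maximal product subcomplexes $\overline{M}_1,\dots,\overline{M}_{k+1}\in\overline{W}_{\overline{\gamma}}$ spanning a simplex, their intersection is a product subcomplex (Lemma \ref{IntofStd}) containing the singular geodesic $\overline{\gamma}$, and since a singular geodesic in a product of trees runs along a single factor this intersection contains a flat $\overline{\gamma}\times\ell\subseteq\mathbb{P}(\overline{\gamma})\subseteq\overline{M}_{\overline{\gamma}}$; thus $\overline{M}_{\overline{\gamma}}$ meets the intersection in a flat and the simplex together with $\overline{\mathbf{v}}$ spans a simplex. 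Consequently the full subcomplex $\overline{L}_W$ of $I(\overline{Y})$ spanned by $\overline{W}_{\overline{\gamma}}$ is connected and lies in the closed star of $\overline{\mathbf{v}}$. Since $\rho_Y$ is a label-preserving combinatorial map (Theorem \ref{TPBCM}), $\rho_Y(\overline{L}_W)$ is connected, lies in $\mathrm{st}(\mathbf{u}_0)$, and spans $V_\gamma$ by the previous paragraph; as $\rho_Y(\overline{L}_W)$ is a connected subcomplex of $L$ containing every vertex of $V_\gamma$, the full subcomplex $L$ is connected.

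The remaining, and main, difficulty is to show that all of $L$, not merely $\rho_Y(\overline{L}_W)$, is contained in $\mathrm{st}(\mathbf{u}_0)$; equivalently, that for every simplex $\triangle$ of $L$ the intersection $M_{\mathbf{u}_0}\cap\bigcap_i M_{\mathbf{u}_i}$ contains a standard product subcomplex. This amounts to proving that each simplex of $L$ admits a lift all of whose vertices lie in $\overline{W}_{\overline{\gamma}}$, i.e. that the standard product subcomplex $K_\triangle$ labelling $\triangle$ carries a loop freely homotopic to $\gamma$. I expect this to be the crux: the p-lifts of the $M_{\mathbf{u}_i}$ witnessing $\mathbf{u}_i\in V_\gamma$ need not be the p-lifts whose common intersection realizes $\triangle$. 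I would resolve it by locating, inside each $M_{\mathbf{u}_i}$, the factor along which its $\gamma$-loop runs and showing, via Lemma \ref{InclusionBetweenSPSes} and the coordinate-wise description of inclusions of standard product subcomplexes, that this factor is forced into the common standard product subcomplex $K_\triangle$; then $K_\triangle$ contains a loop freely homotopic to $\gamma$, a suitable lift of $K_\triangle$ contains $\overline{\gamma}$, and the compatible lift of $\triangle$ lands in $\overline{W}_{\overline{\gamma}}$, completing the reduction to Proposition \ref{Diam2}.
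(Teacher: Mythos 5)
Your overall plan --- lift $\gamma$, run a Proposition \ref{Diam2}-type argument upstairs, and push the conclusion down through $\rho_Y$ --- is the same as the paper's, but there is a genuine gap in the dictionary on which everything rests. You work with $\overline{W}_{\overline{\gamma}}$, the maximal product subcomplexes of $\overline{Y}$ that \emph{contain} $\overline{\gamma}$, and you need $V_\gamma\subseteq\rho_Y(\overline{W}_{\overline{\gamma}})$. Your justification is that a loop $\delta\subset M_{\mathbf{u}}$ freely homotopic to $\gamma$ lifts to ``a singular geodesic that is an axis of a conjugate of $g$, hence a deck-translate $h\overline{\gamma}$.'' This fails twice: $\delta$ need not be locally geodesic, so its lift need not be a geodesic at all; and even when it is, an axis of $hgh^{-1}$ is far from unique (the invariant geodesics sweep out a parallel set), so the lift of $\delta$ is only at finite Hausdorff distance from $h\overline{\gamma}$, not equal to it. Note what your claim would entail: if a p-lift $\overline{M}'$ of $M_{\mathbf{u}}$ contained $h\overline{\gamma}$, then $M_{\mathbf{u}}=p_Y(\overline{M}')$ would contain $p_Y(h\overline{\gamma})=\gamma$ as a subset, so every vertex of $V_\gamma$ would come from a maximal product subcomplex containing the loop $\gamma$ itself --- much stronger than the hypothesis ``contains a loop freely homotopic to $\gamma$,'' and false in general. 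What convexity of the p-lift actually yields is a singular geodesic \emph{parallel} to $h\overline{\gamma}$ inside the p-lift. This is precisely why Proposition \ref{Diam2} is phrased in terms of the set $\overline{V}_{\overline{\gamma}}$ of maximal product subcomplexes containing a geodesic parallel to $\overline{\gamma}$: replacing $\overline{W}_{\overline{\gamma}}$ by $\overline{V}_{\overline{\gamma}}$ throughout (your third paragraph then becomes a direct citation of Proposition \ref{Diam2}), and using the deck action to identify the sets $\rho_Y(\overline{V}_{\tilde{\gamma}_1})$ for the various lifts $\tilde{\gamma}_1$ of $\gamma$, repairs the argument --- and is exactly the paper's proof.

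The second problem is your last paragraph. The ``main difficulty'' you isolate --- that each simplex $\triangle$ of $L$ must span a simplex with $\mathbf{u}_0$, i.e.\ that $M_{\mathbf{u}_0}\cap\bigcap_i M_{\mathbf{u}_i}$ contains a standard product subcomplex --- is not what the statement asks. The paper defines the star of a vertex in a complex of simplices as the \emph{full subcomplex spanned by that vertex and the vertices adjacent to it} (Section \ref{2}), not as the union of the closed simplices containing it. With that definition, $L\subseteq st(\mathbf{u}_0)$ follows as soon as every vertex of $V_\gamma$ equals or is adjacent to $\mathbf{u}_0$: any cell of $RI(Y)$ whose vertices all lie in $V_\gamma$ then has all its vertices in $\{\mathbf{u}_0\}\cup lk(\mathbf{u}_0)$, hence lies in $st(\mathbf{u}_0)$. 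So the step you call the crux is unnecessary --- but since you flagged it as the heart of the matter and only sketched a strategy for it, your proposal is also incomplete on its own terms: the part you declare essential is never actually proved.
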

\begin{proof}
Suppose that $M\subset Y$ is a maximal product subcomplex which contains a loop $\alpha$ freely homotopic to $\gamma$.
Then any p-lift $\overbar{M}$ of $M$ in $\overline{Y}$ contains a lift $\tilde{\alpha}$ of $\alpha$ which is at finite Hausdorff from a lift $\tilde{\gamma}$ of $\gamma$ and by the assumption, any lift of $\gamma$ is a singular geodesic.
Since $\overbar{M}$ is convex, it contains a singular geodesic parallel to $\tilde{\gamma}$.
By Proposition \ref{Diam2}, $\overbar{V}_{\tilde{\gamma}}$ (defined as in Proposition \ref{Diam2}) is then connected and contained in the star of a vertex $\mathbf{v}\in I(\overline{Y})$.
Since $\rho_Y$ is the quotient map obtained from the action of $\pi_1(Y)$ on $|I(\overline{Y})|$ by isometries, for any other lift $\tilde{\gamma}_1$ of $\gamma$, there is an isometry sending $\overbar{V}_{\tilde{\gamma}}$ to $\overbar{V}_{\tilde{\gamma}_1}$.
It means that $\rho_Y(\overbar{V}_{\tilde{\gamma}})=V_\gamma$, and therefore the proposition holds.  
\end{proof}

Contrast to similarities between $p_Y$ and $\rho_Y$, there are also differences between them.
First, the canonical quotient map $\rho_{Y}:I(\overline{Y})\rightarrow RI(Y)$ may not be a local isometry since $|I(\overline{Y})|$ is locally infinite in general. Second, $|I(\overline{Y})|$ may not be contractible though $\overline{Y}$ and even $\tilde{f}_{Y}(I(\overline{Y}))$ are contractible.

%
%

\subsection{Geometric Realization of $RI(Y)$}\label{3.2}
For a polyhedral complex $\mathcal{X}$, if a group $G$ acts on $\mathcal{X}$ by isometries, then the action $G\curvearrowright \mathcal{X}$ gives rise to a complex of groups $G(\mathcal{Y})$ over the quotient $\mathcal{Y}=\mathcal{X}/G$. 
If $\mathcal{X}$ is simply connected, then the complex of groups $G(\mathcal{Y})$ is said to be $\it{developable}$ and $\mathcal{X}$ is said to be the $\it{development}$ of $G(\mathcal{Y})$. 
The development of a developable complex of groups is analogous to the universal covering tree of a graph of groups in Bass-Serre theory. We refer to Chapter III.$\mathcal{C}$ in \cite{BH}, or \cite{LM} for more about the theory of complexes of groups.

By Theorem \ref{TPBCM}, there is an action $G=\pi_1(Y)$ on $I(\overline{Y})$ such that this action gives rise to the complex of groups structures on $RI(Y)$ and $I_0(\overline{Y})$.
Before we see the details, in order to simplify and avoid some technical difficulties, we assume that (1) both $|RI(Y)|$ and $|I(\overline{Y})|$ are connected, and (2) every standard product subcomplex $K$ of $Y$ is the image of its base $\bold{K}$ under a locally isometric embedding; a compact weakly special square complex $Y$ satisfying item (2) is said to be \textit{simple}.
In particular, the stabilizer of any p-lift of $K$ is isomorphic to $\pi_1(\bold{K})$ which is a join group. Recall that a \textit{join group} is a group isomorphic to $\mathbb{F}_n\times \mathbb{F}_m$ ($n,m\geq 1$).
Under the above assumptions, the complex of group structures of $I(\overline{Y})$ and $RI(Y)$ are described as follows: 
\begin{itemize}
\item
To each simplex $\triangle\subset I(\overline{Y})$, if we assign the stabilizer $G_\triangle$ of the standard product subcomplex $\overbar{K}_\triangle\subset\overline{Y}$, then $I(\overline{Y})$ becomes a complex of groups with naturally induced monomorphisms.
\item 
To each simplex $\triangle'\subset RI(Y)$, the fundamental group of the base of the standard product subcomplex $K_{\triangle'}\subset Y$, denoted by $[G_{\triangle'}]$, is assigned; monomorphisms between assigned groups are induced from inclusions between standard product subcomplexes of $Y$.
\end{itemize}
Note that $[G_{\triangle'}]$ is the conjugacy class of $G_\triangle$ where $\triangle'\subset RI(Y)$ is the image of $\triangle\subset I(\overline{Y})$ under the canonical quotient map $\rho_Y:I(\overline{Y})\rightarrow RI(Y)$.
Both $G_\triangle$ and $[G_{\triangle'}]$ are isomorphic to join groups so that each of them has one of three quasi-isometric types: $\mathbb{Z}\times\mathbb{Z}$, $\mathbb{Z}\times\mathbb{F}$ and $\mathbb{F}\times\mathbb{F}$.

Now we see how the observation below Theorem \ref{IsobetInt} gives a special relation between $G_\triangle$ and $G_{\triangle_1}$ when $\triangle_1$ is a face of $\triangle$.
For a finitely generated non-trivial free group $A_1$, if $A_2$ is a non-trivial free factor of $A_1$, then we say that $A_2$ is contained in $A_1$ under the \textit{free factor inclusion}, denoted by $A_2\leq_f A_1$  ($A_2$ is allowed to be $A_1$).
For a join group $A_1\times B_1$, if $A_2\leq_{f} A_1$ and $B_2\leq_{f} B_1$, then we say that $A_2\times B_2$ is contained in $A_1\times B_1$ under the \textit{pairwise free factor inclusion}, denoted by $A_2\times B_2 \leq_{p} A_1\times B_1$.
And we say that $A_2\times B_2$ is a \textit{free factor subgroup} of $A_1\times B_1$ if $A_2$ is a free factor of exactly one of $A_1$ or $B_1$, and $B_2$ is a free factor of the other one, i.e. $A_2\times B_2$ is contained in exactly one of $A_1\times B_1$ and $B_1\times A_1$ under the pairwise free factor inclusion.

\begin{remark}\label{freefactor}
The notion of `pairwise free factor inclusion' distinguishes $A\times B$ from $B\times A$ but the notion of `free factor subgroup' does not.
So, we say that $A_1\times B_1$ is $\it{pairwise\ isomorphic}$ to $A_2\times B_2$ if and only if $A_1$ is isomorphic to $A_2$ and $B_1$ is isomorphic to $B_2$.
\end{remark}

Using the terms introduced in the paragraph above Remark \ref{freefactor}, we can say that the groups assigned to simplices of $I(\overline{Y})$ satisfy the following properties: 
\begin{enumerate}[label=(\roman*)]
\item  
For any face $\triangle_2$ of $\triangle_1$, $G_{\triangle_1}$ is a free factor subgroup of $G_{\triangle_2}$.

\item
For two simplices $\triangle_1$, $\triangle_2$, suppose that $\triangle_1\not\subset\triangle_2$ and $\triangle_2\not\subset\triangle_1$.
If $G_{\triangle_1}\cap G_{\triangle_2}$ is a join group, then $\triangle_1$ and $\triangle_2$ span a simplex $\triangle$ such that $G_\triangle$ is $G_{\triangle_1}\cap G_{\triangle_2}$.

\item\label{CJCond3} 
Suppose that $\triangle_1$, $\triangle_2$ are two distinct maximal simplices such that $\triangle_1\cap\triangle_2$ contains a simplex $\triangle_3$. 
Then, $G_{\triangle_3}$ properly contains $G_{\triangle_1}$ and $G_{\triangle_2}$ as free factor subgroups.

\item\label{CJCond4}
Let $g_1$ and $g_2$ be two distinct non-trivial elements in $G_{\triangle_1}$ for some simplex $\triangle_1$. Then, neither $(g_1,g_2)$ nor $(g_2,g_1)$ is in $G_{\triangle_2}$ for any simplex $\triangle_2$.

\item\label{CJCond5}
Suppose that $\cap_{i} G_{\triangle_i}$ is not trivial for finitely many simplices $\triangle_i$. Then, the union of $\triangle_i$'s is connected and contained in the star of a vertex (Proposition \ref{Diam2}).
\end{enumerate}
The groups assigned to simplices of $RI(Y)$ also have these properties.
Moreover, for each simplex $\triangle \subset|I(\overline{Y})|$, the inclusion map $\triangle\rightarrow|I(\overline{Y})|$ is injective and this statement also holds for $|RI(Y)|$.
Therefore, we define a class of complexes of groups which contains $I(\overline{Y})$ and $RI(Y)$ as follows.

\begin{definition}\label{CJ}
A \textit{complex of join groups} $X$ is a complex of groups such that
\begin{enumerate}
\item 
for the underlying complex $|X|$ of $X$ and any simplex $\triangle\subset |X|$, the inclusion map $\triangle\rightarrow |X|$ is injective, and
\item
a join group $G_\triangle\cong A_{\triangle}\times B_{\triangle}$ is assigned to each (open) simplex $\triangle\subset|X|$ such that these \textit{assigned groups} satisfy the properties (i)-(v).
\end{enumerate}
\end{definition}

Let $X$ be a complex of join groups. For an $n$-simplex $\triangle^n\subset X$, a \textit{chain} $\mathbf{ch}(\triangle^n)$ of $\triangle^n$ is the sequence $(\triangle^n,\cdots,\triangle^0)$ where $\triangle^{i}$ is a codimension-1 face of $\triangle^{i+1}$ for $i=0,\cdots, n-1$ and a \textit{maximal chain} is a chain of a maximal simplex. 
For a chain $\mathbf{ch}(\triangle^n)$, the sequence $(G_{\triangle^{n}},\cdots,G_{\triangle^{0}})$ of the corresponding assigned groups is called the \textit{assigned group sequence} associated to $\mathbf{ch}(\triangle^n)$. 

Consider the collection $\mathcal{P}_{\mathbf{ch}(\triangle^n)}$ of the assigned groups of simplices in a chain $\mathbf{ch}(\triangle^n)$. From the property \ref{CJCond4} for Definition \ref{CJ}, the coordinates of each assigned group in $\mathcal{P}_{\mathbf{ch}(\triangle^n)}$ can be suitably chosen such that the assigned group sequence associated to $\mathbf{ch}(\triangle^n)$ is denoted by $(A_{\triangle^{n}}\times B_{\triangle^{n}},\cdots,A_{\triangle^{0}}\times B_{\triangle^{0}})$ where 
$$A_{\triangle^{i+1}}\times B_{\triangle^{i+1}}\leq_p A_{\triangle^{i}}\times B_{\triangle^i}\ \mathrm{for}\ i=0,\cdots n-1,$$ 
i.e. if we fix the coordinates of one assigned group in $\mathcal{P}_{\mathbf{ch}(\triangle^n)}$, then the coordinates of all the other assigned groups in $\mathcal{P}_{\mathbf{ch}(\triangle^n)}$ are suitably chosen so that the pairwise free factor inclusion is well-defined in $\mathcal{P}_{\mathbf{ch}(\triangle^n)}$. Thus, we can consider $\mathcal{P}_{\mathbf{ch}(\triangle^n)}$ as a poset under the pairwise free factor inclusion (in this poset, even when $A_{\triangle^{i}}\times B_{\triangle^{i}}=A_{\triangle^{i+1}}\times B_{\triangle^{i+1}}$, we distinguish these join groups in $\mathcal{P}_{\mathbf{ch}(\triangle^n)}$).
\vspace{1mm}

Given two complexes of join groups $X_1$ and $X_2$, suppose that there is a combinatorial map $|\Phi|:|X_1|\rightarrow |X_2|$. 
For a chain $\mathbf{ch}(\triangle^n)=(\triangle^n,\cdots,\triangle^0)$ of an $n$-simplex $\triangle^n\subset X_1$, let $\Phi(\mathbf{ch}(\triangle^n)):=(\Phi(\triangle^n),\cdots,\Phi(\triangle^0))$ be the chain of $\Phi(\triangle^n)$.
For every simplex $\triangle^n\subset X_1$ and every chain $\mathbf{ch}(\triangle^n)$, suppose that the assigned group sequences associated to $\mathbf{ch}(\triangle^n)$ and $\Phi(\mathbf{ch}(\triangle^n))$ are
$$(A_n\times B_n,\cdots, A_0\times B_0)\ \mathrm{and}\ (A'_n\times B'_n,\cdots, A'_0\times B'_0),\ \mathrm{respectively},$$ such that (interchanging the coordinates of assigned groups of all simplices in $\mathbf{ch}(\triangle^n)$ as necessary) $A_{i+1}=A_{i}$ if $A'_{i+1}=A'_i$, and $B_{i+1}=B_{i}$ if $B'_{i+1}=B'_i$. 
Then $\Phi:X_1\rightarrow X_2$ is said to be a \textit{semi-morphism}.
A semi-morphism $\Phi:X_1\rightarrow X_2$ is said to be injective if $|\Phi|:|X_1|\rightarrow |X_2|$ is injective. If the inverse map $\Phi^{-1}$ is also a semi-morphism, then $\Phi$ is said to be a \textit{semi-isomorphism}; $X_1$ and $X_2$ are said to be \textit{semi-isomorphic}. Note that a semi-isomorphism preserves the quasi-isometric type of assigned groups.

By the observation below Theorem \ref{IsobetInt}, if $Y$ and $Y'$ are simple, then the isometry $|\Phi|:|I(\overline{Y})|\rightarrow |I(\overline{Y'})|$ induced by a quasi-isometry $\phi:\overline{Y}\rightarrow\overline{Y'}$ in Theorem \ref{IsobetInt} becomes a semi-isomorphism which proves Theorem \ref{QIimpliesIso1}. Moreover, the canonical quotient map $\rho_Y:I(\overline{Y})\rightarrow RI(Y)$ can be considered as a semi-morphism. 

\begin{theorem}[$\mathit{cf}$. Theorem \ref{IsobetInt}]\label{QIimpliesIso2}
Let $\phi:\overline{Y}\rightarrow \overline{Y'}$ be a $(\lambda, \varepsilon)$-quasi-isometry. 
If both $Y$ and $Y'$ are simple, then $\phi$ induces a semi-isomorphism $\Phi:I(\overline{Y})\rightarrow I(\overline{Y'})$.
\end{theorem}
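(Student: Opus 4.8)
The plan is to take the combinatorial isometry $|\Phi|:|I(\overline{Y})|\rightarrow|I(\overline{Y'})|$ produced by Theorem \ref{IsobetInt} and verify that, as a map of complexes of join groups, it satisfies the assigned-group-sequence condition in the definition of a semi-morphism, and in fact the stronger biconditional version that makes it a semi-isomorphism. Theorem \ref{IsobetInt} already supplies everything combinatorial: $|\Phi|$ is a bijection sending $k$-simplices to $k$-simplices and preserving the quasi-isometric type of each label. So the only content left is to track the free-factor structure of the assigned groups along chains, and this is precisely the information encoded by the discussion following Theorem \ref{IsobetInt}.

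The first step is a dictionary between the algebra of the assigned groups and the geometry of the standard product subcomplexes. For a simplex $\triangle\subset I(\overline{Y})$ with $\overbar{K}_\triangle=\overbar{P}_1\times\overbar{P}_2$, the assigned group $G_\triangle=A_\triangle\times B_\triangle$ has its free factors acting cocompactly on the trees $\overbar{P}_1$ and $\overbar{P}_2$, so that $A_\triangle\cong\pi_1(p_Y(\overbar{P}_1))$ and $B_\triangle\cong\pi_1(p_Y(\overbar{P}_2))$; in particular $A_\triangle$ is non-abelian free exactly when $\overbar{P}_1$ is a standard infinite tree and is $\mathbb{Z}$ exactly when $\overbar{P}_1\cong\mathbb{R}$. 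If $\triangle_1$ is a codimension-1 face of $\triangle$, then by Lemma \ref{InclusionBetweenSPSes} we may write $\overbar{K}_{\triangle_1}=\overbar{P}_3\times\overbar{P}_4$ with $\overbar{P}_1\subseteq\overbar{P}_3$ and $\overbar{P}_2\subseteq\overbar{P}_4$, and a leafless-subgraph inclusion induces a free-factor inclusion on fundamental groups; hence $A_\triangle=A_{\triangle_1}$ if and only if $\overbar{P}_1=\overbar{P}_3$, while $A_\triangle$ is a proper free factor of $A_{\triangle_1}$ if and only if $\overbar{P}_1\subsetneq\overbar{P}_3$ (and similarly for the second coordinate). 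Thus the equality-versus-proper-inclusion pattern of an assigned group sequence is literally the equality-versus-strict-inclusion pattern of the tree coordinates.

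Next I would feed each codimension-1 step of a chain into the observation recorded immediately after Theorem \ref{IsobetInt}. Writing $\phi_H(\overbar{K}_\triangle)=\overbar{P}'_1\times\overbar{P}'_2$ and $\phi_H(\overbar{K}_{\triangle_1})=\overbar{P}'_3\times\overbar{P}'_4$, that observation---obtained by applying Lemma \ref{SG} and Theorem \ref{FlattoFlat} to suitable flats inside $\overbar{K}_{\triangle_1}$---gives, after a suitable interchange of coordinates, that $\overbar{P}_i$ is a standard infinite tree if and only if $\overbar{P}'_i$ is, and that $\overbar{P}_{j+1}\subsetneq\overbar{P}_{j+3}$ if and only if $\overbar{P}'_{j+1}\subsetneq\overbar{P}'_{j+3}$ for $j=0,1$. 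Via the dictionary of the previous step, this is exactly the biconditional ``$A_{i+1}=A_i$ iff $A'_{i+1}=A'_i$, and $B_{i+1}=B_i$ iff $B'_{i+1}=B'_i$'' demanded of a semi-morphism at the pair $(\triangle^{i+1},\triangle^i)$. Because the statement is an ``if and only if,'' it certifies the semi-morphism condition for $\Phi$ and for the inverse map $|\Phi|^{-1}$ simultaneously, so once it is established chain-wise we conclude that $\Phi$ is a semi-isomorphism.

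The step I expect to be the crux is promoting these codimension-1 biconditionals, each stated up to its own coordinate interchange, to a single coherent labeling valid for an entire chain $\mathbf{ch}(\triangle^n)=(\triangle^n,\dots,\triangle^0)$---the form in which the definition of a semi-morphism is phrased. The danger is a ``mid-chain flip,'' in which the first source coordinate matches the first target coordinate at one step but the second target coordinate at another, which no single global interchange could repair. I would rule this out using that the subcomplexes are genuinely nested, $\overbar{K}_{\triangle^n}\subseteq\cdots\subseteq\overbar{K}_{\triangle^0}$ and $\phi_H(\overbar{K}_{\triangle^n})\subseteq\cdots\subseteq\phi_H(\overbar{K}_{\triangle^0})$: fixing the two product directions of the maximal subcomplex $\overbar{K}_{\triangle^0}$ (the vertex of the chain) determines, by coordinate-wise inclusion as in Lemma \ref{InclusionBetweenSPSes}, a global labeling of every $\overbar{K}_{\triangle^i}$, and likewise on the target side. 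Since $\phi$ carries each parallel class of singular geodesics in a product direction of $\overbar{K}_{\triangle^0}$ to a single parallel class in $\phi_H(\overbar{K}_{\triangle^0})$ (Lemma \ref{SG} together with Theorem \ref{FlattoFlat}), the correspondence between source and target product directions is fixed once and for all at the head of the chain, so the per-step interchanges may be chosen uniformly. This globalization turns the pairwise data into the assigned-group-sequence condition, and the symmetric argument applied to $\phi^{-1}$ completes the proof that $\Phi$ is a semi-isomorphism.
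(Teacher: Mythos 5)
Your proposal is correct and follows essentially the same route as the paper: Theorem \ref{QIimpliesIso2} is obtained there by combining the isometry of Theorem \ref{IsobetInt} with the codimension-1 observation recorded immediately after it (via Lemma \ref{InclusionBetweenSPSes}, Lemma \ref{SG} and Theorem \ref{FlattoFlat}), which are exactly the ingredients you use, with the biconditional nature of that observation giving the semi-morphism property for $\Phi$ and $\Phi^{-1}$ simultaneously. Your explicit globalization of the per-step coordinate interchanges along a chain, anchored at the product directions of $\overbar{K}_{\triangle^0}$ and their well-defined images under $\phi$, fills in a detail the paper leaves implicit, and it does so correctly.
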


\begin{theorem}[$\mathit{cf}$. Theorem \ref{TPBCM}]\label{TPBCM2}
If $Y$ is simple, then the induced action of $\pi_1(Y)$ on $I(\overline{Y})$ in Theorem \ref{TPBCM} is by semi-isomorphisms and the canonical quotient map $\rho_Y:I(\overline{Y})\rightarrow RI(Y)$ is a semi-morphism.
\end{theorem}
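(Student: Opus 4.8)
The plan is to note that Theorem \ref{TPBCM} already supplies the underlying data for both assertions: each deck transformation $g$ acts on $|I(\overline{Y})|$ by a label-preserving isometry, and $|\rho_Y|:|I(\overline{Y})|\to|RI(Y)|$ is a label-preserving combinatorial map. Hence for each of the two maps the only thing left to check is the extra clause in the definition of a semi-morphism, namely that along every chain the equality pattern of the free-factor coordinates of the assigned group sequence is respected. I would treat both maps in parallel, reducing each to the coordinate-wise behaviour of an isometry (the transformation $g$ in one case, the covering map $p_Y$ in the other) on the tree factors of standard product subcomplexes.

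For the action, fix $g\in\pi_1(Y)$. Being a deck transformation, $g$ is an isometry of $\overline{Y}$, so it carries a standard product subcomplex $\overbar{K}_\triangle=\overbar{P}_1\times\overbar{P}_2$ to $\overbar{K}_{g.\triangle}=(g.\overbar{P}_1)\times(g.\overbar{P}_2)$, preserving the product decomposition coordinate by coordinate, and correspondingly $G_{g.\triangle}=gG_\triangle g^{-1}$. Given a chain $\mathbf{ch}(\triangle^n)=(\triangle^n,\dots,\triangle^0)$, the nested subcomplexes $\overbar{K}_{\triangle^n}\subseteq\cdots\subseteq\overbar{K}_{\triangle^0}$ are sent to $\overbar{K}_{g.\triangle^n}\subseteq\cdots\subseteq\overbar{K}_{g.\triangle^0}$ with the identical coordinate-wise inclusions, so a tree factor stays equal from one step to the next exactly when its image does. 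With the coordinates matched in the natural way, this gives $A_{i+1}=A_i$ iff $A'_{i+1}=A'_i$ (and likewise for the $B$'s), so $g$ is a semi-morphism; applying the same argument to $g^{-1}$ shows $g$ is a semi-isomorphism.

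For $\rho_Y$, the key is that $p_Y$ intertwines the two assigned-group structures. By Lemma \ref{ProjofPS} and the facts recorded after Definition \ref{SPS}, $\overbar{K}_\triangle=\overbar{P}_1\times\overbar{P}_2$ projects to $K_{\rho_Y(\triangle)}=p_Y(\overbar{P}_1)\times p_Y(\overbar{P}_2)$ with base $\mathbf{P}_1\times\mathbf{P}_2$, and $p_Y$ induces an isomorphism $G_\triangle\xrightarrow{\sim}\pi_1(\mathbf{P}_1)\times\pi_1(\mathbf{P}_2)$ onto a representative of $[G_{\rho_Y(\triangle)}]$. Since the groups attached to a single chain are nested free-factor subgroups of the vertex stabilizer $G_{\triangle^0}$, and $p_{Y*}$ restricts coherently to all of them, it sends the chain of free-factor inclusions $\overbar{P}^{(j)}_{i+1}\subseteq\overbar{P}^{(j)}_i$ to the chain of inclusions $\mathbf{P}^{(j)}_{i+1}\subseteq\mathbf{P}^{(j)}_i$ (Lemma \ref{InclusionBetweenSPSes}) isomorphically as posets. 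Thus the free-factor equality pattern of the chain and that of its image coincide, which is stronger than the one-directional condition demanded of a semi-morphism, so $\rho_Y$ is a semi-morphism.

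The point requiring the most care is the coherence of these induced isomorphisms along a fixed chain, which amounts to the equivalence between equality of a tree factor $\overbar{P}^{(j)}_{i+1}=\overbar{P}^{(j)}_i$ upstairs and equality of the corresponding base factor $\mathbf{P}^{(j)}_{i+1}=\mathbf{P}^{(j)}_i$ (equivalently $A_{i+1}=A_i$) downstairs. The forward implication is immediate; for the converse I would use that within a single chain the tree factors are nested p-lifts, so $p_Y$ restricted to $\overbar{P}^{(j)}_i$ is the universal covering of $\mathbf{P}^{(j)}_i$ and a strictly smaller leaf-free subtree occurring in the chain cannot project onto the same base graph. Granting this, together with the observation that $p_Y$ (respectively $g$) matches coordinates in the natural way so that no interchange is invoked, both verifications are complete.
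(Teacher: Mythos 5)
Your proof is correct and takes essentially the same route as the paper: both assertions are reduced to the label-preservation already established in Theorem \ref{TPBCM}, with the chain condition in the definition of a semi-morphism then following because the equality pattern of assigned-group coordinates is dictated by the (tree or base) factors of the corresponding standard product subcomplexes, which $g$ and $p_Y$ respect coordinate-wise. The paper's own proof is simply a terser version of this — it cites label preservation and concludes at once, leaving implicit the coordinate-matching details (including the converse direction for $\rho_Y$, that equality of base factors downstairs forces equality of the nested tree factors upstairs) that you spell out.
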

\begin{proof}
The induced action of $\pi_1(Y)$ on $I(\overline{Y})$ preserves the labels of simplices so that the action is by semi-isomorphisms on $I(\overline{Y})$. Moreover, $\rho_Y$ is also a combinatorial map which preserves the labels of simplices so that $\rho_Y$ is a semi-morphism. 
\end{proof}

Recall that for each simplex $\triangle'\subset RI(Y)$, the standard product subcomplex of $Y$ corresponding to $\triangle'$ is denoted by $K_{\triangle'}$ and the base of $K_{\triangle'}$ is denoted by $\bold{K}_{\triangle'}$.
As $K_{\triangle'}$ is the quotient of a p-lift of $K_{\triangle'}$ by $\pi_1(Y)$, not the stabilizer of the p-lift, $f_Y(RI(Y))$ is a subcomplex of $Y$ which is the quotient of $\tilde{f}_Y(I_0(\overline{Y}))$ by $\pi_1(Y)$, not the stabilizer of $\tilde{f}_Y(I_0(\overline{Y}))$. 
So let us see the quotient of $\tilde{f}_Y(I_0(\overline{Y}))$ by its stabilizer.

For each vertex $\mathbf{u}\in RI(Y)$, fix a local isometry $\iota_{\mathbf{u}}:\bold{K}_{\mathbf{u}}\rightarrow Y$ whose image is $K_{\mathbf{u}}$.
If a simplex $\triangle'\subset RI(Y)$ contains $\mathbf{u}$, then the preimage $\bold{K}_{\mathbf{u},\triangle'}$ of $K_{\triangle'}$ under $\iota_{\mathbf{u}}$ is a standard product subcomplex of $\bold{K}_{\mathbf{u}}$ which can be considered as the base of $K_{\triangle'}$.
For two vertices $\mathbf{u}_1,\mathbf{u}_2\in\triangle'$, $\bold{K}_{\mathbf{u}_1,\triangle'}$ and $\bold{K}_{\mathbf{u}_2,\triangle'}$ are the product of two graphs and there is a canonical coordinate-wise isometry between $\bold{K}_{\mathbf{u}_1,\triangle'}$ and $\bold{K}_{\mathbf{u}_2,\triangle'}$ which comes from the property \ref{CJCond4} for Definition \ref{CJ}.
Thus, a compact special square complex $\mathcal{R}(Y)$ is obtained from the disjoint union of $\bold{K}_{\mathbf{u}}$'s for all the vertices $\mathbf{u}\in RI(Y)$ by coordinate-wisely identifying $\bold{K}_{\mathbf{u}_i,\triangle'}$'s for each simplex $\triangle'$ spanned by $\mathbf{u}_i$'s.
By construction, there exists an immersion $\mathcal{R}(Y)\rightarrow Y$ such that the image is $f_{Y}(RI(Y))$ and the restriction to each $\bold{K}_{\triangle'}$ is equal to the local isometry $\bold{K}_{\triangle'}\rightarrow Y$ whose image is $K_{\triangle'}$.

Until now, we have three groups: the fundamental group $\pi_1(RI(Y))$ of $RI(Y)$ (considered as a complex of groups), $\pi_1(\mathcal{R}(Y))$ and the stabilizer $G$ of $I_0(\overline{Y})$.
In the remaining part of this subsection, we will see a sufficient condition that $I_0(\overline{Y})$ is the development of $RI(Y)$ using the relation between $\pi_1(\mathcal{R}(Y))$ and $G$. In order to do this, we mention a series of results in Chapter III.$\mathcal{C}$ of \cite{BH}.  

\begin{theorem}[Corollary 2.15 in \cite{BH}]\label{developable}
A complex of groups $G(\mathcal{Y})$ is developable if and only if there exists a morphism $\phi$ from $G(\mathcal{Y})$ to some group $G$ which is injective on the local group.
\end{theorem}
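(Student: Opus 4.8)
The plan is to deduce this from the intrinsic characterization of developability via the fundamental group of a complex of groups, so that the entire statement becomes a formal consequence of a universal property. Recall that to any complex of groups $G(\mathcal{Y})$ one associates its fundamental group $\pi_1(G(\mathcal{Y}))$ together with a canonical morphism $\iota\colon G(\mathcal{Y})\rightarrow\pi_1(G(\mathcal{Y}))$, and that $\iota$ is universal among morphisms from $G(\mathcal{Y})$ to groups: any morphism $\phi\colon G(\mathcal{Y})\rightarrow G$ factors as $\phi=\bar{\phi}\circ\iota$ for a unique homomorphism $\bar{\phi}\colon\pi_1(G(\mathcal{Y}))\rightarrow G$, and on each local group this reads $\phi|_{G_\sigma}=\bar{\phi}\circ\iota|_{G_\sigma}$. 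The substantive structural input, which I would cite rather than reprove, is the theorem that $G(\mathcal{Y})$ is developable if and only if $\iota$ is injective on every local group $G_\sigma$; this is the step in which the development is actually constructed as a complex carrying a $G$-action with the prescribed local structure.

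Granting this, the forward direction is immediate: if $G(\mathcal{Y})$ is developable, then $\iota$ is injective on local groups, so the pair $(G,\phi)=(\pi_1(G(\mathcal{Y})),\iota)$ is itself a morphism to a group that is injective on the local groups.

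For the converse, suppose $\phi\colon G(\mathcal{Y})\rightarrow G$ is a morphism injective on local groups. First I would invoke the universal property to factor $\phi=\bar{\phi}\circ\iota$. Restricting to a local group $G_\sigma$ yields $\phi|_{G_\sigma}=\bar{\phi}\circ\iota|_{G_\sigma}$; since the left-hand side is injective, the first factor $\iota|_{G_\sigma}$ must also be injective. As $\sigma$ was arbitrary, $\iota$ is injective on every local group, and the structural theorem then gives that $G(\mathcal{Y})$ is developable.

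The only genuine obstacle is packaged in the cited structural theorem and in the existence of the universal morphism $\iota$: building $\pi_1(G(\mathcal{Y}))$ and the development, and checking that injectivity of $\iota$ on local groups is both necessary and sufficient for the $G$-action on the development to have the correct vertex and cell stabilizers. Once those facts are in hand, the corollary is purely formal---a diagram chase through the factorization---so the argument I would write is the two lines above together with precise references to the fundamental group of a complex of groups and to the developability criterion.
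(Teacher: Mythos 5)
The paper never proves this statement: it is quoted as Corollary III.C.2.15 of Bridson--Haefliger and used as a black box, so the only meaningful comparison is with the proof there. That proof runs in the opposite direction from yours: the substantive implication (a morphism injective on the local groups implies developability) is obtained directly from the basic construction (Theorem III.C.2.13 in Bridson--Haefliger), which assembles the development $D(\mathcal{Y},\phi)$ from cosets of the images of the local groups, while the other implication is read off from the definition of the complex of groups associated to an action, whose local maps are inclusions of stabilizers. Your route goes instead through the fundamental group and the criterion ``$G(\mathcal{Y})$ is developable iff $\iota$ is injective on the local groups.'' That criterion is true, but in the standard treatment it is established \emph{after}, and by means of, the basic construction and this very corollary; so your argument does not shrink the actual content, it relocates it, and you must take care that the source you cite for the criterion does not itself invoke the statement you are proving.

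There is also a genuine inaccuracy in the one step you carry out yourself. The universal property you attribute to $\iota\colon G(\mathcal{Y})\rightarrow\pi_1(G(\mathcal{Y}))$ is false as stated: the group through which \emph{every} morphism $G(\mathcal{Y})\rightarrow G$ factors is $FG(\mathcal{Y})$, the group generated by the local groups together with one generator for each edge of the underlying scwol. The fundamental group $\pi_1(G(\mathcal{Y}),T)$ is the quotient of $FG(\mathcal{Y})$ by the normal closure of the edges of a spanning tree $T$, so a morphism $\phi$ factors through $\iota$ only if it sends the tree edges to $1\in G$. Already for the trivial complex of groups over a single edge this fails: $\pi_1$ is trivial, yet there are nontrivial morphisms to $\mathbb{Z}$, and none of them factor through $\iota$. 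The repair is standard: replace $\phi$ by the homotopic morphism $\phi'$ obtained by conjugating with the elements $h_\sigma\in G$ given by the $\phi$-images of the tree paths from the base vertex, so that $\phi'(a)=1$ for every edge $a$ of $T$. Each local map $\phi'_\sigma$ is a conjugate of $\phi_\sigma$, hence still injective, and $\phi'$ does factor through $\iota$, after which your two-line diagram chase goes through. With that fix, and granting the developability criterion as an independent citation, your proposal is correct.
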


\begin{theorem}[Corollary 2.17 in \cite{BH}]\label{GOG}
Every complex of groups $G(\mathcal{Y})$ of dimension one is developable, i.e. a graph of groups is always developable.
\end{theorem}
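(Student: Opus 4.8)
The plan is to invoke Theorem \ref{developable}, which says that a complex of groups is developable precisely when it admits a morphism to some group that is injective on every local group. Thus, given a one-dimensional complex of groups $G(\mathcal{Y})$, it suffices to produce a group $G$ together with a morphism $\phi:G(\mathcal{Y})\to G$ whose restriction to each vertex group and edge group is injective. First I would observe that in dimension one the higher (twisting) data carried by a complex of groups is vacuous: the twisting elements are indexed by composable pairs of edges, equivalently by chains $\rho>\sigma>\tau$ of cells, and such a chain forces $\dim\rho\geq 2$. Hence $G(\mathcal{Y})$ carries no nontrivial twisting and is exactly a graph of groups in the classical sense, given by vertex groups $G_v$, edge groups $G_e$, and monomorphisms $\psi_e:G_e\hookrightarrow G_{t(e)}$.

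For the target I would take $G=\pi_1(G(\mathcal{Y}))$, the fundamental group of the graph of groups. Concretely, after choosing a spanning tree $T$ of the underlying graph, present $G$ by the generators of all vertex groups $G_v$ together with one stable letter $s_e$ for each edge $e\notin T$, subject to (i) the internal relations of each $G_v$, (ii) for each $e\in T$ the amalgamation relations identifying the two images of $G_e$, and (iii) for each $e\notin T$ the conjugation relations $s_e\,\psi_{\bar e}(g)\,s_e^{-1}=\psi_e(g)$. The canonical homomorphisms $G_v\to G$ and $G_e\to G$, together with the assignment of the stable letters $s_e$ to the edges outside $T$, assemble into a morphism $\phi:G(\mathcal{Y})\to G$ of complexes of groups.

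The crux, and the step I expect to be the main obstacle, is verifying that each $\phi|_{G_v}$ (and hence each $\phi|_{G_e}$) is injective; everything else is formal once such a morphism is in hand. I would establish this via the normal form theorem of Bass--Serre theory: relative to $T$ and a base vertex, every element of $G$ has a normal form as an alternating word, and any reduced word of positive syllable length, as well as any nontrivial word lying inside a single vertex group, represents a nontrivial element of $G$; in particular the composite $G_v\hookrightarrow G$ has trivial kernel. Equivalently, one builds the Bass--Serre tree on which $G$ acts with vertex stabilizers conjugate to the $G_v$ and reads off injectivity from the faithfulness of that action. Granting this, $\phi$ is a morphism injective on all local groups, so Theorem \ref{developable} shows that $G(\mathcal{Y})$ is developable, with the Bass--Serre tree serving as its development. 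The genuine content is therefore concentrated in the normal form theorem, i.e. in proving that the tree action has precisely the predicted stabilizers.
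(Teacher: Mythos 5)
The paper does not contain a proof of this statement at all: it is imported verbatim as Corollary 2.17 of \cite{BH}, and the source's proof is exactly the argument you give — observe that in dimension one the twisting data is vacuous so the complex of groups is a classical graph of groups, then apply the developability criterion (Theorem \ref{developable}) to the canonical morphism into the fundamental group of the graph of groups, with injectivity on local groups supplied by the Bass--Serre normal form theorem. Your proposal is correct and coincides with the cited proof, so there is nothing to add.
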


\begin{theorem}[Theorem 3.13 and Corollary 3.15 in \cite{BH}]\label{development}
The development of a developable complex of groups is simply connected and unique up to isomorphisms.
\end{theorem}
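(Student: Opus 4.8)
The plan is to build the development explicitly from the fundamental group of the complex of groups and then verify the two assertions---simple connectivity and uniqueness---separately, since they rely on different mechanisms. Write $G(\mathcal{Y})$ for the developable complex of groups over the scwol (equivalently, polyhedral complex) $\mathcal{Y}$, with local groups $G_\sigma$, edge monomorphisms $\psi_a$, and twisting elements $g_{a,b}$. By Theorem \ref{developable}, developability supplies a morphism $\Phi\colon G(\mathcal{Y})\to G$ that is injective on every local group; the canonical choice is $G=\pi_1(G(\mathcal{Y}),\sigma_0)$, the fundamental group of the complex of groups, defined by generators (the elements of the local groups together with one symbol per oriented edge) modulo the relations imposed by the $\psi_a$ and $g_{a,b}$. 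First I would recall the coset construction of the development $D=D(\mathcal{Y},\Phi)$: its vertices are the pairs $(g\,\Phi(G_\sigma),\sigma)$ with $g\in G$ and $\sigma$ a vertex of $\mathcal{Y}$, with edges induced from those of $\mathcal{Y}$ via the $\psi_a$, and $G$ acting by left translation so that $D/G=\mathcal{Y}$ and the induced complex of groups is $G(\mathcal{Y})$ again.

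For simple connectivity I would compute $\pi_1(D)$ combinatorially. A closed edge-path in $D$ based at the vertex over $\sigma_0$ projects to a $\mathcal{Y}$-loop $c$, and lifting it back records an element $\pi(c)\in G$; the path closes up in $D$ exactly when $\pi(c)$ is trivial. The point is that the relations defining $\pi_1(G(\mathcal{Y}))$ are precisely the homotopies of $\mathcal{Y}$-loops coming from the $2$-cells, namely the cocycle relations among the $g_{a,b}$, so every null-homotopy in $\mathcal{Y}$ is realized by a relation in $G$. Hence a loop in $D$ bounds a disc iff its image under $\pi$ is trivial in $G=\pi_1(G(\mathcal{Y}))$, and by construction this always holds; an essentially identical van Kampen argument shows there are no nontrivial loops, giving $\pi_1(D)=1$. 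In dimension one this is exactly the Bass--Serre tree of Theorem \ref{GOG}, which I would use as the base case, and the higher skeleta only add $2$-cells that fill discs, preserving simple connectivity.

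For uniqueness up to isomorphism I would argue that any simply connected development carries the same data. Suppose $D_1,D_2$ are two simply connected developments for $G(\mathcal{Y})$, each with a group action recovering the local groups, monomorphisms, and twisting elements. Fixing lifts $\tilde\sigma_0^{(1)}\in D_1$ and $\tilde\sigma_0^{(2)}\in D_2$ over $\sigma_0$, I would define a map on the orbit of a chosen fundamental domain and extend it equivariantly using the respective group actions; the cocycle relations guarantee this extension is consistent on overlaps, and simple connectivity of the target rules out monodromy, so the extension is well defined on all of $D_1$. Running the same construction in the opposite direction produces an inverse, yielding an isomorphism $D_1\cong D_2$.

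The hard part will be the simple-connectivity step: making precise the dictionary between homotopy classes of $\mathcal{Y}$-loops and the presentation of $\pi_1(G(\mathcal{Y}))$, and checking that the $2$-cells of $\mathcal{Y}$ supply exactly the relations needed to fill every loop of $D$. This is the combinatorial heart of the theory, and it is where the cocycle condition on the $g_{a,b}$ is genuinely used; the uniqueness step, by contrast, is a formal equivariant-lifting argument once simple connectivity is in hand.
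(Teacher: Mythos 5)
First, a point of orientation: the paper does not prove Theorem \ref{development} at all --- it is imported verbatim from \cite{BH} (Theorem 3.13 and Corollary 3.15 of Chapter III.$\mathcal{C}$) as background machinery for Proposition \ref{CpxofGps} and the developability results in Sections \ref{3.2} and \ref{4.2}. So there is no in-paper argument to compare against; the relevant benchmark is the proof in \cite{BH}, and your sketch follows essentially that route rather than a new one: the basic (coset) construction $D(\mathcal{Y},\Phi)$, the canonical choice $G=\pi_1(G(\mathcal{Y}),\sigma_0)$, the dictionary between edge-paths in the development and $G(\mathcal{Y})$-paths in $\mathcal{Y}$, and uniqueness by identifying any simply connected development with this universal one.

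As a proof, however, your proposal has genuine gaps beyond the one you flag. (1) Theorem \ref{developable} only guarantees that \emph{some} morphism to \emph{some} group is injective on local groups; to run your construction you need that the \emph{canonical} morphism $\iota\colon G(\mathcal{Y})\to\pi_1(G(\mathcal{Y}),\sigma_0)$ is injective on local groups. This follows from the universal property (any morphism $G(\mathcal{Y})\to G$ factors as a group homomorphism $\pi_1(G(\mathcal{Y}),\sigma_0)\to G$ precomposed with $\iota$), but that step must be stated --- it is precisely where the fundamental group earns its role. (2) The simple-connectivity step, i.e.\ the correspondence between null-homotopy of loops in $D$ and triviality in the presentation of $\pi_1(G(\mathcal{Y}),\sigma_0)$, is the entire content of the cited theorem; deferring it as ``the hard part'' leaves the statement unproved. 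Your proposed induction on skeleta with the Bass--Serre tree (Theorem \ref{GOG}) as base case is also misconceived: simple connectivity is decided entirely on the $2$-skeleton, so there is no meaningful induction over higher skeleta, and the $1$-dimensional case is not a base case of anything --- it is a separate, easier instance. (3) In the uniqueness step you have the roles reversed: monodromy is an obstruction attached to loops in the \emph{source}, so it is simple connectivity of the source (together with the cocycle condition) that makes the path-extension of your equivariant map well defined, not simple connectivity of the target. Moreover, two developments a priori carry actions of two different groups, so before any ``equivariant'' map makes sense you must first produce an isomorphism of the acting groups --- which again comes from identifying each with $\pi_1(G(\mathcal{Y}),\sigma_0)$ via simple connectivity. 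None of these defects is fatal to the strategy, since it is the standard one, but as written the argument does not yet constitute a proof.
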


Suppose that every $\bold{K}_{\triangle'}$ is the product $\bold{P}_i\times \bold{P}'_i$ of planar graphs $\bold{P}_i$ and $\bold{P}'_i$. 
Let $\bold{D}_i$ ($\bold{D}'_i$, resp.) be the topological object obtained from $\bold{P}_i$ ($\bold{P}_i$, resp.) by attaching a disk $D^2$ to each boundary cycle and let $\bold{K}_{\triangle'}^d$ be the product $\bold{D}_i\times \bold{D}'_i$ (which is simply connected). 
While constructing $\mathcal{R}(Y)$, replace the bases $\bold{K}_{\triangle'}$ by $\bold{K}^d_{\triangle'}$ and let $\mathcal{R}^d(Y)$ be the resulting topological object.
For each vertex $\mathbf{u}\in RI(Y)$, choose one point $v_{\mathbf{u}}\in \bold{K}^d_{\mathbf{u}}\subset\mathcal{R}^d(Y)$, and for the vertex set $V(RI(Y))$ of $RI(Y)$, let $\psi^{(0)}:V(RI(Y))\rightarrow \mathcal{R}^d(Y)$ be the map sending $\mathbf{u}$ to $v_{\mathbf{u}}$.
For two vertices $\mathbf{u}_1,\mathbf{u}_2\in RI(Y)$ joined by an edge $\bold{E}$, consider a path in $\bold{K}^d_{\mathbf{u}_1}\cup \bold{K}^d_{\mathbf{u}_2}$ joining $v_{\mathbf{u}_1}$ and $v_{\mathbf{u}_2}$ such that if this path meets $\bold{K}^d_{\triangle'}$, then the simplex $\triangle'$ must contain $\bold{E}$. 
Using these paths, the map $\psi^{(0)}$ can be extended to a map $\psi^{(1)}:|RI(Y)|^{(1)}\rightarrow\mathcal{R}^d(Y)$ where $|RI(Y)|^{(1)}$ is the 1-skeleton of $|RI(Y)|$.
From the map $\psi^{(1)}$, a continuous map $\psi:|RI(Y)|\rightarrow\mathcal{R}^d(Y)$ can be obtained such that for a simplex $\triangle'\subset RI(Y)$, if $\psi(\triangle')$ meets $\bold{K}^d_{\triangle''}$, then the simplex $\triangle''$ must contain $\triangle$. 

If the map $\psi$ is a homotopy equivalence, then $\pi_1(RI(Y))$ is isomorphic to $\pi_1(\mathcal{R}(Y))$, and by Theorem \ref{developable}, $RI(Y)$ is developable (in this case, $\mathcal{R}(Y)$ can be considered as a \textit{geometric realization} of $RI(Y)$).
Additionally, if the immersion $\mathcal{R}(Y)\rightarrow Y$ induces an isomorphism $\pi_1(\mathcal{R}(Y))\rightarrow G$, by Theorem \ref{development}, $I_0(\overline{Y})$ is the development of $RI(Y)$ since $RI(Y)$ is the quotient of $I_0(\overline{Y})$ by $G$. In summary,

\begin{proposition}\label{CpxofGps}
Suppose that $Y$ is a compact weakly special square complex such that every standard product subcomplex of $Y$ is the image of the product of two planar standard graphs under a locally isometric embedding.
If the map $\psi:|RI(Y)|\rightarrow\mathcal{R}^d(Y)$ defined as above is a homotopy equivalence, then $RI(Y)$ is a developable complex of groups and $\pi_1(RI(Y))$ is isomorphic to $\pi_1(\mathcal{R}(Y))$.
Additionally, if the immersion $\mathcal{R}(Y)\rightarrow Y$ induces an injective homomorphism $\pi_1(\mathcal{R}(Y))\rightarrow \pi_1(Y)$, then $I_0(\overline{Y})$ is the development of $RI(Y)$ and in particular, $|I_0(\overline{Y})|$ is simply connected.
\end{proposition}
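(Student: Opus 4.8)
The plan is to exhibit $\mathcal{R}(Y)$ as a realization of the complex of join groups $RI(Y)$ by aspherical spaces, to compute $\pi_1(\mathcal{R}(Y))$ from the hypothesis on $\psi$, and then to invoke the developability criterion of Theorem \ref{developable} together with the uniqueness of developments in Theorem \ref{development}. First I would record that $\mathcal{R}(Y)$ is genuinely a complex of spaces over $|RI(Y)|$: the piece over a simplex $\triangle'$ is the base $\mathbf{K}_{\triangle'}=\mathbf{P}\times\mathbf{P}'$, a product of two graphs, hence aspherical with $\pi_1$ equal to the assigned join group $G_{\triangle'}$, and the gluing maps are the $\pi_1$-injective free factor inclusions recorded in the complex-of-groups structure of $RI(Y)$. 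Thus there is a canonical morphism from $RI(Y)$ into $\pi_1(\mathcal{R}(Y))$, and the whole argument is organized around showing that this morphism is injective on local groups and identifies $\pi_1(\mathcal{R}(Y))$ with $G$.

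For the first assertion I would compare $\mathcal{R}(Y)$ with the disk-filled model $\mathcal{R}^d(Y)$. Each $\mathbf{K}^{d}_{\triangle'}=\mathbf{D}\times\mathbf{D}'$ is a product of two simply connected complexes, since gluing a disk along every boundary cycle of a planar base graph kills its first homology; hence every piece of $\mathcal{R}^d(Y)$ is simply connected, and by van Kampen the total space $\mathcal{R}^d(Y)$ has the fundamental group of its underlying complex $|RI(Y)|$. The hypothesis that $\psi\colon|RI(Y)|\to\mathcal{R}^d(Y)$ is a homotopy equivalence upgrades this to $\mathcal{R}^d(Y)\simeq|RI(Y)|$. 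Passing back from $\mathcal{R}^d(Y)$ to $\mathcal{R}(Y)$ undoes the fillings, which at the level of fundamental groups reinserts exactly the local groups $\pi_1(\mathbf{K}_{\triangle'})=G_{\triangle'}$ along the free factor inclusions; since $\pi_1(RI(Y))$ is by definition assembled from $\pi_1(|RI(Y)|)$ together with precisely these local groups and inclusions, I obtain $\pi_1(\mathcal{R}(Y))\cong\pi_1(RI(Y))$. In particular each aspherical piece $\mathbf{K}_{\triangle'}$ includes $\pi_1$-injectively into $\mathcal{R}(Y)$, so the canonical morphism $RI(Y)\to\pi_1(\mathcal{R}(Y))\cong\pi_1(RI(Y))$ is injective on every local group, and Theorem \ref{developable} yields that $RI(Y)$ is developable.

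For the second assertion I would identify $G$ with $\pi_1(\mathcal{R}(Y))$. The immersion $\mathcal{R}(Y)\to Y$ lifts $G$-equivariantly to the inclusion $\tilde{f}_Y(I_0(\overline{Y}))\hookrightarrow\overline{Y}$, and $\mathcal{R}(Y)$ is the quotient of $\tilde{f}_Y(I_0(\overline{Y}))$ by the stabilizer $G$ of $I_0(\overline{Y})$; tracing loops shows that the induced homomorphism $\pi_1(\mathcal{R}(Y))\to\pi_1(Y)$ has image exactly $G$, the subgroup of deck transformations preserving $I_0(\overline{Y})$. Under the injectivity hypothesis this homomorphism is therefore an isomorphism onto $G$, so $G\cong\pi_1(\mathcal{R}(Y))\cong\pi_1(RI(Y))$. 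Now the action $G\curvearrowright I_0(\overline{Y})$ of Theorem \ref{TPBCM2} realizes $RI(Y)$ as its quotient complex of groups, with the assigned groups $G_{\triangle}$ occurring as the stabilizers of the standard product subcomplexes $\overbar{K}_{\triangle}$; this is exactly the morphism $\phi_0\colon RI(Y)\to G$ associated to the action, and the isomorphism just obtained says that $\phi_0$ induces an isomorphism $\pi_1(RI(Y))\to G$. Consequently $\phi_0$ agrees with the canonical morphism of $RI(Y)$ into its fundamental group, so $I_0(\overline{Y})$, being the development associated to $\phi_0$, is the universal development of $RI(Y)$; by Theorem \ref{development} it is simply connected and unique up to isomorphism, whence $I_0(\overline{Y})$ is the development of $RI(Y)$ and $|I_0(\overline{Y})|$ is simply connected.

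The main obstacle is the fundamental-group computation $\pi_1(\mathcal{R}(Y))\cong\pi_1(RI(Y))$: one must verify that collapsing the disk-fillings reinserts the local groups in a manner compatible with the free factor inclusions and with the topology of $|RI(Y)|$, without introducing spurious relations where pieces overlap along several simplices. This is precisely what the homotopy equivalence $\psi$ is designed to control, since it certifies that the combinatorics of $|RI(Y)|$ is faithfully reflected by the overlaps of the (disk-filled) pieces. The remaining verifications --- asphericity of the pieces, $\pi_1$-injectivity of the gluings, and the compatibility of the $G$-action on $I_0(\overline{Y})$ with the canonical development morphism --- are routine consequences of the defining properties of Definition \ref{CJ} and Theorem \ref{TPBCM2}.
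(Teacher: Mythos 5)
Your proposal is correct and follows essentially the same route as the paper: the paper likewise deduces $\pi_1(RI(Y))\cong\pi_1(\mathcal{R}(Y))$ from the hypothesis that $\psi$ is a homotopy equivalence, obtains developability from Theorem \ref{developable} via the morphism induced by including the pieces into $\mathcal{R}(Y)$, and then uses the identification of $\pi_1(\mathcal{R}(Y))$ with the stabilizer $G$ of $I_0(\overline{Y})$ together with Theorem \ref{TPBCM2} and Theorem \ref{development} to conclude that $I_0(\overline{Y})$ is the simply connected development. The one detail you make explicit that the paper glosses over --- that the image of $\pi_1(\mathcal{R}(Y))\rightarrow\pi_1(Y)$ is exactly $G$, so that the injectivity hypothesis yields an isomorphism onto $G$ --- is a genuine and correct addition rather than a deviation.
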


\begin{remark}
It is not true that $\mathcal{R}^d(Y)$ is always homotopy equivalent to $|RI(Y)|$. For instance, if $Y$ is $RI(S(\Lambda))$, then $\mathcal{R}(S(\Lambda))=S(\Lambda)$ and in particular, $\mathcal{R}^d(S(\Lambda))$ is always contractible. 
On the other hand, if $\Lambda$ is an $n$-cycle for $n\geq 5$, then $|RI(S(\Lambda))|$ is also an $n$-cycle (Corollary \ref{Ncycle2}) and in particular, $|RI(S(\Lambda))|$ is not simply connected.
\end{remark}

\begin{remark}\label{RmkofS}
In \cite{BKS(a)} and \cite{HK}, from a simplicial graph $\Lambda$, the \textit{exploded Salvetti complex} $\tilde{S}(\Lambda)$ which is homotopy equivalent to $S(\Lambda)$ is defined. 
Suppose that $\Lambda$ has no induced $n$-cycles for $n\leq 4$.
Then $RI(\tilde{S}(\Lambda))$ is semi-isomorphic to $RI(S(\Lambda))$ and $|RI(S(\Lambda))|$ is homotopy equivalent to $\mathcal{R}^d(\tilde{S}(\Lambda))$ so that $\pi_1(RI(S(\Lambda)))$ is isomorphic to $\pi_1(\mathcal{R}(\tilde{S}(\Lambda)))$. 
But $A(\Lambda)$ is the quotient of $\pi_1(\mathcal{R}(\tilde{S}(\Lambda)))$ by the normal closure of $\pi_1(|RI(\tilde{S}(\Lambda))|)$. In particular, if $\pi_1(|RI(S(\Lambda))|)$ is not trivial, then $\pi_1(RI(S(\Lambda)))$ is not isomorphic to $A(\Lambda)$.
\end{remark}

%
%
%
%

\section{The cases of RAAGs and graph 2-braid groups}\label{4}
Let $Y$ be a compact weakly special square complex which is simple. Suppose that $RI(Y)$ is connected and has a separating vertex $\mathbf{u}$, i.e. $RI(Y)$ is the union of two subcomplexes $RI_1$, $RI_2$ such that $\mathbf{u}=RI_1\cap RI_2$. Let $M\subset Y$ be the maximal product subcomplex corresponding to $\mathbf{u}$.
Then $f_Y(RI(Y))$ is the union of two subcomplexes $Y_1=f_Y(RI_1)$, $Y_2=f_Y(RI_2)$ of $Y$ such that $M$ is the only one maximal product subcomplex contained in $Y_1\cap Y_2$ (note that $Y_1\cap Y_2$ may not be equal to $M$). In particular, $RI_i$ is the reduced intersection complex of $Y_i$ for $i=1,2$.
Then $I(\overline{Y})$ has separating vertices which are induced from $\mathbf{u}$.

\begin{lemma}\label{SeparatingVertex}
Suppose that $Y$, $Y_i$, $M$ and $\mathbf{u}$ are given as above. Then, any vertex in $I(\overline{Y})$ whose image under $\rho_{Y}:I(\overline{Y})\rightarrow RI(Y)$ is $\mathbf{u}$ is a separating vertex.
\end{lemma}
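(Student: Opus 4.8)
The statement to prove is that deleting such a vertex $\mathbf{v}$ disconnects the component of $I(\overline{Y})$ containing it, so the plan is to exhibit $\mathbf{v}$ as a genuine cut vertex. Write $\overbar{M}=\overbar{M}_{\mathbf{v}}$ for the p-lift of $M$ corresponding to $\mathbf{v}$. First I would record the local picture at $\mathbf{v}$: as in Example \ref{T_3Ex}, every edge of $I(\overline{Y})$ at $\mathbf{v}$ arises from a standard product subcomplex $\overbar{K}\subsetneq\overbar{M}$ along which a maximal product subcomplex $\overbar{M}'\neq\overbar{M}$ is attached, and under $\rho_Y$ this edge maps to an edge of $RI(Y)$ at $\mathbf{u}$. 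Since $\mathbf{u}=RI_1\cap RI_2$ is a cut vertex, each such edge lies in $RI_1$ or in $RI_2$, and I call the corresponding neighbour \emph{type $1$} or \emph{type $2$} accordingly. I then check that both types occur at $\mathbf{v}$: every attaching locus $K\subsetneq M$ of a neighbour of $\mathbf{u}$ has p-lifts inside $\overbar{M}$, so all edges of $RI(Y)$ at $\mathbf{u}$ into $RI_1-\{\mathbf{u}\}$ and into $RI_2-\{\mathbf{u}\}$ lift to edges at $\mathbf{v}$.

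The key combinatorial input is that no simplex of $RI(Y)$ has vertices in both $RI_1-\{\mathbf{u}\}$ and $RI_2-\{\mathbf{u}\}$; otherwise an edge of it would connect the two sides while avoiding $\mathbf{u}$, contradicting that $\mathbf{u}$ separates $RI(Y)$ with $RI_1\cap RI_2=\{\mathbf{u}\}$. Pulling this back through $\rho_Y$, no two maximal product subcomplexes of types $1$ and $2$ share a flat, so in $I(\overline{Y})$ every edge either joins two vertices lying over a single side or has an endpoint lying over $\mathbf{u}$ (a p-lift of $M$). Let $I'$ be the full subcomplex of $I(\overline{Y})$ spanned by the vertices not lying over $\mathbf{u}$; then each component of $I'$ maps under $\rho_Y$ into exactly one of the disjoint sets $RI_1-\{\mathbf{u}\}$, $RI_2-\{\mathbf{u}\}$, and distinct components can meet only through vertices over $\mathbf{u}$. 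Calling these components \emph{blocks}, I form the bipartite incidence graph $\mathcal{T}$ whose vertices are the blocks together with the p-lifts of $M$, with an edge whenever a p-lift of $M$ is adjacent in $I(\overline{Y})$ to a vertex of a block.

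The separation then reduces to showing that $\mathcal{T}$ is a tree: granting this, $\mathbf{v}$ is adjacent to both a type-$1$ and a type-$2$ block and is therefore a cut vertex of the tree $\mathcal{T}$; since any edge-path in $I(\overline{Y})$ projects to a walk in $\mathcal{T}$ through the same over-$\mathbf{u}$ vertices, a path joining a type-$1$ neighbour of $\mathbf{v}$ to a type-$2$ neighbour while avoiding $\mathbf{v}$ would force a walk in $\mathcal{T}$ crossing the cut vertex $\mathbf{v}$ without passing through it, which is impossible. To see that $\mathcal{T}$ is a tree I would pass to the subcomplex $\mathcal{N}=\tilde{f}_Y(I(\overline{Y}))\subset\overline{Y}$, which is contractible (as observed at the end of Section \ref{3.1}) and is the union of the maximal product subcomplexes, each contractible with convex, hence contractible, pairwise intersections. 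Writing $\mathcal{N}$ as a union of block-subcomplexes $\tilde{f}_Y(\text{block})$ glued along the connected p-lifts of $M$, the incidence graph of this decomposition is $\mathcal{T}$; since a union of connected spaces with connected gluing loci admits a $\pi_1$-surjection onto its incidence graph, $\pi_1(\mathcal{N})=1$ forces $\pi_1(\mathcal{T})=1$, so $\mathcal{T}$ is a tree.

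The main obstacle is this last step, namely the faithful identification of the geometric decomposition of $\mathcal{N}$ with the combinatorial blocks. Two points need care. First, edges of $I(\overline{Y})$ record \emph{shared flats}, whereas $\mathcal{N}$-adjacency only needs nonempty intersection; I must show that any two maximal product subcomplexes from different blocks meet only inside a p-lift of $M$, which is where Lemma \ref{NS}, Lemma \ref{AtMostOneMax} and convexity enter, to prevent a flat-free intersection or a distinct p-lift of $M$ from short-circuiting the two sides. Second, if $M$ self-intersects along a flat there may be edges of $I(\overline{Y})$ between two distinct p-lifts of $M$; such edges project to a loop of $RI(Y)$ at $\mathbf{u}$ lying in a single side, and I would absorb them into the adjacent block so that $\mathcal{T}$ remains bipartite. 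Once the block decomposition is verified to be a genuine gluing of connected pieces along connected p-lifts of $M$, contractibility of $\mathcal{N}$ closes the argument and shows $\mathbf{v}$ is a cut vertex.
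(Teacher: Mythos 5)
Your combinatorial setup is sound: since $\rho_Y$ is a combinatorial map and every simplex of $RI(Y)$ embeds in $|RI(Y)|$, no edge of $I(\overline{Y})$ joins a vertex over $RI_1-\{\mathbf{u}\}$ to a vertex over $RI_2-\{\mathbf{u}\}$, and in fact no edge joins two vertices over $\mathbf{u}$ either (such an edge would descend, via Theorem \ref{TPBCM}, to a $1$-cell of $RI(Y)$ with both endpoints equal to $\mathbf{u}$, which cannot exist); so your worry about ``self-intersection edges'' is vacuous, and your proposed fix of absorbing them into blocks is unnecessary. The block decomposition and the projection of edge-paths to walks in $\mathcal{T}$ are likewise fine. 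The genuine gap is the step you defer to the end: identifying the geometric decomposition of $\mathcal{N}=\tilde{f}_Y(I(\overline{Y}))$ with $\mathcal{T}$. The intersections you must control there are precisely the \emph{flat-free} ones --- realizations of two distinct blocks, or two p-lifts of $M$, or a block and a p-lift of $M$, meeting without sharing a flat --- and Lemmas \ref{NS} and \ref{AtMostOneMax} say nothing about these: both concern product subcomplexes containing flats or parallel sets of singular geodesics. The paper itself records the phenomenon downstairs: $Y_1\cap Y_2$ need not equal $M$; it merely contains no maximal product subcomplex other than $M$. So upstairs, pieces lying over opposite sides may a priori meet outside the p-lifts of $M$, and block--p-lift intersections, being unions of convex sets, may be disconnected; nothing you cite forbids either.

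Moreover, even in the favorable case where every cross-block intersection does lie in a p-lift of $M$ (which is what actually happens in the Salvetti model), the topological principle you invoke breaks down: ``$\pi_1$ of a union of connected pieces with connected gluing loci surjects onto $\pi_1$ of the incidence \emph{graph}'' is valid only for a genuine graph-of-spaces decomposition, i.e.\ when triple intersections are empty. A nonempty intersection of two block realizations inside a p-lift $\overbar{M}_j$ creates a triangle in the incidence pattern, and simple connectivity of $\mathcal{N}$ then only controls $\pi_1$ of the full nerve (with its $2$-simplices), not $\pi_1$ of the incidence graph, let alone of your subgraph $\mathcal{T}$; the implication $\pi_1(\mathcal{N})=1\Rightarrow\pi_1(\mathcal{T})=1$ does not follow as stated. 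This is exactly what the paper's proof of Lemma \ref{SeparatingVertex} avoids: rather than decomposing $I(\overline{Y})$ a posteriori, it builds $\tilde{f}_Y(I(\overline{Y}))$ bottom-up as the universal cover of $\mathcal{R}(Y)=\mathcal{R}(Y_1)\cup_{M}\mathcal{R}(Y_2)$, attaching copies of $\overline{\mathcal{R}(Y_1)}$ and $\overline{\mathcal{R}(Y_2)}$ one at a time along single p-lifts of $M$ (covering-space theory for an amalgam along the $\pi_1$-injective subcomplex $M$), so that the tree structure and the control of all intersections hold by construction, and the separation of $I(\overline{Y})$ at vertices over $\mathbf{u}$ is read off directly. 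To repair your argument you would have to prove, not merely invoke, that your block decomposition is a genuine graph of spaces --- empty block--block and p-lift--p-lift intersections, connected block--p-lift intersections --- and that verification is essentially the paper's construction in disguise.
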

\begin{proof}
For convenience, let us assume that $I(\overline{Y})$ is connected.
Then $\tilde{f}_{Y}(I(\overline{Y}))$, the universal cover of $\mathcal{R}(Y)$, is constructed from the universal covers $\overline{\mathcal{R}(Y_1)}$ and $\overline{\mathcal{R}(Y_2)}$ of $\mathcal{R}(Y_1)$ and $\mathcal{R}(Y_2)$ as follows:
\begin{enumerate}
\item 
Start from $I_{(0)}=\overline{\mathcal{R}(Y_1)}$. To each p-lift $\overbar{M}_1$ of $M$ in $I_{(0)}$, attach a copy of $\overline{\mathcal{R}(Y_2)}$ by identifying a p-lift of $M$ in $\overline{\mathcal{R}(Y_2)}$ with $\overbar{M}_1$. Let $I_{(1)}$ be the resulting complex.
\item
To each p-lift $\overbar{M}_2$ of $M$ in $I_{(1)}$ but not in $I_{(0)}$, attach a copy of $\overline{\mathcal{R}(Y_1)}$ by identifying a p-lift of $M$ in $\overline{\mathcal{R}(Y_1)}$ with $\overbar{M}_2$. Let $I_{(2)}$ be the resulting complex.
\item 
Inductively, construct $I_{(n)}$ by attaching a copy of $\overline{\mathcal{R}(Y_i)}$ to $I_{(n-1)}$ along each p-lift of $M$ in $I_{(n-1)}$ but not in $I_{(n-2)}$ where $i$ is 1 if $n$ is even, and 2 if $n$ is odd. 
Then $\tilde{f}_{Y}(I(\overline{Y}))$ is isometric to the direct limit of $I_{(n)}$.
\end{enumerate}
If a copy of $\overline{\mathcal{R}(Y_1)}$ and a copy of $\overline{\mathcal{R}(Y_2)}$ in $\tilde{f}_{Y}(I(\overline{Y}))$ share a standard product subcomplex, then their intersection is a p-lift of $M$.
Thus, $I(\overline{Y})$ is constructed from copies of $I(\overline{\mathcal{R}(Y_1)})$ and $I(\overline{\mathcal{R}(Y_2)})$.
In particular, the vertex in $I(\overline{Y})$ corresponding to a p-lift of $M$ is a separating vertex.
\end{proof}

Let $Y$ be either a 2-dimensional Salvetti complex or the discrete configuration space of 2 points on a (simplicial) graph. Then $Y$ is a compact weakly special square complexes which are simple so that for every standard product subcomplex $K\subset Y$ with base $\bold{K}$, the local isometry $\bold{K}\rightarrow Y$ is injective. Thus, we will not distinguish between a standard product subcomplex of $Y$ and its base. Moreover, $\mathcal{R}(Y)\rightarrow Y$ is an embedding so that $\mathcal{R}(Y)$ will be considered as a subcomplex of $Y$.
If $|RI(Y)|$ is homotopy equivalent to a point, then it is easily deduced that $RI(Y)$ is a developable complex of groups. 

\begin{lemma}\label{Decomposition}
Let $Y$ be given as above. If $|RI(Y)|$ is contractible, then $RI(Y)$ is a developable complex of groups and a component of $I(\overline{Y})$ is the development of $RI(Y)$. 
\end{lemma}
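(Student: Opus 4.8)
The plan is to deduce the lemma from Proposition \ref{CpxofGps}. Since a contractible complex is connected, the hypothesis that $|RI(Y)|$ is contractible places us exactly in the setting of that proposition, so the lift $I_0(\overline{Y})$, the geometric realization $\mathcal{R}(Y)$ with its disk-filled version $\mathcal{R}^d(Y)$, and the map $\psi\colon|RI(Y)|\to\mathcal{R}^d(Y)$ are all defined. It therefore suffices to verify the three hypotheses of Proposition \ref{CpxofGps}: that the base of every standard product subcomplex of $Y$ is a product of two planar graphs, that $\psi$ is a homotopy equivalence, and that the immersion $\mathcal{R}(Y)\to Y$ is $\pi_1$-injective.

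The first and third hypotheses are straightforward. For the planarity of bases, when $Y=S(\Lambda)$ with $\Lambda$ triangle-free each base is $\bold{P}_1\times\bold{P}_2$ with $\bold{P}_i$ a wedge of circles (the loops labelling a factor of a join), and when $Y=D_2(\Gamma)$ Lemma \ref{DisjointSubgraphs} shows each base is a product $\Gamma_1\times\Gamma_2$ of disjoint subgraphs of the planar graph $\Gamma$; in both cases the factors are planar. For $\pi_1$-injectivity, the immersion restricts to a local isometry on each base and the induced link maps are injective with full image (as in Remark \ref{EmbeddingImmersion}), so $\mathcal{R}(Y)\to Y$ is a combinatorial local isometry into the NPC complex $Y$; by Theorem \ref{CW} it is a genuine local isometry and hence $\pi_1$-injective. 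When $Y=S(\Lambda)$ this is trivial, since $\mathcal{R}(S(\Lambda))=S(\Lambda)$ and the map is the identity.

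The heart of the proof is the second hypothesis. Since $|RI(Y)|$ is contractible and both spaces are CW, $\psi$ is a homotopy equivalence if and only if $\mathcal{R}^d(Y)$ is contractible, so I would concentrate on proving the latter. The building blocks are well behaved: filling in the bounded faces of a connected planar graph yields a contractible $2$-complex, so each factor $\bold{D}_i$ is contractible and each filled base $\bold{K}^d_{\triangle'}=\bold{D}_i\times\bold{D}'_i$ is contractible. In the Salvetti case this already finishes the argument: since $\mathcal{R}(S(\Lambda))=S(\Lambda)$ has a single vertex, filling every factor lets $\mathcal{R}^d(S(\Lambda))$ deformation retract onto that vertex, so $\mathcal{R}^d(S(\Lambda))$ is contractible regardless of $|RI|$, and with $|RI(S(\Lambda))|$ contractible $\psi$ is a homotopy equivalence. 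For $Y=D_2(\Gamma)$ the complex $\mathcal{R}(Y)$ has many vertices, so I would instead build $\mathcal{R}^d(Y)$ up one maximal product subcomplex at a time along the structure of the contractible complex $|RI(Y)|$ (using the separating-vertex decomposition of Lemma \ref{SeparatingVertex}), each stage attaching a contractible piece $\bold{K}^d_{\mathbf u}$ along a contractible subcomplex and hence preserving contractibility.

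The main obstacle is exactly this last point: the contractibility of $\mathcal{R}^d(D_2(\Gamma))$. One cannot simply invoke the nerve lemma with the cover $\{\bold{K}^d_{\mathbf u}\}$, because the vertex-pieces share factor graphs and so overlap in a way not recorded by $|RI(Y)|$; indeed the Salvetti example $\Lambda=C_n$ shows $\mathcal{R}^d(Y)$ can be contractible while $|RI(Y)|$ is an $n$-cycle, so $\mathcal{R}^d(Y)\simeq|RI(Y)|$ fails in general and the contractibility of $\mathcal{R}^d(Y)$ must be extracted from that of $|RI(Y)|$ through a careful inductive collapse (ensuring each attaching intersection is connected and contractible) rather than through a cover argument. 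Once $\psi$ is shown to be a homotopy equivalence, Proposition \ref{CpxofGps} immediately yields that $RI(Y)$ is developable and that $I_0(\overline{Y})$, a component of $I(\overline{Y})$, is its development.
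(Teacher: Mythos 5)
Your overall route is exactly the paper's: reduce Lemma \ref{Decomposition} to Proposition \ref{CpxofGps} by checking (a) planarity of the bases, (b) that $\psi$ is a homotopy equivalence, and (c) $\pi_1$-injectivity of $\mathcal{R}(Y)\rightarrow Y$. Your verification of (a) is fine, but your argument for (c) is a non sequitur. From the fact that each base maps to $Y$ by a local isometry (Remark \ref{EmbeddingImmersion}, Theorem \ref{CW}) you conclude that the union $\mathcal{R}(Y)$ does; but local convexity is not preserved by unions, and fullness of links can genuinely fail here. For instance, let $\Gamma$ contain two triangles $C_1,C_2$ sharing a vertex $p$, a triangle $C_3$ attached to the rest of $\Gamma$ only at a vertex $w\in C_2\setminus\{p\}$, and a triangle $C_4$ attached only at a vertex $v\in C_1\setminus\{p\}$; let $e\subset C_1$ be the edge at $v$ not containing $p$ and $f\subset C_2$ the edge at $w$ not containing $p$. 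Then $e\times w\subset C_1\times C_3$ and $v\times f\subset C_4\times C_2$ lie in $\mathcal{R}(D_2(\Gamma))$, and $e\cap f=\emptyset$, so $e\times f$ is a square of $D_2(\Gamma)$; yet every standard subgraph of $\Gamma$ containing $e$ must contain $p$ (the third vertex of $C_1$ has valency $2$), and likewise for $f$, so no two disjoint standard subgraphs contain $e$ and $f$ respectively and the square $e\times f$ lies in no product subcomplex. Hence the link of $(v,w)$ in $\mathcal{R}(D_2(\Gamma))$ is not full in its link in $D_2(\Gamma)$, and $\mathcal{R}(D_2(\Gamma))\hookrightarrow D_2(\Gamma)$ is not a local isometry. (This particular $\Gamma$ has disconnected $|RI|$, but your inference nowhere uses contractibility of $|RI(Y)|$, so the inference itself is unsound; $\pi_1$-injectivity needs a different argument, which the paper supplies only with ``it can be seen.'')

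Second, the step you yourself call the heart of the proof---that contractibility of $|RI(D_2(\Gamma))|$ forces contractibility of $\mathcal{R}^d(D_2(\Gamma))$---is never actually proved. Your observation that the nerve lemma is unavailable (the pieces $\mathbf{K}^d_{\mathbf{u}}$ overlap in ways invisible to $|RI(Y)|$, as the remark following Proposition \ref{CpxofGps} shows) is correct and is a more honest assessment than the paper's own one-line dismissal (``since $|RI(Y)|$ is contractible, $\mathcal{R}^d(Y)$ is obviously contractible''). But having shown the statement is not formally obvious, you must then actually establish it, and the proposed ``careful inductive collapse'' remains a plan: no ordering of the pieces is specified, and nothing is verified about the successive attaching intersections---whose connectivity and contractibility are precisely the kind of statements your nerve-lemma obstruction puts in doubt. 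Likewise, your Salvetti-case justification (``a one-vertex complex deformation retracts onto its vertex'') is not an argument, although the conclusion that $\mathcal{R}^d(S(\Lambda))$ is always contractible does appear in the paper's remark. So while your strategy coincides with the paper's, the proposal has a genuine gap at (b) and an invalid argument at (c), and therefore does not yet prove the lemma.
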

\begin{proof}
Since $|RI(Y)|$ is contractible, $\mathcal{R}^d(Y)$ is obviously contractible. In particular, the map $\psi:|RI(Y)|\rightarrow\mathcal{R}^d(Y)$ constructed in the paragraph below Theorem \ref{development} is a homotopy equivalence. Moreover, it can be seen that the embedding $\mathcal{R}(Y)\hookrightarrow Y$ induces an injective homomorphism $\pi_1(\mathcal{R}(Y))\rightarrow \pi_1(Y)$. By Proposition \ref{CpxofGps}, therefore, the lemma holds. 
\end{proof}

In the upcoming two subsections, we will see properties of $RI(Y)$ (and $I(\overline{Y})$) when $Y$ is either a 2-dimensional Salvetti complex or $D_2(\Gamma)$ for a planar graph $\Gamma$ with the following questions in mind:
\begin{enumerate}
\item Is $RI(Y)$ always a developable complex of groups? Or when will $RI(Y)$ satisfy the assumptions in Proposition \ref{CpxofGps}?
\item How can $I(\overline{Y})$ be explicitly constructed from $RI(Y)$?
\end{enumerate} 

%
%

\subsection{(Reduced) Intersection Complexes for RAAGs}\label{4.1}
In this subsection, we will see the structures of the reduced intersection complex and the intersection complex for a 2-dimensional RAAG.
Recall that for a triangle-free simplicial graph $\Lambda$ which contains at least one edge, there is the associated 2-dimensional Salvetti complex $S(\Lambda)$ with its universal cover $X(\Lambda)$ such that $\pi_1(S(\Lambda))$ is isomorphic to the RAAG $A(\Lambda)$. 
Due to the following facts, we will only consider the case that a triangle-free simplicial $\Lambda$ is connected and has diameter $\geq 3$.
\begin{enumerate}
\item
If $\Lambda$ consists of isolated vertices $v_1,\cdots,v_n$ and components $\Lambda_1,\cdots,\Lambda_m$ each of which contains at least two vertices, then $RI(S(\Lambda))$ is the disjoint union of $RI(S(\Lambda_1)),\cdots,RI(S(\Lambda_m))$. 
\item
When $\Lambda$ is connected, $\Lambda$ is a complete bipartite graph if and only if $S(\Lambda)$ has only one maximal product subcomplex if and only if $|RI(S(\Lambda))|$ (and $|I(X(\Lambda))|$) consists of one vertex.
\end{enumerate}

Each simplex $\triangle'$ of the reduced intersection complex $RI(S(\Lambda))$ of $S(\Lambda)$ corresponds to the standard product subcomplex $K_{\triangle'}\subset S(\Lambda)$ whose defining is a join subgraph $\Lambda_{\triangle'}\leq \Lambda$.
From this fact, we will use the join subgraph $\Lambda_{\triangle'}$ as the label of the simplex $\triangle'$. Similarly, the label of a simplex $\triangle$ of the intersection complex $I(X(\Lambda))$ of $X(\Lambda)$ is the defining graph of the standard product subcomplex $\overbar{K}_{\triangle}\subset X(\Lambda)$ corresponding to $\triangle$.
When $RI(S(\Lambda))$ is considered as a complex of join groups, since $S(\Lambda)$ has a unique vertex, the assigned groups are directly related to the labels of simplices; the assigned group $G_{\triangle'}$ of $\triangle'$ is the canonical subgroup of $A(\Lambda)$ generated by $\Lambda_{\triangle'}$.   

\vspace{1mm}

First, let us see properties of $RI(S(\Lambda))$ which are inherited from properties of $\Lambda$.
\begin{proposition}\label{RIConnected}
$|RI(S(\Lambda))|$ is a connected simplicial complex.
\end{proposition}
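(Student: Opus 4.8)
The plan is to translate the entire statement into the combinatorics of the defining graph $\Lambda$. As recorded in Section \ref{2.3}, every standard product subcomplex of $S(\Lambda)$ has an injective base and corresponds to a \emph{join subgraph}, i.e. a full subgraph admitting a decomposition $\Lambda_2\circ\Lambda_3$ with $\Lambda_2,\Lambda_3$ nonempty; since $\Lambda$ is triangle-free, $\Lambda_2$ and $\Lambda_3$ are independent sets, so a join subgraph is precisely a full (induced) complete bipartite subgraph $K_{m,n}$ with $m,n\geq 1$. Thus the vertices of $RI(S(\Lambda))$ are the maximal induced complete bipartite subgraphs of $\Lambda$, and, because $S(\Lambda)$ has a single vertex, the intersection of standard subcomplexes $S(J_0),\dots,S(J_k)$ equals $S\!\left(\bigcap_i J_i\right)$, where $\bigcap_i J_i$ is the full subgraph on the common vertices.

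First I would establish that $|RI(S(\Lambda))|$ is a genuine simplicial complex. The key observation is that an induced subgraph of a complete bipartite graph is again complete bipartite. Hence, for maximal join subgraphs $J_0,\dots,J_k$, the full subgraph $L=\bigcap_i J_i$ is an induced subgraph of the complete bipartite graph $J_0$ and is therefore itself complete bipartite. If $L$ contains no edge, the $J_i$ span no simplex; if $L$ contains an edge, then $L\cong K_{m,n}$ with $m,n\geq 1$ is a join subgraph and is the \emph{unique} maximal join subgraph contained in the intersection $W=S(L)$. By Definition \ref{RI} the $k+1$ vertices then span exactly one $k$-simplex. Since passing to a subcollection only enlarges the intersection, the set of simplices is downward closed and each is determined by its vertex set, so $|RI(S(\Lambda))|$ is simplicial.

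For connectedness I would argue through the edges of $\Lambda$. For an edge $e$ of $\Lambda$, let $V_e$ be the set of vertices of $RI(S(\Lambda))$ whose maximal join subgraph contains $e$; any two such subgraphs share $e$, hence are joined by an edge, so $V_e$ spans a clique and lies in a single component. Next, if two edges $e,e'$ meet at a vertex $v$, then the star $\{v\}\circ N(v)$ is a join subgraph (as $N(v)$ is independent by triangle-freeness) contained in some maximal join subgraph $J$, and $J$ contains both $e$ and $e'$; thus $V_e$ and $V_{e'}$ share the vertex corresponding to $J$ and lie in the same component. Since $\Lambda$ is connected, its edges are connected through such shared-vertex adjacencies (the line graph of a connected graph with an edge is connected), so all the $V_e$ lie in one component. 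Finally, every maximal join subgraph, being some $K_{m,n}$ with $m,n\geq 1$, contains an edge, so every vertex of $RI(S(\Lambda))$ lies in some $V_e$; hence the $1$-skeleton, and therefore $|RI(S(\Lambda))|$, is connected.

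The graph-theoretic bookkeeping is routine; the one point I regard as the crux is simpliciality, since $|RI(Y)|$ need not be a simplicial complex for general $Y$. The whole difficulty collapses once one notices that intersections of maximal join subgraphs inherit the complete-bipartite property and hence contain a unique maximal join subgraph — a feature special to the Salvetti (RAAG) setting that fails in general.
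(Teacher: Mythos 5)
Your proof is correct and follows essentially the same route as the paper's: simpliciality comes from the observation that, in a triangle-free $\Lambda$, join subgraphs are induced complete bipartite subgraphs and their intersections (resp.\ spans inside a common join subgraph) are again complete bipartite, so each vertex set carries at most one simplex; connectedness comes from walking along $\Lambda$ and using that each star $st(v)$ is a join subgraph contained in some maximal one, so consecutive maximal join subgraphs share an edge of $\Lambda$. The paper packages the walk as a geodesic path and the uniqueness as a no-multi-edge contradiction, while you use the line graph and a direct count of maximal join subgraphs in the intersection, but the underlying arguments are the same.
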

\begin{proof}
First, we will show that $|RI(S(\Lambda))|$ is a simplicial complex.
Suppose that there are two edges $E_1,E_2\subset RI(S(\Lambda))$ joining two vertices $\mathbf{u}_1,\mathbf{u}_2\in RI(S(\Lambda))$. Suppose that $\mathbf{u}_i$ and $E_i$ are labelled by $\Lambda_{\mathbf{u}_i}$ and $\Lambda_{E_i}$, respectively, for $i=1,2$.
Since vertices in $\Lambda_{E_1}\cup\Lambda_{E_2}$ are contained in $\Lambda_{\mathbf{u}_1}$ which is a join subgraph, vertices in $\Lambda_{E_1}\cup\Lambda_{E_2}$ span a join subgraph $\Lambda'$ which is contained in $\Lambda_{\mathbf{u}_1}$. Similarly, $\Lambda'$ is also contained in $\Lambda_{\mathbf{u}_2}$.
It means that $\mathbf{u}_1$ and $\mathbf{u}_2$ are actually joined by an edge whose label contains $\Lambda'$. Thus, there is no multi-edge in $|RI(S(\Lambda))|$ and, for the same reason, there is no multi-simplex.

Now let us show that $|RI(S(\Lambda))|$ is connected.
Let $\mathbf{u}$ and $\mathbf{u}'$ be two distinct vertices in $RI(S(\Lambda))$ labelled by join subgraphs $\Lambda_{\mathbf{u}}$ and $\Lambda_{\mathbf{u}'}$, respectively. If $\Lambda_{\mathbf{u}}$ and $\Lambda_{\mathbf{u}'}$ have an edge of $\Lambda$ in common, then $\mathbf{u}$ and $\mathbf{u}'$ are connected by an edge. 
Otherwise, since $\Lambda$ is connected, there exists a geodesic path $(v_0,v_1,\cdots,v_n)$ in $\Lambda^{(1)}$ such that $v_i$'s are vertices in $\Lambda$ and $v_0\in\Lambda_{\mathbf{u}}$, $v_n\in\Lambda_{\mathbf{u}'}$. (It is possible that $n=0$ if $\Lambda_{\mathbf{u}} \cap \Lambda_{\mathbf{u}'}$ consists of vertices.)
For $i=0,\cdots,n$, there exists a maximal join subgraph $\Lambda_i$ containing the star $st(v_i)$ of $v_i$. Then the vertices in $RI(\Lambda)$ labelled by $\Lambda_i$ and $\Lambda_{i+1}$ are connected by an edge of $RI(S(\Lambda))$ for $i=0,\cdots,n-1$ since the intersection of $\Lambda_i$ and $\Lambda_{i+1}$ contains the edge $(v_i,v_{i+1})$ of $\Lambda$. 
Moreover, $\mathbf{u}$ and the vertex in $RI(\Lambda)$ labelled by $\Lambda_0$ are also connected by an edge of $RI(S(\Lambda))$ since $\Lambda_{\mathbf{u}}$ and $\Lambda_0$ have an edge in common. 
Similarly, $\mathbf{u}'$ and the vertex in $RI(\Lambda)$ labelled by $\Lambda_n$ are also connected by an edge. This implies that $\mathbf{u}$ and $\mathbf{u}'$ are in the same component of $RI(S(\Lambda))$. Thus, $|RI(S(\Lambda))|$ is connected.
\end{proof}

\begin{lemma}\label{Ncycle}
If $\Lambda$ contains an induced $n$-cycle for $n\geq 5$, then $|RI(S(\Lambda))|$ contains either an induced $n$-cycle or a $k$-simplex for $k\geq 2$. 
\end{lemma}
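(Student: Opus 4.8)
The plan is to prove the equivalent statement: if $|RI(S(\Lambda))|$ contains no $2$-simplex (equivalently no $k$-simplex with $k\geq 2$, since any such simplex has $2$-dimensional faces), then it contains an induced $n$-cycle. Fix an induced $n$-cycle $C=(w_0,\dots,w_{n-1})$ in $\Lambda$. Recall from Section \ref{2.3} that the standard product subcomplexes of $S(\Lambda)$ are exactly the standard subcomplexes whose defining graph is a join subgraph, and that, since $\Lambda$ is triangle-free, these join subgraphs are precisely the complete bipartite subgraphs $A\circ B$; the vertices of $RI(S(\Lambda))$ correspond to the maximal ones. For a vertex $v\in\Lambda$ write $N_\Lambda(v)$ for its neighborhood and set $J_v=\big(\bigcap_{y\in N_\Lambda(v)}N_\Lambda(y)\big)\circ N_\Lambda(v)$; this is a maximal join subgraph with $v$ on its first side. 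I take $J_i:=J_{w_i}$ and let $\mathbf{u}_i\in RI(S(\Lambda))$ be the corresponding vertex.

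First I would pin down the picture along $C$. Because $C$ is induced and $n\geq 5$, the only cycle vertices in $J_i$ are $w_{i-1},w_i,w_{i+1}$, with $w_i$ on the first side and its two cycle-neighbors on the second; thus $C\cap J_i$ is the path $w_{i-1}\,w_i\,w_{i+1}$ centered at $w_i$. The center is recovered from $J_i$, so distinct indices give distinct $J_i$ and the $\mathbf{u}_i$ are pairwise distinct. Since $J_i$ and $J_{i+1}$ both contain the edge $(w_i,w_{i+1})$ of $\Lambda$, their intersection contains a torus, so $\mathbf{u}_i$ and $\mathbf{u}_{i+1}$ are joined by an edge of $RI(S(\Lambda))$ by Definition \ref{RI}. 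Therefore $\mathbf{u}_0,\dots,\mathbf{u}_{n-1}$ form a closed edge-path of length $n$, and it remains to exclude chords: I must show that if $\mathbf{u}_i$ and $\mathbf{u}_j$ are adjacent with $i,j$ non-consecutive, then $|RI(S(\Lambda))|$ contains a $2$-simplex.

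This implication is where I expect the real work to be. A $2$-simplex exists precisely when some edge of $\Lambda$ lies in three distinct maximal join subgraphs, so from a chord---which only supplies two subgraphs $J_i,J_j$ sharing an edge $(a,b)$ of $\Lambda$---I must manufacture a third. Labelling the endpoints so that $a$ lies on the first side of $J_i$ and $b$ on the second (so $b\in N_\Lambda(w_i)$ and $a$ is adjacent to all of $N_\Lambda(w_i)$), a short argument using $A_i\cap C=\{w_i\}$ together with non-consecutiveness shows $a\notin C$. I would then split on $b$. If $b\in C$, then $b\in\{w_{i-1},w_{i+1}\}$, and membership of $(a,b)$ in $J_j$ forces $j=i\pm 2$ with $b$ the cycle vertex lying between $w_i$ and $w_j$; one checks directly that $(a,b)$ lies in $J_i$, $J_j$ and $J_b$, so $\mathbf{u}_i,\mathbf{u}_j,\mathbf{u}_b$ span a $2$-simplex. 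If $b\notin C$, I take the third subgraph to be $J_b$, which contains $(a,b)$ because $a\in N_\Lambda(b)$; its distinctness from $J_i$ and $J_j$ is read off from cycle vertices, since $w_i\in N_\Lambda(b)$ always, while from $J_j$ one gets either $w_j\in N_\Lambda(b)$ or $w_{j\pm 1}\in N_\Lambda(b)$, and these do not lie in the cherries $C\cap J_i$, $C\cap J_j$ respectively. The role of triangle-freeness of $\Lambda$ is to eliminate the degenerate overlaps: whenever two cycle vertices that are forced to be common neighbors of $a$ or of $b$ are themselves adjacent, one obtains a triangle in $\Lambda$, and this is exactly what rules out the bad index separations.

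Once chords are excluded, the full subcomplex of $RI(S(\Lambda))$ spanned by $\mathbf{u}_0,\dots,\mathbf{u}_{n-1}$ consists of precisely the $n$ consecutive edges and no higher cell (a $2$-cell among them would be a forbidden $2$-simplex), so it is an induced $n$-cycle. This yields the dichotomy in the statement. Throughout I would rely on Definition \ref{RI} for the rule governing when vertices span a simplex, and on Proposition \ref{RIConnected} to know that $|RI(S(\Lambda))|$ is an honest simplicial complex, so that adjacency and the notion of an induced cycle are unambiguous.
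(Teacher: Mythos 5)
Your proposal is correct and takes essentially the same route as the paper: both build the closed path in $RI(S(\Lambda))$ from the maximal join subgraphs containing the stars $st(w_i)$ (your $J_{w_i}$ coincides with the paper's $\Lambda_i=lk(w_i)\circ lk^{\perp}(w_i)$), prove pairwise distinctness and consecutive adjacency, and exclude chords by exhibiting a third maximal join subgraph through the shared edge --- namely the star-based one at an endpoint of that edge --- so that three distinct maximal join subgraphs contain a common edge and hence span a $2$-simplex. Your bookkeeping differs only superficially (you split on whether the endpoint $b\in lk(w_i)$ lies on the cycle and read off distinctness of $J_b$ from forced cycle vertices plus triangle-freeness, while the paper splits on whether the shared edge lies in $st(w_i)$, $st(w_j)$, or neither); the underlying mechanism is identical.
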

\begin{proof}
For an induced $n$-cycle $l=(v_0,\cdots,v_{n-1},v_0)$ in $\Lambda$, let $\Lambda_i\leq\Lambda$ be a maximal join subgraph containing $st(v_i)$ and $\mathbf{u}_i\in RI(S(\Lambda))$ the vertex whose label is $\Lambda_i$. 
In particular, $\Lambda_i=lk(v_i)\circ lk^{\perp}(v_i)$ where $lk(v_i)$ is the link of $v_i$ in $\Lambda$ which contains at least two vertices and $lk^{\perp}(v_i)$ is a discrete subgraph in $\Lambda$. Then $\mathbf{u}_i$ and $\mathbf{u}_{i+1}$ are distinct since $\Lambda_i$ contains $v_{i-1}$ but $\Lambda_{i+1}$ does not (indices mod $n$). 
Moreover, if $v_i$ and $v_j$ are not adjacent, then $\mathbf{u}_i$ and $\mathbf{u}_j$ are distinct since $\Lambda_i$ does not contain $v_j$ but $\Lambda_j$ does. 
Thus, $\mathbf{u}_i$'s are all distinct in $RI(S(\Lambda))$.

Choose two distinct vertices $v_i$ and $v_j$ in $l$.
If $v_i$ and $v_j$ are adjacent in $\Lambda$, then $\Lambda_i\cap\Lambda_j$ contains the edge $(v_i,v_j)$ so that $\mathbf{u}_i$ and $\mathbf{u}_j$ are adjacent in $RI(S(\Lambda))$.
Suppose that $v_i$ and $v_j$ are not adjacent but $\Lambda_i\cap\Lambda_j$ contains an edge $e$. Since it is impossible that $e\subset st(v_i)\cap st(v_j)$), there are two remaining possibilities:\\
(1) Suppose that $e\subset st(v_i)$ but not in $st(v_j)$. Then one of the endpoints of $e$ is $v_i$ and the other is in $lk(v_j)$; if the other is in $lk^{\perp}(v_j)$, then $v_i$ and $v_j$ are adjacent, a contradiction.
Since $e$ is contained in $\Lambda_j$, $v_i$ is in $lk^{\perp}(v_j)$ and in particular, $lk(v_i)$ contains $lk(v_j)$. 
But it is impossible since two vertices in $l$ adjacent to $v_j$ are in $lk(v_j)$ so that they are adjacent to $v_i$. \\
(2) Suppose that $e$ is contained in neither $st(v_i)$ nor $st(v_j)$. In particular, $lk^{\perp}(v_i)$ and $lk^{\perp}(v_j)$ also have at least two vertices.
Let $v$ and $w$ be the endpoints of $e$. 
By re-indexing, let $\Lambda_i=\Lambda'_i\circ\Lambda''_i$ and $\Lambda_j=\Lambda'_j\circ\Lambda''_j$ so that $v\in\Lambda'_i\cap\Lambda'_j$ and $w\in\Lambda''_i\cap\Lambda''_j$.
Then $lk(v)$ contains $\Lambda''_i\cup\Lambda''_j$ and $lk(w)$ contains $\Lambda'_i\cup\Lambda'_j$.
This implies that at least one of $st(v)$ and $st(w)$ is not contained in $\Lambda_i\cap\Lambda_j$; for if both $st(v)$ and $st(w)$ are contained in $\Lambda_i\cap\Lambda_j$, then $lk(v)=\Lambda''_i=\Lambda''_j$ and $lk(w)=\Lambda'_i=\Lambda'_j$, a contradiction.
So, say $st(v)$ is not contained in $\Lambda_i\cap\Lambda_j$ and let $\mathbf{u}\in RI(S(\Lambda))$ be the vertex whose label contains $st(v)$. Then $\mathbf{u}$ is distinct from $\mathbf{u}_i$ and $\mathbf{u}_j$, and three vertices $\mathbf{u}$, $\mathbf{u}_i$, $\mathbf{u}_j$ span a $2$-simplex since their labels have the edge $e$ in common.

In summary, for two non-adjacent vertices $v_i$ and $v_j$ in $l$, either $\mathbf{u}_i$ and $\mathbf{u}_j$ are adjacent in $RI(S(\Lambda))$ or $\mathbf{u}_i$ and $\mathbf{u}_j$ are contained in a $k$-simplex for $k\geq 2$. Therefore, if the loop $(\mathbf{u}_0,\cdots,\mathbf{u}_{n-1},\mathbf{u}_0)$ in $RI(S(\Lambda))$ is not an induced $n$-cycle, then $RI(S(\Lambda))$ contains a $k$-simplex for $k\geq 2$.
\end{proof}

\begin{corollary}\label{Ncycle2}
Suppose that $\Lambda$ has no induced $n$-cycles for $n\leq 4$. If $\Lambda$ has an induced $m$-cycle for $m\geq 5$, then $|RI(S(\Lambda))|$ has an induced $m$-cycle.
\end{corollary}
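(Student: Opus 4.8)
The plan is to derive the statement directly from Lemma \ref{Ncycle}. That lemma already produces, for the given induced $m$-cycle $l=(v_0,\dots,v_{m-1},v_0)$ of $\Lambda$, a loop $(\mathbf{u}_0,\dots,\mathbf{u}_{m-1},\mathbf{u}_0)$ in $|RI(S(\Lambda))|$ (with $\mathbf{u}_i$ the vertex labelled by a maximal join subgraph $\Lambda_i$ containing $st(v_i)$) that is either an induced $m$-cycle or forces a $k$-simplex with $k\geq 2$. So everything reduces to ruling out the second alternative: I must show that under the hypothesis that $\Lambda$ has no induced $n$-cycle for $n\leq 4$, the complex $|RI(S(\Lambda))|$ contains no $k$-simplex with $k\geq 2$.

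The first step is to pin down the shape of the maximal join subgraphs of $\Lambda$. Since $\Lambda$ is triangle-free, every join subgraph $A\circ B$ is complete bipartite with $A,B$ discrete. If both $|A|\geq 2$ and $|B|\geq 2$, then picking $a,a'\in A$ and $b,b'\in B$ gives a $4$-cycle through $a,b,a',b'$ whose only possible chords $a\text{--}a'$ and $b\text{--}b'$ are absent by triangle-freeness; this induced $4$-cycle is excluded by hypothesis. Hence every join subgraph of $\Lambda$ is a star, and maximality forces each maximal join subgraph to be $st(c)=\{c\}\circ lk(c)$; equivalently $lk^{\perp}(c)=\{c\}$. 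In particular every edge of the maximal join subgraph $st(c)$ is incident to its center $c$, so a given edge $(x,y)$ of $\Lambda$ can lie in at most the two maximal join subgraphs $st(x)$ and $st(y)$.

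The key consequence is then immediate. Suppose, for contradiction, that $|RI(S(\Lambda))|$ had a $k$-simplex $\triangle$ with $k\geq 2$. By Definition \ref{RI} it is spanned by $k+1\geq 3$ distinct vertices, and it corresponds to a standard product subcomplex $K_\triangle$ contained in the common intersection of the associated maximal product subcomplexes; the defining graph $\Lambda_\triangle$ of $K_\triangle$ is a non-trivial join, hence contains an edge $e$. Choosing any three of the spanning vertices $\mathbf{u}_0,\mathbf{u}_1,\mathbf{u}_2$, inclusion of standard product subcomplexes (Lemma \ref{InclusionBetweenSPSes}) gives $\Lambda_\triangle\subseteq \Lambda_{\mathbf{u}_0}\cap\Lambda_{\mathbf{u}_1}\cap\Lambda_{\mathbf{u}_2}$, where $\Lambda_{\mathbf{u}_0},\Lambda_{\mathbf{u}_1},\Lambda_{\mathbf{u}_2}$ are three \emph{distinct} maximal join subgraphs. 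Thus $e$ would sit in three distinct maximal join subgraphs, contradicting the previous paragraph. Therefore no such $k$-simplex exists, and Lemma \ref{Ncycle} leaves only the first alternative, yielding the desired induced $m$-cycle in $|RI(S(\Lambda))|$.

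The substance of the argument — and the only place the sharpened hypothesis is used — is the structural observation that square-freeness collapses every maximal join subgraph to a star, so that its edges are all incident to a single center; I expect this to be the main (if routine) point, together with the bookkeeping identifying a $k$-simplex of $RI(S(\Lambda))$ with three distinct maximal join subgraphs sharing an edge. The one place to be careful is the degenerate case where $lk(c)$ is a single vertex, so that $st(c)$ is merely an edge and its center is not unique; there the conclusion ``any edge of the maximal join subgraph is incident to one of its two endpoints'' still holds, so the counting ``at most two maximal join subgraphs contain a given edge'' is unaffected, and this case does not arise along an $m$-cycle in any event.
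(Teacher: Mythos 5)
Your proof is correct. It rests on the same structural fact as the paper's own proof --- that triangle-freeness together with the absence of induced squares forces every maximal join subgraph of $\Lambda$ to be a star $st(c)$ --- but you deploy it differently. The paper re-enters the proof of Lemma \ref{Ncycle} and observes that its second case requires both $lk^{\perp}(v_i)$ and $lk^{\perp}(v_j)$ to contain at least two vertices, i.e.\ requires vertices of $RI(S(\Lambda))$ whose assigned groups are of type $\mathbb{F}\times\mathbb{F}$, which square-freeness forbids; hence the loop $\mathbf{l}$ is induced. You instead prove the global statement that $|RI(S(\Lambda))|$ contains no $k$-simplex with $k\geq 2$: since every maximal join subgraph is a star, a given edge of $\Lambda$ lies in at most the two stars centered at its endpoints, whereas a $k$-simplex with $k \geq 2$ would force an edge (present in the join defining graph of the corresponding standard product subcomplex) to lie in three distinct maximal join subgraphs. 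This lets you quote Lemma \ref{Ncycle} purely as a stated dichotomy rather than relying on its internal case analysis, and it yields the stronger byproduct that $RI(S(\Lambda))$ is $1$-dimensional --- a fact consistent with the paper's later remark that square-free $\Lambda$ makes $RI(S(\Lambda))$ a graph of groups. The trade-off is symmetric: the paper's version is shorter when the lemma's proof is fresh at hand; yours is more self-contained and slightly more informative. Your handling of the degenerate case where $st(c)$ is a single edge is also sound, since such a star is maximal only when $\Lambda$ itself is that edge.
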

\begin{proof}
By Lemma \ref{Ncycle}, an induced $m$-cycle $(v_0,\cdots,v_{m-1},v_0)$ in $\Lambda$ induces an $m$-cycle $\mathbf{l}=(\mathbf{u}_0,\cdots,\mathbf{u}_{m-1},\mathbf{u}_0)$ in $RI(S(\Lambda))$ such that the label of $\mathbf{u}_i$ contains $st(v_i)$. 
By the assumption that $\Lambda$ has no squares, $RI(S(\Lambda))$ has no vertices whose assigned groups are quasi-isometric to $\mathbb{F}\times\mathbb{F}$ and in particular, $st(v_i)$ is a maximal join subgraph. It means that the second possibility in the proof of Lemma \ref{Ncycle} cannot happen and therefore, $\mathbf{l}$ is an $m$-induced cycle in $RI(S(\Lambda))$.
\end{proof}

Let us see the structure of $I(X(\Lambda))$ inherited from $RI(S(\Lambda))$.
For each vertex $x\in X(\Lambda)$, consider all the maximal product subcomplexes of $X(\Lambda)$ containing $x$ and let $R_x\subset I(X(\Lambda))$ be the full subcomplex spanned by the vertices corresponding to these maximal product subcomplexes. Since $S(\Lambda)$ has one vertex, the restriction $\rho_{S(\Lambda)}|_{R_x}$ of the canonical quotient map $\rho_{S(\Lambda)}:I(X(\Lambda))\rightarrow RI(S(\Lambda))$ to $R_x$ is a semi-isomorphism and $I(X(\Lambda))$ is covered by $\{R_x\ |\ x\in X(\Lambda)\}$.
Thus, connectedness of $RI(S(\Lambda))$ induces connectedness of $I(X(\Lambda))$.

\begin{proposition}
$|I(X(\Lambda))|$ is connected.
\end{proposition}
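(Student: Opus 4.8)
The plan is to exploit the cover $|I(X(\Lambda))|=\bigcup_{x} |R_x|$ together with the connectedness of $|RI(S(\Lambda))|$ established in Proposition \ref{RIConnected}. Since $S(\Lambda)$ has a single vertex, each restriction $\rho_{S(\Lambda)}|_{R_x}$ is a semi-isomorphism onto $RI(S(\Lambda))$; ignoring assigned groups this is a combinatorial isometry of underlying complexes, so $|R_x|$ is isometric to $|RI(S(\Lambda))|$ and hence connected for every vertex $x\in X(\Lambda)$. It then remains to glue these connected pieces along a path in $X(\Lambda)$.

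First I would record that any vertex $\mathbf{v}\in I(X(\Lambda))$ lies in some $R_x$: choosing any vertex $x$ of the maximal product subcomplex $\overbar{M}_{\mathbf{v}}$, we have $\mathbf{v}\in R_x$ by the definition of $R_x$. Given two vertices $\mathbf{v},\mathbf{v}'\in I(X(\Lambda))$, pick vertices $x\in\overbar{M}_{\mathbf{v}}$ and $x'\in\overbar{M}_{\mathbf{v}'}$, and join $x$ to $x'$ by an edge path $x=x_0,x_1,\ldots,x_k=x'$ in the $1$-skeleton of $X(\Lambda)$, which exists because $X(\Lambda)$ is connected.

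The key local step is to show that $R_{x_i}\cap R_{x_{i+1}}\neq\emptyset$ for consecutive vertices. Let $e$ be the edge joining $x_i$ to $x_{i+1}$ and let $v\in\Lambda$ be its label. Because $\Lambda$ is connected of diameter $\geq 3$, the vertex $v$ has at least one neighbour, so $lk(v)\neq\emptyset$; since $\Lambda$ is triangle-free, $lk(v)$ is discrete and $st(v)=\{v\}\circ lk(v)$ is a genuine join subgraph. Hence the p-lift through $e$ of the standard subcomplex of $S(\Lambda)$ with defining graph $st(v)$ is a standard product subcomplex of $X(\Lambda)$ containing both $x_i$ and $x_{i+1}$, and it is contained in some maximal product subcomplex $\overbar{M}$. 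The vertex of $I(X(\Lambda))$ corresponding to $\overbar{M}$ then lies in both $R_{x_i}$ and $R_{x_{i+1}}$.

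Finally I would conclude as follows: since each $|R_{x_i}|$ is connected and consecutive ones intersect, the union $\bigcup_{i=0}^{k}|R_{x_i}|$ is connected, so $\mathbf{v}$ and $\mathbf{v}'$ lie in the same component of $|I(X(\Lambda))|$; as $\mathbf{v},\mathbf{v}'$ were arbitrary, $|I(X(\Lambda))|$ is connected. I expect the only subtle point to be the local step, namely verifying that every edge of $X(\Lambda)$ actually lies in a maximal product subcomplex, which is precisely where the hypotheses that $\Lambda$ is triangle-free and of diameter $\geq 3$ (so that $st(v)$ is a nondegenerate join) enter.
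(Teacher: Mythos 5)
Your proof is correct and is essentially the paper's own argument: both exploit the cover of $I(X(\Lambda))$ by the pieces $R_x$, each connected because $\rho_{S(\Lambda)}|_{R_x}$ identifies it with the connected complex $RI(S(\Lambda))$ (Proposition \ref{RIConnected}), and both bridge consecutive vertices of an edge path in $X(\Lambda)$ by a maximal product subcomplex whose defining graph contains the star of the edge's label. The only difference is bookkeeping: the paper concatenates explicit paths $c_1$, $t_i$, $c_2$, whereas you invoke the standard fact that a union of connected subcomplexes with consecutive nonempty intersections is connected.
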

\begin{proof}
Let $\mathbf{v}$ and $\mathbf{v}'$ be two distinct vertices in $I(X(\Lambda))$, and let $\overbar{M}_\mathbf{v}$ and $\overbar{M}_{\mathbf{v}'}$ be the maximal product subcomplexes of $X(\Lambda)$ corresponding to $\mathbf{v}$ and $\mathbf{v}'$, respectively.
If $\overbar{M}_\mathbf{v}$ and $\overbar{M}_{\mathbf{v}'}$ have a standard product subcomplex in common, then $\mathbf{v}$ and $\mathbf{v}'$ are connected by an edge of $I(X(\Lambda))$. 
Otherwise, consider a geodesic path $(x_0,x_1,\cdots,x_m)$ in $X(\Lambda)$ where $x_0\in \overbar{M}_{\mathbf{v}}$, $x_m\in \overbar{M}_{\mathbf{v}'}$ (it is possible that $m=0$ if $\overbar{M}_\mathbf{v}\cap \overbar{M}_{\mathbf{v}'}$ is not empty).
Let $\Lambda_i\leq\Lambda$ be a maximal join subgraph containing the star of the vertex $v_i\in\Lambda$ which is the label of the edge $(x_i,x_{i+1})\subset X(\Lambda)$ for $i=0,\cdots,m-1$.
Let $\overbar{M}_i\subset X(\Lambda)$ be the maximal product subcomplex with defining graph $\Lambda_i$ which contains $x_i$, and let $\mathbf{v}_i\in I(X(\Lambda))$ be the vertex corresponding to $\overbar{M}_i$. 
Since $\overbar{M}_\mathbf{v}$ and $\overbar{M}_{\mathbf{v}_0}$ have $x_0$ in common, $\mathbf{v}$ and $\mathbf{v}_0$ are contained in $R_{x_0}$. Since $R_{x_0}$ is semi-isomorphic to $RI(S(\Lambda))$ and $RI(S(\Lambda))$ is connected, by Proposition \ref{RIConnected}, there exists a path $c_1$ in $I(X(\Lambda))$ from $\mathbf{v}$ to $\mathbf{v}_0$. 
Similarly, there exists a path $c_2$ in $I(X(\Lambda))$ from $\mathbf{v}_{m-1}$ to $\mathbf{v}'$ since $\overbar{M}_{\mathbf{v}_{m-1}}$ and $\overbar{M}_{\mathbf{v}'}$ have $x_m$ in common.
For each $i\in\{0,\cdots,m-1\}$, there exists a path $t_i$ in $I(X(\Lambda))$ from $\mathbf{v}_i$ to $\mathbf{v}_{i+1}$ since $\overbar{M}_{\mathbf{v}_i}$ and $\overbar{M}_{\mathbf{v}_{i+1}}$ have $x_{i+1}$ in common. 
Therefore, $\mathbf{v}$ and $\mathbf{v}'$ are joined by the concatenation of $c_1$, $t_i$'s and $c_2$.
\end{proof}

By Lemma \ref{Decomposition}, if $|RI(S(\Lambda))|$ is contractible, then $|I(X(\Lambda))|$ is simply connected. The following is a partial converse with its immediate consequence.

\begin{proposition}\label{ContractibleRI}
If $|I(X(\Lambda))|$ is simply connected, then $|RI(S(\Lambda))|$ is also simply connected. In particular, if $|RI(S(\Lambda))|$ is not simply connected, then $I(X(\Lambda))$ is not the development of $RI(S(\Lambda))$.
\end{proposition}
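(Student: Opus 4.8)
The plan is to realize $|RI(S(\Lambda))|$ as a retract of $|I(X(\Lambda))|$ and then run the standard split-injection argument on fundamental groups. The crucial input is the fact recorded just before the proposition: for each vertex $x \in X(\Lambda)$ the restriction $\rho_{S(\Lambda)}|_{R_x}$ of the canonical quotient map to the full subcomplex $R_x$ is a semi-isomorphism, so on underlying complexes $|\rho_{S(\Lambda)}|$ restricts to a combinatorial isomorphism $|R_x| \to |RI(S(\Lambda))|$. Fixing a single vertex $x_0 \in X(\Lambda)$, I would define a section $s \colon |RI(S(\Lambda))| \to |I(X(\Lambda))|$ by inverting this isomorphism on $R_{x_0}$ and composing with the inclusion $|R_{x_0}| \hookrightarrow |I(X(\Lambda))|$.

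First I would check that $s$ is a well-defined continuous map satisfying $|\rho_{S(\Lambda)}| \circ s = \mathrm{id}_{|RI(S(\Lambda))|}$. Since $\rho_{S(\Lambda)}$ is combinatorial, hence continuous, and its restriction to the subcomplex $R_{x_0}$ is a bijective combinatorial map, the inverse is again combinatorial; composing with the inclusion of $R_{x_0}$ yields a continuous map into $|I(X(\Lambda))|$. The identity $|\rho_{S(\Lambda)}| \circ s = \mathrm{id}$ is immediate because the image of $s$ lies in $|R_{x_0}|$, where $|\rho_{S(\Lambda)}|$ coincides with the isomorphism being inverted.

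Next I would pass to fundamental groups. From $|\rho_{S(\Lambda)}| \circ s = \mathrm{id}$ we obtain $(|\rho_{S(\Lambda)}|)_* \circ s_* = \mathrm{id}$ on $\pi_1(|RI(S(\Lambda))|)$, so $s_*$ is injective. If $|I(X(\Lambda))|$ is simply connected, then $\pi_1(|I(X(\Lambda))|)$ is trivial, forcing $\pi_1(|RI(S(\Lambda))|)$ to be trivial; together with the connectedness of $|RI(S(\Lambda))|$ from Proposition \ref{RIConnected}, this proves $|RI(S(\Lambda))|$ is simply connected. The \emph{in particular} clause then follows by contraposition combined with Theorem \ref{development}: were $I(X(\Lambda))$ the development of $RI(S(\Lambda))$, its underlying complex $|I(X(\Lambda))|$ would be simply connected, and the first part would force $|RI(S(\Lambda))|$ to be simply connected as well, contrary to hypothesis.

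The argument is short once the retraction is available, so I do not anticipate a serious obstacle; the only point requiring attention is confirming that a semi-isomorphism onto $RI(S(\Lambda))$ furnishes an honest combinatorial isomorphism of underlying complexes, so that its inverse is again combinatorial and $s$ is genuinely continuous. This is precisely what the definition of semi-isomorphism provides, since there the map of underlying complexes is required to be an isometry admitting a combinatorial inverse.
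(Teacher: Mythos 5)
Your proof is correct and is essentially the paper's own argument: both exploit the fact that $\rho_{S(\Lambda)}|_{R_{x_0}}$ is a semi-isomorphism, so $|R_{x_0}|$ is a copy of $|RI(S(\Lambda))|$ inside $|I(X(\Lambda))|$ retracted back by the globally defined combinatorial map $|\rho_{S(\Lambda)}|$ --- the paper phrases this concretely by lifting a loop to $R_x$ and projecting a filling disk down, while you package the same mechanism as a section $s$ with $|\rho_{S(\Lambda)}|\circ s=\mathrm{id}$ and a split injection on $\pi_1$. The \emph{in particular} clause is handled identically in both, by Theorem \ref{development}.
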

\begin{proof}
Assume that $|RI(S(\Lambda))|$ contains a non-trivial loop $l$. For a vertex $x\in X(\Lambda)$, let $\overline{l}$ be the loop in $R_x\subset I(X(\Lambda))$ which is the preimage of $l$ under $\rho_{S(\Lambda)}|_{R_x}$.
By the assumption, there exists a disk $D$ in $|I(X(\Lambda))|$ such that $D$ is contained in the union of simplices of $|I(X(\Lambda))|$ and the boundary of $D$ is $\overline{l}$. This implies that $\rho_{S(\Lambda)}(D)$ is an immersed disk in $|RI(S(\Lambda))|$ with the boundary $l$ which means that $l$ is a trivial loop, a contradiction. 
Therefore, $|RI(S(\Lambda))|$ is simply connected. Theorem \ref{development} immediately deduces the latter statement.
\end{proof}

The covering $\{R_x\}_{x\in X(\Lambda)}$ of $I(X(\Lambda))$ is related to the join length metric on $A(\Lambda)$.
The \textit{join length} $||g||_J$ of $g\in A(\Lambda)$ is the minimum $l$ such that $g$ can be written as the product of $l$ elements in $\coprod_{\Lambda'\in\mathcal{J}(\Lambda)} A(\Lambda')$ where $\mathcal{J}(\Lambda)$ is the collection of all maximal join subgraphs of $\Lambda$ (the identity $id$ of $A(\Lambda)$ has join length 0). The join length induces a metric $d_J$ on $A(\Lambda)$.
Since $|I(X(\Lambda))|$ is a finite dimensional simplicial complex, $|I(X(\Lambda))|$ can be considered as a metric space endowed with the path metric by considering that each edge has lengh 1 and each simplex is a Euclidean regular simplex. Then $|I(X(\Lambda))|$ is a kind of geometric realization of $A(\Lambda)$ endowed with the metric $d_J$.

\begin{theorem}\label{Loop}
Consider $|I(X(\Lambda))|$ as a metric space endowed with the usual path metric as above. Then $|I(X(\Lambda))|$ is quasi-isometric to the metric space $(A(\Lambda),d_J)$ and in particular, $|I(X(\Lambda))|$ has infinite diameter.
\end{theorem}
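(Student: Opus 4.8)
The plan is to exhibit an explicit quasi-isometry $\Psi\colon (A(\Lambda),d_J)\to |I(X(\Lambda))|$ built from the covering $\{R_x\}_{x\in X(\Lambda)}$. Identify the vertices of $X(\Lambda)$ with $A(\Lambda)$ (writing $x_g$ for the vertex of $g$), and recall that the maximal product subcomplexes of $X(\Lambda)$ are precisely the cosets $gA(\Lambda')$ with $\Lambda'$ a maximal join subgraph. For each $g$ I would set $\Psi(g)$ to be any vertex of $R_{x_g}$, i.e. any maximal product subcomplex through $x_g$; such a subcomplex exists because $\Lambda$ contains an edge, and any two choices differ by at most $D_0:=\mathrm{diam}\,|RI(S(\Lambda))|$, which is finite since $RI(S(\Lambda))$ is a finite complex and $\rho_{S(\Lambda)}|_{R_{x_g}}$ is a semi-isomorphism onto it. This makes $\Psi$ well defined up to bounded error; moreover every maximal product subcomplex $gA(\Lambda')$ contains $x_g$, so each vertex of $I(X(\Lambda))$ lies within $D_0$ of some $\Psi(g)$, giving coarse surjectivity onto the $1$-dense vertex set of $|I(X(\Lambda))|$.

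The core of the argument is the two-sided metric comparison, which I would route through the elementary dictionary: $d_J(g,h)\le 1$ holds iff $g^{-1}h$ lies in some maximal join subgroup $A(\Lambda')$, iff $x_g$ and $x_h$ lie in a common maximal product subcomplex. For the Lipschitz (upper) bound, take a $d_J$-geodesic $g=k_0,\dots,k_l=h$; consecutive vertices share a maximal product subcomplex $\overline{M}$, whose $I$-vertex lies in both $R_{x_{k_i}}$ and $R_{x_{k_{i+1}}}$, so $d_I(\Psi(k_i),\Psi(k_{i+1}))\le 2D_0$ and hence $d_I(\Psi(g),\Psi(h))\le 2D_0\, d_J(g,h)$. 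For the lower bound, take an edge-geodesic $\Psi(g)=\mathbf{v}_0,\dots,\mathbf{v}_m=\Psi(h)$ with $\mathbf{v}_i$ corresponding to $\overline{M}_i$; by the definition of $I(X(\Lambda))$ each edge records a shared flat, which contains a vertex $x_{a_i}\in\overline{M}_i\cap\overline{M}_{i+1}$. Since $x_{a_{i-1}},x_{a_i}\in\overline{M}_i$ and the endpoints satisfy $x_g\in\overline{M}_0$, $x_h\in\overline{M}_m$, the triangle inequality for $d_J$ gives $d_J(g,h)\le m+1=d_I(\Psi(g),\Psi(h))+1$. Together these show $\Psi$ is a quasi-isometric embedding, and combined with coarse surjectivity (and the standard fact that the path metric on a finite-dimensional simplicial complex with regular Euclidean simplices is quasi-isometric to its $1$-skeleton graph metric, using $\dim I(X(\Lambda))\le |V(RI(S(\Lambda)))|-1<\infty$) it follows that $|I(X(\Lambda))|$ is quasi-isometric to $(A(\Lambda),d_J)$. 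I expect the lower bound to be the delicate step, since it is where one must convert purely combinatorial edges of $I(X(\Lambda))$ back into join-syllables, relying on the fact that shared flats supply honest common vertices of $X(\Lambda)$ and that the diameter bound $D_0$ is uniform across all $R_x$.

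For the final assertion it then suffices to show $d_J$ is unbounded, which I would do by a retraction argument. Since $\Lambda$ has diameter $\ge 3$, choose vertices $v_0,v_3$ at distance $3$; the retraction $r\colon A(\Lambda)\to\langle v_0,v_3\rangle\cong\mathbb{F}_2$ that kills all other generators is a well-defined homomorphism, and $v_0,v_3$ are nonadjacent so they generate a free group of rank $2$. Every maximal join subgraph has diameter $\le 2$, hence contains at most one of $v_0,v_3$, so $r(A(\Lambda'))$ is cyclic (a power of $v_0$ or of $v_3$) for each maximal join subgraph $\Lambda'$. Writing $(v_0v_3)^n=s_1\cdots s_l$ with $l=\|(v_0v_3)^n\|_J$ and applying $r$ expresses $(v_0v_3)^n$ as a product of $l$ single-generator powers in $\mathbb{F}_2$; since multiplication cannot increase syllable length and $(v_0v_3)^n$ has reduced syllable length $2n$, we get $l\ge 2n$. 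Thus $d_J$ is unbounded, and by the quasi-isometry established above $|I(X(\Lambda))|$ has infinite diameter.
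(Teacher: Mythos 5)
Your proposal is correct and, for the main equivalence, follows essentially the same route as the paper: both arguments rest on the covering $\{R_x\}_{x\in X(\Lambda)}$ of $I(X(\Lambda))$ by copies of $RI(S(\Lambda))$ and the dictionary that $R_x\cap R_y\neq\emptyset$ if and only if $d_J(x,y)\le 1$, i.e. if and only if $x^{-1}y$ lies in some maximal join subgroup. The paper states this dictionary and immediately concludes the quasi-isometry; you make the conclusion explicit by constructing $\Psi$, bounding all ambiguities by $D_0=\mathrm{diam}\,|RI(S(\Lambda))|$ (finite because $\rho_{S(\Lambda)}|_{R_x}$ is a semi-isomorphism onto the finite connected complex $RI(S(\Lambda))$), and proving the two-sided estimates; your lower bound correctly exploits the fact that each edge of $I(X(\Lambda))$ comes from an intersection containing a flat, hence an honest common vertex of $X(\Lambda)$, so the edge-path converts into a chain of join-syllables. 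The one place where you genuinely go beyond the paper is the final claim: the paper merely asserts that excluding complete bipartite defining graphs forces $|I(X(\Lambda))|$ to have infinite diameter, whereas you prove unboundedness of $d_J$ by retracting $A(\Lambda)$ onto $\mathbb{F}_2=\langle v_0,v_3\rangle$ for two vertices at distance $3$ (which exist under the standing assumption $\mathrm{diam}\,\Lambda\ge 3$ of Section \ref{4.1}), observing that every maximal join subgraph has diameter at most $2$ and hence maps to a cyclic subgroup, and comparing syllable lengths of $(v_0v_3)^n$ in $\mathbb{F}_2$. This retraction argument is a worthwhile addition, since it supplies a justification that the paper leaves implicit.
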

\begin{proof}
For two distinct vertices $x,y\in X(\Lambda)$, $y$ is in $\tilde{f}_{S(\Lambda)}(R_x)$ if and only if there is a standard product subcomplex containing both $x$ and $y$. It means that $R_x\cap R_y$ is the subcomplex of $I(X(\Lambda))$ spanned by the vertices corresponding to the maximal product subcomplexes containing both $x$ and $y$. 
From the fact that vertices in $X(\Lambda)$ correspond to elements of $A(\Lambda)$, we deduce that $R_x\cap R_y\neq\emptyset$ if and only if $d_J(x,y)=1$. Therefore, $|I(X(\Lambda))|$ is quasi-isometric to $(A(\Lambda),d_J)$.
Since we exclude the case that $\Lambda$ is a complete bipartite graph, the diameter of $|I(X(\Lambda))|$ is infinite.
\end{proof}

The definition of the join length is similar to the definition of the star length. The \textit{star length} $||g||_*$ of $g\in A(\Lambda)$ is the word length with respect to the generating set $\coprod_{v\in \Lambda} \langle st(v) \rangle$. 
In \cite{KK14}, Kim and Koberda showed that $A(\Lambda)$ with the metric $d_*$ induced from the star length is quasi-isometric to the extension graph of $A(\Lambda)$ which is a quasi-tree. Moreover, they showed that $A(\Lambda)$ is weakly hyperbolic relative to $\{\langle st(v)\rangle\ |\ v\in \Lambda\}$.

The star metric and the join metric on $A(\Lambda)$ are actually Lipschitz equivalent due to the following two observations: First, all the vertices in a maximal join subgraph are contained in the union of vertices in two stars, i.e. if $\Lambda'=\Lambda_1\circ\Lambda_2$ is a maximal join subgraph of $\Lambda$, then $\Lambda'$ is contained in the subgraph spanned by $st(v)\cup st(w)$ for some $v\in\Lambda_1$ and $w\in\Lambda_2$.
Second, every star is contained in some maximal join subgraph. 
These two facts imply that $||g||_J\leq ||g||_*\leq 2||g||_J$ so that $(A(\Lambda),d_J)$ is quasi-isometric to $(A(\Lambda),d_*)$. Therefore, we immediately obtain the following result.

\begin{corollary}\label{Quasitree}
For a triangle-free graph $\Lambda$, $|I(X(\Lambda))|$ is a quasi-tree. In particular, $A(\Lambda)$ is weakly hyperbolic relative to $\{A(\Lambda')\ |\ \Lambda'\in \mathcal{J}(\Lambda)\}$.
\end{corollary}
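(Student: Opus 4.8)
The plan is to obtain both assertions by chaining together the quasi-isometries already assembled in the paragraphs immediately preceding the statement, and then to reinterpret the join-length metric as a coned-off (relative) metric. First I would recall that Theorem \ref{Loop} identifies $|I(X(\Lambda))|$, equipped with its usual path metric, as a space quasi-isometric to $(A(\Lambda),d_J)$. Next I would use the two observations recorded just above the corollary, namely that every maximal join subgraph $\Lambda'=\Lambda_1\circ\Lambda_2$ is contained in the subgraph spanned by $st(v)\cup st(w)$ for suitable $v\in\Lambda_1$, $w\in\Lambda_2$, and that every star lies in some maximal join subgraph. These yield the two-sided bound $\|g\|_J\leq\|g\|_*\leq 2\|g\|_J$, so that the identity map $(A(\Lambda),d_J)\to(A(\Lambda),d_*)$ is bi-Lipschitz and in particular a quasi-isometry.

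Second, I would invoke the results of Kim and Koberda in \cite{KK14}: the space $(A(\Lambda),d_*)$ is quasi-isometric to the extension graph of $A(\Lambda)$, and the extension graph is a quasi-tree. Since being a quasi-tree is a quasi-isometry invariant, combining this with the quasi-isometries of the previous paragraph shows that $|I(X(\Lambda))|$ is itself a quasi-tree, which is the first assertion of the corollary.

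Finally, for the relative statement I would observe that $d_J$ is precisely the word metric on $A(\Lambda)$ relative to the (infinite) generating set $\coprod_{\Lambda'\in\mathcal{J}(\Lambda)}A(\Lambda')$, so that $(A(\Lambda),d_J)$ is quasi-isometric to the Cayley graph obtained by coning off the cosets of the peripheral subgroups $\{A(\Lambda')\mid\Lambda'\in\mathcal{J}(\Lambda)\}$. By definition, $A(\Lambda)$ is weakly hyperbolic relative to this collection exactly when that coned-off graph is Gromov hyperbolic, and since we have just shown it to be a quasi-tree (hence hyperbolic), the conclusion follows. I do not expect any serious obstacle here: essentially all the content resides in the Lipschitz comparison of $d_J$ with $d_*$ and in the appeal to Kim--Koberda, both of which are already in place. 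The only point demanding a moment of care is the bookkeeping that matches the chosen definition of weak relative hyperbolicity with the $d_J$-description of the coned-off graph; once that identification is made explicit, the corollary is immediate.
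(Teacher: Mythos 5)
Your proposal is correct and follows essentially the same route as the paper: Theorem \ref{Loop} identifies $|I(X(\Lambda))|$ with $(A(\Lambda),d_J)$, the two observations give the bi-Lipschitz comparison $\|g\|_J\leq\|g\|_*\leq 2\|g\|_J$, and the appeal to Kim--Koberda's result on the extension graph yields both the quasi-tree property and weak relative hyperbolicity. Your explicit identification of $(A(\Lambda),d_J)$ with the coned-off Cayley graph merely spells out what the paper leaves implicit in saying the corollary follows ``immediately.''
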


The fact that $|I(X(\Lambda))|$ is a quasi-tree implies that there are infinitely many vertices whose $\delta$-neighborhoods separate $|I(X(\Lambda))|$ for some $\delta\geq 0$. Actually, we can show that the 2-neighborhoods of vertices in $I(X(\Lambda))$ whose labels contain the closed stars of vertices in $\Lambda$ separate $|I(X(\Lambda))|$. Before we see this, let us see the intersection of $R_x$ and $R_y$ for two vertices $x,y\in X(\Lambda)$.

\begin{lemma}\label{IntersectingR}
Let $R_x$ and $R_y$ be two distinct elements in the covering $\{R_x\}_{x\in X(\Lambda)}$ of $I(X(\Lambda))$. If $R_x\cap R_y$ is non-empty, then one of the following holds:
\begin{enumerate}
\item $R_x\cap R_y$ is a simplex.
\item There is a vertex $\mathbf{v}\in R_x\cap R_y$ such that $R_x\cap R_y=st_{R_x}(\mathbf{v})=st_{R_y}(\mathbf{v})$.
\end{enumerate}
\end{lemma}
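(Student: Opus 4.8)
The plan is to transport the question to $RI(S(\Lambda))$ and settle it combinatorially in terms of defining graphs. Fix the vertex corresponding to $x$ as the identity of $A(\Lambda)$ and let $g\in A(\Lambda)$ be the element corresponding to $y$; since $R_x\cap R_y\neq\emptyset$ we have $d_J(x,y)=1$ as in the proof of Theorem~\ref{Loop}, so $g\neq 1$. Let $\Lambda_g$ be the minimal full subgraph with $g\in A(\Lambda_g)$. A maximal product subcomplex through $x$ is $X(\Lambda_i)$ for a maximal join $\Lambda_i$, and it contains $y$ exactly when $g\in A(\Lambda_i)$, i.e. when $\Lambda_g\subseteq\Lambda_i$. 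Hence, via the semi-isomorphism $\rho_{S(\Lambda)}|_{R_x}:R_x\rightarrow RI(S(\Lambda))$, the subcomplex $R_x\cap R_y$ is identified with the full subcomplex of $RI(S(\Lambda))$ spanned by those vertices whose labels contain $\Lambda_g$, and all of their assigned groups contain the fixed nontrivial element $g$. The same description read from $y$ will, at the end, give the symmetry $st_{R_x}(\mathbf v)=st_{R_y}(\mathbf v)$.

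First I would dispose of the case in which $\Lambda_g$ contains an edge $e$. Then every maximal join $\Lambda_i\supseteq\Lambda_g$ contains $e$, so the defining graph of the intersection of the corresponding maximal product subcomplexes contains $e$ and hence a flat. By Definition~\ref{IC} these vertices jointly span a single simplex, and since $|I(X(\Lambda))|$ is a simplicial complex (Proposition~\ref{RIConnected}), the full subcomplex they span is exactly that simplex. This gives conclusion~(1).

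In the remaining case $\Lambda_g$ is an independent set $S$. Since all assigned groups of the vertices of $R_x\cap R_y$ contain $g\neq 1$, property~\ref{CJCond5} (equivalently Proposition~\ref{Diam2}) applies to this finite set of vertices and shows that $R_x\cap R_y$ is connected and bounded inside a single star. I would pin down the center explicitly: writing $L=\bigcap_{v\in S}lk(v)$ (nonempty, as $R_x\cap R_y\neq\emptyset$) and $S^{+}=\bigcap_{w\in L}lk(w)$, one checks $S\subseteq S^{+}$ and $L=\bigcap_{v\in S^{+}}lk(v)$, so $\mathbf v:=S^{+}\circ L$ is a maximal join containing $S$; by Lemma~\ref{AtMostOneMax} the corresponding maximal product subcomplex is the unique one absorbing the parallel set of the standard geodesics common to all the $X(\Lambda_i)$. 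Then $\mathbf v\in R_x\cap R_y$, and moreover any maximal join $\Lambda_i=A_i\circ B_i$ containing the independent set $S$ has $S\subseteq A_i$ and $B_i\subseteq L$, so it meets $\mathbf v$ in an edge $\{s,b\}$ with $s\in S\subseteq S^{+}$ and $b\in B_i\subseteq L$; hence $R_x\cap R_y\subseteq st_{R_x}(\mathbf v)$.

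The main obstacle is the reverse inclusion $st_{R_x}(\mathbf v)\subseteq R_x\cap R_y$: I must show that every maximal join $\Lambda_j$ adjacent to $\mathbf v=S^{+}\circ L$ already contains all of $S$, rather than meeting $\mathbf v$ in only a single edge $\{p,q\}$ with $p\in S^{+}$, $q\in L$. This is where the combinatorics of $\Lambda$ really enters: one wants to promote such an edge to the inclusion $S\subseteq\Lambda_j$, using that $q$ is adjacent to every vertex of $S$ (as $q\in L$) together with the maximality of $\Lambda_j$ and the closed-pair relations $S^{+}=\bigcap_{w\in L}lk(w)$ and $L=\bigcap_{v\in S^{+}}lk(v)$. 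The delicate point — and the step I expect to require the most care — is excluding maximal joins that are adjacent to $\mathbf v$ yet only involve part of $S$. Once this reverse inclusion is secured, replaying the argument with $x$ and $y$ interchanged yields $st_{R_y}(\mathbf v)=R_x\cap R_y$ as well, giving conclusion~(2).
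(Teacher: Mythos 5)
Your opening reduction is correct and coincides with the paper's starting point: after translating so that $x$ is the identity and $y$ corresponds to $g$, the vertices of $R_x\cap R_y$ are exactly the standard subcomplexes through the identity whose defining graphs are the maximal join subgraphs containing the support $S$ of $g$; and your Case A (if $S$ contains an edge, every such maximal join contains that edge, so all the corresponding intersections contain a flat, the vertices span one simplex by Definition \ref{IC}, and the full subcomplex on them is that simplex) is sound. The genuine gap is the step you yourself flag as ``delicate'' in Case B, and it is not merely unproven --- it is false. Take $\Lambda$ with vertices $a,b,s,t,c,d,w$ and edges $ac,ad,bc,bd,sc,sd,sw,tc,tw$; this is connected, triangle-free, of diameter $3$, and not a join. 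Its maximal join subgraphs are $M_1=\{a,b,s\}\circ\{c,d\}$, $M_2=\{a,b,s,t\}\circ\{c\}$, $M_3=\{s\}\circ\{c,d,w\}$ and $M_4=\{s,t\}\circ\{c,w\}$. For $g=ab$ the support $S=\{a,b\}$ is independent, $L=\{c,d\}$, $S^{+}=\{a,b,s\}$, so your center is $\mathbf{v}=M_1$. But $M_3$ meets $M_1$ in $\{s\}\circ\{c,d\}$ and $M_4$ meets $M_1$ in the edge $sc$, so both are adjacent to $\mathbf{v}$ in $R_x$ while containing neither $a$ nor $b$. Hence $st_{R_x}(\mathbf{v})$ is the full subcomplex on all four vertices, whereas $R_x\cap R_y$ is only the edge spanned by the vertices of $M_1$ and $M_2$ (their intersection contains $ac$). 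So here conclusion (1) holds and conclusion (2) fails for every choice of center: independence of $S$ does not decide which alternative of the lemma occurs, and the reverse inclusion you plan to ``promote'' cannot be established.

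This is also where you genuinely diverge from the paper, to your disadvantage. The paper's dichotomy is whether a standard geodesic contains both $x$ and $y$ (i.e.\ whether $S$ is a single vertex): every support of size at least two, independent or not, is sent to the simplex alternative via a minimal standard product subcomplex containing $x$ and $y$, and the star alternative is reserved for the single-vertex case, with center the unique maximal product subcomplex whose defining graph contains $st(v)$ (Lemma \ref{AtMostOneMax}, Proposition \ref{Diam2}). Your split sends every independent $S$ to the star alternative, which the example above refutes; conversely, for the graph on $a,b,c,d,p,q,e$ with edges $ac,ad,bc,bd,cp,dq,pe$ the independent support $\{a,b\}$ does yield a star (and not a simplex), so neither alternative is forced by independence alone --- your case division simply does not line up with the dichotomy in the statement. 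Be aware, moreover, that the difficulty you flagged is essential rather than technical: enlarging the first example by two vertices $z,z'$ with edges $cz,zz',az'$ produces a graph in which, for $y=ab$, the subcomplex $R_x\cap R_y$ is a path of length two that is neither a simplex nor the full star of any vertex in $R_x$, so no case division can salvage the exact equality in alternative (2); only the containment of $R_x\cap R_y$ in a simplex or a star around the distinguished vertex (which is what is actually invoked later, in the proof of Lemma \ref{Separating}) is robust enough to prove along these lines.
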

\begin{proof}
Note that a simplex $\triangle$ is contained in $R_x\cap R_y$ if and only if the standard product subcomplex $\overbar{K}_{\triangle}\subset X(\Lambda)$ contains both $x$ and $y$. 
If there is one maximal product subcomplex of $X(\Lambda)$, then $R_x\cap R_y$ is a vertex corresponding to this maximal product subcomplex.

Suppose that there are at least two maximal product subcomplexes of $X(\Lambda)$ containing both $x$ and $y$. Then there are two cases:\\
$\mathbf{Case\ 1.}$ Suppose that there is a standard geodesic $\gamma$ containing both $x$ and $y$. By Lemma \ref{AtMostOneMax}, there is a unique maximal product subcomplex $\overbar{M}_{\gamma}$ whose defining graph contains $st(v)$. Following the proof of Proposition \ref{Diam2}, any maximal product subcomplex of $X(\Lambda)$ containing a singular geodesic parallel to $\gamma$ shares a flat with $\overbar{M}_{\gamma}$.
Thus, $R_x\cap R_y=st_{R_x}(\mathbf{v})=st_{R_y}(\mathbf{v})$.\\
$\mathbf{Case\ 2.}$ Suppose that there is no standard geodesic containing both $x$ and $y$.
Let $\overbar{K}\subset X(\Lambda)$ be the minimal standard product subcomplex containing both $x$ and $y$; the defining graph of $\overbar{K}$ is the join subgraph $\Lambda_{\overbar{K}}$ spanned by vertices which are the labels of edges in a geodesic segment between $x$ and $y$. 
For the collection $\{\Lambda_i\leq\Lambda\}_{i\in I}$ of labels of simplices of $RI(S(\Lambda))$, let $\Lambda_{\{x,y\}}$ be the intersection of $\Lambda_i$'s which contain $\Lambda_{\overbar{K}}$. Then $R_x\cap R_y$ is the simplex whose label is $\Lambda_{\{x,y\}}$.
\end{proof}

\begin{lemma}\label{Separating}
Let $v\in\Lambda$ be a vertex of valency $\geq 2$ and let $\mathbf{v}\in I(X(\Lambda))$ be a vertex whose label contains $st_{\Lambda}(v)$. Then the star $st_{I(X(\Lambda))}(\mathbf{v})$ separates $I(X(\Lambda))$.
\end{lemma}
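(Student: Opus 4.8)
The plan is to realize $st_{I(X(\Lambda))}(\mathbf{v})$ as the set of vertices meeting a single separating wall of $X(\Lambda)$. Since the label $\Lambda_{\mathbf{v}}$ of $\mathbf{v}$ contains $st_{\Lambda}(v)=\{v\}\circ lk(v)$, the maximal product subcomplex $\overbar{M}_{\mathbf{v}}$ contains a standard geodesic $\gamma_v$ labelled by $v$, and its parallel set $\mathbb{P}(\gamma_v)$ is the copy of $X(st_{\Lambda}(v))=X(lk(v))\times\gamma_v$ sitting inside $\overbar{M}_{\mathbf{v}}$; as $v$ has valency $\ge 2$, $X(lk(v))$ is a branching tree. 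Fix a hyperplane $h$ dual to an edge of $\gamma_v$. All edges in the $\square$-class of $h$ are labelled $v$, $h$ is two-sided, and it separates $X(\Lambda)$ into two halfspaces $H^{+}$ and $H^{-}$. The whole argument reduces to showing that the vertices of $I(X(\Lambda))$ lying outside $st(\mathbf{v})$ are partitioned by the two sides of $h$, with no edge of $I(X(\Lambda))$ crossing between the sides.

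First I would show that any maximal product subcomplex crossing $h$ has its vertex in $st(\mathbf{v})$. Because the carrier $\kappa(h)$ is $X(lk(v))\times e$ for the crossing edge $e$, every $v$-geodesic crossing $h$ has the form $\{p\}\times\gamma_v$; it is therefore parallel to $\gamma_v$ and contained in $\mathbb{P}(\gamma_v)\subset\overbar{M}_{\mathbf{v}}$. If a maximal product subcomplex $\overbar{M}$ crosses $h$, it contains such a $v$-geodesic $\gamma'$. Writing $\overbar{M}$ as a product whose $v$-factor contains $\gamma'$, its second factor is labelled by a nonempty subgraph of $lk(v)$, so $\overbar{M}$ contains a flat $\gamma'\times\ell$ with $\ell$ a geodesic in the $lk(v)$-direction; this flat also lies in $X(lk(v))\times\gamma_v=\mathbb{P}(\gamma_v)\subset\overbar{M}_{\mathbf{v}}$. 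Thus $\overbar{M}\cap\overbar{M}_{\mathbf{v}}$ contains a flat, and by Definition \ref{IC} the vertex of $\overbar{M}$ lies in $st(\mathbf{v})$. Contrapositively, every vertex $\mathbf{w}\notin st(\mathbf{v})$ corresponds to a maximal product subcomplex that does not cross $h$; being connected and $2$-dimensional while $h$ is a tree, it lies in exactly one of the closed halfspaces $\overline{H^{+}},\overline{H^{-}}$, which assigns to $\mathbf{w}$ a well-defined side.

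Next comes the separation itself. If $\mathbf{w}_1,\mathbf{w}_2\notin st(\mathbf{v})$ are joined by an edge of $I(X(\Lambda))$, then by Definition \ref{IC} the subcomplexes $\overbar{M}_{\mathbf{w}_1}$ and $\overbar{M}_{\mathbf{w}_2}$ share a flat $F$. Were $\mathbf{w}_1$ and $\mathbf{w}_2$ on opposite sides, $F$ would lie in $\overline{H^{+}}\cap\overline{H^{-}}=h$, which is impossible since $F$ is a flat and $h$ is a tree. Hence every edge between off-star vertices stays on one side, so no path avoiding $st(\mathbf{v})$ can pass from the $H^{+}$-side to the $H^{-}$-side. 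Consequently the full subcomplex spanned by the vertices outside $st(\mathbf{v})$ is disconnected, and $st(\mathbf{v})$ separates $I(X(\Lambda))$, provided both sides actually contain an off-star vertex.

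The main obstacle is precisely this last point: producing an off-star vertex in each halfspace. Here I would use that $\mathrm{diam}(\Lambda)\ge 3$ forces $\Lambda$ to be strictly larger than $st_\Lambda(v)$, so $\overbar{M}_{\mathbf{v}}\ne X(\Lambda)$, together with the fact that $|I(X(\Lambda))|$ has infinite diameter (Theorem \ref{Loop}). In each halfspace I would choose a $v$-geodesic $\gamma^{\pm}\subset H^{\pm}$ not parallel to $\gamma_v$ and lying deep in $H^{\pm}$, and set $\overbar{M}^{\pm}=\overbar{M}_{\gamma^{\pm}}$ (unique by Lemma \ref{AtMostOneMax}). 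Since all $v$-geodesics of $\overbar{M}^{\pm}$ are parallel to $\gamma^{\pm}$ and none of its other edges are $v$-edges, $\overbar{M}^{\pm}$ does not cross $h$ and lies in $\overline{H^{\pm}}$. The delicate part is to guarantee that $\overbar{M}^{\pm}$ is off the star, i.e.\ shares no flat with $\overbar{M}_{\mathbf{v}}$: a shared flat would make $\overbar{M}^{\pm}$ meet $\overbar{M}_{\mathbf{v}}$ in a flat and hence, by Lemma \ref{NS} and the product structure, remain within bounded Hausdorff distance of $\overbar{M}_{\mathbf{v}}$ along that flat, which is excluded once $\gamma^{\pm}$ is chosen sufficiently far from $\mathbb{P}(\gamma_v)$ inside $H^{\pm}$. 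This choice is available because $|I(X(\Lambda))|$ is a quasi-tree of infinite diameter, so off-star vertices occur on both sides of any wall; carefully verifying this distribution is the technical crux of the argument.
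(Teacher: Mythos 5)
Your separation mechanism (steps one through three) is sound, and is in places cleaner than the paper's own argument: any maximal product subcomplex crossing $h$ contains a $v$-geodesic lying in $\mathbb{P}(\gamma_v)\subset\overbar{M}_{\mathbf{v}}$ and hence shares a flat with $\overbar{M}_{\mathbf{v}}$; therefore off-star vertices acquire well-defined sides of $h$, and no edge of $I(X(\Lambda))$ can join off-star vertices on opposite sides. (The paper routes this same idea through the covering $\{R_x\}$ and Lemma \ref{IntersectingR}, converting a path in $I(X(\Lambda))$ into a sequence of vertices of $X(\Lambda)$.)

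The genuine gap is exactly the step you flag as the crux, and your proposed fix does not work. Because $\Lambda$ is triangle-free, the defining graph of any maximal product subcomplex containing the parallel set of a $v$-geodesic is forced to be $\bigl(\bigcap_{w\in lk(v)}lk(w)\bigr)\circ lk(v)$, and its first factor may contain a vertex $u\neq v$ with $lk(v)\subseteq lk(u)$. In that case: (a) your claim that all $v$-geodesics of $\overbar{M}^{\pm}$ are parallel to $\gamma^{\pm}$ is false, since translating a $v$-geodesic by $u$ produces a non-parallel one inside the same maximal product subcomplex; and (b) distance from $\mathbb{P}(\gamma_v)$ does not detect being off-star: the geodesics $u^{n}\gamma_v$ are $v$-geodesics arbitrarily far from $\mathbb{P}(\gamma_v)$ and not parallel to $\gamma_v$, yet $\overbar{M}_{u^{n}\gamma_v}=\overbar{M}_{\mathbf{v}}$ itself by Lemma \ref{AtMostOneMax}, so a choice "deep in $H^{\pm}$" may fail to give even a vertex distinct from $\mathbf{v}$, let alone one outside $st(\mathbf{v})$. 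The appeal to the quasi-tree property and infinite diameter only yields that off-star vertices exist somewhere, not that they occur on both sides of the chosen wall. The paper closes this hole concretely: the defining graph $\Lambda_{\mathbf{v}}$ is a join, hence has diameter $\leq 2$, so $\mathrm{diam}(\Lambda)\geq 3$ gives a vertex $v_0\notin\Lambda_{\mathbf{v}}$ (note you need $v_0$ outside all of $\Lambda_{\mathbf{v}}$, not merely outside $st_\Lambda(v)$); attaching $v_0$-edges $e_1,e_2$ to the two endpoints of the edge $e\subset\gamma_v$ produces vertices $y_1,y_2\notin\overbar{M}_{\mathbf{v}}$ lying on opposite sides of $h$, and the maximal product subcomplexes with the same defining graph $\Lambda_{\mathbf{v}}$ through $y_1$ and $y_2$ are distinct cosets of the same subgroup, hence disjoint from $\overbar{M}_{\mathbf{v}}$ (so off-star) and disjoint from the carrier of $h$ (so contained in $H^-$ and $H^+$ respectively). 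Substituting this construction for your fourth step repairs the argument.
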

\begin{proof}
Let $\overbar{M}_\mathbf{v}$ be the maximal product subcomplex of $X(\Lambda)$ corresponding to $\mathbf{v}$. For a standard geodesic $l$ in $\overbar{M}_\mathbf{v}$ labelled by $v$, $\overbar{M}_\mathbf{v}$ contains the parallel set $\mathbb{P}(l)$. 
Since $\Lambda$ has diameter $\geq 3$, there exists a vertex $v_0\in\Lambda$ which is not contained in the defining graph of $\overbar{M}_\mathbf{v}$ (containing $st(v)$). Let $e\subset X(\Lambda)$ be an edge in $l$ and let $e_1$ and $e_2$ be the edges of $X(\Lambda)$ labelled by $v_0$ such that $e_1\cap e$ and $e_2\cap e$ are two distinct endpoints of $e$; in particular, there is a singular geodesic in $X(\Lambda)$ which contains three consecutive edges $e_1$, $e$, $e_2$.  
For two components of $X(\Lambda)- h_e$, let $X_1$ be the component containing $e_1$ and $X_2$ the component containing $e_2$.
Let $y_i\in X(\Lambda)$ be the endpoint of $e_i$ not in $e$ for $i=1,2$. 
Then every combinatorial path in $X(\Lambda)^{(1)}$ joining $y_1$ to $y_2$ passes through $h_e$.
Let $\mathbf{v}_1\in R_{y_1}$ and $\mathbf{v}_2\in R_{y_2}$ be the vertices whose images under $\rho_{S(\Lambda)}$ are $\rho_{S(\Lambda)}(\mathbf{v})$. Then $\mathbf{v}$ is adjacent to neither $\mathbf{v}_1$ nor $\mathbf{v}_2$ in $I(\overline{Y})$; the defining graphs of $\overbar{M}_{\mathbf{v}}$ and $\overbar{M}_{\mathbf{v}_i}$ are the same but $\overbar{M}_{\mathbf{v}}$ does not contain $y_i$. 
In order to prove the lemma, we will show the following claim.

\begin{claim}
Every path in $I(X(\Lambda))$ joining $\mathbf{v}_1$ to $\mathbf{v}_2$ passes through $st(\mathbf{v})$.
\end{claim}

For any combinatorial path $\overline{c}$ in $I(X(\Lambda))$ joining $\mathbf{v}_1$ to $\mathbf{v}_2$, there is a sequence $(y_1=x_1,\cdots,x_n=y_2)$ of vertices in $X(\Lambda)$ such that $\cup R_{x_i}$ contains $\overline{c}$ and $R_{x_i}\cap R_{x_{i+1}}$ is non-empty, i.e. there is a standard product subcomplex of $X(\Lambda)$ containing $x_i$ and $x_{i+1}$.
Since $x_1$ is in $X_1$ but $x_n$ is in $X_2$, there exists $1\leq i\leq n-1$ such that $x_{i}$ is in $X_1$ but $x_{i+1}$ is in $X_2$.
Let $\overbar{K}\subset X(\Lambda)$ be a standard product subcomplex containing $x_i$ and $x_{i+1}$. Then $\overbar{K}$ must contain a standard geodesic parallel to $l$.
From Lemma \ref{Separating} and its proof, we deduce that $R_{x_i}\cap R_{x_{i+1}}$ is either a simplex containing $\mathbf{v}$ or the star of $\mathbf{v}$ in $R_{x_i}$. In particular, $R_{x_i}\cap R_{x_{i+1}}$ is contained in $st(\mathbf{v})$. It means that $\overline{c}$ passes through $st(\mathbf{v})$ and therefore, the claim holds.
\end{proof}

\begin{remark}\label{ConstructionOfX}
Let $A(\Lambda)^{(i)}$ be the subset of $A(\Lambda)$ whose elements have join length $i$ for integers $i\geq 0$. In particular, $A(\Lambda)$ (as a set) is the disjoint union of $A(\Lambda)^{(i)}$.
From $RI(S(\Lambda))$, $I(X(\Lambda))$ is constructed by the following three steps:

(1) Let $R$ be the copy of $RI(S(\Lambda))$ correspondig to the identity of $A(\Lambda)$ and let $gR$ be the copy of $RI(S(\Lambda))$ corresponding to $g\in A(\Lambda)^{(1)}$. Let $\triangle'_g$ be the simplex of $gR$ corresponding to the simplex $\triangle'\subset RI(S(\Lambda))$ such that the assigned group is the conjugation $(G_{\triangle'})^g$ of $G_{\triangle'}$.
If $\triangle'_g$ is assumed to be labelled by the left coset $gG_{\triangle'}$ of $G_{\triangle'}$, with the left action of $A(\Lambda)$ on $A(\Lambda)$, $(G_{\triangle'})^g$ is the stabilizer of $gG_{\triangle'}$ in $A(\Lambda)$. 
From $R$ and $gR$'s for $g\in A(\Lambda)^{(1)}$, a complex of join groups is obtained by identifying simplices which have the same assigned groups, i.e. for $g,h\in \{id\}\cup A(\Lambda)^{(1)}$, if $g^{-1}h\in G_{\triangle'}$, then, the simplex $\triangle'_g$ of $gR$ and the simplex $\triangle'_h$ of $hR$ are identified. Let $R^{(1)}$ be the resulting complex of join groups.
 
(2) As above, consider the copies of $RI(S(\Lambda))$ such that these copies correspond to elements of $A(\Lambda)^{(2)}$. 
For $g\in A(\Lambda)^{(2)}$, we define $gR$, $\triangle'_g$ as above such that the assigned group of $\triangle'_g$ is $(G_{\triangle'})^g$.
Then a new complex of join groups $R^{(2)}$ is obtained from $R^{(1)}$ and $gR$'s for $g\in A(\Lambda)^{(2)}$ by identifying simplices which have the same assigned groups.

(3) Inductively, for $g\in A(\Lambda)^{(i)}$, we define $gR$, $\triangle'_g$ and the assigned group $(G_{\triangle'})^g$. 
Then $R^{(i)}$ is constructed from $R^{(i-1)}$ and $gR$'s for $g\in A(\Lambda)^{(i)}$, and there is a canonical semi-morphism $\rho_{S(\Lambda)}^{(i)}:R^{(i)}\rightarrow RI(S(\Lambda))$. The direct limit of $R^{(i)}$ is $I(X(\Lambda))$ and the induced semi-morphism $\rho_{S(\Lambda)}:I(X(\Lambda))\rightarrow RI(S(\Lambda))$ is the canonical quotient map obtained in Theorem \ref{TPBCM2}. 
\end{remark}

A vertex $\mathbf{u}\in RI(S(\Lambda))$ is said to be a \textit{type-1 vertex} if it is not a separating vertex and the assigned group $G_\mathbf{u}$ of $\mathbf{u}$ is contained in the subgroup of $A(\Lambda)$ generated by $G_{\mathbf{u}_i}$'s for all the vertices $\mathbf{u}_i$ adjacent to $\mathbf{u}$. 
Otherwise, $\mathbf{u}$ is said to be a \textit{type-2 vertex}.
If a non-separating vertex $\mathbf{u}$ is type-2, then there is a vertex $a\in \Lambda$ such that $G_{\mathbf{u}}$ is the only assigned group which contains $a$. 
A vertex $\mathbf{v}\in I(X(\Lambda))$ is said to be a \textit{type-i} vertex if $\rho_{S(\Lambda)}(\mathbf{v})$ is type-$i$ for $i=1,2$.
Note that if $\mathbf{v}$ is type-1, then any $g\in G_\mathbf{v}$ is decomposed as $g=g_1\cdots g_n$ where $g_i$ is contained in $G_\mathbf{v}\cap G_{\mathbf{v}_i}$ for a vertex $\mathbf{v}_i$ adjacent to $\mathbf{v}$.

\begin{lemma}\label{Type2Vertex}
Let $\mathbf{v}$ be a vertex in $I(X(\Lambda))$. If $\mathbf{v}$ is type-1, then $\mathbf{v}$ is not a separating vertex. Otherwise, $\mathbf{v}$ is a separating vertex. 
\end{lemma}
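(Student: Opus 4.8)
The plan is to prove the two implications separately, in each case reducing the global statement about $I(X(\Lambda))$ to the local structure of the covering $\{R_x\}_{x\in X(\Lambda)}$ together with the behaviour of $\rho_{S(\Lambda)}$. Write $\overbar{M}_{\mathbf{v}}$ for the maximal product subcomplex corresponding to $\mathbf{v}$ and $\Lambda_{\mathbf{v}}=\Lambda_1\circ\Lambda_2$ for its defining graph; recall that $I(X(\Lambda))$ is connected. Suppose first that $\mathbf{v}$ is type-$2$. If $\rho_{S(\Lambda)}(\mathbf{v})$ is a separating vertex of $RI(S(\Lambda))$, then since $RI(S(\Lambda))$ is connected by Proposition \ref{RIConnected}, Lemma \ref{SeparatingVertex} applies directly and the vertex $\mathbf{v}$, which lies over $\rho_{S(\Lambda)}(\mathbf{v})$, is separating.

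If instead $\rho_{S(\Lambda)}(\mathbf{v})$ is non-separating, then by the remark following the definition of a type-$2$ vertex there is a vertex $a\in\Lambda$ for which $\Lambda_{\mathbf{v}}$ is the unique maximal join subgraph containing $a$; a short computation (as in the proof of Lemma \ref{Separating}) gives $lk_\Lambda(a)=\Lambda_2$, so that for an $a$-labelled standard geodesic $l\subset\overbar{M}_{\mathbf{v}}$ the parallel set is the standard subcomplex $\mathbb{P}(l)=l\times X(\Lambda_2)\subset\overbar{M}_{\mathbf{v}}$. I would fix an edge $e\subset l$ with dual hyperplane $h_e$ and let $X_1,X_2$ be the two components of $X(\Lambda)-h_e$. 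The key point is that every maximal product subcomplex $\overbar{M}'\neq\overbar{M}_{\mathbf{v}}$ lies entirely in $X_1$ or in $X_2$: if $\overbar{M}'$ contained an edge dual to $h_e$, that edge would be $a$-labelled, forcing the defining graph of $\overbar{M}'$ to equal $\Lambda_{\mathbf{v}}$ and $\mathbb{P}(l)\subset\overbar{M}'$, whence $\overbar{M}'=\overbar{M}_{\mathbf{v}}$ by Lemma \ref{AtMostOneMax}. Partitioning $V(I(X(\Lambda)))\setminus\{\mathbf{v}\}$ according to the side of $h_e$ containing the corresponding maximal product subcomplex then yields a separation, once vertices on both sides are exhibited; these are produced by attaching a $v_0$-edge ($v_0\notin\Lambda_{\mathbf{v}}$, which exists because $\Lambda$ has diameter $\geq 3$) at each of the two endpoints of $e$ and taking any maximal product subcomplex through the far endpoint.

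Suppose now that $\mathbf{v}$ is type-$1$. Then $\rho_{S(\Lambda)}(\mathbf{v})$ is non-separating in $RI(S(\Lambda))$, so for every vertex $x\in\overbar{M}_{\mathbf{v}}$ the semi-isomorphism $\rho_{S(\Lambda)}|_{R_x}\colon R_x\to RI(S(\Lambda))$ shows that $R_x\setminus\{\mathbf{v}\}$ is connected. I would prove non-separation by rerouting: given two neighbours $\mathbf{a},\mathbf{b}$ of $\mathbf{v}$, pick vertices $x_a\in\overbar{M}_{\mathbf{a}}\cap\overbar{M}_{\mathbf{v}}$, $x_b\in\overbar{M}_{\mathbf{b}}\cap\overbar{M}_{\mathbf{v}}$ and a combinatorial path $x_a=z_0,\dots,z_m=x_b$ in $\overbar{M}_{\mathbf{v}}^{(1)}$. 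Since distinct neighbours of $\mathbf{v}$ have distinct defining graphs (again by Lemma \ref{AtMostOneMax}), the type-$1$ hypothesis that $G_{\mathbf{v}}$ is generated by the edge groups $G_{\mathbf{v}}\cap G_{\mathbf{v}_i}$ means exactly that every vertex of $\Lambda_{\mathbf{v}}$ lies in a second maximal join subgraph. Hence each edge $z_jz_{j+1}$, labelled by some $w_j\in\Lambda_{\mathbf{v}}$, is contained in a maximal product subcomplex $\overbar{M}_{\mathbf{c}_j}\neq\overbar{M}_{\mathbf{v}}$, giving a vertex $\mathbf{c}_j\neq\mathbf{v}$ with $\mathbf{c}_j\in R_{z_j}\cap R_{z_{j+1}}$. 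Joining $\mathbf{a}$ to $\mathbf{c}_0$ inside $R_{z_0}\setminus\{\mathbf{v}\}$, then $\mathbf{c}_{j-1}$ to $\mathbf{c}_j$ inside $R_{z_j}\setminus\{\mathbf{v}\}$, and finally $\mathbf{c}_{m-1}$ to $\mathbf{b}$ inside $R_{z_m}\setminus\{\mathbf{v}\}$ produces a path from $\mathbf{a}$ to $\mathbf{b}$ avoiding $\mathbf{v}$; as $I(X(\Lambda))$ is connected this shows $\mathbf{v}$ is non-separating.

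The main obstacle in both directions is controlling precisely which maximal product subcomplexes can cross a given hyperplane or contain a given edge, and this is governed by the uniqueness statement of Lemma \ref{AtMostOneMax} for parallel sets of standard geodesics. In the type-$2$ case the substantive work is to upgrade the conclusion of Lemma \ref{Separating} from ``the path meets $st(\mathbf{v})$'' to ``the path meets $\mathbf{v}$'', which is exactly where the uniqueness of the maximal join containing $a$ enters. In the type-$1$ case the delicate point is that the rerouting vertices $\mathbf{c}_j$ need not be adjacent to $\mathbf{v}$, so connectivity must be argued through the punctured stars $R_{z_j}\setminus\{\mathbf{v}\}$ rather than within the link of $\mathbf{v}$ itself; verifying that consecutive $\mathbf{c}_j$ indeed lie in a common $R_{z_j}$ is the step requiring the most care.
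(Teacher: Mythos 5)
Your proposal is essentially correct, but it proves the lemma by a genuinely different route than the paper. The paper's proof is algebraic: it uses the construction of $I(X(\Lambda))$ from copies $gR$ of $RI(S(\Lambda))$ indexed by elements $g\in A(\Lambda)$ (Remark \ref{ConstructionOfX}). In the type-2, non-separating-downstairs case, the paper notes $R\cap aR=\{\mathbf{v}\}$ and argues that a path avoiding $\mathbf{v}$ would yield a chain of copies $R=g_0R,\dots,g_nR=aR$ with consecutive intersections strictly bigger than $\{\mathbf{v}\}$, expressing $a$ as a product of elements of assigned groups other than $G_{\mathbf{v}}$, contradicting type-2; in the type-1 case it decomposes $g\in G_{\mathbf{v}}$ as $g_1\cdots g_n$ with $g_i\in G_{\mathbf{v}}\cap G_{\mathbf{v}_i}$ and chains the copies $g_1\cdots g_kR$, whose consecutive intersections contain edges. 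You replace both uses of the group structure by geometry of $X(\Lambda)$: for type-2 you upgrade Lemma \ref{Separating} from ``the star of $\mathbf{v}$ separates'' to ``$\mathbf{v}$ separates'' by showing that, since $\Lambda_{\mathbf{v}}$ is the unique maximal join subgraph containing $a$, every edge dual to $h_e$ lies in $\mathbb{P}(l)\subset\overbar{M}_{\mathbf{v}}$, so no maximal product subcomplex other than $\overbar{M}_{\mathbf{v}}$ crosses $h_e$ (via Lemma \ref{AtMostOneMax}); for type-1 you reroute paths in $\overbar{M}_{\mathbf{v}}^{(1)}$ through the punctured copies $R_{z_j}\setminus\{\mathbf{v}\}$, which are connected because $\rho_{S(\Lambda)}|_{R_{z_j}}$ is a semi-isomorphism and $\rho_{S(\Lambda)}(\mathbf{v})$ is non-separating. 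The paper's argument is shorter once Remark \ref{ConstructionOfX} is in place; yours is more self-contained geometrically, exhibits the concrete separating mechanism for type-2 vertices (such a vertex ``owns'' a hyperplane class), and would adapt to situations where vertices of the ambient complex are not identified with group elements.

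One sentence in your type-1 case is false as written: distinct neighbours of $\mathbf{v}$ in $I(X(\Lambda))$ need \emph{not} have distinct defining graphs. For example, if $\Lambda$ is a path on vertices $a,b,c,d,e$ and $\overbar{M}_{\mathbf{v}}$ is a standard subcomplex with defining graph $st(c)$, then each of the infinitely many $\{b,c\}$-flats in $\overbar{M}_{\mathbf{v}}$ lies in a different p-lift of $S(st(b))$, so $\mathbf{v}$ has infinitely many neighbours all labelled $st(b)$; Lemma \ref{AtMostOneMax} only forbids two distinct maximal product subcomplexes from containing a common parallel set, not from having a common defining graph. Fortunately the claim is inessential: the statement you extract from it --- every vertex of $\Lambda_{\mathbf{v}}$ lies in a second maximal join subgraph --- follows from the type-1 hypothesis computed downstairs in $RI(S(\Lambda))$, where distinct vertices do carry distinct maximal join subgraphs: the subgroup generated by the $G_{\mathbf{u}_i}$'s is the canonical subgroup on the full subgraph spanned by $\bigcup_i\Lambda_i$, and containment of canonical subgroups of $A(\Lambda)$ forces containment of the corresponding vertex sets. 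With that justification substituted (and the routine check, as in the proof of Lemma \ref{Separating}, that the far endpoints $y_i$ of the $v_0$-labelled edges do not lie in $\overbar{M}_{\mathbf{v}}$, so that both sides of $h_e$ really contribute vertices of $I(X(\Lambda))$), your proof goes through.
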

\begin{proof}
If $\rho_{S(\Lambda)}(\mathbf{v})$ is a separating vertex in $RI(S(\Lambda))$, then $RI(S(\Lambda))$ consists of two subcomplexes $RI_1$ and $RI_2$ whose intersection is $\rho_{S(\Lambda)}(\mathbf{v})$.
Let $S_1$ and $S_2$ be $f_{S(\Lambda)}(RI_1)$ and $f_{S(\Lambda)}(RI_2)$, respectively; $S(\Lambda)$ is the union of $S_1$ and $S_2$ and $S_1\cap S_2$ is the maximal product subcomplex corresponding to $\rho_{S(\Lambda)}(\mathbf{v})$.
By Lemma \ref{SeparatingVertex}, then, $\mathbf{v}$ is a separating vertex in $I(X(\Lambda))$.

Suppose that $\rho_{S(\Lambda)}(\mathbf{v})$ is not a separating vertex. Let $R$ be the copy of $RI(S(\Lambda))$ in $I(\overline{Y})$ corresponding to the identity element in $A(\Lambda)$.
Without loss of generality, assume that $\mathbf{v}$ is in $R$. Then there are two cases depending on the type of $\mathbf{v}$.

$\mathbf{Case\ 1.}$ Suppose that there is an element $a\in \mathcal{S}$ which makes $\mathbf{v}$ type-2. Then $G_\mathbf{v}$ is the only assigned group of a simplex of $I(X(\Lambda))$ containing $a$ and the intersection of $R$ and $aR$ is $\mathbf{v}$. 
If $\mathbf{v}$ is not a separating vertex in $I(X(\Lambda))$, then there exists a sequence $R=g_0 R, g_1 R,\cdots, g_n R=aR$ such that the intersection of $g_i R$ and $g_{i+1} R$ is non-empty but not equal to $\{\mathbf{v}\}$. It means that $\overline{g_i}g_{i+1}$ is contained in the assigned group of a vertex in $RI(S(\Lambda))$ which is not $\rho_{S(\Lambda)}(\mathbf{v})$.
However, it is a contradiction by the definition of type-2 vertices since $a$ is the concatenation of such $\overline{g_i}g_{i+1}$'s. Hence, $\mathbf{v}$ is a seperating vertex of $I(X(\Lambda))$. 

$\mathbf{Case\ 2.}$ Suppose that $\mathbf{v}$ is type-1. Then, $g\in G_{\mathbf{v}}$ is decomposed as $g=g_1\cdots g_n$ where $g_i$ is contained in $G_{\mathbf{v}}\cap G_{\mathbf{v}_i}$ for a vertex $\mathbf{v}_i$ adjacent to $\mathbf{v}$; $R\cap g_i R$ contains at least one edge which contains $\mathbf{v}$. 
Then $g_1\cdots g_{k+1}R\cap g_1\cdots g_{k}R$ contains an edge for $k=1,\cdots,n-1$. Therefore, $\mathbf{v}$ is not a seperating vertex of $I(X(\Lambda))$.
\end{proof}

Even though $\mathcal{R}(S(\Lambda))=f_{S(\Lambda)}(RI(S(\Lambda)))=S(\Lambda)$, $I(X(\Lambda))$ is not the development of $RI(S(\Lambda))$ in general (for instance, if $\pi_1(|RI(S(\Lambda))|)$ is non-trivial, by Proposition \ref{ContractibleRI}, $I(X(\Lambda))$ is not the development of $RI(S(\Lambda))$). 
Based on Proposition \ref{CpxofGps}, however, there is a way to make a developable complex of groups related to arbitrary $RI(S(\Lambda))$ (to do this, the definition of complexes of join groups must be changed slightly, but we will not do this here).
Let $C(RI(S(\Lambda)))$ be a cone on $RI(S(\Lambda))$ such that the assigned groups of the added simplices is the trivial group. 
Obviously, $|C(RI(S(\Lambda)))|$ is contractible and thus homotopy equivalent to $\mathcal{R}^d(S(\Lambda))$. Since $\pi_1(\mathcal{R}(S(\Lambda)))$ is isomorphic to $A(\Lambda)$ and $\pi_1(C(RI(S(\Lambda))))$ is isomorphic to $A(\Lambda)$, $C(RI(S(\Lambda)))$ can be considered as a developable complex of groups and the development of $C(RI(S(\Lambda)))$ is obtained from $I(X(\Lambda))$ by adding a cone on each $R_x$.

%
%

\subsection{(Reduced) Intersection Complex for $PB_2(\Gamma)$}\label{4.2}
In this subsection, we will see the structures of the reduced intersection complex and the intersection complex for a graph 2-braid group. 
Recall that for a connected simplicial graph $\Gamma$ (as we assumed in the end of Section \ref{2.4}), $D_2(\Gamma)$ denotes the discrete configuration space of 2 points on $\Gamma$ and $\overline{D_2(\Gamma)}$ denotes the universal cover of $D_2(\Gamma)$.
We will exclude the case that $\Gamma$ has no pair of disjoint cycles due to the following three equivalent statements:
\begin{itemize}
\item
$\Gamma$ has no pair of disjoint cycles.
\item
$D_2(\Gamma)$ has no standard product subcomplexes, i.e. $RI(D_2(\Gamma))$ is empty.
\item 
$B_2(\Gamma)$ (and thus $PB_2(\Gamma)$) is a finitely generated free group (Theorem \ref{scrg}). 
\end{itemize}
Moreover, we exclude the case that $\Gamma$ has leaves or redundant vertices (vertices of valency 2 which is not contained in any boundary cycles).

The action of $S_2$ on $D_2(\Gamma)$ induces the action of $S_2$ on $RI(D_2(\Gamma))$ by semi-isomorphisms. 
For a vertex $\mathbf{u}\in RI(D_2(\Gamma))$ labelled by $\Gamma_1\times\Gamma_2$, the \textit{switched vertex} of $\mathbf{u}$ is the vertex in $RI(D_2(\Gamma))$ labelled by $\Gamma_2\times\Gamma_1$ and denoted by $\mathbf{u}^s$. 
Suppose that a component $C$ of $RI(D_2(\Gamma))$ contains both a vertex $\mathbf{u}$ and its switched vertex $\mathbf{u}^s$. 
Let $\mathbf{w}$ be another vertex of $C$ and $p$ a path in $C^{(1)}$ joining $\mathbf{u}$ to $\mathbf{w}$. Then the switched vertices of $p$ induce the path $p^s$ in $C^{(1)}$ from $\mathbf{u}^s$ to $\mathbf{w}^s$.
It means that $\mathbf{w}^s$ is contained in $C$ so that for any vertex in $C$, its switched vertex is also contained in $C$; this kind of component of $RI(D_2(\Gamma))$ is called an \textit{M-component}. 
If $C$ is not an M-component, then there exists a component $C^s$ whose vertex set is the collection of $\mathbf{u}^s$'s for all vertices $\mathbf{u}\in C$; in this case, $C$ and $C^s$ are called \textit{S-components} and $C^s$ is the \textit{switched component} of $C$. For instance, $RI(D_2(\mathcal{O}_{3}))$ in Figure \ref{Ex1} consists of one M-component and $RI(D_2(\mathcal{O}_V))$ in Figure \ref{ExV} consists of six S-components.

When $\Gamma$ is a simplest cactus, every standard product subcomplex of $D_2(\Gamma)$ (thus the label of a simplex of $RI(D_2(\Gamma))$ or $I(\overline{D_2(\Gamma)})$) can be represented by the product of two disjoint subcollections of $\mathcal{C}$ (see the paragraph below Lemma \ref{IntofMax}). 
\textbf{From now on, $\Gamma$ is assumed to be a simplest cactus in addition to the assumption in the first paragraph of this subsection. Specific examples of simplest cacti are denoted by $\mathcal{O}$ with indices.}

\vspace{1mm}

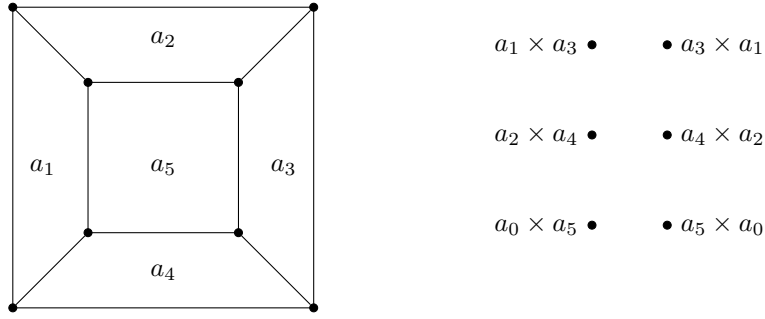
\begin{figure}[h]
\begin{tikzpicture}
\tikzstyle{every node}=[draw,circle,fill=black,minimum size=3pt,inner sep=0pt]
  \node[anchor=center] (a1) at (-0.5,4){};
  \node[anchor=center] (a2) at (3.5,4){};  
  \node[anchor=center] (a3) at (3.5,0)[label={[shift={(-0.4,1.6)}]:$a_3$}]{};  
  \node[anchor=center] (a4) at (-0.5,0)[label={[shift={(0.4,1.6)}]:$a_1$}]{};
  \node[anchor=center] (b1) at (0.5,3)[label={[shift={(1,0.3)}]:$a_2$}]{};  
  \node[anchor=center] (b2) at (2.5,3){};  
  \node[anchor=center] (b3) at (2.5,1)[label={[shift={(-1,0.6)}]:$a_5$}]{};
  \node[anchor=center] (b4) at (0.5,1)[label={[shift={(1,-0.8)}]:$a_4$}]{};  
  \draw (a1.center) -- (a2.center) -- (a3.center) -- (a4.center) -- cycle;  
  \draw (b1.center) -- (b2.center) -- (b3.center) -- (b4.center) -- cycle;  
  \foreach \from/\to in {a1/b1,a2/b2,a3/b3,a4/b4}
  \draw (\from) -- (\to);
  
  \draw (7.2,3.5) node (p1) [label={[label distance=0.1cm]left:$a_1\times a_3$}]{};
  \draw (8.2,3.5) node (p2) [label={[label distance=0.1cm]right:$a_3\times a_1$}]{};
  \draw (7.2,2.3) node (p3) [label={[label distance=0.1cm]left:$a_2\times a_4$}]{};
  \draw (8.2,2.3) node (p4) [label={[label distance=0.1cm]right:$a_4\times a_2$}]{};
  \draw (7.2,1.1) node (p5) [label={[label distance=0.1cm]left:$a_0\times a_5$}]{};
  \draw (8.2,1.1) node (p6) [label={[label distance=0.1cm]right:$a_5\times a_0$}]{};

\end{tikzpicture}
\caption{The graph $\mathcal{O}_V$ which is not a cactus, and $RI(D_2(\mathcal{O}_V))$. Each boundary cycle $a_i$ is considered as a subgraph.}
\label{ExV}
\end{figure}

The collection of labels of all simplices of $RI(D_2(\Gamma))$ is a poset under the following inclusion relation: $\mathcal{C}_1\times \mathcal{C}'_1\subset \mathcal{C}_2\times \mathcal{C}'_2$ if and only if $\mathcal{C}_1\subset \mathcal{C}_2$ and $\mathcal{C}'_1\subset \mathcal{C}'_2$ where $\mathcal{C}_i\times \mathcal{C}'_i$ is the label of a simplex of $RI(D_2(\Gamma))$ for $i=1,2$ (in particular, $\mathcal{C}_i$ and $\mathcal{C}'_i$ are disjoint subcollections of $\mathcal{C}$).
Using this inclusion relation, the existence of an M-component in $RI(D_2(\Gamma))$ can be detected from $\Gamma$.  

\begin{proposition}\label{Mcomponent}
If there exist three disjoint boundary cycles in $\Gamma$ such that any two of three are joined by a path in $\Gamma$ avoiding the other one, then $RI(D_2(\Gamma))$ is connected, i.e. $RI(D_2(\Gamma))$ consists of one M-component.
\end{proposition}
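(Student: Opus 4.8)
The plan is to translate the statement into the combinatorics of cycle subcollections, build an explicit connected ``core'' out of the three given cycles, and then reduce an arbitrary vertex to that core.

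First I would record the combinatorial dictionary set up just before the proposition. Since $\Gamma$ is a special cactus, every maximal product subcomplex of $D_2(\Gamma)$ is recorded by an ordered pair $\mathcal{C}_1\times\mathcal{C}_2$ of disjoint subcollections of $\mathcal{C}$ coming from a separation of $\Gamma$, and by Lemma \ref{IntofMax} two maximal product subcomplexes span an edge of $RI(D_2(\Gamma))$ exactly when their intersection is non-empty; reading this off coordinatewise, this happens exactly when they share a boundary cycle in the first coordinate and a boundary cycle in the second. I would also reinterpret the hypothesis: three disjoint cycles $a,b,c$, any two of which are joined by a path avoiding the third, is precisely the statement that each of the three ``pair versus singleton'' groupings is separable, so that all six ordered bipartitions of $\{a,b,c\}$ into a singleton and a pair are realized by genuine standard product subcomplexes of $D_2(\Gamma)$ (the path from $a$ to $b$ avoiding $c$, for instance, is exactly what makes $\{a,b\}$ and $\{c\}$ occupy disjoint subgraphs).

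Next I would build the core. Each of the six ordered bipartitions of $\{a,b,c\}$ sits inside a maximal product subcomplex, giving six vertices of $RI(D_2(\Gamma))$, and by the edge criterion these six are joined in the cyclic order
$$\{a\}\times\{b,c\},\ \{a,c\}\times\{b\},\ \{c\}\times\{a,b\},\ \{b,c\}\times\{a\},\ \{b\}\times\{a,c\},\ \{a,b\}\times\{c\},$$
since consecutive terms always share one cycle in each coordinate (for instance the maximal products through $\{a\}\times\{b,c\}$ and $\{a,c\}\times\{b\}$ still share $a$ in the first coordinate and $b$ in the second, because passing to a larger product only enlarges each coordinate). This is exactly the hexagon appearing for $\mathcal{O}_3$ in Figure \ref{Ex1}. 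As the antipodal pair $\{a\}\times\{b,c\}$ and $\{b,c\}\times\{a\}$ are switched vertices lying in the same hexagon, this core already lies inside a single M-component.

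Finally I would show every vertex $\mathbf{v}=\mathcal{D}_1\times\mathcal{D}_2$ connects to the hexagon. The distribution of $\{a,b,c\}$ across $\mathcal{D}_1,\mathcal{D}_2$ is either $2$--$1$ or $3$--$0$. In the $2$--$1$ case, say $a,b\in\mathcal{D}_1$ and $c\in\mathcal{D}_2$, the vertex $\mathbf{v}$ already shares $a$ in the first coordinate and $c$ in the second with the core vertex of type $\{a,b\}\times\{c\}$, hence is adjacent to the core. The genuine work is the $3$--$0$ case, where all of $a,b,c$ lie on one side of $\mathbf{v}$: here I would slide the separating point of $\mathbf{v}$ along the cactus toward the branch vertex supplied by the hypothesis, producing a chain of maximal product subcomplexes in which each two successive separations retain a common cycle in each coordinate (hence an edge) and which terminates at a vertex of $2$--$1$ type. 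The hard part will be exactly this last reduction: making the sliding argument precise, i.e.\ proving that the realizable ordered bipartitions of a special cactus, under the shared-cycle adjacency, are connected once a genuine branch vertex is present, and in particular that one never gets stuck because a side loses all its cycles. This is also where the hypothesis is essential: in a ``linear'' cactus the bipartition $\{c\}\times\{a,b\}$ is not realizable, the hexagon degenerates, and $RI(D_2(\Gamma))$ splits into switched S-components, as in Figure \ref{ExV}. Once connectivity is established, the global $S_2$-action by switching carries the unique component to itself, so it is an M-component, as claimed.
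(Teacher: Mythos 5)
Your first half coincides with the paper's proof. The dictionary you set up is the right one, and your hexagon is exactly the paper's: the three pairwise-separating paths realize all six ordered bipartitions of $\{a,b,c\}$ as standard product subcomplexes, the six vertices whose labels contain them close up into a hexagonal loop (the paper's loop $(\mathbf{u},\mathbf{u}_1,\mathbf{u}_2,\mathbf{u}^s,\mathbf{u}_1^s,\mathbf{u}_2^s,\mathbf{u})$), and since this loop contains a vertex together with its switch, its component is an M-component. Your $2$--$1$ case is also correct: sharing a boundary cycle in each coordinate makes the intersection of the two maximal product subcomplexes non-empty, hence gives an edge by Lemma \ref{IntofMax}.

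The genuine gap is exactly where you flag it: the $3$--$0$ case, which is the whole content of the proposition once the hexagon exists. ``Sliding the separating point toward the branch vertex'' is a plan, not a proof: you never verify that each slide keeps a common cycle in \emph{both} coordinates (the naive orientation of consecutive cuts fails and must be re-matched at each step), nor that the chain cannot get stuck when the shrinking side loses its last cycle, nor that it terminates at a $2$--$1$ vertex; you also silently use that every $a_i$ lies entirely on one side of each maximal product subcomplex, a fact the paper only records later, via the spine $\mathcal{K}_\Gamma$. The paper does not slide at all; it disposes of this half by contradiction. Supposing a second component $C$ exists, it considers any boundary cycle $b$ occurring in a label of a vertex of $C$: if $b$ were disjoint from some $a_i$, then by Lemma \ref{DisjointSubgraphs} the disjoint subgraphs $b$ and $a_i$ span a standard product subcomplex, hence sit inside a maximal product subcomplex whose label carries $b$ and $a_i$ in opposite coordinates, and this ties $C$ to the hexagon component; so every such $b$ would have to meet all of $a_1,a_2,a_3$, which is impossible because distinct boundary cycles of a \emph{special} cactus are disjoint. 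Your approach is repairable --- the spine formalism introduced immediately after this proposition is the natural tool for making the slide precise --- but as submitted, the statement is not proved: the $3$--$0$ configuration, which is precisely where the tripod hypothesis must be used to move a cycle from one coordinate to the other, is deferred to an argument you acknowledge you do not have.
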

\begin{proof}
Let $a_1$, $a_2$, $a_3$ be three disjoint boundary cycles in $\Gamma$ satisfying the assumption. 
Let $\mathbf{u}$, $\mathbf{u}_1$ and $\mathbf{u}_2$ be vertices in $RI(D_2(\Gamma))$ whose labels contain $\{a_1,a_3\}\times\{a_2\}$, $\{a_3\}\times\{a_1,a_2\}$ and $\{a_2,a_3\}\times\{a_1\}$, respectively.
Then there is a loop $l=(\mathbf{u},\mathbf{u}_1,\mathbf{u}_2,\mathbf{u}^s,\mathbf{u}^s_1,\mathbf{u}^s_2,\mathbf{u})$ in $RI(D_2(\Gamma))$; for instance, $\mathbf{u}$ and $\mathbf{u}_1$ are joined by an edge whose label contains $\{a_3\}\times\{a_2\}$.
This implies that the component of $RI(D_2(\Gamma))$ containing $\mathbf{u}$ is an M-component.

Assume that there is a component $C$ of $RI(D_2(\Gamma))$ except the component containing $\mathbf{u}$ such that $\{b_j\}_{j\in J}\subset\mathcal{C}$ is the collection of boundary cycles appeared in the labels of vertices in $C$. Lemma \ref{DisjointSubgraphs} implies that each $b_j$ must intersect $a_1$, $a_2$ and $a_3$ which is a contradiction since $\Gamma$ is a simplest cactus. Therefore, $RI(D_2(\Gamma))$ consists of one M-component.
\end{proof}

Let $\Gamma$ be a simplest cactus. Under the assumption in the first paragraph of this subsection, $\Gamma$ is the union of a tree $\mathcal{K}_\Gamma$, which will be called the \textit{spine} of $\Gamma$, and 3-cycles attached to some vertices of $\mathcal{K}_\Gamma$ such that if $v\in\mathcal{K}_\Gamma$ has valency $\leq 2$ in $\mathcal{K}_\Gamma$, then there must exist a 3-cycle attached to $v$. 
There is a canonical quotient map $\Psi:\Gamma\rightarrow \mathcal{K}_\Gamma$ collapsing each 3-cycle to a vertex.
For a maximal product subcomplex $\Gamma_i\times\Gamma'_i\subset D_2(\Gamma)$, then $V_\Gamma$ is the disjoint union of $V_\Gamma\cap\Psi(\Gamma_i)$ and $V_\Gamma\cap\Psi(\Gamma'_i)$.

The existence of three boundary cycles $a_1$, $a_2$, $a_3$ in $\Gamma$ satisfying the assumption in Proposition \ref{Mcomponent} is equivalent to the existence of a star tree with central vertex of valency $n\geq 3$ in $\mathcal{K}_\Gamma$. A simplest cactus $\Gamma$ is said to be of $\it{type}$-$S$ if $\mathcal{K}_\Gamma$ is homeomorphic to a line segment, and of $\it{type}$-$M$ otherwise. 

\begin{proposition}\label{Scomponent}
For a simplest cactus $\Gamma$ of type-S, $RI(D_2(\Gamma))$ consists of two semi-isomorphic components whose underlying complexes are simplices. 
\end{proposition}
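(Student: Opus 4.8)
The plan is to exploit the linear order on the boundary cycles coming from the spine and to reduce everything to bookkeeping on intervals. Since $\Gamma$ is of type-S, $\mathcal{K}_\Gamma$ is homeomorphic to a line segment; because $\Gamma$ has no leaves, both endpoints of this segment are joints, so the joints inherit a linear order and I may label the boundary cycles $a_1,\dots,a_m$ accordingly (with $m\ge 2$, as we assume $\Gamma$ has a pair of disjoint cycles). Since redundant vertices are excluded, consecutive cycles $a_i,a_{i+1}$ are joined by a single bridge edge, and no vertex is a hub of three bridges (that would branch the spine). The first step is to establish the correspondence between connected standard subgraphs of $\Gamma$ and intervals $[i,j]\subset\{1,\dots,m\}$, writing $\Gamma[i,j]$ for the subgraph spanned by $a_i,\dots,a_j$: a standard subgraph has no leaf, hence cannot contain a bridge edge without also containing the cycle that edge leads to, so its set of cycles maps under $\Psi$ to a connected subset of the line, i.e. an interval; conversely each interval determines such a subgraph.

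Next I would classify the standard product subcomplexes. By Lemma~\ref{DisjointSubgraphs} each is $\Gamma_1\times\Gamma_2$ with $\Gamma_1,\Gamma_2$ disjoint standard subgraphs, hence by the first step it is $\Gamma[i_1,j_1]\times\Gamma[i_2,j_2]$ for two disjoint intervals. Maximising both factors (extend the left interval down to $1$, the right interval up to $m$, and push the two interior endpoints together until they are adjacent) shows that the maximal product subcomplexes are exactly the $m-1$ cuts
$$P_i=\Gamma[1,i]\times\Gamma[i+1,m]\qquad(1\le i\le m-1),$$
together with their switched vertices $P_i^{\,s}=\Gamma[i+1,m]\times\Gamma[1,i]$.

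Finally I would read off the two components. Every $P_i$ contains the standard product subcomplex $a_1\times a_m$, and for $i_0<\dots<i_k$ the intersection $P_{i_0}\cap\dots\cap P_{i_k}$ equals $\Gamma[1,i_0]\times\Gamma[i_k+1,m]$, a single standard product subcomplex, so by Definition~\ref{RI} the vertices $P_1,\dots,P_{m-1}$ span exactly one $(m-2)$-simplex; the same argument applies to $P_1^{\,s},\dots,P_{m-1}^{\,s}$. A direct computation gives $P_i\cap P_j^{\,s}=\emptyset$ for all $i,j$, since nonemptiness of the two factors would require $j<i$ and $i<j$ simultaneously; hence the two simplices lie in distinct components, and as the $P_i$ and $P_i^{\,s}$ exhaust the maximal product subcomplexes, these are all the vertices. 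Thus $RI(D_2(\Gamma))$ is the disjoint union of two $(m-2)$-simplices, and since the coordinate-swapping $S_2$-action (which acts by semi-isomorphisms) sends $P_i$ to $P_i^{\,s}$, it restricts to a semi-isomorphism between them, exhibiting them as switched S-components. The step I expect to be the main obstacle is the first one: verifying rigorously that the type-S hypothesis forces the clean ``chain of cycles joined by single bridge edges'' picture and hence the interval correspondence, after which the rest is routine interval arithmetic.
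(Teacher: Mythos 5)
Your overall route is the same as the paper's: identify the chain-of-cycles structure of a type-S special cactus, classify the maximal product subcomplexes as the cuts $P_i=\Gamma[1,i]\times\Gamma[i+1,m]$ together with their switched versions, compute the intersections, and let the $S_2$-action exchange the two resulting simplices. Your steps after the classification --- the computation $P_{i_0}\cap\dots\cap P_{i_k}=\Gamma[1,i_0]\times\Gamma[i_k+1,m]$, the emptiness of $P_i\cap P_j^{s}$, and the semi-isomorphism coming from the coordinate swap --- are correct and are in fact spelled out in more detail than in the paper's own (very terse) proof. However, the first step, which you yourself flagged as the main obstacle, contains a false claim: it is not true that a standard subgraph ``cannot contain a bridge edge without also containing the cycle that edge leads to.'' Take $\Gamma$ to be a chain of four triangles $a_1,a_2,a_3,a_4$ in which the two bridges $b_1,b_2$ at $a_2$ attach at distinct vertices $u,x\in a_2$, and let $e\subset a_2$ be the edge joining $u$ to $x$. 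Then $H=a_1\cup b_1\cup e\cup b_2\cup a_3$ is connected, has no leaves, and contains the cycle $a_1$, so it is a standard subgraph; yet it contains both bridges of $a_2$ while containing only one edge of $a_2$. Consequently $H\times a_4$ is a standard product subcomplex of $D_2(\Gamma)$ that is not of the form $\Gamma[i_1,j_1]\times\Gamma[i_2,j_2]$, so your claimed bijection between standard subgraphs and intervals, and the ``maximising'' argument built on it, do not go through as stated.

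The conclusion of that step (the maximal product subcomplexes are exactly the $P_i$ and $P_i^{s}$) is nevertheless true, and what is missing is a disjointness lemma rather than a new strategy: if a standard subgraph meets a boundary cycle $a_j$ at all, then it either contains all of $a_j$, or contains an arc of $a_j$ whose two endpoints carry the (at most two) bridges of $a_j$, or passes through the single vertex carrying both bridges; in every case no standard subgraph \emph{disjoint} from it can also meet $a_j$. Hence for disjoint standard subgraphs $\Gamma_1,\Gamma_2$ the sets of joints met by $\Psi(\Gamma_1)$ and $\Psi(\Gamma_2)$ are disjoint intervals of the spine, which yields $\Gamma_1\times\Gamma_2\subseteq\Gamma[1,i]\times\Gamma[i+1,m]$ for some $i$; equivalently, in a maximal product subcomplex a factor meeting $a_j$ must contain $a_j$, since otherwise one could enlarge that factor by adjoining $a_j$ while preserving disjointness, contradicting maximality. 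With this lemma inserted your argument is complete and matches the paper's, which simply asserts the vertex classification; note also that your dimension count ($m-1$ vertices spanning an $(m-2)$-simplex) is the correct one, whereas the paper's ``$(n-1)$-simplices'' is off by one.
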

\begin{proof}
Since $\mathcal{K}_\Gamma$ is isometric to a line segment, $\Gamma$ is the graph which has $n$ boundary cycles $a_1,\cdots,a_n$ and $n-1$ edges such that $a_i$ and $a_{i+1}$ are joined by a unique edge for $i=1,\cdots,n-1$.
Then the vertices in $RI(D_2(\Gamma))$ are labelled by $\{a_1,\cdots,a_i\}\times\{a_{i+1},\cdots,a_n\}$ or $\{a_{i+1},\cdots,a_n\}\times\{a_1,\cdots,a_i\}$ for $i=1,\cdots,n-1$.
Therefore, $RI(D_2(\Gamma))$ consists of two $(n-1)$-simplices whose labels are $\{a_1\}\times\{a_n\}$ and $\{a_n\}\times\{a_1\}$, respectively.
\end{proof}

Combining Proposition \ref{Mcomponent} and Proposition \ref{Scomponent}, $RI(D_2(\Gamma))$ consists of one component if $\Gamma$ is of type-M, and two semi-isomorphic components if $\Gamma$ is of type-S. 
From now on, for convenience, whenever a simplest cactus $\Gamma$ is of type-M or type-S, let $RI(D_2(\Gamma))$ denote a component of $RI(D_2(\Gamma))$.
If $\Gamma$ is of type-S, Lemma \ref{IntofMax} implies that $\mathcal{R}(D_2(\Gamma))$ consists of two isometric components, and thus, let $\mathcal{R}(D_2(\Gamma))$ denote one of two components as in the previous sentence; equivalently, $\mathcal{R}(D_2(\Gamma))$ is isometric to the union of maximal product subcomplexes of $UD_2(\Gamma)$.

By the paragraphs above and below Lemma \ref{IntofMax}, there is a one-to-one correspondence between maximal product subcomplexes and oriented edges of $\mathcal{K}_\Gamma$. More precisely, a maximal product subcomplex $M_i=\Gamma_i\times\Gamma'_i$ corresponds to a separating open edge $e_i$ in $\mathcal{K}_\Gamma$ with the orientation of $e_i$ indicating the first coordinate of $M_i$. Then we can say that $\Gamma_i$ ($\Gamma_i$, resp.) is the union of $\Psi(\Gamma_i)$ ($\Psi(\Gamma'_i)$, resp.) and the cycles in $\Gamma$ intersecting $\Psi(\Gamma_i)$.
Recall that a product subcomplex of $D_2(\mathcal{K}_\Gamma)$ is defined as a product of two disjoint subgraphs of $\mathcal{K}_\Gamma$, possibly, vertices (see Definition \ref{PSofD2}). Using the fact that $D_2(\mathcal{K}_\Gamma)$ is the union of `maximal' product subcomplexes, product subcomplexes, which are maximal under the set inclusion, the homotopy type of $|RI(D_2(\Gamma))|$ can be deduced from $\mathcal{K}_\Gamma$. In order to do this, we first need the following strong version of Lemma \ref{IntofMax} for a simplest cactus $\Gamma$.

\begin{lemma}\label{IntisS}
Let $\Gamma$ be a simplest cactus. If there are finitely many maximal product subcomplexes $M_i=\Gamma_i\times\Gamma'_i$ whose intersection is non-empty, then the intersection is exactly a standard product subcomplex.
\end{lemma}
\begin{proof}
By the paragraph above this lemma, each $\Gamma_i\times\Gamma'_i$ corresponds to an edge $e_i$ with an orientation.
Then it can easily be seen that $M_i \cap M_j$ is non-empty if and only if the orientations of $e_i$ and $e_j$ are coherent, i.e. $\Gamma_i\subset \Gamma_j$ and $\Gamma'_j\subset\Gamma'_i$, or $\Gamma_j\subset \Gamma_i$ and $\Gamma'_i\subset\Gamma'_j$. In particular, $M_i\cap M_j$ is a standard product subcomplex which is either $\Gamma_i\times\Gamma'_j$ or $\Gamma_j\times\Gamma'_i$.
Using this fact, we can see that $\cap_i M_i$ is non-empty if and only if $e_i$'s are coherent if and only if $\cap_i M_i$ is a standard product subcomplex.
\end{proof}

\begin{proposition}\label{Cactus1}
Let $\Gamma$ be a simplest cactus and $\mathcal{K}_\Gamma$ the spine of $\Gamma$. Then $D_2(\mathcal{K}_\Gamma)$, $|RI(D_2(\Gamma))|$ and $\mathcal{R}^d(D_2(\Gamma))$ are homotopy equivalent.
In particular, $|RI(D_2(\Gamma))|$ is non-simply connected if and only if $\Gamma$ is of type-M.
\end{proposition}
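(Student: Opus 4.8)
The plan is to realize all three spaces as homotopy equivalent to the nerve of a single good cover, which will turn out to be exactly $|RI(D_2(\Gamma))|$. The organizing principle is that each of $D_2(\mathcal{K}_\Gamma)$ and $\mathcal{R}^d(D_2(\Gamma))$ is a union of contractible blocks glued along contractible subblocks, and that the combinatorial pattern of this gluing is recorded by the simplices of $RI(D_2(\Gamma))$. First I would cover $D_2(\mathcal{K}_\Gamma)$ by its maximal ``certain'' product subcomplexes $A_{\mathbf{u}}\times A'_{\mathbf{u}}$, one for each vertex $\mathbf{u}\in RI(D_2(\Gamma))$, via the one-to-one correspondence with maximal product subcomplexes of $D_2(\Gamma)$ set up above. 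Each block is a product of two subtrees of the tree $\mathcal{K}_\Gamma$, hence contractible. The key geometric input is that any nonempty finite intersection is again a product of subtrees: using fact (2), write the block for $\mathbf{u}_i$ with $A_{\mathbf{u}_i}$ the closure of one component of $\mathcal{K}_\Gamma-e_i$; then the intersection equals $(\cap_i A_{\mathbf{u}_i})\times(\cap_i A'_{\mathbf{u}_i})$, and an intersection of subtrees of a tree is connected (or empty). Thus every nonempty intersection is contractible and the cover is good.

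Next I would identify the nerve of this cover with $|RI(D_2(\Gamma))|$. The vertices match by construction. A family of maximal product subcomplexes spans a simplex of $RI(D_2(\Gamma))$ precisely when their intersection $W$ contains a standard product subcomplex, with the simplices counted by the maximal such subcomplexes in $W$. Translating to the spine, $W$ corresponds to $(\cap_i A_{\mathbf{u}_i})\times(\cap_i A'_{\mathbf{u}_i})$, and the connectedness of each factor from the previous step shows that a nonempty such intersection carries a \emph{unique} maximal standard product subcomplex, so no multi-simplices occur for a special cactus. Moreover facts (1) and (4) (no edge joins two non-joints, and non-joints have valency $\geq 3$) force each factor of a nonempty intersection to contain a joint, i.e.\ a cycle; hence ``nonempty intersection'' is equivalent to ``contains a standard product subcomplex.'' Therefore the nerve is exactly $|RI(D_2(\Gamma))|$, and the nerve lemma gives $D_2(\mathcal{K}_\Gamma)\simeq |RI(D_2(\Gamma))|$.

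The identical bookkeeping applies to $\mathcal{R}^d(D_2(\Gamma))$: it is the union of the blocks $\mathbf{K}^d_{\mathbf{u}}=\mathbf{D}_{\mathbf{u}}\times\mathbf{D}'_{\mathbf{u}}$, each a product of two graphs with all cycles filled by disks, hence contractible (a tree of filled cycles in each coordinate), glued along the contractible subblocks indexed by the simplices of $RI(D_2(\Gamma))$. So $\mathcal{R}^d(D_2(\Gamma))$ realizes the same good-cover pattern, and the map $\psi\colon |RI(D_2(\Gamma))|\to\mathcal{R}^d(D_2(\Gamma))$ of Proposition \ref{CpxofGps} is a homotopy equivalence. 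Concretely, the coordinate-wise collapse of each filled-cycle disk to its joint gives block-by-block homotopy equivalences $\mathbf{K}^d_{\mathbf{u}}\to A_{\mathbf{u}}\times A'_{\mathbf{u}}$ that are compatible over overlaps, yielding a direct homotopy equivalence $\mathcal{R}^d(D_2(\Gamma))\simeq D_2(\mathcal{K}_\Gamma)$ as well. This proves the first assertion.

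For the final ``in particular,'' observe that the tree $\mathcal{K}_\Gamma$ contains no pair of disjoint cycles (it has no cycles at all), so $\pi_1(D_2(\mathcal{K}_\Gamma))$ is free by the mechanism of Theorem \ref{scrg}; it is trivial exactly when $\mathcal{K}_\Gamma$ is a segment. If $\Gamma$ is of type-S then $\mathcal{K}_\Gamma$ is a segment, consistent with $|RI(D_2(\Gamma))|$ being a union of contractible simplices by Proposition \ref{Scomponent}; if $\Gamma$ is of type-M then $\mathcal{K}_\Gamma$ contains a copy of the tripod $T_3$, whose $D_2(T_3)$ is the essential hexagonal circle of Example \ref{Tripod}, so $\pi_1(D_2(\mathcal{K}_\Gamma))$ is nontrivial. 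Since $|RI(D_2(\Gamma))|\simeq D_2(\mathcal{K}_\Gamma)$, this gives the stated equivalence. I expect the main obstacle to be the second step: checking cleanly that for a special cactus the spine-level intersections are connected and carry a unique maximal standard product subcomplex, so that the nerve coincides with $|RI(D_2(\Gamma))|$ and no multi-simplices appear.
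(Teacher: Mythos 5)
Your proposal is correct and takes essentially the same approach as the paper: the paper's own proof likewise covers $D_2(\mathcal{K}_\Gamma)$ (and $\mathcal{R}^d(D_2(\Gamma))$) by the maximal ``certain'' product subcomplexes, matches their intersection pattern with the simplices of $|RI(D_2(\Gamma))|$ via Lemma \ref{IntofMax} and the correspondence set up before the proposition, concludes by ``blowing up'' vertices of $|RI(D_2(\Gamma))|$ to these blocks---which is exactly an informal version of your nerve-lemma argument---and settles the ``in particular'' by the same observation that $D_2(\mathcal{K}_\Gamma)$ is non-simply connected precisely when $\mathcal{K}_\Gamma$ is not a segment. The differences are ones of rigor in your favor: you verify the good-cover hypotheses (blocks and their intersections are products of subtrees, hence contractible) and explicitly confront the multi-simplex/uniqueness issue, both of which the paper's proof leaves implicit.
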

\begin{proof}
By the paragraph above Lemma \ref{IntisS}, there is a ono-to-one correspondence between the collection of maximal product subcomplexes of $D_2(\Gamma)$ and the collection of `maximal' product subcomplexes of $D_2(\mathcal{K}_\Gamma)$.
Suppose that the intersection $K$ of maximal product subcomplexes $M_i$ of $D_2(\Gamma)$ is non-empty. By Lemma \ref{IntisS}, $K$ is exactly a standard product subcomplex and thus the intersection of the `maximal' product subcomplexes of $D_2(\mathcal{K}_\Gamma)$ corresponding to $M_i$'s is a product subcomplex.
Conversely, if the intersection $P$ of the `maximal' product subcomplexes $P_i$'s of $D_2(\mathcal{K}_\Gamma)$ is non-empty, by Lemma \ref{IntisS}, the intersection of the maximal product subcomplexes of $D_2(\Gamma)$ corresponding to $P_i$'s is a standard product subcomplex and thus $P$ is a product subcomplex.

Then a component of $D_2(\mathcal{K}_\Gamma)$ is homotopy equivalent to $|RI(D_2(\Gamma))|$; vertices in $|RI(D_2(\Gamma))|$ are blowed up to `maximal' product subcomplexes of $D_2(\mathcal{K}_\Gamma)$ and the intersection of a finite number of `maximal' product subcomplexes of $D_2(\mathcal{K}_\Gamma)$ is non-empty if and only if it is a product subcomplex if and only if the corresponding vertices in $|RI(D_2(\Gamma))|$ span a simplex. 
Therefore, $|RI(D_2(\Gamma))|$ is homotopy equivalent to $D_2(\mathcal{K}_\Gamma)$. Similarly, we can show that $|RI(D_2(\Gamma))|$ is homotopy equivalent to $\mathcal{R}^d(D_2(\Gamma))$ using Lemma \ref{IntofMax}.

Note that $D_2(\mathcal{K}_\Gamma)$ is connected and non-simply connected if and only if $\mathcal{K}_\Gamma$ is not homeomorphic to a line segment. And $D_2(\mathcal{K}_\Gamma)$ consists of two isometric components if and only if $\mathcal{K}_\Gamma$ is homeomorphic to a line segment. Therefore, the last statement in the proposition holds.
\end{proof}

In the previous subsection, $RI(S(\Lambda))$ is a complex of join groups whose assigned groups are considered as subgroups of $A(\Lambda)$ since $S(\Lambda)$ has a unique vertex. 
Contrast to $S(\Lambda)$, for any graph $\Gamma$, $D_2(\Gamma)$ has no canonical vertex so that the assigned groups may not be represented by subgroups of $PB_2(\Gamma)$. Instead, if $\Gamma$ is a simplest cactus, then we can know that $RI(D_2(\Gamma))$ always becomes a developable complex of groups and the fundamental group of $RI(D_2(\Gamma))$ (considered as a complex of groups) is isomorphic to $\pi_1(\mathcal{R}(D_2(\Gamma)))$ which is, for convenience, denoted by $SPB_2(\Gamma)$.

\begin{theorem}\label{pi_1-injective}
Let $\Gamma$ be a simplest cactus. Then $RI(D_2(\Gamma))$ can be considered as the developable complex of groups decomposition of $SPB_2(\Gamma)$. 
\end{theorem}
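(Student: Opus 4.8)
The plan is to apply Proposition \ref{CpxofGps} to $Y=D_2(\Gamma)$ and read the theorem off its first conclusion, noting that $SPB_2(\Gamma)$ is by definition $\pi_1(\mathcal{R}(D_2(\Gamma)))$ and that Proposition \ref{CpxofGps} produces exactly an isomorphism $\pi_1(RI(D_2(\Gamma)))\cong\pi_1(\mathcal{R}(D_2(\Gamma)))$ together with developability of $RI(D_2(\Gamma))$. First I would check the hypothesis on bases. Since $\Gamma$ is a special cactus it is planar, and by Lemma \ref{DisjointSubgraphs} together with Definition \ref{PSofD2} the base of every standard product subcomplex of $D_2(\Gamma)$ has the form $\Gamma_i\times\Gamma'_i$ with $\Gamma_i,\Gamma'_i$ disjoint standard subgraphs of $\Gamma$, hence planar. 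As recorded at the start of Section \ref{4}, for $D_2(\Gamma)$ the local isometry out of each base is an embedding, so $\mathcal{R}(D_2(\Gamma))$ is genuinely a subcomplex of $D_2(\Gamma)$ and $\mathcal{R}^d(D_2(\Gamma))$ is obtained by attaching a disk to each boundary cycle coordinate-wise. Thus every such base is a product of two planar graphs and the first hypothesis of Proposition \ref{CpxofGps} holds.

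The substantive step is to show that the comparison map $\psi:|RI(D_2(\Gamma))|\to\mathcal{R}^d(D_2(\Gamma))$ constructed just before Proposition \ref{CpxofGps} is a homotopy equivalence, and here I would lean on Proposition \ref{Cactus1}. I would cover $\mathcal{R}^d(D_2(\Gamma))$ by the disked bases $\mathbf{K}^d_{\mathbf{u}}$ of the maximal product subcomplexes $M_{\mathbf{u}}$. Each such piece is a product $\mathbf{D}_i\times\mathbf{D}'_i$, where $\mathbf{D}_i$ arises from a cactus subgraph by disking all of its boundary cycles and is therefore contractible; the same holds for every nonempty finite intersection, which is the disked base of a shared standard product subcomplex. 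By Lemma \ref{IntofMax} the intersection pattern of the $M_{\mathbf{u}}$, equivalently of the $\mathbf{K}^d_{\mathbf{u}}$, is encoded exactly by $|RI(D_2(\Gamma))|$, so the nerve of this good cover is $|RI(D_2(\Gamma))|$; this is precisely the homotopy equivalence extracted in the proof of Proposition \ref{Cactus1}. Because $\psi$ is carrier-compatible with this cover---by construction $\psi(\triangle')$ meets $\mathbf{K}^d_{\triangle''}$ only when $\triangle'\subset\triangle''$---the nerve lemma identifies $\psi$ with this equivalence, so $\psi$ is a homotopy equivalence.

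With both hypotheses verified, the first conclusion of Proposition \ref{CpxofGps} yields that $RI(D_2(\Gamma))$ is a developable complex of groups with $\pi_1(RI(D_2(\Gamma)))\cong\pi_1(\mathcal{R}(D_2(\Gamma)))=SPB_2(\Gamma)$, which is the assertion of the theorem. I expect the main obstacle to be the middle step: upgrading the bare homotopy equivalence of Proposition \ref{Cactus1} to the statement that the particular map $\psi$ realizes it, i.e. checking that the cover of $\mathcal{R}^d(D_2(\Gamma))$ by disked bases is good (contractibility of the disked cactus-subgraph products and of their intersections) and that $\psi$ respects its carrier structure. When $\Gamma$ is of type-S this is immediate, since $|RI(D_2(\Gamma))|$ is a single simplex by Proposition \ref{Scomponent} and one may instead invoke Lemma \ref{Decomposition}; the type-M case, where $|RI(D_2(\Gamma))|$ is non-simply connected by Proposition \ref{Cactus1}, is where the nerve argument is genuinely needed.
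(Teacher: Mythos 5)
Your proposal is correct and follows essentially the same route as the paper: the paper's proof likewise verifies that every base is a product of planar graphs (since $\Gamma$ is a special cactus), invokes Proposition \ref{Cactus1} for the homotopy equivalence between $|RI(D_2(\Gamma))|$ and $\mathcal{R}^d(D_2(\Gamma))$, and concludes by Proposition \ref{CpxofGps}. Your extra care in upgrading the abstract equivalence of Proposition \ref{Cactus1} to the statement that the particular map $\psi$ realizes it (via the carrier-compatible nerve argument) fills in a detail the paper leaves implicit, but it is an elaboration of the same argument rather than a different one.
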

\begin{proof}
Since $\Gamma$ is a simplest cactus, every standard product subcomplex of $D_2(\Gamma)$ is the product of planar graphs.
By Proposition \ref{Cactus1}, moreover, $|RI(D_2(\Gamma))|$ is homotopy equivalent to $\mathcal{R}^d(D_2(\Gamma))$. 
By Proposition \ref{CpxofGps}, therefore, the theorem holds.
\end{proof}

\begin{corollary}\label{Development}
Let $\Gamma$ be a simplest cactus. Then a lift $I_0(\overline{D_2(\Gamma)})$ of $RI(D_2(\Gamma))$ in $I(\overline{D_2(\Gamma)})$ is the development of $RI(D_2(\Gamma))$. In particular, $|I_0((\overline{D_2(\Gamma)}))|$ is simply connected.
\end{corollary}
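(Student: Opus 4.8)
The plan is to deduce the statement from the second half of Proposition \ref{CpxofGps}, applied to $Y = D_2(\Gamma)$. Theorem \ref{pi_1-injective} already supplies the first two hypotheses of that proposition: since $\Gamma$ is a special cactus, every standard product subcomplex of $D_2(\Gamma)$ has base the product of two planar graphs, and by Proposition \ref{Cactus1} the comparison map $\psi\colon |RI(D_2(\Gamma))| \to \mathcal{R}^d(D_2(\Gamma))$ is a homotopy equivalence. Hence $RI(D_2(\Gamma))$ is a developable complex of join groups with $\pi_1(RI(D_2(\Gamma))) \cong \pi_1(\mathcal{R}(D_2(\Gamma))) = SPB_2(\Gamma)$. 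To upgrade this to the identification of $I_0(\overline{D_2(\Gamma)})$ with the development, the only remaining ingredient is the injectivity of the homomorphism $\pi_1(\mathcal{R}(D_2(\Gamma))) \to \pi_1(D_2(\Gamma)) = PB_2(\Gamma)$ induced by the immersion $\iota\colon \mathcal{R}(D_2(\Gamma)) \to D_2(\Gamma)$.

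First I would establish this injectivity by showing that $\iota$ is in fact a local isometry of NPC cube complexes; by the standard fact recalled in Section \ref{2.1}, a local isometry into an NPC cube complex is $\pi_1$-injective. Because $\Gamma$ is planar, $\iota$ is an embedding onto the subcomplex $f_{D_2(\Gamma)}(RI(D_2(\Gamma)))$, so the induced link maps are injective and it remains only to check that each such link map has full image, i.e. that $\mathcal{R}(D_2(\Gamma))$ is locally convex in $D_2(\Gamma)$. Concretely, for a vertex $y = (u,w) \in \mathcal{R}(D_2(\Gamma))$ I must verify that whenever two edges at $y$ both lie in $\mathcal{R}(D_2(\Gamma))$ and span a square of $D_2(\Gamma)$, that square already lies in a common maximal product subcomplex and hence in $\mathcal{R}(D_2(\Gamma))$.

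The main obstacle is exactly this fullness check, which I would resolve using the dictionary between maximal product subcomplexes of $D_2(\Gamma)$ and edges of the spine $\mathcal{K}_\Gamma$ developed before Proposition \ref{Cactus1}: a maximal product subcomplex $\Gamma_e^+ \times \Gamma_e^-$ corresponds to removing a single edge $e$ from the tree $\mathcal{K}_\Gamma$, and a cell of $D_2(\Gamma)$ lies in it exactly when the $\Psi$-images of its two coordinates fall in the two components of $\mathcal{K}_\Gamma - e$. An edge-pair at $y$ spanning a square corresponds to two disjoint cells of $\Gamma$, whose $\Psi$-images are disjoint and therefore separable in the tree $\mathcal{K}_\Gamma$; the hypothesis that each of the two edges already lies in $\mathcal{R}(D_2(\Gamma))$ forces these images to be genuinely separated, so a single spine edge $e$ cuts them apart and the square lies in $\Gamma_e^+ \times \Gamma_e^-$. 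The delicate point is the interaction of bridge edges of $\Gamma$ (which map to edges of $\mathcal{K}_\Gamma$) with cycle edges (which map into joints): one has to confirm that the configurations in which no single spine edge separates the two coordinates are precisely those in which one of the two edges fails to lie in $\mathcal{R}(D_2(\Gamma))$ in the first place, so that they never arise under the fullness hypothesis.

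Once local convexity, and hence $\pi_1$-injectivity of $\iota$, is in hand, Proposition \ref{CpxofGps} immediately yields that $I_0(\overline{D_2(\Gamma)})$ is the development of $RI(D_2(\Gamma))$; the simple connectivity of $|I_0(\overline{D_2(\Gamma)})|$ is then part of the conclusion of that proposition, and alternatively follows from Theorem \ref{development}.
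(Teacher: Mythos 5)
Your overall skeleton is the same as the paper's: Corollary \ref{Development} is deduced from the second half of Proposition \ref{CpxofGps}, whose first two hypotheses are exactly what Theorem \ref{pi_1-injective} (via Proposition \ref{Cactus1}) provides, and the only remaining issue is the injectivity of $\pi_1(\mathcal{R}(D_2(\Gamma)))\rightarrow PB_2(\Gamma)$. Where you diverge from the paper is in how that injectivity is obtained. The paper never argues local convexity: it identifies $SPB_2(\Gamma)=\pi_1(\mathcal{R}(D_2(\Gamma)))$ with an explicitly presented subgroup of $PB_2(\Gamma)$ (resp.\ $B_2(\Gamma)$) using the discrete Morse-theoretic complexes $M_2(\Gamma)$ and $UM_2(\Gamma)$ of \cite{KP12} behind Theorem \ref{scrg}; this is carried out in Remark \ref{SPB_2} and asserted in Lemma \ref{Decomposition}. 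Your reduction itself (embedding plus fullness of links gives a local isometry, hence $\pi_1$-injectivity) is sound, so your route would be a legitimate alternative \emph{if} the fullness check were correct.

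That is where there is a genuine gap: your fullness argument rests on the claims that the two cells $\sigma,\tau\subset\Gamma$ spanning a square have disjoint $\Psi$-images separable by an edge of $\mathcal{K}_\Gamma$, and that every maximal product subcomplex has the form $\Gamma_e^+\times\Gamma_e^-$ for a spine edge $e$. Both claims fail. Let $\Gamma$ be a $4$-cycle $\alpha=w_1w_2w_3w_4$ with a triangle $\gamma_i$ attached to each $w_i$ by a bridge $br_i$; this is a special cactus with no leaves or redundant vertices. The subgraphs $S_{12}=\gamma_1\cup br_1\cup(w_1,w_2)\cup br_2\cup\gamma_2$ and $S_{34}=\gamma_3\cup br_3\cup(w_3,w_4)\cup br_4\cup\gamma_4$ are disjoint standard subgraphs, and $S_{12}\times S_{34}$ is a maximal product subcomplex: it cannot be enlarged (every remaining cell of $\Gamma$ meets $S_{12}\cup S_{34}$), and it is contained in no $\Gamma_e^+\times\Gamma_e^-$, because here one factor of any spine-type maximal product subcomplex is a single triangle $\gamma_i$, which contains neither $S_{12}$ nor $S_{34}$. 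Now take the square $Q=(w_1,w_2)\times(w_3,w_4)$ and its two spanning edges at the vertex $(w_1,w_3)$: both edges lie in $\mathcal{R}(D_2(\Gamma))$ (namely in $S_{12}\times S_{34}$, and in no spine-type piece), yet $\Psi((w_1,w_2))=\Psi((w_3,w_4))$ is the single joint coming from $\alpha$, so no spine edge separates the two coordinates, and $Q$ lies in no $\Gamma_e^+\times\Gamma_e^-$. This contradicts your dichotomy that ``no separating spine edge implies one of the two edges is not in $\mathcal{R}(D_2(\Gamma))$'', so your argument does not establish local convexity. Note that local convexity itself is not refuted by this example --- $Q$ does lie in $\mathcal{R}(D_2(\Gamma))$, but only through the ``diagonal'' maximal product subcomplex $S_{12}\times S_{34}$, which is invisible to the spine dictionary; any repair of your proof must account for standard subgraphs that traverse a boundary cycle through a proper arc (a phenomenon that is also passed over in the paper's discussion preceding Proposition \ref{Cactus1}, but which the paper's presentation-theoretic route to injectivity does not rely on in this pointwise way).
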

\begin{proof}
Theorem \ref{scrg} implies that $B_2(\Gamma)$ and $PB_2(\Gamma)$ admit the group presentations
$\langle \{g_i\}_{i\in I},\{h_j\}_{j\in J} \mid \{R_z\}_{z\in Z}\rangle$ and $\langle \{g'_{i'}\}_{i'\in I'},\{h'_{j'}\}_{j'\in J'} \mid \{R'_{z},R''_{z}\}_{z\in Z}\rangle$, respectively, where 
\begin{enumerate}
\item $R_z$ is the commutator corresponding to a pair of disjoint boundary cycles in $\Gamma$ and $R'_z$, $R''_z$ are the ordered versons of $R_z$, and
\item $\{g_i\}_{i\in I}$ is the set of generators of $B_2(\Gamma)$ used to present $R_z$'s and $\{g'_{i'}\}_{i'\in I'}$ is the set of generators of $PB_2(\Gamma)$ used to present $R'_z$'s and $R''_z$'s.
\end{enumerate}

Suppose that $\Gamma$ is a simplest cactus of type-M and let $f_D:D_2(\Gamma)\rightarrow M_2(\Gamma)$ be the homotopy equivalence in the sketch of proof of Theorem \ref{scrg}. Since $\mathcal{R}(D_2(\Gamma))$ is the union of all maximal product subcomplexes of $D_2(\Gamma)$, the image of $\mathcal{R}(D_2(\Gamma))$ under $f_D$ is the union of all 2-cubes in $M_2(\Gamma)$. It means that $\langle \{g'_{i'}\}_{i'\in I'}\mid \{R'_{z},R''_{z}\}_{z\in Z}\rangle$ is the quotient of $SPB_2(\Gamma)$.
But the embedding $\iota:\mathcal{R}(D_2(\Gamma))\hookrightarrow D_2(\Gamma)$ is a local isometry since if $(x,y)$ is a vertex in $\mathcal{R}(D_2(\Gamma))$, then the link of $(x,y)$ in $\mathcal{R}(D_2(\Gamma))$ is a full subcomplex of the link of $(x,y)$ in $D_2(\Gamma)$; it can easily be seen from the fact that $x\in\Gamma_i$ and $y\in\Gamma'_i$ for some maximal product subcomplex $\Gamma_i\times\Gamma'_i$.
Thus, $SPB_2(\Gamma)$ is indeed isomorphic to $\langle \{g'_{i'}\}_{i'\in I'}\mid \{R'_{z},R''_{z}\}_{z\in Z}\rangle$.
This means that the homomorphism $\iota_*:\pi_1(\mathcal{R}(D_2(\Gamma)))\rightarrow PB_2(\Gamma)$ induced from $\iota$ is injective, and thus, by the last part of Proposition \ref{CpxofGps}, the theorem holds.

Suppose that $\Gamma$ is a simplest cactus of type-S and let $f_U:UD_2(\Gamma)\rightarrow UM_2(\Gamma)$ be the homotopy equivalence in the sketch of proof of Theorem \ref{scrg}.
By the paragraph below Proposition \ref{Scomponent}, $\mathcal{R}(D_2(\Gamma))$ is equal to $\mathcal{R}(UD_2(\Gamma))$, and the image of $\mathcal{R}(UD_2(\Gamma))$ under $f_u$ is the union of all 2-cubes in $UM_2(\Gamma)$.
As in the previous paragraph, $\mathcal{R}(UD_2(\Gamma))$ is also locally convex in $UD_2(\Gamma)$, and thus, $SPB_2(\Gamma)$ is isomorphic to the subgroup $\langle \{g_{i}\}_{i\in I}\mid \{\mathcal{R}_z\}_{z\in Z}\rangle$ of $B_2(\Gamma)$. This means that the homomorphism $\pi_1(\mathcal{R}(UD_2(\Gamma)))\rightarrow B_2(\Gamma)$ induced from $\mathcal{R}(UD_2(\Gamma))\hookrightarrow UD_2(\Gamma)$ is injective, and thus, by the last part of Proposition \ref{CpxofGps}, the theorem holds (note that $I(\overline{UD_2(\Gamma)})=I(\overline{D_2(\Gamma)})$).
\end{proof}

\begin{figure}[h]
\begin{tikzpicture}
\tikzstyle{every node}=[draw,circle,fill=black,minimum size=3pt,inner sep=0pt]
  \node[anchor=center] (x) at (0,0){};
  \node[anchor=center] (a1) at (0,1){};
  \node[anchor=center] (a2) at (0.5,1.73){};  
  \node[anchor=center] (a3) at (-0.5,1.73){};  
  \node[anchor=center] (b1) at (1,0){};
  \node[anchor=center] (b2) at (1.73,0.5){};  
  \node[anchor=center] (b3) at (1.73,-0.5){};  
  \node[anchor=center] (c1) at (0,-1){};
  \node[anchor=center] (c2) at (0.5,-1.73){};  
  \node[anchor=center] (c3) at (-0.5,-1.73){};  
  \node[anchor=center] (d1) at (-1,0){};
  \node[anchor=center] (d2) at (-1.73,0.5){};  
  \node[anchor=center] (d3) at (-1.73,-0.5){};  
  
  \draw (a1.center) -- (a2.center) -- (a3.center) -- cycle;  
  \draw (b1.center) -- (b2.center) -- (b3.center) -- cycle;
  \draw (c1.center) -- (c2.center) -- (c3.center) -- cycle;
  \draw (d1.center) -- (d2.center) -- (d3.center) -- cycle;  
  \foreach \from/\to in {x/a1,x/b1,x/c1,x/d1}
  \draw (\from) -- (\to);
  
  \begin{scope}[xshift=5.5cm]
  \node[anchor=center] (x1) at (0,0){};
  \node[anchor=center] (y1) at (1,0){};
  \node[anchor=center] (a1) at (-0.5,0.73){};
  \node[anchor=center] (a2) at (-1.5,0.73){};  
  \node[anchor=center] (a3) at (-0.5,1.73){};  
  \node[anchor=center] (b1) at (-0.5,-0.73){};
  \node[anchor=center] (b2) at (-1.5,-0.73){};  
  \node[anchor=center] (b3) at (-0.5,-1.73){};  
  \node[anchor=center] (c1) at (1.5,0.73){};
  \node[anchor=center] (c2) at (2.5,0.73){};  
  \node[anchor=center] (c3) at (1.5,1.73){};  
  \node[anchor=center] (d1) at (1.5,-0.73){};
  \node[anchor=center] (d2) at (1.5,-1.73){};  
  \node[anchor=center] (d3) at (2.5,-0.73){};  
\end{scope}
  \draw (a1.center) -- (a2.center) -- (a3.center) -- cycle;  
  \draw (b1.center) -- (b2.center) -- (b3.center) -- cycle;
  \draw (c1.center) -- (c2.center) -- (c3.center) -- cycle;
  \draw (d1.center) -- (d2.center) -- (d3.center) -- cycle;  
  \foreach \from/\to in {x1/a1,x1/b1,y1/c1,y1/d1,x1/y1}
  \draw (\from) -- (\to);
  
\end{tikzpicture}
\caption{$\mathcal{O}_{4}$ and $\mathcal{O}'_{4}$.}
\label{Ex2}
\end{figure}
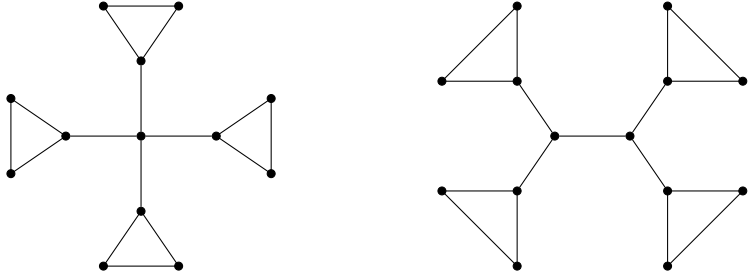

The relation between $I(\overline{D_2(\Gamma)})$ and $RI(D_2(\Gamma))$ is a little bit different from the relation between $I(X(\Lambda))$ and $RI(S(\Lambda))$.
For instance, when $\Gamma$ is a simplest cactus of type-M, $|I(\overline{D_2(\Gamma)})|$ can not be considered as the union of copies of $|RI(D_2(\Gamma))|$ since $|I(\overline{D_2(\Gamma)})|$ is simply connected but $|RI(D_2(\Gamma))|$ is not.

\begin{Ex}\label{T_4Ex}
Let $\mathcal{O}_{4}$ be the left graph in Figure \ref{Ex2}; a simplest cactus of type-M which is the union of four boundary cycles $a_1,\cdots,a_4$ and a star tree with central vertex of valency 4. The spine of $\mathcal{O}_4$ is a star tree with central vertex of valency 4.
The figure of $RI(D_2(\mathcal{O}_{4}))$ is given in Figure \ref{Ex2'}. In this case, $RI(D_2(\mathcal{O}_4))$ is the graph of groups decomposition of $SPB_2(\mathcal{O}_4)$.
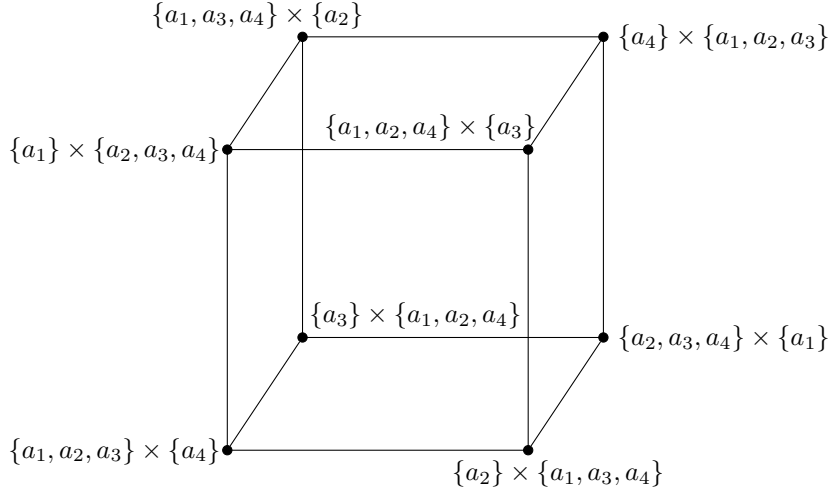
\begin{figure}[h]
\begin{tikzpicture}
\tikzstyle{every node}=[draw,circle,fill=black,minimum size=3.5pt,inner sep=0pt]
  
\draw (0,0) node (p1) [label={[shift={(-1.5,-1.5)}]:$\{a_1,a_2,a_3\}\times\{a_4\}$}]{};
\draw (4,0) node (p2) [label={[shift={(0.4,-1.8)}]:$\{a_2\}\times\{a_1,a_3,a_4\}$}]{};
\draw (4,4) node (p3) [label={[shift={(-1.3,-1.2)}]:$\{a_1,a_2,a_4\}\times\{a_3\}$}]{};
\draw (0,4) node (p4) [label={[shift={(-1.5,-1.5)}]:$\{a_1\}\times\{a_2,a_3,a_4\}$}]{};
\draw (1,1.5) node (p5) [label={[shift={(1.5,-1.2)}]:$\{a_3\}\times\{a_1,a_2,a_4\}$}]{};
\draw (5,1.5) node (p6) [label={[shift={(1.6,-1.5)}]:$\{a_2,a_3,a_4\}\times\{a_1\}$}]{};
\draw (5,5.5) node (p7) [label={[shift={(1.6,-1.5)}]:$\{a_4\}\times\{a_1,a_2,a_3\}$}]{};
\draw (1,5.5) node (p8) [label={[shift={(-0.6,-1.2)}]:$\{a_1,a_3,a_4\}\times\{a_2\}$}]{};

\draw (p1.center) -- (p2.center) -- (p3.center) -- (p4.center) -- cycle;  
\draw (p5.center) -- (p6.center) -- (p7.center) -- (p8.center) -- cycle;

  \foreach \from/\to in {p1/p5,p2/p6,p3/p7,p4/p8}
  \draw (\from) -- (\to);
\end{tikzpicture}
\caption{$RI(D_2(\mathcal{O}_{4}))$.}
\label{Ex2'}
\end{figure}
\end{Ex}

\begin{Ex}\label{T'_4Ex}
Let $\mathcal{O}'_{4}$ be the right graph of Figure \ref{Ex2}; a simplest cactus of type-M which has four boundary cycles $a_1,\cdots,a_4$ and six edges. The spine of $\mathcal{O}'_4$ is obtained from $\mathcal{O}'_{4}$ by removing vertices of valency 2 and (open) edges attached to these vertices. The right of Figure \ref{Ex3'} is $RI(D_2(\mathcal{O}'_4))$. 
In this case, $RI(D_2(\mathcal{O}'_4))$ is the developable complex of groups decomposition of $SPB_2(\mathcal{O}'_4)$.

\begin{figure}[h]
\begin{tikzpicture}
\tikzstyle{every node}=[draw,circle,fill=black,minimum size=3.5pt,inner sep=0pt]
\tikzstyle{point}=[circle,thick,draw=black,fill=black,inner sep=0pt,minimum width=4pt,minimum height=4pt]
	
\draw [gray, -triangle 60  ] (-2.5,1.4) -- (-0.1,0.1) node (x1) [minimum size=0pt]{};
\node (x)[point,label={[shift={(-3.2,0.2)}]:$\{a_1,a_2\}\times\{a_3,a_4\}$}] at (0,0) {};

\node (a)[point,label={[shift={(0.9,-1.2)}]:$\{a_1\}\times\{a_2,a_3,a_4\}$}] at (-0.7,1.5) {};
\node (b)[point,label={[shift={(-1.6,-1.5)}]:$\{a_1,a_2,a_4\}\times\{a_3\}$}] at (-3.1,0) {};
\node (c)[point,label={[shift={(1.55,-1.75)}]:$\{a_2\}\times\{a_1,a_3,a_4\}$}] at (0.7,-1.5) {};
\node (d)[point,label={[shift={(1.6,-1.5)}]:$\{a_1,a_2,a_3\}\times\{a_4\}$}] at (3.1,0) {};

\begin{scope}[yshift=-4cm]
    \node (y)[point] at (0,0) {};

    \node (e)[point] at (-0.7,1.5) {};
    \node (f)[point] at (-3.1,0) {};
    \node (g)[point] at (0.7,-1.5) {};
    \node (h)[point] at (3.1,0) {};
\end{scope}

    \draw[pattern=dots] (a.center) -- (x.center) -- (b.center) -- cycle;
    \draw[pattern=mydots] (b.center) -- (x.center) -- (c.center) -- cycle;
    \draw[pattern=mynewdots] (c.center) -- (x.center) -- (d.center) -- cycle;
    \draw[pattern=verynewdots] (d.center) -- (x.center) -- (a.center) -- cycle;

	\draw [red] plot [smooth] coordinates {(-0.7,1.5) (-1.1,-0.5) (-0.7,-2.5)};
	\draw [blue] plot [smooth] coordinates {(-3.1,0) (-3.8,-2) (-3.1,-4)};
	\draw [red] plot [smooth] coordinates {(0.7,-1.5) (1.1,-3.5) (0.7,-5.5)};
	\draw [blue] plot [smooth] coordinates {(3.1,0) (3.8,-2) (3.1,-4)};

    \draw[pattern=mynewdots] (e.center) -- (y.center) -- (f.center) -- cycle;
    \draw[pattern=verynewdots] (f.center) -- (y.center) -- (g.center) -- cycle;
    \draw[pattern=dots] (g.center) -- (y.center) -- (h.center) -- cycle;
    \draw[pattern=mydots] (h.center) -- (y.center) -- (e.center) -- cycle;
    
\end{tikzpicture}
\caption{$RI(D_2(\mathcal{O}'_{4}))$.}
\label{Ex3'}
\end{figure}
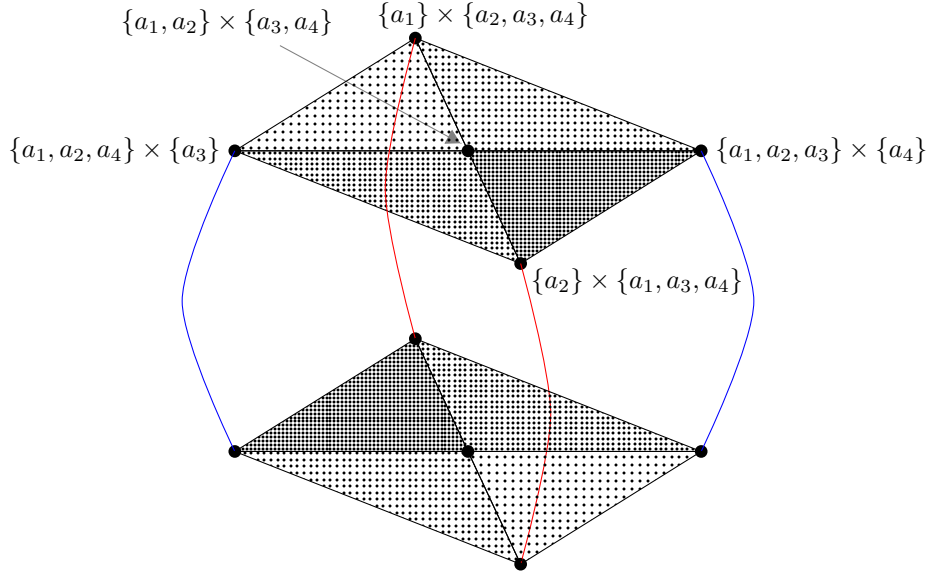
\end{Ex}

If $\Gamma$ is not a simplest cactus or even not a planar graph, then the situation becomes more complicated even when we determine whether $RI(D_2(\Gamma))$ contains an M-component. This is why, in this paper, we only deal with simplest cacti when we talk about graph 2-braid groups.

%
%
%
%

\section{Construction of quasi-isometries}\label{5}

Given a compact weakly special square complex $Y_i$ and its universal cover $\overline{Y_i}$ for $i=1,2$, each simplex of $I(\overline{Y_i})$ and $RI(Y_i)$ has two kinds of information: a label, the standard product subcomplex of $Y$ corresponding to a simplex (Definition \ref{IC} and \ref{RI}), and an assigned group (Definition \ref{CJ}).
When $Y_i$ is either a 2-dimensional Salvetti complex $S(\Lambda)$ or $D_2(\Gamma)$ for a simplest cactus $\Gamma$, these kinds of information give us an explicit way to describe the relation between $I(\overline{Y_i})$ and $RI(Y_i)$.   
Using this relation, in this section, we will see when a semi-isomorphism $I(\overline{Y_1})\rightarrow I(\overline{Y_2})$ induces a quasi-isometry $\overline{Y_1}\rightarrow\overline{Y_2}$.

%
%

\subsection{Applications to 2-dimensional RAAGs}\label{5.1}
Two well-known facts about quasi-isometric rigidity of RAAGs are following: One is from the result in \cite{PW02},\cite{Hua(a)} that it suffices to consider RAAGs whose defining graphs are connected in order to classify all RAAGs up to quasi-isometry. More precisely, 

\begin{theorem}[\cite{PW02},\cite{Hua(a)}]
Suppose that $\Lambda$ and $\Lambda'$ are (possibly disconnected) simplicial graphs such that there is a $(\lambda,\varepsilon)$-quasi-isometry $\phi:X(\Lambda)\rightarrow X(\Lambda')$.
Then there exists $D=D(\lambda,\varepsilon)>0$ such that for any connected component $\Lambda_1$ of $\Lambda$ which is not a single vertex and a standard subcomplex $X_1$ of $X(\Lambda)$ whose defining graph is $\Lambda_1$, there is a unique connected component $\Lambda'_1$ of $\Lambda'$ and a standard subcomplex $X'_1$ of $X(\Lambda')$ whose defining graph is $\Lambda'_1$ such that $d_H(\phi(X_1),X'_1)<D$.
\end{theorem}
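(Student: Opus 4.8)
The plan is to exploit the fact that a disconnected defining graph makes the RAAG a free product, so that $X(\Lambda)$ is a tree of spaces whose vertex spaces are the standard subcomplexes $X(\Lambda_i)$; the statement then becomes the coarse preservation of vertex spaces under a quasi-isometry. Concretely, since $\Lambda$ is the disjoint union of its components $\Lambda_1,\dots,\Lambda_m$ together with isolated vertices, $A(\Lambda)$ decomposes as the free product of the $A(\Lambda_i)$ and a free group on the isolated vertices; correspondingly $S(\Lambda)$ is the wedge of the $S(\Lambda_i)$ (as recorded in Section \ref{2.3}), so $X(\Lambda)$ is obtained by gluing copies of the convex subcomplexes $X(\Lambda_i)$ along single vertices, arranged along the Bass--Serre tree $T_\Lambda$ of the free product. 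The standard subcomplex $X_1$ in the statement is precisely one such vertex space.

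First I would record the two inputs on ends. A component $\Lambda_1$ that is not a single vertex is connected with at least two vertices, so $A(\Lambda_1)$ is freely indecomposable, torsion-free and not infinite cyclic; by Stallings' theorem it is one-ended, whereas an isolated vertex contributes a two-ended $\mathbb{Z}$-factor. Since the standard subcomplex $X_1\cong X(\Lambda_1)$ is convex in the CAT(0) cube complex $X(\Lambda)$, its inclusion is an isometric embedding, so $X_1$ is a proper geodesic space with exactly one coarse end and $\phi|_{X_1}$ is a $(\lambda,\varepsilon)$-quasi-isometric embedding onto $\phi(X_1)$; hence $\phi(X_1)$ is coarsely one-ended, with separation constants controlled by $\lambda$, $\varepsilon$ and the fixed (hence cocompact) local models $S(\Lambda)$, $S(\Lambda')$.

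The heart of the argument is to show that a coarsely one-ended, coarsely connected subset $Z\subset X(\Lambda')$ lies in a uniform neighborhood of a single vertex space of the tree of spaces $X(\Lambda')$. Each wedge vertex of $X(\Lambda')$ is a cut point whose bounded neighborhood separates $X(\Lambda')$ into deep, far-apart pieces, one for each edge of $T_{\Lambda'}$ at that vertex. If $Z$ had points deep in two distinct such pieces, a bounded set would coarsely separate $Z$ into two deep subsets, contradicting coarse one-endedness; the separation radius is uniform because the action of $A(\Lambda')$ makes all vertex spaces and wedge points uniformly one-ended and uniformly spaced. Thus $Z$ lies coarsely on one side of every wedge vertex, which forces $Z=\phi(X_1)$ into a bounded neighborhood of a single vertex space $X'_1$. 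This $X'_1$ is the standard subcomplex of one component $\Lambda'_1$, and $\Lambda'_1$ cannot be a single vertex since the associated $\mathbb{Z}$-space is two-ended while $Z$ is one-ended. Uniqueness of $X'_1$ and the bound $D=D(\lambda,\varepsilon)$ then follow, because distinct one-ended vertex spaces are separated in $T_{\Lambda'}$ and hence at infinite Hausdorff distance, while all constants produced above depend only on $\lambda,\varepsilon$ through cocompactness.

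The main obstacle will be making the coarse-separation step uniform and quantitative: one must upgrade the qualitative principle that one-ended spaces are not coarsely separated by bounded sets into a statement with explicit constants, which is exactly the coarse-topological content behind \cite{PW02} and \cite{Hua(a)}. In particular one has to guarantee that the separation radius at the wedge points of $X(\Lambda')$, and the fattening needed to pass from $\phi(X_1)$ to $X'_1$, are bounded solely in terms of $\lambda$, $\varepsilon$ and the finitely many local models, rather than growing with the position of $X_1$ inside $X(\Lambda)$.
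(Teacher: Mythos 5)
The paper offers no proof of this theorem: it is imported as a known result from \cite{PW02} and \cite{Hua(a)}, so there is no internal argument to compare yours against, and your proposal must be judged on its own. Its skeleton is the right one, and is essentially what lies behind the cited results: $X(\Lambda')$ is a tree of spaces whose vertex spaces are p-lifts of the $S(\Lambda'_j)$, glued along lifts of the wedge vertex; each such lift is a global cut point, and a chain of points with gaps at most $\lambda+\varepsilon$ cannot pass between two components of the complement of a cut vertex $v$ while staying outside $N_r(v)$ once $r>\lambda+\varepsilon$. Combined with one-endedness of $X_1$ (here $\Lambda_1$ is connected with at least one edge, so $A(\Lambda_1)$ is freely indecomposable, torsion-free and not $\mathbb{Z}$) and cocompactness of the finitely many factor actions to make all constants uniform, this correctly traps $\phi(X_1)$ on one side of every cut vertex.

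There are, however, genuine gaps. The most serious one: the theorem bounds the Hausdorff distance $d_H(\phi(X_1),X'_1)$, but your argument, even granting every step, only produces the one-sided containment $\phi(X_1)\subset N_D(X'_1)$; nothing in the proposal shows $X'_1\subset N_D(\phi(X_1))$, i.e.\ that $\phi(X_1)$ is coarsely dense in the vertex space. The standard repair is to first prove that $\Lambda'_1$ is not a single vertex, so that $X'_1$ is itself one-ended, and then run the same trapping argument for a coarse inverse $\bar{\phi}$ applied to $X'_1$: one gets $\bar{\phi}(X'_1)\subset N_{D'}(X''_1)$ for some vertex space $X''_1$ of $X(\Lambda)$, and since $X_1$ lies in a uniform neighborhood of $\bar{\phi}(\phi(X_1))\subset N(\bar{\phi}(X'_1))$ while distinct vertex spaces of $X(\Lambda)$ have bounded coarse intersection, $X''_1=X_1$, which yields the missing containment. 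Second, your reason that $\Lambda'_1$ cannot be a single vertex ("two-ended versus one-ended") is insufficient as stated: a ray is one-ended and lies inside a line, so one-endedness of $\phi(X_1)$ alone does not prevent it from lying in a bounded neighborhood of a $\mathbb{Z}$-vertex space. What does rule it out is that $\Lambda_1$ contains an edge, hence $X_1$ contains $\mathbb{Z}^2$-flats, and $\mathbb{Z}^2$ admits no quasi-isometric embedding into $\mathbb{R}$ (compare separated nets, or growth); alternatively, no space admitting a geometric group action is quasi-isometric to a ray. Third, and smaller: the step "lies coarsely on one side of every wedge vertex, which forces $Z$ into a bounded neighborhood of a single vertex space" is asserted rather than argued; it is true, but it is exactly where the tree combinatorics enters (for instance: two points of $\phi(X_1)$ lying in distinct vertex spaces must both lie within the uniform radius of the wedge vertex separating them, which is impossible once their mutual distance exceeds the uniform constants).
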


The other one is from Theorem \ref{FlattoFlat} (Corollary 1.4 in \cite{Hua(b)}) that if two RAAGs are quasi-isometric, then they have the same dimension. 
Hence, given a 2-dimensional RAAG $A(\Lambda)$, it suffices to only consider 2-dimensional RAAGs in order to find RAAGs quasi-isometric to $A(\Lambda)$.
\textbf{In this subsection, the defining graphs of RAAGs are thus assumed to be connected and triangle-free and contain at least one edge, as we assumed in Section \ref{4.1}.}

By Theorem \ref{QIimpliesIso2}, if $X(\Lambda)$ and $X(\Lambda')$ are quasi-isometric, then $I(X(\Lambda))$ and $I(X(\Lambda'))$ are semi-isomorphic. With some restrictions on defining graphs, a semi-isomorphism between $I(X(\Lambda))$ and $I(X(\Lambda'))$ induces a quasi-isometry between $X(\Lambda)$ and $X(\Lambda')$.
The first example is the class of RAAGs whose defining graphs are trees.
Let $P_4$ be a line segment with 4 vertices $a,b,c,d$ in this order. Then, 
$$A(P_4)=\langle a,b,c,d\ |\ [a,b]=[b,c]=[c,d]=1\rangle$$ 
and $RI(S(P_4))$ consists of two vertices and one edge such that the assigned groups of the vertices are $\langle b\rangle\times\langle a,c\rangle$ and $\langle b,d\rangle\times\langle c\rangle$, respectively, and the assigned group of the edge is $\langle b\rangle\times\langle c\rangle$. In particular, these vertices are type-2.
Since $|RI(S(P_4))|$ is contractible and 1-dimensional, by Lemma \ref{Decomposition} and Theorem \ref{GOG}, $I(X(P_4))$ is the Bass-Serre tree associated to $RI(S(P_4))$ and by Proposition \ref{ContractibleRI}, $|I(X(P_4))|$ is a locally infinite tree of infinite diameter.

\begin{lemma}\label{CoreofT}
Let $T$ be a tree of diameter $\geq 3$. Then $I(X(T))$ is semi-isomorphic to $I(X(P_4))$.
\end{lemma}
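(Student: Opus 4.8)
The plan is to compute $RI(S(T))$ explicitly, identify $I(X(T))$ as its Bass-Serre tree, and then observe that this tree carries a completely uniform assigned-group pattern identical to that of $I(X(P_4))$, so that any combinatorial isometry between the two underlying trees is automatically a semi-isomorphism. First I would analyze the join subgraphs of $T$. Since $T$ is a tree it is triangle-free and contains no $4$-cycle, so in any join decomposition $\Lambda_1\circ\Lambda_2$ of a subgraph one factor must be a single vertex; hence every join subgraph has the form $\{v\}\circ S$ with $S\subseteq lk(v)$, and a short argument (using that two neighbors of $v$ are never adjacent) shows that $\{v\}\circ S$ is maximal exactly when $S=lk(v)$ and $v$ is not a leaf. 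Thus the vertices of $RI(S(T))$ correspond to the non-leaf vertices $v$ of $T$, with assigned group $\langle v\rangle\times\langle lk(v)\rangle\cong\mathbb{Z}\times\mathbb{F}_{|lk(v)|}$ of quasi-isometric type $(\mathbb{Z}\times\mathbb{F})$. Two stars $st(u),st(v)$ share a flat iff $st(u)\cap st(v)$ contains an edge of $T$, i.e.\ iff $u,v$ are adjacent in $T$; the corresponding edge of $RI(S(T))$ has group $\langle u\rangle\times\langle v\rangle\cong\mathbb{Z}^2$ of type $(\mathbb{Z}\times\mathbb{Z})$. Since only the two stars $st(x),st(y)$ contain a given edge $xy$, no three maximal product subcomplexes share a flat, so $RI(S(T))$ is $1$-dimensional and its underlying complex is precisely the subtree $T'$ of $T$ spanned by the non-leaf vertices, a tree with at least one edge because $T$ has diameter $\ge 3$. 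I would also record that every vertex is type-$2$ (a leaf of $T'$ still carries a leaf-neighbor of $T$ that lies in no incident edge group, while every interior vertex of $T'$ is separating), so by Lemma \ref{Type2Vertex} every vertex of $I(X(T))$ is separating, consistent with the tree structure below. The same computation recovers $RI(S(P_4))$ as a single edge, as described before the lemma.

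Next, because $|RI(S(T))|=T'$ is contractible and $1$-dimensional, Lemma \ref{Decomposition} together with Theorem \ref{GOG} shows that $RI(S(T))$ is a developable graph of groups and that a component of $I(X(T))$ is its development; by Theorem \ref{development} this development is simply connected, hence a tree, and by the connectedness of $|I(X(\Lambda))|$ proved above it is all of $I(X(T))$. In this Bass-Serre tree the number of edges at the vertex lifting $st(v)$ coming from an $RI$-edge toward $st(w)$ is the index $[\langle lk(v)\rangle:\langle w\rangle]$, the index of a rank-$1$ free factor in $\mathbb{F}_{|lk(v)|}$ with $|lk(v)|\ge 2$, which is countably infinite; as each vertex of $T'$ has at least one incident edge, every vertex of $I(X(T))$ has countably infinite degree. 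The identical count applies to $I(X(P_4))$. Since a tree all of whose vertices have countably infinite degree is unique up to isomorphism (a standard back-and-forth argument), there is a combinatorial isometry $|I(X(T))|\to|I(X(P_4))|$.

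It remains to promote this isometry to a semi-isomorphism, and here I would exploit that both complexes are $1$-dimensional, so every chain is a pair $(\text{edge},\text{endpoint})$ and the semi-morphism condition is checked edge-locally. At every vertex $st(v)$ and every incident edge, the inclusion $\langle v\rangle\times\langle w\rangle\leq_p\langle v\rangle\times\langle lk(v)\rangle$ exhibits the same pattern: one coordinate is an equality ($\langle v\rangle=\langle v\rangle$) and the other a strict free-factor inclusion ($\langle w\rangle\subsetneq\langle lk(v)\rangle$, since $|lk(v)|\ge 2$), with the stable coordinate being the central generator of the vertex. This pattern, together with the uniform vertex type $(\mathbb{Z}\times\mathbb{F})$ and edge type $(\mathbb{Z}\times\mathbb{Z})$, is the same everywhere in $I(X(T))$ and coincides verbatim with the corresponding pattern in $I(X(P_4))$. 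Consequently the coordinate-matching and quasi-isometric-type requirements of a semi-morphism hold for the isometry and, symmetrically, for its inverse, so it is a semi-isomorphism.

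I expect the main obstacle to lie not in this final, formal step but in the structural analysis of the first paragraph: pinning down exactly which join subgraphs are maximal, computing the vertex and edge groups of $RI(S(T))$, and verifying that the inclusion pattern is genuinely uniform across the complex. It is precisely this uniformity, combined with the fact that chains in a $1$-dimensional complex of join groups span only a single edge, that makes an arbitrary tree isometry automatically respect the assigned-group structure and lets one avoid any global coordinate-consistency bookkeeping.
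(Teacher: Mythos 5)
Your proposal is correct and follows essentially the same route as the paper: compute $RI(S(T))$ explicitly (vertices are stars of non-leaf vertices of $T$, vertex groups $\mathbb{Z}\times\mathbb{F}$, edge groups $\mathbb{Z}\times\mathbb{Z}$, underlying complex the leaf-removed subtree), identify $I(X(T))$ as the locally infinite Bass--Serre tree of this graph of groups via Lemma \ref{Decomposition} and Theorem \ref{GOG}, and conclude from the uniform alternating pattern of assigned groups that the tree isometry to $|I(X(P_4))|$ is a semi-isomorphism. The only difference is that you spell out two steps the paper leaves implicit — the back-and-forth argument giving the combinatorial isometry between countably-branching trees, and the edge-local verification of the chain condition — which strengthens rather than changes the argument.
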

\begin{proof}
Since $T$ is a tree, any vertex in $RI(S(T))$ is labelled by $v\circ lk(v)$ for a vertex $v\in T$ whose valency is $\geq 2$; each vertex in $RI(S(T))$ corresponds to a vertex in $T$ whose valency is $\geq 2$. The vertices in $RI(S(T))$ labelled by $v\circ lk(v)$ and $w\circ lk(w)$ are joined by an edge if and only if $v$ and $w$ are adjacent in $T$. 
So, $|RI(S(T))|$ is isometric to the subgraph of $T$ obtained by removing leaves in $T$ and in particular, $|RI(S(T))|$ is contractible. 
Then $RI(S(T))$ is a complex of join groups such that:
\begin{enumerate}
\item $|RI(S(T))|$ is a finite tree and every vertex in $RI(S(T))$ is type-2.
\item 
The assigned group of each vertex is isomorphic to $\mathbb{Z}\times\mathbb{F}$ and the assigned group of each edge is isomorphic to $\mathbb{Z}\times\mathbb{Z}$.
\item
For two endpoints $\mathbf{u}_1$ and $\mathbf{u}_2$ of each edge $\bold{E}\subset RI(S(T))$, if $G_{\mathbf{u}_1}$ is pairwise isomorphic to $\mathbb{Z}\times\mathbb{F}_n$, then $G_{\mathbf{u}_2}$ is pairwise isomorphic to $\mathbb{F}_m\times\mathbb{Z}$ for $n,m>1$ and vice-versa. And $G_\bold{E}$ is contained in $G_{\mathbf{u}_i}$ under the pairwise free factor inclusion. 
\end{enumerate}

In the paragraph above this lemma, we showed that $|I(X(P_4))|$ is a locally infinite tree of infinite diameter. Due to the fact that $|RI(S(T))|$ is contractible and 1-dimensional, by the same reason, $|I(X(T))|$ is also a locally infinite tree of infinite diameter. 
Moreover, both $I(X(T))$ and $I(X(P_4))$ have a kind of bipartite structure; for a vertex $\mathbf{u}$ whose assigned group is pairwise isomorphic to $\mathbb{Z}\times\mathbb{F}_n$, the assigned group of each vertex adjacent to $\mathbf{u}$ is pairwise isomorphic to $\mathbb{F}_m\times\mathbb{Z}$ for $n,m>1$ and vice-versa.
Therefore, $I(X(P_4))$ is semi-isomorphic to $I(X(T))$.
\end{proof}

\begin{theorem}[$\mathit{cf}$.\cite{BN}]\label{Tree}
Let $T$ be a tree of diameter $\geq 3$. For a graph $\Lambda$, if $S(\Lambda)$ is 2-dimensional, then $I(X(\Lambda))$ is semi-isomorphic to $I(X(T))$ if and only if $\Lambda$ is a tree of diameter $\geq 3$.
\end{theorem}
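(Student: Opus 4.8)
The statement is an equivalence, and I would prove the two implications separately, using throughout that semi-isomorphism is an equivalence relation on complexes of join groups (the inverse and composite of semi-morphisms are semi-morphisms) and that every semi-isomorphism preserves the quasi-isometric type of each assigned group. The forward implication is immediate: if $\Lambda$ is a tree of diameter $\geq 3$, then by Lemma~\ref{CoreofT} both $I(X(\Lambda))$ and $I(X(T))$ are semi-isomorphic to $I(X(P_4))$, so composing one semi-isomorphism with the inverse of the other yields a semi-isomorphism $I(X(\Lambda))\to I(X(T))$.

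For the reverse implication I assume a semi-isomorphism $I(X(\Lambda))\to I(X(T))$ and first transport invariants down to $RI(S(\Lambda))$. Reading off the model complex $I(X(P_4))\simeq I(X(T))$ from Lemma~\ref{CoreofT}, I record three facts about $I(X(T))$: every vertex group has quasi-isometric type $\mathbb{Z}\times\mathbb{F}$, every edge group has type $\mathbb{Z}\times\mathbb{Z}$, and $|I(X(T))|$ is a tree. Since the semi-isomorphism preserves quasi-isometric types and its underlying map is a simplicial isomorphism, all three facts hold for $I(X(\Lambda))$; and because the $A(\Lambda)$-action on $I(X(\Lambda))$ preserves labels and $\rho_{S(\Lambda)}$ is a semi-morphism (Theorem~\ref{TPBCM2}), the type statements descend to $RI(S(\Lambda))$: every vertex has type $\mathbb{Z}\times\mathbb{F}$ and every edge has type $\mathbb{Z}\times\mathbb{Z}$.

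Next I would translate these into graph theory using that the assigned group of a vertex $\mathbf{u}$ is generated by a maximal join subgraph $\Lambda_{\mathbf{u}}=\Lambda_2\circ\Lambda_3$ with $\Lambda_2,\Lambda_3$ discrete and nonempty (Section~\ref{2.3}). Type $\mathbb{Z}\times\mathbb{F}$ forces one factor to be a single vertex, so maximality gives $\Lambda_{\mathbf{u}}=\{v\}\circ lk(v)=st(v)$ for a unique vertex $v$ of valency $\geq 2$ (a second vertex added to the singleton side would create a $K_{2,2}$, i.e. an $\mathbb{F}\times\mathbb{F}$-vertex); in particular $\Lambda$ has no induced $4$-cycle. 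The same dictionary shows $\mathbf{u}_v,\mathbf{u}_{v'}$ span an edge exactly when $st(v)\cap st(v')$ contains an edge, which for $v\neq v'$ happens iff $vv'$ is an edge of $\Lambda$. Hence three pairwise-adjacent vertices of $RI(S(\Lambda))$ would produce a triangle in $\Lambda$, impossible; so there is no $2$-simplex, $|RI(S(\Lambda))|$ is $1$-dimensional, and $\mathbf{u}\mapsto v(\mathbf{u})$ identifies $|RI(S(\Lambda))|$ with the full subgraph of $\Lambda$ on its valency-$\geq 2$ vertices. Now $|I(X(\Lambda))|$ is a tree, hence simply connected, so Proposition~\ref{ContractibleRI} makes $|RI(S(\Lambda))|$ simply connected; a $1$-dimensional simply connected complex is a tree. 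Since every cycle of $\Lambda$ lies in that subgraph, $\Lambda$ is acyclic, hence (being connected) a tree; alternatively, once no induced cycle of length $\leq 4$ is known, Corollary~\ref{Ncycle2} directly forbids induced $m$-cycles for $m\geq 5$. Finally, a tree of diameter $\leq 2$ is a single edge or a star, hence complete bipartite, for which $|I(X(\Lambda))|$ is a single vertex; as $|I(X(\Lambda))|\cong|I(X(T))|$ has infinite diameter, $\Lambda$ must have diameter $\geq 3$.

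The step I expect to be the main obstacle is the dimensional bookkeeping in the reverse direction: one must verify carefully that "no $\mathbb{F}\times\mathbb{F}$-vertex'' genuinely forces every maximal join subgraph to be a star $st(v)$ and that no simplex of dimension $\geq 2$ occurs. This is precisely what guarantees $|RI(S(\Lambda))|$ is a graph, so that an induced cycle there is homotopically nontrivial and simple connectedness really prohibits cycles in $\Lambda$ — without $1$-dimensionality a full cycle could conceivably bound a triangulated disk, and the argument would collapse.
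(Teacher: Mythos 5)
Your proof is correct, and your forward direction coincides with the paper's (both factor through Lemma \ref{CoreofT} and $I(X(P_4))$), but your reverse implication takes a genuinely different route. The paper argues by an exhaustive case analysis on $\Lambda$: if $\Lambda$ is a tree of diameter $<3$ then $|I(X(\Lambda))|$ is a single vertex; if $\Lambda$ has an induced $4$-cycle then $I(X(\Lambda))$ has a vertex of type $\mathbb{F}\times\mathbb{F}$, which $I(X(T))$ lacks; and if $\Lambda$ has an induced $n$-cycle with $n\geq 5$ then Lemma \ref{Ncycle} produces, inside $|RI(S(\Lambda))|$ and hence inside $|I(X(\Lambda))|$, either an induced $n$-cycle or a $k$-simplex with $k\geq 2$, both impossible in the tree $|I(X(T))|$. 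You instead reconstruct the graph directly: type preservation forces every maximal join subgraph of $\Lambda$ to be a star $st(v)$, triangle-freeness turns adjacency in $RI(S(\Lambda))$ into adjacency in $\Lambda$ and excludes all simplices of dimension $\geq 2$, so $|RI(S(\Lambda))|$ is graph-isomorphic to the full subgraph of $\Lambda$ on vertices of valency $\geq 2$; then simple connectivity descends from the tree $|I(X(\Lambda))|$ via Proposition \ref{ContractibleRI}, making that core a tree, hence $\Lambda$ acyclic, hence a tree, with the diameter bound recovered exactly as in the paper. Your route avoids Lemma \ref{Ncycle} entirely (and your fallback through Corollary \ref{Ncycle2} is also valid), at the cost of re-deriving the star/adjacency dictionary that the paper absorbs into Lemma \ref{CoreofT} and Lemma \ref{Ncycle}; what it buys is a sharper structural conclusion — an explicit identification of $|RI(S(\Lambda))|$ with the valency-$\geq 2$ core of $\Lambda$ — whereas the paper's case split is shorter given that Lemma \ref{Ncycle} is already available. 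The dimensional point you flagged as the main obstacle is indeed the crux, and your handling of it is sound: any $2$-simplex of $RI(S(\Lambda))$ would have three pairwise adjacent vertices, which under your dictionary yields a triangle in $\Lambda$, contradicting $2$-dimensionality of $S(\Lambda)$.
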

\begin{proof}
Suppose that $\Lambda$ is a tree. By Lemma \ref{CoreofT}, if $\Lambda$ has diameter $\geq 3$, then $I(X(\Lambda))$ is semi-isomorphic to $I(X(T))$.
If $\Lambda$ is a tree of diameter $<3$, then $|I(X(\Lambda))|$ is a vertex so that $I(X(\Lambda))$ is not semi-isomorphic to $I(X(T))$.

Suppose that $\Lambda$ has an induced $n$-cycle ($n>3$).
If $n=4$, then $I(X(\Lambda))$ has a vertex whose assigned group is quasi-isometric to $\mathbb{F}\times\mathbb{F}$. Since the assigned group of every vertex in $I(X(T))$ is quasi-isometric to $\mathbb{Z}\times\mathbb{F}$, $I(X(\Lambda))$ cannot be semi-isomorphic to $I(X(T))$.
If $n\geq 5$, by Lemma \ref{Ncycle}, $|RI(\Lambda)|$ and thus $|I(X(\Lambda))|$ have either an induced $n$-cycle or a $k$-simplex for $k\geq 2$. In particular, $I(X(\Lambda))$ is not semi-isomorphic to $I(X(T))$. 
\end{proof}

In \cite{BN}, Behrstock and Neumann studied about the quasi-isometric rigidity in a class of graphs of groups. Their main result can be reformulated in our language as follows:
\begin{theorem}[\cite{BN}]\label{GraphofGroups}
Let $X_i$ be a graph of groups for $i=1,2$ such that
\begin{enumerate}
\item
The assigned group $G_{\mathbf{u}}$ of every vertex $\mathbf{u}$ is isomorphic to $\mathbb{Z}\times\mathbb{F}$, and the assigned group $G_\bold{E}$ of every edge $\bold{E}$ is isomorphic to $\mathbb{Z}\times\mathbb{Z}$.
\item(Flipping)
For two endpoints $\mathbf{u}_1$ and $\mathbf{u}_2$ of each edge $\bold{E}$, let $G_{\mathbf{u}_1}=\langle a_1\rangle\times\langle a_2,\cdots,a_n\rangle$, $G_{\mathbf{u}_2}=\langle b_1\rangle\times\langle b_2,\cdots,b_m\rangle$ for $n,m\geq 3$ and $G_\bold{E}=\langle x\rangle\times\langle y\rangle$. If the embedding $G_\bold{E}\hookrightarrow G_{\mathbf{u}_1}$ maps $x$ to $a_1$ and $y$ maps to $a_i$ for $2\leq i\leq n$, then $G_\bold{E}\hookrightarrow G_{\mathbf{u}_2}$ maps $x$ to $b_j$ for $2\leq j\leq m$ and $y$ maps to $b_1$ and vice-versa.
\end{enumerate}
Then, the fundamental groups of $X_1$ and $X_2$ (considered as graph of groups) are quasi-isometric.
\end{theorem}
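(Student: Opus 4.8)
The plan is to follow the tree-of-spaces strategy of Behrstock--Neumann: realize each $G_i:=\pi_1(X_i)$ geometrically, record the gluing pattern of the graph of groups as a decoration on its Bass--Serre tree, and show that hypotheses (1) and (2) force all such decorated trees to be bisimilar, so that all the groups $G_i$ lie in a single quasi-isometry class. First I would build a geometric model. For a vertex $\mathbf{u}$ with $G_{\mathbf{u}}\cong\mathbb{Z}\times\mathbb{F}$, take the vertex space $V_{\mathbf{u}}=S^1\times W$, where $W$ is a wedge of circles with $\pi_1(W)\cong\mathbb{F}$; its universal cover is $\mathbb{R}\times T$ with $T$ a regular tree, and I call the $\mathbb{R}$-factor the \emph{fiber direction}. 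For an edge $\mathbf{E}$ with $G_{\mathbf{E}}\cong\mathbb{Z}\times\mathbb{Z}$, glue the two adjacent vertex spaces along a flat torus using the edge monomorphisms. On the universal cover $\widetilde{X}_i$ of the resulting graph of spaces $\widehat{X}_i$ the group $G_i$ acts geometrically, so by the Milnor--\v{S}varc lemma $G_i$ is quasi-isometric to $\widetilde{X}_i$. This space decomposes into \emph{pieces}, each a copy of $\mathbb{R}\times T$, glued along \emph{gluing flats} of the form $\mathbb{R}\times\ell$ with $\ell$ an axis in $T$, and its nerve is exactly the Bass--Serre tree $\mathcal{T}_i$ of $X_i$.

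Next I would read off what the flip condition buys. In $G_{\mathbf{u}_1}=\langle a_1\rangle\times\langle a_2,\dots,a_n\rangle$ the edge group $\langle x\rangle\times\langle y\rangle$ maps to $\langle a_1\rangle\times\langle a_i\rangle$, so the gluing flat is $(\text{fiber of piece }1)\times(\text{axis of }a_i)$; condition (2) says that under the adjacent embedding into $G_{\mathbf{u}_2}=\langle b_1\rangle\times\langle b_2,\dots,b_m\rangle$ the roles reverse, $x\mapsto b_j$ and $y\mapsto b_1$, so that the fiber line of piece $1$ becomes a tree-axis of piece $2$ while a tree-axis of piece $1$ becomes the fiber line of piece $2$. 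Thus no fiber line ever survives into a neighboring piece, and the coarse branching structure is homogeneous: every piece carries gluing flats spread like the branch lines of $T$, and every crossing \emph{flips} fiber and base. I would encode this uniform local picture as a decoration of $\mathcal{T}_i$ recording the incident-flat valences at each vertex and the flip at each edge; by (1) and (2) this decoration has a single local type, independent of $i$.

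The heart of the argument is the reduction of quasi-isometry to a combinatorial matching of these decorated trees together with its converse. Concretely, one sets up a \emph{bisimulation}: a relation between $\mathcal{T}_1$ and $\mathcal{T}_2$ that respects the decorations and is surjective on $1$-neighborhoods in both directions. Because every vertex has the same local type, the flip hypothesis guarantees that $\mathcal{T}_1$ and $\mathcal{T}_2$ are bisimilar. Given such a bisimulation, one constructs the quasi-isometry $\widetilde{X}_1\to\widetilde{X}_2$ piece by piece, sending each $\mathbb{R}\times T$ to a matched piece and interpolating across the gluing flats, and composing with the Milnor--\v{S}varc quasi-isometries then yields that $G_1$ and $G_2$ are quasi-isometric.

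I expect the main obstacle to be exactly this last construction: assembling the piecewise maps into a genuine quasi-isometry with constants uniform over all of $\widetilde{X}_i$. The difficulty is metric distortion along the gluing flats, since a geodesic may cross arbitrarily many pieces and the fiber and base directions interchange at each crossing; one must verify (via a straightening, or "template," argument in the spirit of \cite{BN}) that the flip condition keeps the accumulated distortion linear, so that a path which is efficient in $\widetilde{X}_1$ maps to one that is efficient in $\widetilde{X}_2$ and conversely. Once this is in hand, the conclusion follows for arbitrary $X_1,X_2$ subject to (1) and (2), placing all such fundamental groups in one quasi-isometry class; in particular this recovers, through the graph-of-groups description $RI(S(T))$ used in Theorem \ref{Tree}, that the groups $A(T)$ for trees $T$ of diameter $\geq 3$ are mutually quasi-isometric.
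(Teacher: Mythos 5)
The paper does not prove Theorem \ref{GraphofGroups} at all: it states the result as a reformulation, in the language of complexes of join groups, of the main theorem of Behrstock--Neumann \cite{BN}, and then uses it as a black box (e.g.\ in Corollary \ref{QItoAT} and Proposition \ref{QItoRAAG}). So there is no internal proof to compare your attempt against; the relevant comparison is with the cited source, and your sketch is essentially a reconstruction of that argument: realize each vertex group $\mathbb{Z}\times\mathbb{F}$ by a piece whose universal cover is $\mathbb{R}\times T$, glue adjacent pieces along flats prescribed by the $\mathbb{Z}\times\mathbb{Z}$ edge groups, observe that the flip condition forces fiber and base directions to interchange at every crossing, and then match the resulting decorated Bass--Serre trees so as to assemble a quasi-isometry piece by piece. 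That is the correct strategy, and it is the one used in \cite{BN}. The one caveat is that the step you flag as the main obstacle---promoting the piecewise identifications to a genuine quasi-isometry with uniform constants, controlling distortion across arbitrarily many flips---is not an incidental verification but the technical core of \cite{BN} (their bi-Lipschitz results for fattened trees and the inductive extension of maps across gluing flats); your proposal correctly locates this difficulty but, like the paper itself, ultimately defers it to the cited work rather than resolving it.
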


If $T$ is a tree of diameter $\geq 3$, then $RI(S(T))$ is a graph of groups decomposition of $A(T)$ by Lemma \ref{Decomposition} and satisfies the conditions in Theorem \ref{GraphofGroups}. Therefore, if $T$ and $T'$ are trees of diameter $\geq 3$, not only $I(X(T))$ and $I(X(T'))$ are semi-isomorphic, but also $A(T)$ and $A(T')$ are quasi-isometric.

\begin{remark}
If $\Lambda$ is square-free, then $RI(S(\Lambda))$ is a graph of groups. However, if $|RI(S(\Lambda))|$ is not simply connected, by Proposition \ref{ContractibleRI} and Theorem \ref{GOG}, $I(X(\Lambda))$ is not the Bass-Serre tree associted to $RI(S(\Lambda))$. By Remark \ref{RmkofS}, moreover, $A(\Lambda)$ is not isomorphic to $\pi_1(RI(S(\Lambda)))$.
\end{remark}

The second example is the class of RAAGs whose outer automorphism groups are finite. 
By the results in \cite{Ser89} and \cite{Lau95}, the outer automorphism group $\Out(A(\Lambda))$ of $A(\Lambda)$ is generated by the following four types of elements with respect to the standard generating set of $A(\Lambda)$ which is the collection of vertices in $\Lambda$:
\begin{enumerate}
\item Automorphisms induced by graph automorphisms of $\Lambda$,
\item Inversions: $v_i\mapsto v^{-1}_i$ for a vertex $v_i\in\Lambda$ and fix all the other vertices,
\item Transvections: For two vertices $v,w\in\Lambda$, $w\mapsto wv$ if $lk(w)\subset st(v)$, and fix all the other vertices,
\item Partial conjugations: Suppose that for a vertex $v\in\Lambda$, $\Lambda-st(v)$ is disconnected and $C$ is one of its components. Then $w\mapsto vwv^{-1}$ for vertices $w\in C$ and fix all the other vertices.
\end{enumerate}
In particular, $\Out(A(\Lambda))$ is finite if it does not have transvections and partial conjugations, i.e. $\Lambda$ has no closed separating stars and there are no two distinct vertices $v,w\in\Lambda$ such that $lk(w)\subset st(v)$.

Recall that $\{R_x\}_{x\in X(\Lambda)}$ is the cover of $I(X(\Lambda))$ where $R_x$ is a copy of $RI(S(\Lambda))$ such that $\rho_{S(\Lambda)}(R_x)=RI(S(\Lambda))$.
Suppose that $\Out(A(\Lambda))$ has no transvections. Then the star of any vertex in $\Lambda$ is a maximal join subgraph of $\Lambda$ and there is an embedding $\tau:\Lambda\rightarrow |RI(S(\Lambda))|^{(1)}$ which sends each vertex $v\in\Lambda$ to a vertex in $RI(S(\Lambda))$ labelled by $st(v)$. 
It means that the image of $\Lambda$ in $|RI(S(\Lambda))|$ under $\tau$ is the subcomplex of $|RI(S(\Lambda))|$ spanned by vertices whose assigned groups are quasi-isometric to $\mathbb{Z}\times\mathbb{F}$.
For convenience, let $\tau(\Lambda)$ be the subcomplex of $RI(S(\Lambda))$ whose underlying complex is the image of $\Lambda$ under $\tau$ (note that $|\tau(\Lambda)|$ is isometric to $\Lambda$ and $\tau(\Lambda)$ may not be a reduced intersection complex).
Any vertex whose assigned group is quasi-isometric to $\mathbb{F}\times\mathbb{F}$ is adjacent to some vertices in $\tau(\Lambda)$.  
Let $\overline{\tau(\Lambda)}$ be the preimage of $\tau(\Lambda)$ in $I(X(\Lambda))$ under $\rho_{S(\Lambda)}$. 
Then $\overline{\tau(\Lambda)}$ is the subcomplex of $I(X(\Lambda))$ spanned by the vertices whose assigned groups are quasi-isometric to $\mathbb{Z}\times\mathbb{F}$ and $|\overline{\tau(\Lambda)}|$ is exactly the extension complex of $\Lambda$ defined in \cite{Hua(a)}. From the result of \cite{Hua(a)}, we obtain the following fact.

\begin{theorem}\label{FiniteOut}
Let $\Lambda$ and $\Lambda_*$ be triangle-free graphs such that $\Out(A(\Lambda))$ and $\Out(A(\Lambda_*))$ are finite.
Then $\Lambda$ and $\Lambda_*$ are isometric if and only if $RI(S(\Lambda))$ and $RI(S(\Lambda_*))$ are semi-isomorphic if and only if $I(X(\Lambda))$ and $I(X(\Lambda_*))$ are semi-isomorphic.
\end{theorem}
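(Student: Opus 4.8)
The plan is to prove the two equivalences $(1)\Leftrightarrow(2)$ and $(1)\Leftrightarrow(3)$, where $(1)$ is ``$\Lambda$ and $\Lambda_*$ are isometric'', $(2)$ is ``$RI(S(\Lambda))$ and $RI(S(\Lambda_*))$ are semi-isomorphic'', and $(3)$ is ``$I(X(\Lambda))$ and $I(X(\Lambda_*))$ are semi-isomorphic''. The common device is the embedding $\tau\colon\Lambda\to|RI(S(\Lambda))|^{(1)}$ set up in the paragraph preceding the theorem. First I would record the combinatorial fact that the finiteness of $Out(A(\Lambda))$ forces $\Lambda$ to have no leaves and no transvections, so every vertex has valency $\geq 2$ and $st(v)$ is a maximal join subgraph; consequently the vertices of $RI(S(\Lambda))$ whose assigned groups have quasi-isometric type $\mathbb{Z}\times\mathbb{F}$ are exactly the $st(v)$. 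I would then check that the full subcomplex spanned by these vertices is isometric to $\Lambda$: vertices $st(v),st(w)$ are joined by an edge iff $st(v)\cap st(w)$ contains a join subgraph, which by triangle-freeness happens iff $v\sim w$ (common neighbors of non-adjacent vertices form a discrete set); and no three distinct stars can share an edge $xy$, since $xy\subset st(u)$ with $\Lambda$ triangle-free forces $u\in\{x,y\}$, and $u,v,w$ distinct cannot all lie in a two-element set. Hence this subcomplex is $1$-dimensional and equals $\tau(\Lambda)$ with $|\tau(\Lambda)|\cong\Lambda$; lifting through $\rho_{S(\Lambda)}$, the preimage $\overline{\tau(\Lambda)}$ realizes the extension complex of $\Lambda$ from \cite{Hua(a)}.

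For the easy directions $(1)\Rightarrow(2)$ and $(1)\Rightarrow(3)$ I would argue by functoriality. A graph isometry $\Lambda\to\Lambda_*$ induces an isomorphism $A(\Lambda)\to A(\Lambda_*)$ and hence equivariant isometries $S(\Lambda)\to S(\Lambda_*)$ and $X(\Lambda)\to X(\Lambda_*)$; any such isometry carries maximal product subcomplexes to maximal product subcomplexes, preserves their intersection pattern, and preserves the pairwise isomorphism type of every assigned group, so it induces semi-isomorphisms $RI(S(\Lambda))\to RI(S(\Lambda_*))$ and $I(X(\Lambda))\to I(X(\Lambda_*))$ (the latter also follows from Theorem \ref{QIimpliesIso2}, viewing an isometry as a quasi-isometry).

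For $(2)\Rightarrow(1)$ I would use that a semi-isomorphism preserves the quasi-isometric type of assigned groups. It therefore maps the subcomplex of $RI(S(\Lambda))$ spanned by the $\mathbb{Z}\times\mathbb{F}$-type vertices isometrically onto the corresponding subcomplex of $RI(S(\Lambda_*))$; by the first paragraph these subcomplexes are $\tau(\Lambda)\cong\Lambda$ and $\tau(\Lambda_*)\cong\Lambda_*$, so the restriction is a graph isometry $\Lambda\cong\Lambda_*$. This direction is elementary and uses no external rigidity input.

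The one hard implication is $(3)\Rightarrow(1)$, and it is where \cite{Hua(a)} enters. A semi-isomorphism $\Phi\colon I(X(\Lambda))\to I(X(\Lambda_*))$ again preserves quasi-isometric types, so it restricts to an isometry $\overline{\tau(\Lambda)}\to\overline{\tau(\Lambda_*)}$ between the subcomplexes spanned by the $\mathbb{Z}\times\mathbb{F}$-type vertices. Since $|\overline{\tau(\Lambda)}|$ is precisely the extension complex of $\Lambda$ (and likewise for $\Lambda_*$), I would invoke Huang's rigidity theorem for RAAGs with finite outer automorphism group, to the effect that the extension complex determines the defining graph, to conclude $\Lambda\cong\Lambda_*$. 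The main obstacle I expect is exactly this final step: verifying that the isometry produced by $\Phi$ respects the structure Huang's theorem requires of a map between extension complexes — that is, that it is compatible with the labelling/support data carried by the vertices rather than being merely an abstract simplicial isometry — so that his theorem applies verbatim. Once that compatibility is confirmed, the two equivalences together give the full chain of biconditionals.
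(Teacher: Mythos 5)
Your proposal is correct and takes essentially the same route as the paper: the same device of the subcomplexes $\tau(\Lambda)\subset RI(S(\Lambda))$ and $\overline{\tau(\Lambda)}\subset I(X(\Lambda))$ spanned by the $\mathbb{Z}\times\mathbb{F}$-type vertices, the same observation that a semi-isomorphism restricts to an isometry of these subcomplexes because it preserves quasi-isometric types, and the same appeal to Theorem 1.1 of \cite{Hua(a)} for the implication from isometric extension complexes to isometric graphs. Your final compatibility worry is not an obstacle, since the paper invokes Huang's theorem precisely in the form that an abstract isometry of extension complexes suffices, and your first-paragraph verifications are exactly the facts the paper records (without proof) in the paragraph preceding the theorem.
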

\begin{proof}
Obviously, if $\Lambda$ and $\Lambda_*$ are isometric, then $RI(S(\Lambda))$ and $RI(S(\Lambda_*))$ are semi-isomorphic, and $I(X(\Lambda))$ and $I(X(\Lambda_*))$ are semi-isomorphic.

Suppose that there is a semi-isomorphism $\Phi:RI(S(\Lambda))\rightarrow RI(S(\Lambda_*))$. 
Let $\tau:\Lambda\rightarrow |RI(S(\Lambda))|$ and $\tau_*:\Lambda_*\rightarrow |RI(S(\Lambda_*))|$ be the embeddings given as before with subcomplexes $\tau(\Lambda)\subset RI(S(\Lambda))$ and $\tau_*(\Lambda_*)\subset RI(S(\Lambda_*))$, respectively.
Then the image of $\tau(\Lambda)\subset RI(S(\Lambda))$ under $\Phi$ is $\tau_*(\Lambda_*)\subset RI(S(\Lambda_*))$ since the semi-isomorphism $\Phi$ preserves the quasi-isometric type of labels of vertices. Therefore, $\Lambda$ and $\Lambda_*$ are isometric.

Suppose that there is a semi-isomorphism $\Phi:I(X(\Lambda))\rightarrow I(X(\Lambda_*))$.
Then the restriction of $\Phi$ to $\overline{\tau(\Lambda)}\subset I(X(\Lambda))$ induces an isometry between $|\overline{\tau(\Lambda)}|$ and $|\overline{\tau_*(\Lambda_*)}|\subset |I(X(\Lambda_*))|$ since $\Phi$ preserves the quasi-isometric type of labels of vertices.
Theorem 1.1 in \cite{Hua(a)} asserts that under our assumption on $\Lambda$ and $\Lambda_*$, if the extension complexes of $\Lambda$ and $\Lambda_*$ are isometric, then $\Lambda$ and $\Lambda_*$ are isometric. Therefore, $\Lambda$ and $\Lambda_*$ are isometric. 
\end{proof}

\begin{corollary}\label{FiniteOut2}
Suppose that $\Lambda$ and $\Lambda_*$ are given as in Theorem \ref{FiniteOut}. 
Then $\Lambda$ and $\Lambda_*$ are isometric if and onyl if $A(\Lambda)$ and $A(\Lambda_*)$ are quasi-isometric.
\end{corollary}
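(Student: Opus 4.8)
The plan is to reduce the claim to the equivalence already proved in Theorem \ref{FiniteOut}, bridging the gap between the groups $A(\Lambda)$, $A(\Lambda_*)$ and the universal covers $X(\Lambda)$, $X(\Lambda_*)$ of their Salvetti complexes by the Milnor--\v{S}varc lemma.

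The forward implication is immediate: if $\Lambda$ and $\Lambda_*$ are isometric, then they are isomorphic as simplicial graphs, so $A(\Lambda)$ and $A(\Lambda_*)$ are isomorphic groups, and any group isomorphism is in particular a quasi-isometry.

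For the converse, I would first pass from a quasi-isometry of groups to a quasi-isometry of spaces. Under the standing assumptions of this subsection---$\Lambda$ connected, triangle-free, and containing at least one edge---the Salvetti complex $S(\Lambda)$ is a compact weakly special square complex (and $2$-dimensional), so $A(\Lambda) = \pi_1(S(\Lambda))$ acts freely, cocompactly, and by isometries on its universal cover $X(\Lambda)$; the same holds for $\Lambda_*$. By the Milnor--\v{S}varc lemma, $A(\Lambda)$ is quasi-isometric to $X(\Lambda)$ and $A(\Lambda_*)$ is quasi-isometric to $X(\Lambda_*)$, so a quasi-isometry $A(\Lambda)\to A(\Lambda_*)$ produces a quasi-isometry $\phi:X(\Lambda)\to X(\Lambda_*)$. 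Since $X(\Lambda)$ and $X(\Lambda_*)$ are the universal covers of compact weakly special square complexes, Theorem \ref{QIimpliesIso2} applies and $\phi$ induces a semi-isomorphism $\Phi:I(X(\Lambda))\to I(X(\Lambda_*))$. Finiteness of $Out(A(\Lambda))$ and $Out(A(\Lambda_*))$ then lets me invoke Theorem \ref{FiniteOut}, whose last clause says that the existence of such a semi-isomorphism forces $\Lambda$ and $\Lambda_*$ to be isometric.

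I do not expect a substantial obstacle, since all the hard work is carried by Theorem \ref{FiniteOut} (which itself rests on Huang's rigidity theorem for extension complexes, Theorem 1.1 in \cite{Hua(a)}) and by the quasi-isometry invariance of the intersection complex established in Theorem \ref{QIimpliesIso2}. The only details demanding care are confirming that the hypotheses guarantee $S(\Lambda)$ is a compact weakly special square complex---so that both the geometric action and the intersection-complex machinery are simultaneously available---and the routine application of Milnor--\v{S}varc to identify each RAAG with the universal cover of its Salvetti complex up to quasi-isometry.
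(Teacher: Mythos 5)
Your proposal is correct and follows essentially the same route as the paper: the converse is exactly the paper's contrapositive argument (quasi-isometry of the groups yields, via Theorem \ref{QIimpliesIso2}, a semi-isomorphism of intersection complexes, which Theorem \ref{FiniteOut} converts into an isometry of the defining graphs). The only difference is that you spell out the Milnor--\v{S}varc identification of $A(\Lambda)$ with $X(\Lambda)$, which the paper leaves implicit.
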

\begin{proof}
The forward direction is obvious. 
If $\Lambda_1$ and $\Lambda_2$ are not isometric, by Theorem \ref{FiniteOut}, $I(X(\Lambda_1))$ and $I(X(\Lambda_2))$ are not semi-isomorphic. By Theorem \ref{QIimpliesIso2}, $X(\Lambda_1)$ and $X(\Lambda_2)$ are not quasi-isometric. 
\end{proof}

By using (reduced) intersection complexes, we can show that adding leaves to the defining graph may change the quasi-isometric type of the RAAG. 

\begin{lemma}\label{FiniteOutNoSV}
If $\Out(A(\Lambda))$ is finite, then every vertex in $RI(S(\Lambda))$ is type-1. In particular, $|I(X(\Lambda))|$ has no separating vertices.
\end{lemma}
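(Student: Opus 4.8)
The plan is to prove the statement about $RI(S(\Lambda))$ directly and then pass to $I(X(\Lambda))$ through Lemma \ref{Type2Vertex}. First I would record the two consequences of finiteness of $Out(A(\Lambda))$ that I need. Absence of transvections forces $|lk(v)|\geq 2$ for every vertex $v$ (otherwise $lk(v)$ is a single vertex $w$ with $lk(v)\subseteq st(w)$, a transvection), so that $st(v)=v\circ lk(v)$ is a maximal join subgraph whose assigned group is $\mathbb{Z}\times\mathbb{F}$, and no maximal join subgraph is a single edge. Absence of partial conjugations means $\Lambda-st(v)$ is connected for every $v$. I would also use the embedding $\tau\colon\Lambda\hookrightarrow RI(S(\Lambda))$ sending $v$ to the vertex labelled $st(v)$, together with the elementary fact (forced by triangle-freeness) that two star vertices $\tau(v),\tau(w)$ are adjacent in $RI(S(\Lambda))$ exactly when $v,w$ are adjacent in $\Lambda$; thus $\tau(\Lambda)$ is isometric to $\Lambda$, and more generally $\tau$ carries the full subgraph of $\Lambda$ on a vertex set $S$ onto a connected subcomplex whenever that full subgraph is connected. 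Each maximal join subgraph is then either a star $st(v_0)$ (assigned group $\mathbb{Z}\times\mathbb{F}$, ``Case B'') or has both join factors of size $\geq 2$ (assigned group $\mathbb{F}\times\mathbb{F}$, ``Case A'').

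For the generation condition at a vertex $\mathbf{u}$ with label $\Lambda_{\mathbf{u}}=\Lambda_2\circ\Lambda_3$: since $G_{\mathbf{u}}=A(\Lambda_{\mathbf{u}})$ is generated by the vertices of $\Lambda_{\mathbf{u}}$, it suffices to place each such vertex in the assigned group of a neighbour of $\mathbf{u}$. For $v\in\Lambda_{\mathbf{u}}$, the subgraph $v\circ\Lambda_3$ (or $\Lambda_2\circ v$) lies in both $\Lambda_{\mathbf{u}}$ and $st(v)$ and contains an edge, so $\tau(v)$ is adjacent to $\mathbf{u}$ whenever $\tau(v)\neq\mathbf{u}$, and then $v\in A(st(v))=G_{\tau(v)}$. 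The only generator for which $\tau(v)=\mathbf{u}$ occurs in Case B, for $v=v_0$ with $\Lambda_{\mathbf{u}}=st(v_0)$; here I would instead choose any $w\in\Lambda_3=lk(v_0)$, note $\tau(w)\neq\mathbf{u}$ (a second vertex of $\Lambda_3$ lies in $st(v_0)$ but not in $st(w)$, by triangle-freeness), and observe $v_0\in st(w)$, hence $v_0\in A(st(w))=G_{\tau(w)}$ with $\tau(w)$ adjacent to $\mathbf{u}$. This exhibits $G_{\mathbf{u}}\subseteq\langle G_{\mathbf{u}_i}\mid \mathbf{u}_i\ \text{adjacent to}\ \mathbf{u}\rangle$.

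The harder part is to show that no vertex $\mathbf{u}$ separates $RI(S(\Lambda))$; this is where the partial-conjugation hypothesis enters. If $\mathbf{u}$ is a Case A ($\mathbb{F}\times\mathbb{F}$) vertex, then $\mathbf{u}\notin\tau(\Lambda)$, the connected complex $\tau(\Lambda)$ lies in $RI(S(\Lambda))\setminus\{\mathbf{u}\}$, and every remaining vertex attaches to it: any $\mathbb{F}\times\mathbb{F}$ vertex $\mathbf{w}\neq\mathbf{u}$ is adjacent to $\tau(v)$ for some $v\in\Lambda_{\mathbf{w}}$, and such $\tau(v)\neq\mathbf{u}$ since the types differ. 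If $\mathbf{u}=\tau(v_0)$ is a star vertex, I would take $S_0:=\tau(\Lambda-st(v_0))$, which is connected (and non-empty, since $\Lambda$ has diameter $\geq 3$) by the no-partial-conjugation hypothesis, and show every other vertex connects to $S_0$ without using $\mathbf{u}$: for $w\in lk(v_0)$ the bound $|lk(w)|\geq 2$ and triangle-freeness produce a neighbour $y$ of $w$ with $y\notin st(v_0)$, so $\tau(w)$ is joined to $\tau(y)\in S_0$; and each $\mathbb{F}\times\mathbb{F}$ vertex is adjacent to some $\tau(v)$ with $v\neq v_0$, which connects to $S_0$ by the previous step. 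Hence $RI(S(\Lambda))\setminus\{\mathbf{u}\}$ is connected in every case.

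Combining the two parts, every vertex of $RI(S(\Lambda))$ is type-1. Since a vertex of $I(X(\Lambda))$ is type-$i$ precisely when its image under $\rho_{S(\Lambda)}$ is type-$i$, every vertex of $I(X(\Lambda))$ is type-1, and Lemma \ref{Type2Vertex} then gives that none of them is a separating vertex, so $|I(X(\Lambda))|$ has no separating vertices. I expect the main obstacle to be the star-vertex case of non-separation: the subtlety is that a separating vertex of $RI(S(\Lambda))$ need not correspond to a cut vertex of $\Lambda$ itself (indeed $\Lambda-v_0$ may be disconnected), so the argument must be routed through the correct invariant $\Lambda-st(v_0)$ supplied by the partial-conjugation hypothesis rather than through $\Lambda-v_0$, and the reductions connecting the $lk(v_0)$-vertices and the $\mathbb{F}\times\mathbb{F}$-vertices back to $S_0$ must be arranged so as never to pass through $\mathbf{u}$.
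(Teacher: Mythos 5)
Your proof is correct, and its skeleton matches the paper's: verify the generation condition, verify non-separation using the star embedding $\tau$, then conclude via Lemma \ref{Type2Vertex}. The one place where you genuinely diverge is the non-separation argument for a star vertex $\mathbf{u}=\tau(v_0)$. The paper disposes of this case in one line by asserting that $|\tau(\Lambda)|$, which is isometric to $\Lambda$, has no separating vertices; this is true, but it is itself a consequence of the hypotheses that the paper leaves unproven (if $v_0$ were a cut vertex of $\Lambda$, connectedness of $\Lambda-st(v_0)$ would force some component of $\Lambda-v_0$ to lie entirely inside $lk(v_0)$, and triangle-freeness would then make every vertex of that component a leaf, contradicting the absence of transvections). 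You instead route the argument through $\Lambda-st(v_0)$, whose connectedness is literally the no-partial-conjugation hypothesis, and then reconnect the $lk(v_0)$-stars and the $\mathbb{F}\times\mathbb{F}$ vertices to $\tau(\Lambda-st(v_0))$ by hand; this is longer but self-contained, whereas the paper's version is shorter but hides the cut-vertex fact. One side remark of yours is inaccurate: under these hypotheses $\Lambda-v_0$ can in fact never be disconnected (by the argument just sketched), so the ``subtlety'' you flag at the end is vacuous --- though since your proof never relies on that remark, nothing breaks. Your treatment of the generation condition (which the paper dismisses as obvious) and of the $\mathbb{F}\times\mathbb{F}$ case agrees with the paper's, the latter resting on adjacency to the complete bipartite family of star vertices.
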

\begin{proof}
Since there is no leaf in $\Lambda$, it is obvious that for any vertex $\mathbf{u}\in RI(S(\Lambda))$, $G_\mathbf{u}$ is contained in the subgroup of $A(\Lambda)$ generated by $G_{\mathbf{u}'}$ for all the vertices $\mathbf{u}'$ adjacent to $\mathbf{u}$.
The vertex in $RI(S(\Lambda))$ whose label is $st(v)$ for a vertex $v\in\Lambda$ is not a separating; this vertex is in $\tau(\Lambda)\subset RI(S(\Lambda))$ and $|\tau(\Lambda)|$ has no separating vertices. 
Let $\mathbf{u}$ be a vertex whose assigned group is quasi-isometric to $\mathbb{F}\times\mathbb{F}$. Let $\{u_i\}_{i\in I}\circ\{v_j\}_{j\in J}$ be the label of $\mathbf{u}$. 
Then $\mathbf{u}$ is adjacent to the vertices labelled by $st(u_i)$'s and $st(v_j)$'s. Moreover, these vertices span a complete bipartite graph in $|RI(S(\Lambda))|$ which is isometric to $\{u_i\}_{i\in I}\circ\{v_j\}_{j\in J}$. So, $\mathbf{u}$ is not a separating vertex. 
Therefore, every vertex in $RI(S(\Lambda))$ is type-1 and by Lemma \ref{Type2Vertex}, $I(X(\Lambda))$ has no separating vertices.
\end{proof}

\begin{proposition}\label{ValencyOne}
Let $\Lambda$ be a graph with finite $\Out(A(\Lambda))$ and let $\Lambda'$ be the graph obtained from $\Lambda$ by attaching an edge to a vertex $v\in\Lambda$.
Then $|I(X(\Lambda'))|$ is the union of copies of $|I(X(\Lambda))|$ such that the intersection of two copies is either a vertex or an empty set.
In particular, $I(X(\Lambda))$ and $I(X(\Lambda'))$ are not semi-isomorphic. 
\end{proposition}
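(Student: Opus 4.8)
The plan is to pin down exactly how the pendant vertex $w$ interacts with the join structure of $\Lambda$ and then read off the decomposition from the separating-vertex machinery of Lemma~\ref{Type2Vertex} and Lemma~\ref{FiniteOutNoSV}. First I would record the combinatorial input. Since $Out(A(\Lambda))$ is finite there are no transvections, so $\Lambda$ has no leaves (hence $\deg_\Lambda(v)\ge 2$) and, as recalled before Theorem~\ref{FiniteOut}, $st_\Lambda(v)=\{v\}\circ lk_\Lambda(v)$ is a maximal join subgraph. In $\Lambda'$ the only new edge is $vw$ and $lk_{\Lambda'}(w)=\{v\}$, so any join subgraph containing $w$ must have $\{v\}$ as its complementary factor; consequently $w$ lies in a unique maximal join subgraph of $\Lambda'$, namely $st_{\Lambda'}(v)=\{v\}\circ(lk_\Lambda(v)\cup\{w\})$, while every other maximal join subgraph of $\Lambda$ survives unchanged in $\Lambda'$. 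Thus $|RI(S(\Lambda'))|$ and $|RI(S(\Lambda))|$ are isometric, the only change being that the assigned group at the vertex $\mathbf u_0$ labelled by $st(v)$ grows from $\langle v\rangle\times\langle lk_\Lambda(v)\rangle$ to $\langle v\rangle\times\langle lk_\Lambda(v)\cup\{w\}\rangle$. Because $w$ appears in $G_{\mathbf u_0}$ but in no other assigned group, $\mathbf u_0$ is a type-2 vertex, so by Lemma~\ref{Type2Vertex} every vertex $\mathbf v_0\in I(X(\Lambda'))$ with $\rho_{S(\Lambda')}(\mathbf v_0)=\mathbf u_0$ is a separating vertex; by contrast $I(X(\Lambda))$ has no separating vertex by Lemma~\ref{FiniteOutNoSV}.

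Next I would build the copies. Writing $A(\Lambda')=A(\Lambda)\ast_{\langle v\rangle}(\langle v\rangle\times\langle w\rangle)$, the p-lifts of the standard subcomplex $S(\Lambda)\hookrightarrow S(\Lambda')$ are the translates $\tilde\Lambda=gX(\Lambda)\subset X(\Lambda')$, indexed by the cosets $A(\Lambda')/A(\Lambda)$. For each $\tilde\Lambda$ I define a map $\iota_{\tilde\Lambda}\colon I(X(\Lambda))\cong I(\tilde\Lambda)\to I(X(\Lambda'))$ sending a maximal product subcomplex $\overline N$ of $\tilde\Lambda$ to the unique maximal product subcomplex of $X(\Lambda')$ containing it. For every $\overline N$ whose defining graph is not $st_\Lambda(v)$ this defining graph is still a maximal join of $\Lambda'$, so $\iota_{\tilde\Lambda}(\overline N)=\overline N$; the subcomplexes with defining graph $st_\Lambda(v)$ are the only ones that grow, and they grow only in the $w$-direction to distinct maximal product subcomplexes $\overline M_{\mathbf v_0}$ (existence and uniqueness of the containing maximal subcomplex via Lemma~\ref{NS}). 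I then set $\Sigma_{\tilde\Lambda}=\iota_{\tilde\Lambda}(I(X(\Lambda)))$; the content to verify here is that $\iota_{\tilde\Lambda}$ is an injective, full combinatorial map, so that $\Sigma_{\tilde\Lambda}$ is isometric to $|I(X(\Lambda))|$.

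The single recurring geometric fact that drives everything is that two distinct translates of the convex subcomplex $X(\Lambda)$ meet in dimension at most $1$: in the amalgam over $\langle v\rangle$ their intersection is contained in a translate of the edge space $X(\{v\})\cong\mathbb R$, so it contains no flat. Using this I would establish covering and the intersection pattern. Every non-$\mathbf u_0$ vertex of $I(X(\Lambda'))$ has defining graph inside $\Lambda$ and hence lies in some $\Sigma_{\tilde\Lambda}$; each $\mathbf u_0$-vertex $\overline M_{\mathbf v_0}$ contains the $w$-free copies of its $st_\Lambda(v)$-subcomplexes, one in each adjacent $\tilde\Lambda$, so $\mathbf v_0\in\Sigma_{\tilde\Lambda}$ for each such translate, and the same argument covers the simplices. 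For the intersections, a shared vertex of $\Sigma_{\tilde\Lambda_1}$ and $\Sigma_{\tilde\Lambda_2}$ cannot be of non-$\mathbf u_0$ type, since that would place a flat in $\tilde\Lambda_1\cap\tilde\Lambda_2$; moreover the edges of $I(X(\Lambda'))$ issuing from a fixed $\mathbf v_0$ are carried by flats lying in a single $w$-free component of $\overline M_{\mathbf v_0}$, so the link of $\mathbf v_0$ is partitioned among the copies meeting it, with different copies contributing to different components of $I(X(\Lambda'))\smallsetminus\mathbf v_0$.

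Finally I would rule out two copies meeting in two vertices and conclude. If $\Sigma_{\tilde\Lambda_1}\cap\Sigma_{\tilde\Lambda_2}$ contained distinct vertices $\mathbf a\ne\mathbf b$, both would be separating $\mathbf u_0$-vertices; since each copy is isometric to the cut-vertex-free complex $|I(X(\Lambda))|$ (Lemma~\ref{FiniteOutNoSV}), inside $\Sigma_{\tilde\Lambda_1}\smallsetminus\mathbf a$ and inside $\Sigma_{\tilde\Lambda_2}\smallsetminus\mathbf a$ one can join $\mathbf b$ to neighbours of $\mathbf a$ lying in the two distinct components of $I(X(\Lambda'))\smallsetminus\mathbf a$, producing a path around $\mathbf a$ and contradicting that $\mathbf a$ separates. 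Hence the $\Sigma_{\tilde\Lambda}$ exhibit $|I(X(\Lambda'))|$ as a union of copies of $|I(X(\Lambda))|$, any two of which meet in at most a vertex, and the shared vertices are genuine separating vertices of $|I(X(\Lambda'))|$. Since a semi-isomorphism restricts to an isometry of underlying complexes and isometries preserve separating vertices, the presence of separating vertices in $I(X(\Lambda'))$ together with their absence in $I(X(\Lambda))$ forces the two to be non-semi-isomorphic. The hardest point, and the one I would spend most care on, is this last separation claim: showing that a single shared $\mathbf v_0$ genuinely disconnects the distinct copies from one another—essentially re-running the separation argument of Lemma~\ref{Type2Vertex} and the gluing description of Lemma~\ref{SeparatingVertex} at the level of the copies $\Sigma_{\tilde\Lambda}$—so that ``union glued along single separating vertices'' is a genuine block decomposition rather than a merely set-theoretic one.
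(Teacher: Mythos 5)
Your proposal is correct and takes essentially the same route as the paper's proof: decompose $X(\Lambda')$ into copies of $X(\Lambda)$ glued along p-lifts of the maximal product subcomplex labelled by $st_{\Lambda'}(v)$, deduce the corresponding decomposition of $|I(X(\Lambda'))|$ into copies of $|I(X(\Lambda))|$ meeting in at most a vertex, and conclude non-semi-isomorphism from the resulting separating vertices together with their absence in $I(X(\Lambda))$ (Lemma \ref{FiniteOutNoSV}). The paper states this decomposition tersely where you verify it via the amalgam $A(\Lambda')=A(\Lambda)\ast_{\langle v\rangle}(\langle v\rangle\times\langle w\rangle)$ and Lemma \ref{Type2Vertex}; the only imprecision is that distinct translates of $X(\Lambda)$ are actually disjoint (they are full subcomplexes on disjoint cosets of $A(\Lambda)$), which is stronger than your ``meet in at most a line'' and simplifies both the intersection analysis and the separation claim you flag as the delicate point.
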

\begin{proof}
Consider $\Lambda$ as a subgraph of $\Lambda'$ and let $M'_{\mathbf{v}}\subset S(\Lambda')$ be the maximal product subcomplex whose label is $st_{\Lambda'}(v)$. 
Then $X(\Lambda')$ is the union of copies of $X(\Lambda)$ and p-lifts of $M'_{\mathbf{v}}$; the intersection of any two copies of $X(\Lambda)$ is empty.
Thus, $|I(X(\Lambda'))|$ is the union of copies of $|I(X(\Lambda))|$ such that the intersection of two copies is either the vertex corresponding to a p-lift of $M'_{\mathbf{v}}$ or an empty set. 
Note that except the vertices in $I(X(\Lambda'))$ which correspond to p-lifts of $M'_{\mathbf{v}}$, the labels of vertices are the same with the labels of vertices in the copy of $|I(X(\Lambda))|$.
Since Lemma \ref{FiniteOutNoSV} implies that $I(X(\Lambda))$ has no separating vertices, the proposition holds. 
\end{proof}

In the above proposition, $|RI(S(\Lambda))|$ and $|RI(S(\Lambda'))|$ are isometric. Actually, there is a canonical semi-morphism $RI(S(\Lambda))\rightarrow RI(S(\Lambda'))$ but its inverse is not a semi-morphism.
This means that the semi-isomorphic type of $RI(S(\Lambda))$ tells us more than the isometric type of $|RI(S(\Lambda))|$.
There is a specific example that two reduced intersection complexes are not isomorphic but their underlying complexes are isometric; the same holds for intersection complexes. This is the proof of Proposition \ref{1.5'} with an explicit example.

\begin{proposition}[Proposition \ref{1.5'}]\label{1.5}
Let $\Lambda$ be a graph with finite $\Out(A(\Lambda))$ and let $\Lambda'$ be the graph obtained from the disjoint union of $\Lambda$ and a vertex $v'$ by attaching an edge between $v'$ and each vertex in $lk_{\Lambda}(v)$ for some vertex $v\in\Lambda$.
Then $|I(X(\Lambda))|$ and $|I(X(\Lambda'))|$ are isometric but $I(X(\Lambda))$ and $I(X(\Lambda'))$ are not semi-isomorphic.
\end{proposition}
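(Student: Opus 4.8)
The vertex $v'$ is a \emph{twin} of $v$: since no edge is added between $v$ and $v'$, we have $lk_{\Lambda'}(v')=lk_{\Lambda}(v)=lk_{\Lambda'}(v)$. The plan rests on one combinatorial observation about join (complete bipartite) subgraphs of $\Lambda'$. Because $\Lambda$ is triangle-free, $lk_\Lambda(v)$ is an independent set; and whenever a join subgraph $A\circ B$ of $\Lambda'$ has $v\in A$, one checks that $A\cap lk(v)=\emptyset$ (as $A$ is independent) while $B\subseteq lk(v)=lk(v')$, so $A\cup\{v'\}$ is again independent and still complete to $B$. Hence $v'$ can always be adjoined to the side of $v$, and symmetrically; consequently \emph{$v$ and $v'$ occur together, on a common side, in every maximal join of $\Lambda'$}. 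This yields a bijection $\beta$ from maximal joins of $\Lambda$ to maximal joins of $\Lambda'$: fix $J$ if $v\notin J$, and otherwise send $A\circ B$ (with $v\in A$) to $(A\cup\{v'\})\circ B$. Using that finiteness of $Out(A(\Lambda))$ forbids transvections (so no vertex of $\Lambda$ can be added to the side of $v$), I would verify $\beta$ is a well-defined bijection on the vertex sets of $RI(S(\Lambda))$ and $RI(S(\Lambda'))$, and that it preserves incidence, hence induces a simplicial isomorphism $|RI(S(\Lambda))|\cong|RI(S(\Lambda'))|$. To keep this check one–dimensional I would take the explicit witness $\Lambda$ to be square-free as well (e.g.\ the induced pentagon $C_5$, whose $|RI(S(\Lambda))|$ is again a pentagon by Corollary \ref{Ncycle2}); then $RI$ is a graph and preserving incidence reduces to preserving adjacency, which is immediate because the edges of a maximal join meeting $v$ correspond exactly to the edges meeting $v$ or $v'$ in its $\beta$-image.

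\textbf{The type change and the reduced complexes.} I would then compute assigned groups along $\beta$. The unique maximal join of $\Lambda$ in which $v$ sits alone on a side is $st_\Lambda(v)=\{v\}\circ lk(v)$: any proper subset of $lk(v)$ can be enlarged within $lk(v)$ (triangle-freeness), and $\{v\}$ cannot be enlarged (no transvections). Its assigned group is $\langle v\rangle\times\langle lk(v)\rangle\cong\mathbb{Z}\times\mathbb{F}$, whereas $\beta(st_\Lambda(v))=\{v,v'\}\circ lk(v)$ carries $\langle v,v'\rangle\times\langle lk(v)\rangle\cong\mathbb{F}_2\times\mathbb{F}$, of quasi-isometric type $\mathbb{F}\times\mathbb{F}$. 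Every other vertex keeps its quasi-isometric type: a star $st_\Lambda(w)$ with $w\in lk(v)$ maps to $st_{\Lambda'}(w)$, gaining the generator $v'$ on its free side but remaining $\mathbb{Z}\times\mathbb{F}$, and a join with $\ge 2$ vertices on the $v$-side stays of the form $\mathbb{F}\times(\cdots)$. Thus $\beta$ alters the quasi-isometric type of \emph{exactly one} vertex. Since $|RI(S(\Lambda))|$ is finite and a semi-isomorphism is a vertex bijection preserving the quasi-isometric type of assigned groups, the resulting multisets of types differ, so $RI(S(\Lambda))$ and $RI(S(\Lambda'))$ are not semi-isomorphic.

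\textbf{Passing to the intersection complexes.} By Remark \ref{ConstructionOfX}, $I(X(\cdot))$ is assembled inductively from countably many copies of $RI(S(\cdot))$, glued along simplices according to cosets of the assigned groups. The crucial point is that every gluing is $\aleph_0$-fold, since all the assigned groups are infinite and the relevant cosets have infinite index irrespective of the ranks of the free factors involved. Hence the underlying complex $|I(X(\cdot))|$ is determined by the simplicial structure of $|RI(S(\cdot))|$ together with its pattern of overlaps (Lemma \ref{IntersectingR}), data that $\beta$ preserves. Running the construction of Remark \ref{ConstructionOfX} in parallel through $\beta$ then produces an isometry $|I(X(\Lambda))|\cong|I(X(\Lambda'))|$. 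For the failure of semi-isomorphy at this level I would again use the square-free witness: then every maximal join of $\Lambda$ is a star, so $I(X(\Lambda))$ has no vertex of quasi-isometric type $\mathbb{F}\times\mathbb{F}$, while $I(X(\Lambda'))$ has the $\mathbb{F}\times\mathbb{F}$-vertices that are $p$-lifts of $\{v,v'\}\circ lk(v)$. A semi-isomorphism is a surjective, quasi-isometric-type-preserving bijection, so none can exist in either direction. (For general $\Lambda$ one argues instead that a semi-isomorphism restricts to an isometry of the subcomplexes spanned by the $\mathbb{Z}\times\mathbb{F}$-vertices — the extension complexes $\overline{\tau}$ of Section \ref{4.1} — and that these differ because $\Lambda'$ loses the star of $v$.)

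\textbf{The main obstacle.} The delicate step is the claim in the previous paragraph that $|I(X(\cdot))|$ is insensitive to the assigned groups beyond their incidence pattern: replacing the assigned group of one vertex from $\mathbb{Z}\times\mathbb{F}$ to $\mathbb{F}\times\mathbb{F}$ changes its maximal product subcomplex from $\mathbb{R}\times T$ to $T\times T$, and a priori this could alter the local structure of the intersection complex. I would resolve this by showing that the neighbors of such a vertex remain parametrized by the same set of standard-geodesic directions transported by $\beta$, each direction contributing $\aleph_0$ neighbors, so that local degrees and the global gluing coincide; the $\mathbb{Z}$-versus-$\mathbb{F}$ difference in the modified coordinate is invisible to the underlying complex precisely because that coordinate indexes a factor shared by all neighbors attached in a given direction, and both alternatives branch $\aleph_0$ times. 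Carrying this through the inductive construction of Remark \ref{ConstructionOfX}, and confirming that $\beta$ respects the overlap dichotomy of Lemma \ref{IntersectingR}, is the technical heart of the argument; the remainder is bookkeeping with the twin relation, parallel to the leaf-adding analysis in Proposition \ref{ValencyOne} and the separating-vertex structure of Lemma \ref{FiniteOutNoSV}.
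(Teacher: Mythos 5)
Your skeleton---the twin bijection $\beta$, the resulting isometry $|RI(S(\Lambda))|\cong|RI(S(\Lambda'))|$, the observation that exactly one vertex trades quasi-isometric type $\mathbb{Z}\times\mathbb{F}$ for $\mathbb{F}\times\mathbb{F}$, and the ``run Remark \ref{ConstructionOfX} in parallel; every gluing is $\aleph_0$-fold'' sketch of the isometry $|I(X(\Lambda))|\cong|I(X(\Lambda'))|$---coincides with the paper's own proof, and the paper is no more detailed than you are on the isometry step. The genuine gap is in the final claim, that $I(X(\Lambda))$ and $I(X(\Lambda'))$ are not semi-isomorphic. Proposition \ref{1.5} quantifies over \emph{every} $\Lambda$ with finite $Out(A(\Lambda))$, including graphs containing squares (e.g.\ the $1$-skeleton of the $3$-cube), but your main argument---$I(X(\Lambda))$ has no $\mathbb{F}\times\mathbb{F}$-vertices while $I(X(\Lambda'))$ does---works only for square-free witnesses, and the multiset count you use for the finite complexes $RI$ has no analogue for the infinite complexes $I$, where both types occur infinitely often once $\Lambda$ contains a square. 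Your parenthetical fallback for general $\Lambda$ fails as stated: $\Lambda'$ \emph{always} has infinite outer automorphism group, since $lk_{\Lambda'}(v')=lk_{\Lambda'}(v)$ makes $v\mapsto vv'$ a transvection. Consequently the subcomplex of $I(X(\Lambda'))$ spanned by its $\mathbb{Z}\times\mathbb{F}$-vertices is not an extension complex in the sense of Section \ref{5.1} (the stars of $v$ and $v'$ are not maximal joins, so the corresponding vertices are simply absent), Theorem \ref{FiniteOut} and the rigidity theorem of \cite{Hua(a)}---both of which require finite $Out$ on \emph{both} sides---cannot be invoked, and the assertion that ``these differ'' is precisely what has to be proved rather than a consequence of anything you have established.

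The paper closes the general case with a local, simplex-by-simplex argument that you could adopt. Any maximal join of $\Lambda'$ whose intersection with $\{v,v'\}\circ lk(v)$ contains an edge must contain $v$ or $v'$, hence (twin property) both; therefore every simplex of $RI(S(\Lambda'))$---and so, via the covering of $I(X(\Lambda'))$ by copies of $RI(S(\Lambda'))$, every simplex of $I(X(\Lambda'))$---containing a lift of the vertex $\mathbf{u}'$ labelled $\{v,v'\}\circ lk(v)$ has label of the form $\{v,v'\}\circ B''$, with assigned group $\mathbb{F}_2\times(\cdots)$, never quasi-isometric to $\mathbb{Z}\times\mathbb{Z}$. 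By contrast, finiteness of $Out(A(\Lambda))$ makes every star of $\Lambda$ a maximal join, so any $\mathbb{F}\times\mathbb{F}$-vertex of $I(X(\Lambda))$ with label $A\circ B$ spans, together with lifts of $st(a)$ and $st(b)$ (for $a\in A$, $b\in B$), a $2$-simplex whose label is the single edge $\{a\}\circ\{b\}$ and whose assigned group is $\mathbb{Z}\times\mathbb{Z}$ (triangle-freeness gives $st(a)\cap st(b)=\{a,b\}$). Since a semi-isomorphism is an isometry of underlying complexes preserving the quasi-isometric type of the assigned group of \emph{every} simplex, it would have to carry a lift of $\mathbf{u}'$ to an $\mathbb{F}\times\mathbb{F}$-vertex of $I(X(\Lambda))$ and match their incident simplices type by type, which is impossible; and when $\Lambda$ is square-free there is no $\mathbb{F}\times\mathbb{F}$-vertex available at all, which subsumes your witness case. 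This argument needs neither counting nor extension-complex rigidity, and it covers all admissible $\Lambda$ at once.
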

\begin{proof}
Consider $\Lambda$ as a subgraph of $\Lambda'$. Then there is a canonical semi-morphism $RI(S(\Lambda))\rightarrow RI(S(\Lambda'))$ such that each simplex $\triangle\subset RI(S(\Lambda))$ maps to a simplex $\triangle'\subset RI(S(\Lambda'))$ where the label of $\triangle$ is contained in the label of $\triangle'$. More precisely, 
\begin{itemize}
\item 
The label of a simplex $\triangle\subset RI(S(\Lambda))$ which does not contain $v$ is the same as the label of the image of $\triangle$ in $RI(S(\Lambda'))$ under the isometry. 
\item
Otherwise, the label of the image of $\triangle$ is obtained by adding $v'$ to the label of $\triangle$.  
\end{itemize}
However, $RI(S(\Lambda))$ and $RI(S(\Lambda'))$ are not semi-isomorphic.
In $RI(S(\Lambda))$, the number of vertices whose assigned groups are quasi-isometric to $\mathbb{Z}\times\mathbb{F}$ is the same as the number of vertices of $\Lambda$.
In $RI(S(\Lambda'))$, the number of vertices whose assigned groups are quasi-isometric to $\mathbb{Z}\times\mathbb{F}$ is smaller than the number of vertices of $\Lambda$ since there are no vertices labelled by $st(v)$ and $st(v')$.

The construction of $I(X(\Lambda))$ from $RI(S(\Lambda))$ is the same as the construction of $I(X(\Lambda'))$ from $RI(S(\Lambda'))$ since the role of $v$ in the construction of $I(X(\Lambda))$ and the roles of $v$ and $v'$ in the construction of $I(X(\Lambda'))$ are the same.
This implies that $|I(X(\Lambda))|$ is isometric to $|I(X(\Lambda'))|$.

Since $\Out(A(\Lambda))$ is finite, for any vertex $\mathbf{u}\in RI(S(\Lambda))$ whose assigned group is quasi-isometric to $\mathbb{F}\times\mathbb{F}$ (if exists), there are edges $\bold{E}_i\subset RI(S(\Lambda))$ containing $\mathbf{u}$ such that the assigned groups of $\bold{E}_i$'s are quasi-isometric to $\mathbb{Z}\times\mathbb{Z}$.
However, for the vertex $\mathbf{u}'\in RI(S(\Lambda'))$ whose label is $\{v,v'\}\circ lk(v)$, the assigned group of every edge of $RI(S(\Lambda'))$ containing $\mathbf{u}'$ is quasi-isometric to $\mathbb{Z}\times\mathbb{F}$.
Therefore, $I(X(\Lambda'))$ are not semi-isomorphic to $I(X(\Lambda))$.
\end{proof}

Proposition \ref{ValencyOne} and \ref{1.5} can also be deduced from the result in \cite{Hua(b)} that if $\Out(A(\Lambda))$ is finite, then $A(\Lambda')$ is quasi-isometric to $A(\Lambda)$ if and only if $\Lambda'$ is obtained from $\Lambda$ by generalized star extensions.
\vspace{1mm}

Lastly, the following fact shows that subgraphs $\Lambda_i\leq\Lambda$ whose outer autormophism groups are finite can be considered as pieces which quasi-isometries preserve up to finite Hausdorff distance.
\begin{reptheorem}{1.7'}
Let $\Lambda$ ($\Lambda'$, respectively) be the union of subgraphs $\Lambda_i$ ($\Lambda'_{i'}$, respectively) with finite $\Out(A(\Lambda_i))$ ($\Out(A(\Lambda'_{i'}))$, respectively). Suppose that $\Lambda$ and $\Lambda'$ satisfy the following two common properties:
\begin{enumerate}
\item
The intersection of two of the subgraphs is either a vertex or empty and the intersection of at least three of the subgraphs is empty.
\item
There is no sequence of the subgraphs such that the first and last subgraphs are the same and the intersection of two consecutive subgraphs is a vertex.
\end{enumerate}
Let $\mathcal{I}$ ($\mathcal{I}'$, resp.) be the collection of isometry classes of $\Lambda_i$'s ($\Lambda'_{i'}$'s, resp.). If $\mathcal{I}$ and $\mathcal{I}'$ are different, then $A(\Lambda)$ and $A(\Lambda')$ are not quasi-isometric. 
\end{reptheorem}
\begin{proof}
Let $L_i$ be the subgraph of $\Lambda$ which is the union of the stars of vertices in $\Lambda_i$ and let $L'_{i'}$ be the subgraph of $\Lambda'$ which is defined by the same way from $\Lambda'_{i'}$.
By the assumption of $\Lambda$ and Proposition \ref{ValencyOne}, $|I(X(L_i))|$ ($|I(X(L'_{i'}))|$, resp.) is the union of copies of $|I(X(\Lambda_i))|$ ($|I(X(\Lambda'_{i'}))|$, resp.) such that the intersection of two copies is either a vertex or an empty set. 

The condition of $\Lambda$ implies that $RI(S(\Lambda))$ is the union of the subcomplexes $RI_i\subset RI(S(\Lambda))$ such that there is an injective semi-morphism $\rho_i:RI(S(L_i))\rightarrow RI_i\subset RI(S(\Lambda))$ and $RI_i\cap RI_{j}$ is either a separating vertex or an empty set for $i\neq j$. More precisely, if $\Lambda_i\cap\Lambda_{j}=\{v\}$, then $L_i\cap L_{j}=st(v)$ so that $RI_i\cap RI_{j}$ consists of the vertex labelled by $st(v)$. Otherwise, $RI_i\cap RI_{j}$ is empty.
By Lemma \ref{SeparatingVertex}, then, $I(X(\Lambda))$ is the union of copies of $I(X(L_i))$'s where the intersection of any two copies is either a separating vertex or an empty set. Thus, $|I(X(\Lambda))|$ is the union of copies of $|I(X(\Lambda_i))|$'s such that the intersection of any copies is either an empty set or a separating vertex. 
By the same reason, $|I(X(\Lambda'))|$ is the union of copies of $|I(X(\Lambda'_{i'}))|$'s such that the intersection of any two copies is either a separating vertex or an empty set.

If there is a quasi-isometry $\phi:A(\Lambda)\rightarrow A(\Lambda')$, then $\phi$ induces a semi-isomorphism $\Phi:I(X(\Lambda))\rightarrow I(X(\Lambda'))$ which sends separating vertices to separating vertices. Moreover, the image of a copy of $|I(X(\Lambda_i))|$ in $|I(X(\Lambda))|$ under the isometry $|\Phi|:|I(X(\Lambda))|\rightarrow |I(X(\Lambda'))|$ must be a copy of $|I(X(\Lambda'_j))|$ in $|I(X(\Lambda'))|$. By changing labels a little bit, the restriction of $\Phi$ to the subcomplex of $I(X(\Lambda))$ whose underlying complex is a copy of $I(X(L_i))$ induces a semi-isomorphism from $I(X(\Lambda_i))$ to $I(X(\Lambda'_{i'}))$ and by Theorem \ref{FiniteOut}, $\Lambda_i$ is isometric to $\Lambda'_{i'}$. Therefore, $\mathcal{I}$ is equal to $\mathcal{I}'$.
\end{proof}

%
%

\subsection{Applications to Graph 2-braid groups}

Suppose that $\Gamma$ is a simplest cactus of type-S so that the boundary cycles are labelled by $a_1,\cdots,a_n$ in this order. Let $\Lambda_\Gamma$ be a complete graph with $n$ vertices such that vertices are labelled by $a_i$'s. Then there is a semi-isomorphism $RI(D_2(\Gamma))\rightarrow RI(S(\Lambda_\Gamma))$ such that the assigned group of each simplex $\triangle\subset RI(D_2(\Gamma))$ is isomorphic to the assigned group of the image of $\triangle$ in $RI(S(\Lambda_\Gamma))$ under this semi-isomorphism. Moreover, there is also a semi-isomorphism between $I(X(\Lambda_\Gamma))$ and a component of $I(\overline{D_2(\Gamma)})$.

\begin{proposition}\label{ScomponentGraph}
Let $\Gamma$ be a simplest cactus of type-S and $\Lambda_\Gamma$ the graph obtained from $\Gamma$ as above. Then a component of $I(\overline{D_2(\Gamma)})$ is semi-isomorphic to $I(X(\Lambda_\Gamma))$. 
\end{proposition}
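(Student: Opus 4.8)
The plan is to deduce the semi-isomorphism of intersection complexes from the semi-isomorphism $\Psi\colon RI(D_2(\Gamma))\to RI(S(\Lambda_\Gamma))$ of reduced complexes recorded in the paragraph preceding the statement, by realizing both intersection complexes as developments of their reduced complexes and then transporting $\Psi$ across the development construction. First I would fix a component $I_0(\overline{D_2(\Gamma)})$ of $I(\overline{D_2(\Gamma)})$ lying over the designated component $RI(D_2(\Gamma))$. By Corollary \ref{Development} this component is the development of $RI(D_2(\Gamma))$, and $|I_0(\overline{D_2(\Gamma)})|$ is simply connected; this handles the graph-braid side.

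Next I would show that $I(X(\Lambda_\Gamma))$ is likewise the development of $RI(S(\Lambda_\Gamma))$. By Proposition \ref{Scomponent} the source $RI(D_2(\Gamma))$ has a single simplex as underlying complex, so $\Psi$ forces $|RI(S(\Lambda_\Gamma))|$ to be a single simplex as well, hence contractible and connected. Contractibility is exactly the hypothesis needed to invoke Lemma \ref{Decomposition} (through Proposition \ref{CpxofGps}): it gives that $RI(S(\Lambda_\Gamma))$ is a developable complex of groups and that the (necessarily connected) intersection complex $I(X(\Lambda_\Gamma))$ is its development. Alternatively, one reads this off the explicit inductive construction in Remark \ref{ConstructionOfX}, where contractibility of $|RI(S(\Lambda_\Gamma))|$ collapses the gluing of copies into the Bass--Serre-type development with no extra identifications.

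With both intersection complexes identified as developments, I would transport $\Psi$. Since $\Psi$ is an isomorphism of complexes of join groups, namely a combinatorial isomorphism of the two simplices together with an isomorphism of the assigned-group data respecting the pairwise free-factor inclusions, it induces an isomorphism of the associated fundamental groups and an equivariant isomorphism of their developments; by the uniqueness of developments (Theorem \ref{development}) this isomorphism is canonical. It then remains to check that the resulting combinatorial isomorphism $I_0(\overline{D_2(\Gamma)})\to I(X(\Lambda_\Gamma))$ is a \emph{semi-}isomorphism, i.e. that along every maximal chain it matches the patterns of $\mathbb{Z}$- and $\mathbb{F}$-coordinates and of strict pairwise free-factor inclusions. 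This is inherited simplex by simplex from $\Psi$, because the assigned group of a simplex of either development is a conjugate of the assigned group of its image under the canonical quotient map, and $\Psi$ preserves both the quasi-isometric type and the free-factor structure of the reduced assigned groups.

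The main obstacle is the middle step: guaranteeing that $I(X(\Lambda_\Gamma))$ genuinely is the development of $RI(S(\Lambda_\Gamma))$ rather than merely a complex covered by copies of it. The excerpt stresses (around Proposition \ref{ContractibleRI} and Remark \ref{RmkofS}) that $I(X(\Lambda))$ is usually \emph{not} the development when $|RI(S(\Lambda))|$ is non-simply connected, so the whole argument hinges on the simplex $|RI(S(\Lambda_\Gamma))|$ being contractible together with the $\pi_1$-injectivity hypothesis of Proposition \ref{CpxofGps}; verifying that the immersion $\mathcal{R}(S(\Lambda_\Gamma))\to S(\Lambda_\Gamma)$ is $\pi_1$-injective (equivalently, that $\mathcal{R}(S(\Lambda_\Gamma))=S(\Lambda_\Gamma)$ as in Section \ref{4.1}) is where the real work lies. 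Once developability is secured on both sides, the transport of $\Psi$ and the bookkeeping of assigned-group types are routine.
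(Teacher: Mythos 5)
Your proposal is correct and takes essentially the same route as the paper: the paper's proof likewise observes that $|RI(D_2(\Gamma))|$ is contractible by Proposition \ref{Scomponent}, hence $|RI(S(\Lambda_\Gamma))|$ is contractible via the given semi-isomorphism, and then invokes Lemma \ref{Decomposition} so that both a component of $I(\overline{D_2(\Gamma)})$ and $I(X(\Lambda_\Gamma))$ are developments of semi-isomorphic reduced intersection complexes. The only difference is one of exposition: the paper leaves the transport of the semi-isomorphism across the developments (your third and fourth steps, via Theorem \ref{development}) implicit, whereas you spell it out.
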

\begin{proof}
By Proposition \ref{Scomponent}, $|RI(D_2(\Gamma))|$ is contractible so that $|RI(S(\Lambda_\Gamma))|$ is also contractible. By Lemma \ref{Decomposition}, therefore, the proposition holds.
\end{proof}

When $\Gamma$ is a simplest cactus of type-M, there is no obvious way to find $\Lambda_\Gamma$ such that $RI(D_2(\Gamma))$ is semi-isomorphic to $RI(S(\Lambda_\Gamma))$.
Even when such a graph $\Lambda_\Gamma$ exists, $I(X(\Lambda_\Gamma))$ may not be semi-isomorphic to a component of $I(\overline{D_2(\Gamma)})$. The underlying complex of a lift of $RI(D_2(\Gamma))$ is simply connected since $RI(D_2(\Gamma))$ is the developable complex of groups so that its lift is the development (Corollary \ref{Development}). On the other hand, $|I(X(\Lambda_\Gamma))|$ is not simply connected since $|RI(S(\Lambda_\Gamma))|$ is not simply connected (Proposition \ref{ContractibleRI}). 
However, if $\Gamma$ is in relatively simple cases, then there is an alternative graph $\Lambda'_\Gamma$ (not isometric to $\Lambda_\Gamma$) such that $I(X(\Lambda'_\Gamma))$ is semi-isomorphic to a component of $I(\overline{D_2(\Gamma)})$. 

\begin{proposition}\label{StarN}
Consider the star tree with central vertex of valency $k\geq 3$ and let $\mathcal{O}_k$ be the graph obtained from this star tree by attaching a 3-cycle to each leaf ($\mathcal{O}_3$ and $\mathcal{O}_4$ are given in Figure \ref{Ex1} and \ref{Ex2}, respectively).
Then $RI(D_2(\mathcal{O}_k))$ is a graph of groups satisfying the conditions in Theorem \ref{GraphofGroups}.
\end{proposition}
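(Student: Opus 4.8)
The plan is to read off the entire combinatorial structure of $RI(D_2(\mathcal{O}_k))$ from the spine $\mathcal{K}_{\mathcal{O}_k}$, which is a star tree whose center has valency $k$ (and is not a joint) and whose $k$ leaves are the joints $a_1,\dots,a_k$ coming from the $k$ three-cycles of $\mathcal{O}_k$. First I would list the maximal product subcomplexes, i.e. the vertices of $RI(D_2(\mathcal{O}_k))$. By the paragraph preceding Proposition~\ref{Cactus1}, each maximal product subcomplex corresponds to an edge $e$ of $\mathcal{K}_{\mathcal{O}_k}$, its two factors being the closures of the two components of $\mathcal{K}_{\mathcal{O}_k}-e$. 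Cutting the edge from the center to the joint $a_i$ isolates $a_i$ on one side and leaves the remaining $k-1$ joints and the center on the other, so the maximal product subcomplexes are exactly $M_i=\{a_i\}\times\{a_j:j\neq i\}$ and their switched versions $M_i^s=\{a_j:j\neq i\}\times\{a_i\}$ for $i=1,\dots,k$.

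Next I would compute the assigned groups. The base of $M_i$ is $a_i\times\Gamma_i'$, where $\Gamma_i'$ is the standard subgraph formed by the $k-1$ cycles $a_j$ ($j\neq i$) together with the edges joining them through the center; hence its fundamental group is $\mathbb{Z}\times\mathbb{F}_{k-1}$, which is $\mathbb{Z}\times\mathbb{F}$ since $k\geq 3$. This yields condition (1) for the vertices. For the edges, using Lemma~\ref{IntofMax} and the coordinate-wise description of intersections of product subcomplexes, I would check that $M_i\cap M_{i'}^s=\{a_i\}\times\{a_{i'}\}$ is a standard product subcomplex (a flat) precisely when $i\neq i'$, while $M_i\cap M_{i'}$, $M_i^s\cap M_{i'}^s$ and $M_i\cap M_i^s$ never contain a standard product subcomplex. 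Thus the edges of $RI(D_2(\mathcal{O}_k))$ are exactly the pairs $\{M_i,M_{i'}^s\}$ with $i\neq i'$, each carrying assigned group $\mathbb{Z}\times\mathbb{Z}$, which gives condition (1) for the edges. Since every edge joins an unswitched vertex to a switched one, the $1$-skeleton is bipartite, so it has no triangles; consequently $RI(D_2(\mathcal{O}_k))$ contains no simplex of dimension $\geq 2$ and is a genuine graph of groups. Its connectedness follows from Proposition~\ref{Mcomponent}, as $\mathcal{O}_k$ is of type-M.

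The remaining and most delicate step is the flipping condition (2), which amounts to keeping track of which coordinate is the $\mathbb{Z}$-factor and which is the free factor at each end of an edge. For the edge $\bold{E}$ between $M_i$ and $M_{i'}^s$ (so $i\neq i'$) we have $G_{\bold{E}}=\langle a_i\rangle\times\langle a_{i'}\rangle$. In $G_{M_i}=\langle a_i\rangle\times\langle a_j:j\neq i\rangle$ the generator $a_i$ is the $\mathbb{Z}$-generator and $a_{i'}$ is one of the free generators (as $i'\neq i$), whereas in $G_{M_{i'}^s}=\langle a_j:j\neq i'\rangle\times\langle a_{i'}\rangle$ the roles are reversed, since there $a_{i'}$ is the $\mathbb{Z}$-generator and $a_i$ is a free generator (as $i\neq i'$). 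This is exactly the flip demanded by Theorem~\ref{GraphofGroups}, and it holds verbatim for every edge by symmetry. I expect the care in this argument to lie in keeping the coordinate identifications consistent across all edges and in confirming that each $\Gamma_i'$ is genuinely a leaf-free standard subgraph disjoint from $a_i$, so that the asserted bases, and hence the assigned groups, are correct.
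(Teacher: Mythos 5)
Your proof is correct and follows essentially the same route as the paper: both identify the $2k$ maximal product subcomplexes $\{a_i\}\times(\{a_1,\cdots,a_k\}-\{a_i\})$ and their switched versions, observe that edges join $\mathbf{u}_i$ to $\mathbf{u}_{i'}^s$ exactly when $i\neq i'$, and read off the required group-theoretic conditions from these labels. The only difference is one of explicitness: you enumerate the vertices via the spine and check conditions (1) and (2) of Theorem \ref{GraphofGroups} (including the flipping of the $\mathbb{Z}$-factor) in detail, whereas the paper asserts the vertex/edge structure by reference to Examples \ref{T_3Ex} and \ref{T_4Ex} and additionally invokes Theorem \ref{pi_1-injective} to view $RI(D_2(\mathcal{O}_k))$ as the graph-of-groups decomposition of $SPB_2(\mathcal{O}_k)$ --- an identification used later in Corollary \ref{QItoAT} but not strictly needed for the statement you were asked to prove.
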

\begin{proof}
Label boundary cycles in $\mathcal{O}_k$ by $a_1,\cdots,a_k$. Then there are $2k$ vertices in $RI(D_2(\mathcal{O}_k))$; vertices $\mathbf{u}_i\in RI(D_2(\mathcal{O}_k))$ labelled by $\{a_i\}\times (\{a_1,\cdots,a_k\}-\{a_i\})$ and vertices $\mathbf{u}^s_i$ labelled by $(\{a_1,\cdots,a_k\}-\{a_i\}) \times \{a_i\}$ for $i=1,\cdots,k$.
As in Example \ref{T_3Ex} and \ref{T_4Ex}, $(k-1)$ edges are attached to each $\mathbf{u}_i$ such that their other endpoints are $\mathbf{u}^s_j$ for $j\in \{1,\cdots,k\}-\{i\}$.
Since $\mathcal{O}_k$ is a cactus, $RI(D_2(\mathcal{O}_k))$ can be considered as the graph of groups decomposition of $SPB_2(\mathcal{O}_k)$ by Theorem \ref{pi_1-injective}.
\end{proof}

\begin{corollary}\label{QItoAT}
Suppose that $\mathcal{O}_k$ is given as in Proposition \ref{StarN}. Then $SPB_2(\mathcal{O}_k)$ is quasi-isometric to $A(T)$ for a tree $T$ of diameter $\geq 3$.
\end{corollary}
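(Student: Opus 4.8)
The plan is to exhibit both $SPB_2(\mathcal{O}_k)$ and $A(T)$ as fundamental groups of graphs of groups that satisfy the hypotheses of Theorem \ref{GraphofGroups}, and then to appeal to that theorem. Since Theorem \ref{GraphofGroups} guarantees a quasi-isometry between the fundamental groups of \emph{any} two graphs of groups meeting its conditions (1) and (2), the whole argument reduces to checking that the two decompositions in question both lie in that class; no direct construction of the quasi-isometry is needed.

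First I would treat the braid-group side. By Proposition \ref{StarN}, $RI(D_2(\mathcal{O}_k))$ is a graph of groups satisfying conditions (1) and (2) of Theorem \ref{GraphofGroups}. Moreover, since $\mathcal{O}_k$ is a special cactus, Theorem \ref{pi_1-injective} identifies $RI(D_2(\mathcal{O}_k))$ with the developable complex of groups decomposition of $SPB_2(\mathcal{O}_k)$; as $|RI(D_2(\mathcal{O}_k))|$ is one-dimensional, this is precisely a graph of groups with fundamental group $SPB_2(\mathcal{O}_k)$. The one point worth recording explicitly is that the vertex groups are genuinely of the form $\mathbb{Z}\times\mathbb{F}$ with $\mathbb{F}$ non-abelian: the vertex $\mathbf{u}_i$ carries the assigned group $\mathbb{Z}\times\mathbb{F}_{k-1}$, coming from the single cycle $a_i$ paired with the $k-1$ remaining cycles, and the hypothesis $k\geq 3$ forces $k-1\geq 2$.

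Next I would produce the RAAG side. Choosing any tree $T$ of diameter $\geq 3$ (for instance $T=P_4$), the discussion following Theorem \ref{GraphofGroups} --- which rests on Lemma \ref{Decomposition} --- shows that $RI(S(T))$ is a graph of groups decomposition of $A(T)$ again satisfying conditions (1) and (2). Applying Theorem \ref{GraphofGroups} to the pair $RI(D_2(\mathcal{O}_k))$ and $RI(S(T))$ then yields that $SPB_2(\mathcal{O}_k)$ and $A(T)$ are quasi-isometric, completing the proof. Note that since Theorem \ref{GraphofGroups} requires no isomorphism between the two decompositions, a single fixed $T$ works uniformly for all $k$.

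The argument is essentially a bookkeeping step: all of the substantive verification --- in particular the Flipping condition of Theorem \ref{GraphofGroups} and the precise isomorphism types of the edge groups $\mathbb{Z}\times\mathbb{Z}$ --- was already carried out inside Proposition \ref{StarN}. Consequently I do not expect a serious obstacle here; the only thing one must be slightly careful about is confirming that a tree of diameter $\geq 3$ exists and that its reduced intersection complex lands in the same class, both of which are immediate from Lemma \ref{Decomposition} and the structure of $RI(S(T))$ recorded in the proof of Lemma \ref{CoreofT}. In this sense the real content of the corollary is the combination of Proposition \ref{StarN} with the Behrstock--Neumann quasi-isometry criterion stated as Theorem \ref{GraphofGroups}.
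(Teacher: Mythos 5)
Your proposal is correct and follows exactly the paper's route: the paper likewise cites Proposition \ref{StarN} (that $RI(D_2(\mathcal{O}_k))$ is a graph of groups satisfying the Behrstock--Neumann conditions, with fundamental group $SPB_2(\mathcal{O}_k)$ via Theorem \ref{pi_1-injective}) together with the fact, recorded right after Theorem \ref{GraphofGroups}, that $RI(S(T))$ gives such a decomposition of $A(T)$, and then applies Theorem \ref{GraphofGroups}. Your write-up merely spells out details the paper leaves implicit, such as the verification that the vertex groups $\mathbb{Z}\times\mathbb{F}_{k-1}$ have non-abelian free factor precisely because $k\geq 3$.
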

\begin{proof}
Since $SPB_2(\mathcal{O}_k)$ is the fundamental group of $RI(D_2(\mathcal{O}_k))$ (as a graph of groups), by Theorem \ref{GraphofGroups} and Proposition \ref{StarN}, the corollary holds.
\end{proof}

Based on the fact in \cite{PW02}, we can know that $\mathcal{R}(D_2(\Gamma))$ is the main object when we study the quasi-isometry rigidity of $\overline{D_2(\Gamma)}$.  

\begin{theorem}[\cite{PW02}]\label{PW02}
Let $G$ be any finitely generated group of order at least three. Then, $G*G$, $G*\mathbb{Z}$, $G*\mathbb{F}_n$ are all quasi-isometric.
\end{theorem}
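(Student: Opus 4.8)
The plan is to model each of the three groups by a tree of spaces and then invoke the quasi-isometry rigidity of bushy trees of spaces, which is the heart of the argument in \cite{PW02}. First I would fix a finite generating set of $G$ and realize each of $G\ast G$, $G\ast\mathbb{Z}$ and $G\ast\mathbb{F}_n$ as the fundamental group of a finite graph of groups with trivial edge groups: after decomposing $\mathbb{F}_n\cong\mathbb{Z}\ast\cdots\ast\mathbb{Z}$ as necessary, the vertex groups become copies of $G$ together with finitely many copies of $\mathbb{Z}$. Each such free product acts cocompactly on its Bass--Serre tree $T$, and the associated tree of spaces $X$ is obtained by placing a copy of $\mathrm{Cay}(G)$ or of $\mathrm{Cay}(\mathbb{Z})=\mathbb{R}$ over each vertex of $T$ and attaching these copies at single points (the edge groups being trivial, the edge spaces are points). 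Since $|G|\geq 3$, none of the three products is the exceptional group $\mathbb{Z}/2\ast\mathbb{Z}/2$, so each is infinitely ended and the underlying tree $T$ is bushy, branching exponentially away from any bounded set. This bushiness, which fails precisely when $|G|=2$ (where $D_\infty$ is merely two-ended and quasi-isometric to $\mathbb{Z}$), is exactly what makes the hypothesis $|G|\geq 3$ indispensable.

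Second, I would observe that the three trees of spaces carry the same large-scale data. The edge spaces are uniformly bounded (indeed points) in all three cases, and the set of quasi-isometry classes of one-ended vertex spaces is $\{[\mathrm{Cay}(G)]\}$ when $G$ is one-ended and empty when $G$ is finite; the vertex spaces modeled on $\mathbb{Z}$ or on $\mathbb{F}_n$ are two-ended or themselves tree-like and contribute no one-ended class. The rigidity principle of \cite{PW02} asserts that two trees of spaces over bushy trees, with uniformly bounded edge spaces and with the same set of quasi-isometry classes of one-ended vertex spaces, are quasi-isometric; applying it yields $G\ast G\simeq G\ast\mathbb{Z}\simeq G\ast\mathbb{F}_n$. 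In the degenerate case where $G$ is finite, all three groups are virtually free and infinitely ended, hence each is quasi-isometric to a regular tree and the conclusion is immediate.

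The step I expect to be the main obstacle is the construction underlying that rigidity principle, namely showing that the two-ended ($\mathbb{Z}$) and tree-like ($\mathbb{F}_n$) vertex spaces can be absorbed into the ambient tree structure without changing the quasi-isometry type. One cannot simply collapse a $\mathbb{Z}$-line to a point, since distance along such a line is realized by ambient geodesics; instead I would use the cut-and-paste technique of \cite{PW02}, partitioning each tree of spaces into pieces along the bushy tree and reassembling one into the other by a map that is a uniform quasi-isometry on each piece. The key geometric fact enabling this is that each $\mathbb{Z}$-vertex space, being densely decorated with branch points leading into bushy subtrees, is itself large-scale bushy and may therefore be recombined with the surrounding $\mathrm{Cay}(G)$-copies. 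Verifying that the resulting piecewise map has uniformly controlled distortion across the (point) edge spaces is the technical core, and is where the triviality of the edge groups and the uniform geometry of the vertex spaces are used.
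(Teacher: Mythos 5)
The paper does not actually prove this statement---it is imported verbatim from \cite{PW02}---so the only meaningful comparison is with the Papasoglu--Whyte argument itself, and your sketch is a faithful reconstruction of it: explicit free-product decompositions with trivial edge groups (avoiding any appeal to accessibility, which matters since $G$ is only assumed finitely generated), trees of spaces over bushy Bass--Serre trees with point edge spaces, and the cut-and-paste absorption of the two-ended and tree-like vertex spaces into the bushy ambient structure. Your identification of why $|G|\geq 3$ is indispensable (it excludes the two-ended case $\mathbb{Z}/2 * \mathbb{Z}/2$) and of the absorption of $\mathbb{Z}$-vertex spaces as the technical core is exactly where the weight of the proof in \cite{PW02} lies.
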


\begin{proposition}\label{QItoProd}
Suppose that $\Gamma$ is a simplest cactus. If $\Gamma$ is of type-M, Then $PB_2(\Gamma)$ is quasi-isometric to the free product of $SPB_2(\Gamma)$ and $\mathbb{Z}$.
\end{proposition}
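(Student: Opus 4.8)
The plan is to identify $PB_2(\Gamma)$ up to quasi-isometry with a free product having $SPB_2(\Gamma)$ as a factor, and then to let Theorem \ref{PW02} absorb every remaining factor into a single $\mathbb{Z}$. Concretely, I would establish that $PB_2(\Gamma)$ splits as a free product $PB_2(\Gamma)\cong SPB_2(\Gamma)*F$ (or, in the type-S case, $SPB_2(\Gamma)*SPB_2(\Gamma)*F$) with $F$ a nontrivial finitely generated free group, and then observe that $SPB_2(\Gamma)$ is infinite, so that Theorem \ref{PW02} collapses the free and the repeated factors and yields $SPB_2(\Gamma)*\mathbb{Z}$.

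For the decomposition I would start from the presentation of $PB_2(\Gamma)$ supplied by Theorem \ref{scrg}, namely $PB_2(\Gamma)=\langle\{g'_{i'}\}_{i'\in I'},\{h'_{j'}\}_{j'\in J'}\mid\{R'_z,R''_z\}_{z\in Z}\rangle$, in which every relator is a commutator and $\{g'_{i'}\}$ is precisely the set of generators occurring in the relators; hence no relator involves any $h'_{j'}$. Any presentation in which a set of generators occurs in no relator is the free product of the subpresentation on the remaining generators with the free group on the absent ones, so $PB_2(\Gamma)\cong\langle\{g'_{i'}\}\mid\{R'_z,R''_z\}\rangle*F$ with $F$ free on $\{h'_{j'}\}_{j'\in J'}$. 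For a special cactus of type-M, Remark \ref{SPB_2} identifies the first factor with $SPB_2(\Gamma)$ itself, the subgroup of $PB_2(\Gamma)$ carried by the union of standard tori, i.e. by $\mathcal{R}(D_2(\Gamma))$, which is $\pi_1$-injective by Proposition \ref{CpxofGps} and Theorem \ref{pi_1-injective}; this gives $PB_2(\Gamma)\cong SPB_2(\Gamma)*F$ on the nose.

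Two verifications then remain before invoking Theorem \ref{PW02}. First, $SPB_2(\Gamma)$ is infinite: since $\Gamma$ has a pair of disjoint boundary cycles, $\mathcal{R}(D_2(\Gamma))$ contains a standard torus, so $SPB_2(\Gamma)$ contains $\mathbb{Z}\times\mathbb{Z}$ and in particular has order at least three. Second, $F$ is nontrivial: as $\Gamma$ has no leaves or redundant vertices but carries branch structure, there is always a braid generator lying in no commutator relator—for instance a Y-exchange at a vertex of the spine of valency $\ge 3$, or a move with both points circulating a single boundary cycle—so $|J'|\ge 1$; equivalently, comparing abelianizations gives $\mathrm{rank}\,H_1(PB_2(\Gamma))>\mathrm{rank}\,H_1(SPB_2(\Gamma))$. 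With $SPB_2(\Gamma)$ infinite and $F\neq 1$, Theorem \ref{PW02} shows $SPB_2(\Gamma)*F$ is quasi-isometric to $SPB_2(\Gamma)*\mathbb{Z}$, and since quasi-isometric factors may be substituted in a free product without changing its quasi-isometry type, the type-S surplus copy of $SPB_2(\Gamma)$ is likewise reabsorbed.

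I expect the genuine obstacle to be the clean identification of the commutator subpresentation with $SPB_2(\Gamma)$ together with the uniform handling of the two types. For type-M this is exactly Remark \ref{SPB_2}, and the $2$-cube part of $M_2(\Gamma)$ is the $f_D$-image of $\mathcal{R}(D_2(\Gamma))$ up to collapsing bridges (Proposition \ref{Cactus1}); for type-S, where $RI(D_2(\Gamma))$ denotes a single one of the two switched components, one must check that passing to the ordered Morse complex $M_2(\Gamma)$ re-glues the two switched families of tori only along the single vertex, producing the extra free factor isomorphic to $SPB_2(\Gamma)$ that Theorem \ref{PW02} then reabsorbs. Once this bookkeeping is pinned down, the quasi-isometry between $PB_2(\Gamma)$ and $SPB_2(\Gamma)*\mathbb{Z}$ follows formally.
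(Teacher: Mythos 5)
Your proposal is correct, and for type-M cacti it is exactly the paper's argument: split off the generators $\{h'_{j'}\}$ that appear in no commutator relator as a free factor, identify the commutator subpresentation with $SPB_2(\Gamma)$ via Remark \ref{SPB_2}, certify that the free factor is nontrivial by exhibiting a Y-exchange whose conjugates cannot lie in $SPB_2(\Gamma)$ because the corresponding movement is not supported in $\mathcal{R}(D_2(\Gamma))$, and finish with Theorem \ref{PW02}. Where you genuinely diverge is the type-S case. You decompose $PB_2(\Gamma)$ itself as $SPB_2(\Gamma)*SPB_2(\Gamma)*F$, which obliges you to (a) prove the per-component analogue of Remark \ref{SPB_2} inside the ordered Morse complex $M_2(\Gamma)$ --- that the two switched families of tori involve disjoint sets of critical $1$-cells and that each carries a copy of $SPB_2(\Gamma)$ --- and (b) invoke a factor-substitution principle (if $A$ is quasi-isometric to $A'$ then $A*\mathbb{Z}$ is quasi-isometric to $A'*\mathbb{Z}$), which is true but strictly stronger than the quoted Theorem \ref{PW02}; it requires the full Papasoglu--Whyte classification of free products. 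The paper sidesteps both burdens at once: since $PB_2(\Gamma)$ has index two in $B_2(\Gamma)$, the two groups are quasi-isometric, and in the unordered Morse complex $UM_2(\Gamma)$ the $S_2$-quotient fuses the two switched components into a single copy of $\mathcal{R}(D_2(\Gamma))$, so Remark \ref{SPB_2} gives $B_2(\Gamma)=SPB_2(\Gamma)*\mathbb{F}_m$ on the nose and the quoted Theorem \ref{PW02} suffices. Your route buys an honest splitting of $PB_2(\Gamma)$ itself, but at the cost of precisely the bookkeeping you flag as the main obstacle; the paper's route makes that bookkeeping unnecessary. Two small repairs: your witness for $F\neq 1$ should be the paper's --- a Y-exchange at a valency-$\geq 3$ vertex of $\Gamma$ lying on an outermost boundary cycle, which exists for every special cactus with a pair of disjoint cycles, whereas ``a vertex of the spine of valency $\geq 3$'' exists only in type-M and the ``two points circulating one boundary cycle'' element is not justified as avoiding $SPB_2(\Gamma)$; and the abelianization inequality is equivalent to $|J'|\geq 1$ (all relators are commutators), so it restates the claim rather than proving it independently.
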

\begin{proof}
In the proof of Corollary \ref{Development}, we showed that if $\Gamma$ is of type-M (type-S, resp.) $PB_2(\Gamma)$ ($B_2(\Gamma)$, resp.) is isomorphic to $SPB_2(\Gamma)*\mathbb{F}_n$ where $\mathbb{F}_n$ is the free group of rank $n\ge 0$ ($\mathbb{F}_0$ means that it is trivial). 
In order to prove the proposition, we need to show that $n$ is $\geq 1$.

Let $a$ be an outermost boundary cycle in $\Gamma$, i.e. $v=\Psi(a)$ is a leaf in $\mathcal{K}_\Gamma$. Since $v$ is a vertex of valency $\geq 3$ in $\Gamma$, there are three vertices $v_1,v_2,v_3\in \Gamma$ adjacent to $v$ such that $v_1$ and $v_2$ are in $a$ but $v_3$ is not. 
Then the movement corresponding to an Y-exchange $(v_1,v_2,v_3;v)$ (see Example \ref{Tripod}) is not contained in $\mathcal{R}(D_2(\Gamma))$ so that any conjugation of the Y-exchange $(v_1,v_2,v_3;v)$ is not contained in $SPB_2(\Gamma)$. This means that $SPB_2(\Gamma)$ is a proper subgroup of $PB_2(\Gamma)$ ($B_2(\Gamma)$, resp.) when $\Gamma$ is of type-M (type-S, resp.), and thus, $n$ must be $\ge 1$. By Theorem \ref{PW02}, therefore, $PB_2(\Gamma)$ is quasi-isometric to $SPB_2(\Gamma)*\mathbb{Z}$.
\end{proof}

\begin{proposition}\label{QItoRAAG}
Let $\mathcal{O}_k$ be the graph in Proposition \ref{StarN}. Let $T$ be a tree of diameter $\geq3$. Then, $PB_2(\mathcal{O}_k)$ is quasi-isometric to $A(T)*\mathbb{Z}$.
\end{proposition}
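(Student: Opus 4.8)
The plan is to assemble the statement from the two structural results already proved for $\mathcal{O}_k$ together with the quasi-isometry invariance of free products. First I would record that $\mathcal{O}_k$ is a special cactus of type-M: it is planar, its $k$ three-cycles are attached to distinct leaves of a star tree and hence are pairwise disjoint, and its spine is a star tree with central vertex of valency $k\geq 3$. Thus both Corollary \ref{QItoAT} and Proposition \ref{QItoProd} apply to $\Gamma=\mathcal{O}_k$.

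The backbone of the argument is the chain
\[
PB_2(\mathcal{O}_k)\ \sim_{QI}\ SPB_2(\mathcal{O}_k)*\mathbb{Z}\ \sim_{QI}\ A(T)*\mathbb{Z}.
\]
The first quasi-isometry is exactly Proposition \ref{QItoProd} applied to $\mathcal{O}_k$. For the identification of the one-ended factor, Corollary \ref{QItoAT} produces a tree $T_0$ of diameter $\geq 3$ with $SPB_2(\mathcal{O}_k)$ quasi-isometric to $A(T_0)$; since all RAAGs on trees of diameter $\geq 3$ are mutually quasi-isometric (Theorem \ref{GraphofGroups} applied to the graph of groups $RI(S(T))$, as observed just below that theorem), we also have that $A(T_0)$ is quasi-isometric to $A(T)$ for the tree $T$ in the statement. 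Hence $SPB_2(\mathcal{O}_k)$ is quasi-isometric to $A(T)$, which is why the conclusion holds for \emph{every} such $T$ and not merely for the one produced by Corollary \ref{QItoAT}.

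The remaining, and main, step is to promote this quasi-isometry of the one-ended factors to a quasi-isometry $SPB_2(\mathcal{O}_k)*\mathbb{Z}\to A(T)*\mathbb{Z}$ of the free products. Here I would invoke the quasi-isometry classification of free products from \cite{PW02}: both groups are finitely generated, neither is virtually cyclic, both have infinitely many ends, and in their Grushko decompositions each has a single one-ended factor (namely $SPB_2(\mathcal{O}_k)$, respectively $A(T)$) together with a free part of rank one. Since $A(T)$ is one-ended, being the RAAG on a connected graph with at least two vertices, and $SPB_2(\mathcal{O}_k)$ is quasi-isometric to it, the sets of quasi-isometry types of one-ended factors coincide and both free parts are nontrivial, so \cite{PW02} yields the desired quasi-isometry. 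This last implication is the crux of the proof: the special case recorded as Theorem \ref{PW02} only varies the second factor against a fixed first factor, whereas here I need the full invariance of a free product with $\mathbb{Z}$ under a quasi-isometry of its one-ended factor. Chaining the three quasi-isometries then completes the argument.
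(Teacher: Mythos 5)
Your proof is correct and follows essentially the same route as the paper's: Proposition \ref{QItoProd} gives $PB_2(\mathcal{O}_k)\sim_{QI} SPB_2(\mathcal{O}_k)*\mathbb{Z}$, Corollary \ref{QItoAT} identifies the one-ended factor with $A(T)$ up to quasi-isometry, and the Papasoglu--Whyte result \cite{PW02} finishes the argument. Your extra care is warranted rather than a divergence: the final step indeed requires the full Papasoglu--Whyte invariance of free products under quasi-isometry of their one-ended Grushko factors (together with the mutual quasi-isometry of all $A(T)$ for trees $T$ of diameter $\geq 3$), not merely the fixed-factor special case reproduced as Theorem \ref{PW02}, so your explicit invocation of the full classification makes precise what the paper's two-line proof leaves implicit.
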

\begin{proof}
By Proposition \ref{QItoProd}, $PB_2(\mathcal{O}_k)$ is quasi-isometric to the free product of $SPB_2(\mathcal{O}_k)$ and $\mathbb{Z}$.
Since $SPB_2(\mathcal{O}_k)$ is quasi-isometric to $A(T)$ by Corollary \ref{QItoAT}, $PB_2(\mathcal{O}_k)$ is quasi-isometric to $A(T)*\mathbb{Z}$ by Theorem \ref{PW02}.
\end{proof}

However, there also exists a simplest cactus of type-M whose graph braid group is not quasi-isometric to any RAAG. 
To show the existence of such a cactus, we recall the following two observations:
\begin{enumerate}
\item
Suppose that $\Lambda$ consists of finitely many components $\Lambda_1,\cdots,\Lambda_n$ each of which is triangle-free. Then $I(X(\Lambda))$ consists of infinitely copies of $I(X(\Lambda_i))$ for $i=1,\cdots,n$.
\item If $\overline{D_2(\Gamma)}$ is quasi-isometric to $X(\Lambda)$, by Theorem \ref{FlattoFlat}, $\Lambda$ must be triangle-free and contain at least one edge since the dimension of a top dimensional flat in $\overline{D_2(\Gamma)}$ is at most two.
\end{enumerate}
By these observations, we will find a cactus $\Gamma$ such that $I(\overline{D_2(\Gamma)})$ has a component which is not semi-isomorphic to $I(X(\Lambda))$ for any triangle-free connected graph $\Lambda$. 

\begin{proposition}\label{NotQI}
Let $\mathcal{O}'_4$ be the graph in Example \ref{T'_4Ex}. Then there exists no triangle-free graph $\Lambda$ such that $I(X(\Lambda))$ is semi-isomorphic to a component $I_0$ of $I(\overline{D_2(\mathcal{O}'_4)})$. In particular, $PB_2(\mathcal{O}'_4)$ is not quasi-isometric to any RAAGs.
\end{proposition}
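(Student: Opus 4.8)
The plan is to exploit the one qualitative feature that separates $\mathcal{O}'_4$ from the graphs $\mathcal{O}_k$ of Proposition~\ref{StarN}: by Example~\ref{T'_4Ex} (Figure~\ref{Ex3'}) the complex $RI(D_2(\mathcal{O}'_4))$ contains a vertex $\mathbf{x}$ of quasi-isometric type $\mathbb{F}\times\mathbb{F}$, labelled by $\{a_1,a_2\}\times\{a_3,a_4\}$, and hence so does its development $I_0$ (Corollary~\ref{Development}), whereas $RI(D_2(\mathcal{O}_k))$ is a graph of groups with no such vertex. I would argue by contradiction: suppose a triangle-free connected $\Lambda$ admits a semi-isomorphism $\Phi\colon I_0\to I(X(\Lambda))$. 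Since a semi-isomorphism preserves the quasi-isometric type of assigned groups, $\Phi(\mathbf{x})$ is an $\mathbb{F}\times\mathbb{F}$ vertex $\mathbf{v}\in I(X(\Lambda))$ whose label is a maximal join subgraph $\Lambda_1\circ\Lambda_2$ with $|\Lambda_1|,|\Lambda_2|\geq 2$; triangle-freeness forces $\Lambda_1,\Lambda_2$ to be independent sets, so $\Lambda$ contains an induced $4$-cycle.

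Next I would pin down the local picture. Because $\Phi$ is an isometry of underlying complexes, it matches the link of $\mathbf{x}$ with that of $\mathbf{v}$ simplex by simplex. Comparing, for each edge through $\mathbf{x}$, the number of $2$-simplices it carries with the corresponding data at $\mathbf{v}$ (where the $2$-simplices through $\mathbf{v}$ are exactly the $\mathbf{v}$–$st(u)$–$st(w)$ with $u\in\Lambda_1,\ w\in\Lambda_2$) forces $|\Lambda_1|=|\Lambda_2|=2$, say $\Lambda_1=\{u_1,u_2\}$ and $\Lambda_2=\{w_1,w_2\}$. The same link bookkeeping identifies the rim vertex $\mathbf{a}=\langle a_1\rangle\times\langle a_2,a_3,a_4\rangle$ of $\mathbf{x}$ with a neighbour $st(u_1)=\langle u_1\rangle\times\langle lk(u_1)\rangle$ of $\mathbf{v}$ whose free group factor must have rank $3$ (the rank being visible as a combinatorial invariant of the link), so $lk(u_1)=\{w_1,w_2,z\}$ with $z\sim u_1$ and $z\notin\{w_1,w_2\}$.

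The decisive step is to transport the special configuration of $I_0$ through $\Phi$ and decode it in $\Lambda$. In $RI(D_2(\mathcal{O}'_4))$ the vertex $\mathbf{a}$ carries a \emph{flip edge} to the $\mathbb{F}\times\mathbb{Z}$ vertex $\mathbf{c}^{s}=\langle a_1,a_3,a_4\rangle\times\langle a_2\rangle$ that reduces the free factor of $\mathbf{a}$ to the direction $\langle a_2\rangle$ \emph{transverse} to the factor shared with $\mathbf{x}$, lies in no $2$-simplex, and continues to the second $\mathbb{F}\times\mathbb{F}$ vertex $\mathbf{x}^{s}=\langle a_3,a_4\rangle\times\langle a_1,a_2\rangle$, which \emph{re-uses} the rank-$2$ factor $\langle a_3,a_4\rangle$ already appearing in $\mathbf{x}$. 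Reading the $\Phi$-image of this length-$3$ path off the dictionary between $I(X(\Lambda))$ and $\Lambda$ supplied by Remark~\ref{ConstructionOfX}, the flip edge corresponds to the edge $u_1z$ of $\Lambda$ (so $u_1\sim z$), and the terminal $\mathbb{F}\times\mathbb{F}$ vertex is a maximal join one of whose factors is the \emph{same} pair $\langle w_1,w_2\rangle$; since in a join every vertex of one factor is adjacent to every vertex of the other, this gives $w_1\sim z$. Together with $u_1\sim w_1$ (from $\Lambda_1\circ\Lambda_2$) and $u_1\sim z$, this yields a triangle $\{u_1,w_1,z\}$, contradicting triangle-freeness.

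The main obstacle, and the step on which I would spend the most care, is justifying that the terminal vertex's factor is genuinely $\langle w_1,w_2\rangle$ and not merely an abstract $\mathbb{F}_2$: a semi-isomorphism remembers only quasi-isometric types and pairwise free-factor inclusions, so I must show that the join decomposition is transported \emph{coherently} along the path. Here I would use that the two $\mathbb{F}\times\mathbb{F}$ vertices of $I_0$ are joined by several such flip paths (one through each rim vertex), whose mutual consistency pins the re-used factor. Once the local claim is established, the final statement is formal: if $PB_2(\mathcal{O}'_4)$ were quasi-isometric to some $A(\Lambda)$, then $\overline{D_2(\mathcal{O}'_4)}$ and $X(\Lambda)$ would be quasi-isometric, forcing $\Lambda$ to be triangle-free with at least one edge (top-dimensional flats are $2$-dimensional, by Theorem~\ref{FlattoFlat}); Theorem~\ref{QIimpliesIso2} would produce a semi-isomorphism of intersection complexes, and after splitting into connected components of $\Lambda$ and invoking Proposition~\ref{Components}, a component $I_0$ of $I(\overline{D_2(\mathcal{O}'_4)})$ would be semi-isomorphic to $I(X(\Lambda'))$ for a connected triangle-free $\Lambda'$, contradicting what was just proved.
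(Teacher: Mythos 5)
Your strategy is genuinely different from the paper's, but it breaks down exactly where you say it does, and that gap is structural rather than technical. A semi-isomorphism preserves only the combinatorics of the underlying complexes, the quasi-isometric types of assigned groups, and the equality patterns of assigned-group sequences along \emph{chains}, i.e.\ along nested sequences $\triangle^n\supset\triangle^{n-1}\supset\cdots\supset\triangle^0$, and even there only up to a coordinate interchange chosen separately for each chain (see Definition \ref{CJ} and the definition of semi-morphism in Section \ref{3.2}). It does not preserve labels. Your flip path $\overline{\mathbf{x}}-\overline{\mathbf{a}}-\overline{\mathbf{c}^s}-\overline{\mathbf{x}^s}$ is a path in the $1$-skeleton, not a chain: the only chains through the flip edge are $(\text{edge},\overline{\mathbf{a}})$ and $(\text{edge},\overline{\mathbf{c}^s})$, and these receive independent coordinate choices, so nothing in the definition transports "which factor is re-used" across even one edge, let alone three. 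The statement "the labels of $\overline{\mathbf{x}}$ and $\overline{\mathbf{x}^s}$ share $\{a_3,a_4\}$" is label data, which $\Phi$ does not see at all; your proposed repair (mutual consistency of the several flip paths) restates the desired conclusion rather than proving it. The supporting steps have the same defect: $2$-simplices through $\mathbf{v}$ need not be triples $\mathbf{v}$--$st(u)$--$st(w)$ (a $2$-simplex only records three maximal product subcomplexes sharing a standard product subcomplex); the links of $\overline{\mathbf{x}}$ in $I_0$ and of $\mathbf{v}$ in $I(X(\Lambda))$ are both infinite (in the development every edge lies in infinitely many $2$-simplices, since edge groups have infinite index in vertex groups), so the counting meant to force $|\Lambda_1|=|\Lambda_2|=2$ is not even defined; and the rank of the $\mathbb{F}$-factor is not a semi-isomorphism invariant ($\mathbb{F}_2$ and $\mathbb{F}_3$ have the same quasi-isometric type), so "$lk(u_1)$ has exactly three vertices" is unjustified.

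The paper's proof avoids all of these local rigidity questions. It uses the global input that $|I_0|$ is simply connected (Corollary \ref{Development}, since $\mathcal{O}'_4$ is of type-M), hence $|RI(S(\Lambda))|$ would have to be simply connected (Proposition \ref{ContractibleRI}); it then takes the essential loop in $RI(D_2(\mathcal{O}'_4))$ alternating the flip edges $\{a_1\}\times\{a_2\}$, $\{a_4\}\times\{a_3\}$ with the $2$-simplices $\{a_1\}\times\{a_3\}$, $\{a_4\}\times\{a_2\}$, unwinds it to an infinite periodic line in $I_0$, and observes that the vertices along this line are separating vertices characterized by a purely combinatorial property (no path from the far vertex of the maximal edge to the far vertices of the $2$-simplex avoiding the vertex) which any semi-isomorphism must preserve. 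Pushing these through $\Phi$ and then through $\rho_{S(\Lambda)}$ lands infinitely many such separating vertices in the \emph{finite} simplicial complex $RI(S(\Lambda))$, and pigeonhole plus simpliciality give the contradiction. If you want to salvage your local approach, you would first need to prove a lemma stating that flip data across an edge (which side of each endpoint's join decomposition the edge group occupies) is a semi-isomorphism invariant, and then a further rigidity statement pinning the terminal vertex's factor to $\langle w_1,w_2\rangle$; both are precisely the missing content, and neither follows from the definitions as they stand. Your final paragraph, deducing the "in particular" clause from Theorems \ref{FlattoFlat} and \ref{QIimpliesIso2}, is fine and matches the paper's reduction.
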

\begin{proof}
Assume that there exists a triangle-free graph $\Lambda$ such that $I(X(\Lambda))$ is semi-isomorphic to $I_0$ with a semi-isomorphism $\Phi:I_0\rightarrow I(X(\Lambda))$.
Since $\mathcal{O}'_4$ is a cactus of type-M, by corollary \ref{Development}, $|I_0|$ is simply connected. It means that $|I(X(\Lambda))|$ is simply connected so that by Proposition \ref{ContractibleRI}, $|RI(S(\Lambda))|$ is also simply connected. 

Let $\rho_S:I(X(\Lambda))\rightarrow RI(S(\Lambda))$ and $\rho_D:I_0\rightarrow RI(D_2(\mathcal{O}'_4))$ be the canonical quotient morphisms.
Let $\alpha=(\bold{E}_1,\triangle_1,\bold{E}_2,\triangle_2)$ be a sequence of simplices of $RI(D_2(\mathcal{O}'_4))$ where $\bold{E}_1$ and $\bold{E}_2$ are labelled by $\{a_1\}\times\{a_2\}$ and $\{a_4\}\times\{a_3\}$, respectively, and $\triangle_1$ and $\triangle_2$ are labelled by $\{a_1\}\times\{a_3\}$ and $\{a_4\}\times\{a_2\}$, respectively.
Let $$\overline{\alpha}=(\cdots,\overline{\bold{E}}_1,\overline{\triangle}_1,\overline{\bold{E}}_2,\overline{\triangle}_2,\cdots)$$ be an infinite sequence of consecutive simplices in $I_0$ such that $\rho_D(\overline{\bold{E}}_{2k+1})=\bold{E}_1$, $\rho_D(\overline{\bold{E}}_{2k})=\bold{E}_2$, $\rho_D(\overline{\triangle}_{2k+1})=\triangle_1$ and $\rho_D(\overline{\triangle}_{2k})=\triangle_2$. 
Let $\mathbf{v}_n=\overline{\bold{E}}_n\cap\overline{\triangle}_n$ and $\mathbf{w}_n=\overline{\triangle}_n\cap\overline{\bold{E}}_{n+1}$. 
Then $\mathbf{v}_n$, $\mathbf{w}_n$ are separating vertices in $I_0$ which have the following property: For any maximal edge $\overline{\bold{E}}\subset I_0$ and any 2-simplex $\overline{\triangle}\subset I_0$ both of which contain $\mathbf{v}_n$ (or $\mathbf{w}_n$), there is no path joining the other vertex in $\overline{\bold{E}}$ and any other vertex in $\overline{\triangle}$ without passing through $\mathbf{v}_n$ (or $\mathbf{w}_n$).
Since $\Phi(\mathbf{v}_n)$ and $\Phi(\mathbf{w}_n)$ are separating vertices in $I(X(\Lambda))$ which have this property, $\rho_S(\Phi(\mathbf{v}_n))$ and $\rho_S(\Phi(\mathbf{w}_n))$ are separating vertices in $RI(S(\Lambda))$.
For if $\rho_S(\Phi(\mathbf{v}_n))$ (or $\rho_S(\Phi(\mathbf{w}_n))$) was not a separating vertex in $RI(S(\Lambda))$, then $\Phi(\mathbf{v}_n)$ (or $\Phi(\mathbf{w}_n)$) would not have the above property from the fact that $I(X(\Lambda))$ is covered by copies of $RI(S(\Lambda))$.

Let $\overline{\alpha}'=\Phi(\overline{\alpha})$ and consider a half $\overline{\alpha}'_{+}$ of $\overline{\alpha}'$, 
$$\overline{\alpha}'_+=(\Phi(\overline{\bold{E}}_1),\Phi(\overline{\triangle}_1),\Phi(\overline{\bold{E}}_2),\Phi(\overline{\triangle}_2),\cdots).$$
Then $\overline{\alpha}'_{+}$ induces the sequence of seperating vertices in $RI(S(\Lambda))$, 
$$(\rho_{S}(\Phi(\mathbf{v}_1)),\rho_{S}(\Phi(\mathbf{w}_1)),\rho_{S}(\Phi(\mathbf{v}_2)),\rho_{S}(\Phi(\mathbf{w}_2)),\cdots).$$ 
Since $|RI(S(\Lambda))|$ is finite, there exists $m>0$ such that $\rho_{S}(\Phi(\mathbf{v}_m))=\rho_{S}(\Phi(\mathbf{v}_{m+1}))$ or $\rho_{S}(\Phi(\mathbf{w}_{m}))=\rho_{S}(\Phi(\mathbf{w}_{m+1}))$. 
But it is impossible since $\rho_S(\Phi(\overline{\bold{E}}_i))$ and $\rho_S(\Phi(\overline{\triangle}_i))$ are simplices of $RI(S(\Lambda))$ and $|RI(S(\Lambda))|$ is a simplicial complex, a contradiction.
Therefore, there is no such $\Lambda$.
\end{proof}

Let $\mathcal{O}'_{4,n}$ be the graph obtained from $\mathcal{O}'_4$ as follows: after adding $n$ vertices on the interior of the edge of $\mathcal{O}'_4$ not meeting boundary cycles, one 3-cycle is attached to each added vertex and the added 3-cycles are labelled by $a_5,\cdots,a_{n+4}$ (Figure \ref{4,n}). 
In $RI(D_2(\mathcal{O}'_{4,n}))$, there are eight maximal $(n+2)$-simplices and four maximal edges. The $(n+2)$-simplices are labelled by $\{a_1\}\times \{a_3\}$, $\{a_1\}\times \{a_4\}$, $\{a_2\}\times \{a_3\}$, $\{a_2\}\times \{a_4\}$ and their switched ones; the first four of them share the $n$-simplex labelled by $\{a_1,a_2\}\times\{a_3,a_4\}$ and the last four share the $n$-simplex labelled by $\{a_3,a_4\}\times\{a_1,a_2\}$.
The maximal edges are labelled by $\{a_1\}\times \{a_2\}$, $\{a_3\}\times \{a_4\}$, $\{a_2\}\times \{a_1\}$, $\{a_4\}\times \{a_3\}$. 
Indeed, $RI(D_2(\mathcal{O}'_{4,n}))$ is obtained from $RI(D_2(\mathcal{O}'_4))$ by replacing some simplices: the vertices in $RI(D_2(\mathcal{O}'_4))$ labelled by $\{a_1,a_2\}\times\{a_3,a_4\}$ and $\{a_3,a_4\}\times\{a_1,a_2\}$ are replaced by $n$-simplices and the maximal simplices of $RI(D_2(\mathcal{O}'_4))$ labelled by $\{a_i\}\times \{a_j\}$ or $\{a_j\}\times \{a_i\}$ are replaced by $(n+2)$-simplices for $i\in\{1,2\}$, $j\in\{3,4\}$. 
See Figure \ref{4,1} for $RI(D_2(\mathcal{O}'_{4,1}))$.
Similarly to how we prove the previous proposition, we can show that $PB_2(\mathcal{O}'_{4,n})$ is not quasi-isometric to RAAGs.

\begin{figure}[h]
\begin{tikzpicture}
\tikzstyle{every node}=[draw,circle,fill=black,minimum size=4pt,inner sep=0pt]
  \node[anchor=center] (x1) at (0,0){};
  \node[anchor=center] (y1) at (6,0){};
  \node[anchor=center] (a1) at (-0.5,0.73){};
  \node[anchor=center] (a2) at (-1.5,0.73){};  
  \node[anchor=center] (a3) at (-0.5,1.73){};  
  \node[anchor=center] (b1) at (-0.5,-0.73){};
  \node[anchor=center] (b2) at (-1.5,-0.73){};  
  \node[anchor=center] (b3) at (-0.5,-1.73){};  
  \node[anchor=center] (c1) at (6.5,0.73){};
  \node[anchor=center] (c2) at (7.5,0.73){};  
  \node[anchor=center] (c3) at (6.5,1.73){};  
  \node[anchor=center] (d1) at (6.5,-0.73){};
  \node[anchor=center] (d2) at (6.5,-1.73){};  
  \node[anchor=center] (d3) at (7.5,-0.73){};  
  \node[anchor=center] (e1) at (1.5,0){};
  \node[anchor=center] (e2) at (1,0.73){};  
  \node[anchor=center] (e3) at (2,0.73){};  
  \node[anchor=center] (f1) at (4.5,0){};
  \node[anchor=center] (f2) at (5,0.73){};  
  \node[anchor=center] (f3) at (4,0.73){};  
  \node[minimum size=2pt] (g1) at (2.7,0.63){};
  \node[minimum size=2pt] (g2) at (3,0.63){};  
  \node[minimum size=2pt] (g3) at (3.3,0.63){};

  \draw (a1.center) -- (a2.center) -- (a3.center) -- cycle;  
  \draw (b1.center) -- (b2.center) -- (b3.center) -- cycle;
  \draw (c1.center) -- (c2.center) -- (c3.center) -- cycle;
  \draw (d1.center) -- (d2.center) -- (d3.center) -- cycle;  
  \draw (e1.center) -- (e2.center) -- (e3.center) -- cycle;  
  \draw (f1.center) -- (f2.center) -- (f3.center) -- cycle;  

  \foreach \from/\to in {x1/a1,x1/b1,y1/c1,y1/d1,x1/y1}
  \draw (\from) -- (\to);
  
\end{tikzpicture}
\caption{$\mathcal{O}'_{4,n}$ with $(n+4)$ boundary cycles.}
\label{4,n}
\end{figure}
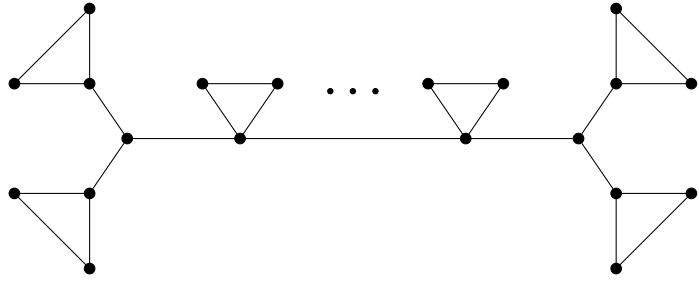

\begin{figure}[h]
\begin{tikzpicture}
\tikzstyle{point}=[circle,thick,draw=black,fill=black,inner sep=0pt,minimum width=3.5pt,minimum height=4pt]
\draw [gray, -triangle 60  ] (2.5,1.4) -- (0.3,0.4) node (x2) [minimum size=0pt]{};
\draw [gray, -triangle 60  ] (-2.9,1.1) -- (-0.3,-0.2) node (x3) [minimum size=0pt]{};

\node (x)[point,label={[shift={(2.8,0.9)}]:$\{a_1,a_2,a_5\}\times\{a_3,a_4\}$}] at (0.2,0.3) {};
\node (x1)[point,label={[shift={(-3.4,1.2)}]:$\{a_1,a_2\}\times\{a_3,a_4,a_5\}$}] at (-0.2,-0.3) {};
\node (a)[point,label={[label distance=-0.1cm]above:$\{a_1\}\times\{a_2,a_3,a_4,a_5\}$}] at (-0.7,1.6) {};
\node (b)[point,label={[label distance=0cm]left:$\{a_1,a_2,a_4,a_5\}\times\{a_3\}$}] at (-3.5,0) {};
\node (c)[point,label={[shift={(1.7,-0.55)}]:$\{a_2\}\times\{a_1,a_3,a_4,a_5\}$}] at (0.7,-1.6) {};
\node (d)[point,label={[label distance=-0.1cm]right:$\{a_1,a_2,a_3,a_5\}\times\{a_4\}$}] at (3.5,0) {};

    \begin{scope}[yshift=-4.5cm]
    \node (y)[point] at (0.2,0.3) {};
    \node (y1)[point] at (-0.2,-0.3) {};
    \node (e)[point] at (-0.7,1.6) {};
    \node (f)[point] at (-3.5,0) {};
    \node (g)[point] at (0.7,-1.6) {};
    \node (h)[point] at (3.5,0) {};
    \end{scope}

    \draw[pattern=dots] (a.center) -- (x.center) -- (b.center) -- cycle;
    \draw[pattern=dots] (a.center) -- (x1.center) -- (b.center) -- cycle;
    \draw[pattern=dots] (a.center) -- (x1.center) -- (x.center) -- cycle;
    \draw[pattern=dots] (b.center) -- (x1.center) -- (x.center) -- cycle;
    \draw[pattern=mydots] (b.center) -- (x.center) -- (c.center) -- cycle;
    \draw[pattern=mydots] (b.center) -- (x1.center) -- (c.center) -- cycle;
    \draw[pattern=mynewdots] (c.center) -- (x.center) -- (d.center) -- cycle;
    \draw[pattern=mynewdots] (c.center) -- (x1.center) -- (d.center) -- cycle;
    \draw[pattern=mynewdots] (c.center) -- (x.center) -- (x1.center) -- cycle;
    \draw[pattern=mynewdots] (d.center) -- (x1.center) -- (x.center) -- cycle;
    \draw[pattern=verynewdots] (d.center) -- (x.center) -- (a.center) -- cycle;
    \draw[pattern=verynewdots] (d.center) -- (x1.center) -- (a.center) -- cycle;
    \foreach \from/\to in {x/x1,y/y1}
  \draw (\from) -- (\to);
	\draw [red] plot [smooth] coordinates {(-0.7,1.6) (-1.1,-0.65) (-0.7,-2.9)};
	\draw [blue] plot [smooth] coordinates {(-3.5,0) (-4.3,-2.25) (-3.5,-4.5)};
	\draw [red] plot [smooth] coordinates {(0.7,-1.6) (1.1,-3.85) (0.7,-6.1)};
	\draw [blue] plot [smooth] coordinates {(3.5,0) (4.3,-2.25) (3.5,-4.5)};

    \draw[pattern=mynewdots] (e.center) -- (y.center) -- (f.center) -- cycle;
    \draw[pattern=mynewdots] (e.center) -- (y1.center) -- (f.center) -- cycle;
    \draw[pattern=mynewdots] (e.center) -- (y.center) -- (y1.center) -- cycle;
    \draw[pattern=mynewdots] (f.center) -- (y1.center) -- (y.center) -- cycle;
    \draw[pattern=verynewdots] (f.center) -- (y.center) -- (g.center) -- cycle;
    \draw[pattern=verynewdots] (f.center) -- (y1.center) -- (g.center) -- cycle;
    \draw[pattern=dots] (g.center) -- (y.center) -- (h.center) -- cycle;
    \draw[pattern=dots] (g.center) -- (y1.center) -- (h.center) -- cycle;
    \draw[pattern=dots] (g.center) -- (y.center) -- (y1.center) -- cycle;
    \draw[pattern=dots] (h.center) -- (y1.center) -- (y.center) -- cycle;
    \draw[pattern=mydots] (h.center) -- (y.center) -- (e.center) -- cycle;
    \draw[pattern=mydots] (h.center) -- (y1.center) -- (e.center) -- cycle;

\end{tikzpicture}
\caption{$RI(D_2(\mathcal{O}'_{4,1}))$.}
\label{4,1}
\end{figure}
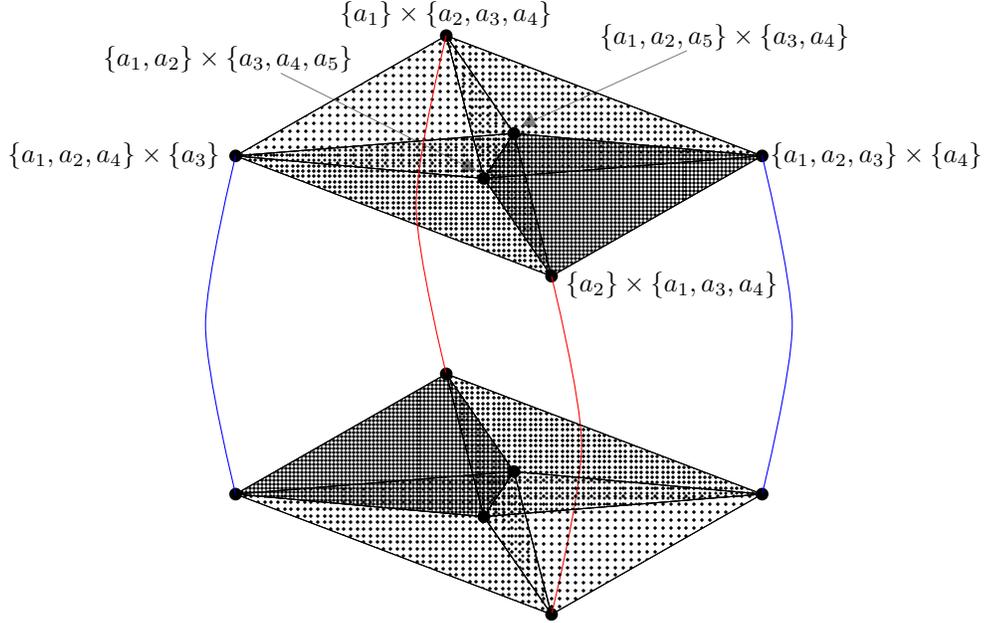

\begin{corollary}\label{NotQItoRAAG}
Let $\mathcal{O}'_{4,n}$ be given as above. Then there is no triangle-free graph $\Lambda$ such that $I(X(\Lambda))$ is semi-isomorphic to a component $I_0$ of $I(\overline{D_2(\mathcal{O}'_{4,n})})$. In particular, $PB_2(\mathcal{O}'_{4,n})$ is not quasi-isometric to any RAAGs.
\end{corollary}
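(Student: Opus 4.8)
The plan is to repeat the argument of Proposition~\ref{NotQI} almost verbatim, the point being that the graph $\mathcal{O}'_{4,n}$ is assembled from $\mathcal{O}'_4$ in a way that leaves intact every combinatorial feature that drives that proof. First I would argue by contradiction: assume there is a triangle-free graph $\Lambda$ and a semi-isomorphism $\Phi:I_0\rightarrow I(X(\Lambda))$, where (since $I_0$ is connected) $\Lambda$ may be taken connected. As $\mathcal{O}'_{4,n}$ is again a special cactus of type-M (its spine is not homeomorphic to a line segment), Corollary~\ref{Development} shows that $I_0$ is the development of $RI(D_2(\mathcal{O}'_{4,n}))$, so $|I_0|$ is simply connected. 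Consequently $|I(X(\Lambda))|$ is simply connected and, by Proposition~\ref{ContractibleRI}, so is $|RI(S(\Lambda))|$.

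Next I would extract from $RI(D_2(\mathcal{O}'_{4,n}))$ the same cyclic frame of simplices used before. From the explicit description of $RI(D_2(\mathcal{O}'_{4,n}))$ recorded above, the four maximal edges labelled $\{a_1\}\times\{a_2\}$, $\{a_3\}\times\{a_4\}$, $\{a_2\}\times\{a_1\}$ and $\{a_4\}\times\{a_3\}$ persist unchanged, while the maximal simplices carrying the labels $\{a_1\}\times\{a_3\}$ and $\{a_4\}\times\{a_2\}$ are now $(n+2)$-simplices rather than $2$-simplices. Taking $\mathbf{E}_1,\mathbf{E}_2$ to be two of these maximal edges and $\triangle_1,\triangle_2$ to be these maximal simplices, the sequence $\alpha=(\mathbf{E}_1,\triangle_1,\mathbf{E}_2,\triangle_2)$ is a closed frame in $RI(D_2(\mathcal{O}'_{4,n}))$ exactly as in Proposition~\ref{NotQI}, which I would lift to a bi-infinite line $\overline{\alpha}=(\cdots,\overline{\mathbf{E}}_1,\overline{\triangle}_1,\overline{\mathbf{E}}_2,\overline{\triangle}_2,\cdots)$ in $I_0$. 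The consecutive intersection points $\mathbf{v}_n=\overline{\mathbf{E}}_n\cap\overline{\triangle}_n$ and $\mathbf{w}_n=\overline{\triangle}_n\cap\overline{\mathbf{E}}_{n+1}$ are then separating vertices of $I_0$ possessing the same local separation property as before; the only change is that ``$2$-simplex'' becomes ``$(n+2)$-simplex'' in the statement of that property, which affects neither its content nor its verification.

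The conclusion then follows by pushing the line forward. Since $\Phi$ is a semi-isomorphism, each $\Phi(\mathbf{v}_n)$ and $\Phi(\mathbf{w}_n)$ is a separating vertex of $I(X(\Lambda))$ with the same property, so its image under the canonical quotient map $\rho_S:I(X(\Lambda))\rightarrow RI(S(\Lambda))$ is a separating vertex of $RI(S(\Lambda))$. A half of $\Phi(\overline{\alpha})$ thus produces an infinite sequence of separating vertices of the finite complex $|RI(S(\Lambda))|$, so by pigeonhole two consecutive ones have equal $\rho_S$-image; but then the distinct simplices $\rho_S(\Phi(\overline{\mathbf{E}}_i))$ and $\rho_S(\Phi(\overline{\triangle}_i))$ lying between them would share two vertices, contradicting that $|RI(S(\Lambda))|$ is a simplicial complex. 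This excludes $\Lambda$, proving the first assertion. For the ``in particular'' statement, if $PB_2(\mathcal{O}'_{4,n})$ were quasi-isometric to some $A(\Lambda)$, then Theorem~\ref{FlattoFlat} forces $\Lambda$ to be triangle-free with at least one edge, Theorem~\ref{QIimpliesIso2} produces a semi-isomorphism of intersection complexes, and observations (1)--(2) preceding Proposition~\ref{NotQI} let us pass to a connected component of $\Lambda$; the resulting semi-isomorphism with $I_0$ contradicts the first assertion.

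The main obstacle is the middle step: one must confirm that the cyclic frame, its bi-infinite lift, and above all the local separation property of the vertices $\mathbf{v}_n,\mathbf{w}_n$ really are unaffected by inserting the extra cycles $a_5,\dots,a_{n+4}$ along the central edge. This is precisely where the explicit structure of $RI(D_2(\mathcal{O}'_{4,n}))$ computed above is used, and it is the only place where the argument departs in substance from the proof of Proposition~\ref{NotQI}.
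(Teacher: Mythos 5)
Your proposal is correct and follows essentially the same route as the paper's own proof: both rerun the argument of Proposition \ref{NotQI} for $\mathcal{O}'_{4,n}$, using type-M to get simple connectivity of $|RI(S(\Lambda))|$, the same cyclic frame of maximal edges $\{a_1\}\times\{a_2\}$, $\{a_4\}\times\{a_3\}$ and $(n+2)$-simplices $\{a_1\}\times\{a_3\}$, $\{a_4\}\times\{a_2\}$, the bi-infinite lift, the separating vertices with the local separation property, and the pigeonhole contradiction in the finite simplicial complex $|RI(S(\Lambda))|$. The only cosmetic difference is that you spell out the ``in particular'' deduction (via Theorems \ref{FlattoFlat} and \ref{QIimpliesIso2}), which the paper leaves to the observations preceding Proposition \ref{NotQI}.
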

\begin{proof}
Assume that there exists a triangle-free graph $\Lambda$ such that $I(X(\Lambda))$ is semi-isomorphic to $I_0$ with a semi-isomorphism $\Phi:I_0\rightarrow I(X(\Lambda))$.
As in Proposition \ref{NotQI}, $|RI(S(\Lambda))|$ is simply connected since $\mathcal{O}'_{4,n}$ is a cactus of type-M. 

Let $\rho_S:I(X(\Lambda))\rightarrow RI(S(\Lambda))$ and $\rho_D:I_0\rightarrow RI(D_2(\mathcal{O}'_{4,n}))$ be the canonical quotient morphisms.
Let $\alpha=(\bold{E}_1,\triangle_1,\bold{E}_2,\triangle_2)$ be a sequence of simplices of $RI(D_2(\mathcal{O}'_{4,n}))$ where $\bold{E}_1$ and $\bold{E}_2$ are edges labelled by $\{a_1\}\times\{a_2\}$ and $\{a_4\}\times\{a_3\}$, respectively, and $\triangle_1$ and $\triangle_2$ are $(n+2)$-simplices labelled by $\{a_1\}\times\{a_3\}$ and $\{a_4\}\times\{a_2\}$, respectively.
Let $$\overline{\alpha}=(\cdots,\overline{\bold{E}}_1,\overline{\triangle}_1,\overline{\bold{E}}_2,\overline{\triangle}_2,\cdots)$$ be an infinite sequence of consecutive simplices in $I_0$ such that $\rho_D(\overline{\bold{E}}_{2k+1})=\bold{E}_1$, $\rho_D(\overline{\bold{E}}_{2k})=\bold{E}_2$, $\rho_D(\overline{\triangle}_{2k+1})=\triangle_1$ and $\rho_D(\overline{\triangle}_{2k})=\triangle_2$. Let $\mathbf{v}_n=\overline{\bold{E}}_n\cap\overline{\triangle}_n$ and $\mathbf{w}_n=\overline{\triangle}_n\cap\overline{E}_{n+1}$. 
Then $\mathbf{v}_n$, $\mathbf{w}_n$ are separating vertices in $I_0$ which have the following property: For any maximal edge $\overline{\bold{E}}\subset I_0$ and any $(n+2)$-simplex $\overline{\triangle}\subset I_0$ both of which contain $\mathbf{v}_n$ (or $\mathbf{w}_n$), there is no path joining the other vertex of $\overline{\bold{E}}$ and any other vertex of $\overline{\triangle}$ without passing through $\mathbf{v}_n$ (or $\mathbf{w}_n$).
Since $\Phi(\mathbf{v}_n)$ and $\Phi(\mathbf{w}_n)$ must have this property, $\rho_S(\Phi(\mathbf{v}_n))$ and $\rho_S(\Phi(\mathbf{w}_n))$ are separating vertices in $RI(S(\Lambda))$.
For if $\rho_S(\Phi(\mathbf{v}_n))$ (or $\rho_S(\Phi(\mathbf{w}_n))$) was not a separating vertex in $RI(S(\Lambda))$, then $\Phi(\mathbf{v}_n)$ (or $\Phi(\mathbf{w}_n)$) would not have the above property from the fact that $I(X(\Lambda))$ is covered by copies of $RI(S(\Lambda))$.

As we did in the proof of Proposition \ref{NotQI}, from the fact that $|RI(S(\Lambda))|$ is a finite simplicial complex, the sequence of seperating vertices in $RI(S(\Lambda))$, $$(\rho_{S}(\Phi(\mathbf{v}_1)),\rho_{S}(\Phi(\mathbf{w}_1)),\rho_{S}(\Phi(\mathbf{v}_2)),\rho_{S}(\Phi(\mathbf{w}_2)),\cdots)$$
yields a contradiction. Therefore, there is no such $\Lambda$.
\end{proof}

By combining Proposition \ref{QItoRAAG} and Corollary \ref{NotQItoRAAG}, the main theorem about planar graph 2-braid groups is proved.

\begin{reptheorem}{MainThm}
There are infinitely many cacti whose graph 2-braid groups are quasi-isometric to RAAGs.
There are also infinitely many cacti whose graph 2-braid groups are NOT quasi-isometric to RAAGs.
\end{reptheorem}

\medskip
\bibliographystyle{alpha} 
\bibliography{Ref.bib}

\end{document}